\newcommand{\R}{{\mathbb R}}       
\newcommand{\N}{{\mathbb N}}       %
\newcommand{\HH}{{\mathcal H}}
\newcommand{\LL}{{\mathcal L}}
\def\one{\mathds{1}}
\newcommand{\diam}{\mathop{\rm diam}}
\newcommand{\dist}{{\rm dist}}
\newcommand{\rf}[1]{{(\ref{#1})}}
\newcommand{\supp}{\operatorname{supp}}
\newcommand{\ve}{{\varepsilon}}
\newcommand{\vv}{{\vspace{2mm}}}
\newcommand{\vvv}{\vspace{4mm}}
\newcommand{\wt}[1]{{\widetilde{#1}}}
\newcommand{\HD}{{\mathsf{HD}}}
\newcommand{\LD}{{\mathsf{LD}}}
\def\Xint#1{\mathchoice
{\XXint\displaystyle\textstyle{#1}}%
{\XXint\textstyle\scriptstyle{#1}}%
{\XXint\scriptstyle\scriptscriptstyle{#1}}%
{\XXint\scriptscriptstyle\scriptscriptstyle{#1}}%
\!\int}
\def\XXint#1#2#3{{\setbox0=\hbox{$#1{#2#3}{\int}$ }
\vcenter{\hbox{$#2#3$ }}\kern-.58\wd0}}
\def\avint{\Xint-}
\newtheorem{theorem}{Theorem}[section]
\newtheorem{lemma}[theorem]{Lemma}
\newtheorem{remark}[theorem]{Remark}
\newtheorem*{lemma*}{Lemma}
\newtheorem*{theorem*}{Theorem}
 \newtheorem{main}{Theorem}
\theoremstyle{definition}
\theoremstyle{remark}
\newtheorem{rem}[theorem]{\bf Remark}
\def\ND{{\rm ND}}
\def\LD{{\rm LD}}
\def\HD{{\rm HD}}
\def\BA{{\rm BA}}
\numberwithin{equation}{section}
\newcommand{\cnj}[1]{\overline{#1}}
\newcommand{\RRem}{\begin{rem}}
\newcommand{\erem}{\end{rem}}
\newcommand{\xavi}[1]{}
\def\@tocline#1#2#3#4#5#6#7{\relax
  \ifnum #1>\c@tocdepth 
  \else
    \par \addpenalty\@secpenalty\addvspace{#2}%
    \begingroup \hyphenpenalty\@M
    \@ifempty{#4}{%
      \@tempdima\csname r@tocindent\number#1\endcsname\relax
    }{%
      \@tempdima#4\relax
    }%
    \parindent\z@ \leftskip#3\relax \advance\leftskip\@tempdima\relax
    \rightskip\@pnumwidth plus4em \parfillskip-\@pnumwidth
    #5\leavevmode\hskip-\@tempdima
      \ifcase #1
       \or\or \hskip 1em \or \hskip 2em \else \hskip 3em \fi%
      #6\nobreak\relax
    \dotfill\hbox to\@pnumwidth{\@tocpagenum{#7}}\par
    \nobreak
    \endgroup
  \fi}
\def\cH{{\mathcal{H}}}
\def\cM{{\mathcal{M}}}
\def\bR{{\mathbb{R}}}
\newcommand{\ps}[1]{\left( #1 \right)}
\newcommand{\ck}[1]{\left\{#1 \right\}}
\newcommand{\av}[1]{\left| #1 \right|}
\newcommand{\isif}[1]{\left\{\begin{array}{cc} #1
\end{array}\right.}
\def\gec{\gtrsim}
\def\lec{\lesssim}
\def\grad{\nabla}
\def\bR{\mathbb{R}}
\def\lec{\lesssim}
\def\Lip{\textrm{Lip}}
\newcommand{\jonas}[1]{}
\newcommand{\hdis}[1]{{\mbox{\dist}}_{#1}}
\begin{document}

\title[Characterization of 
rectifiable measures in terms of $\alpha$-numbers]{Characterization of 
rectifiable measures in terms of $\alpha$-numbers}

\author{Jonas Azzam}
\address{School of Mathematics, University of Edinburgh, JCMB, Kings Buildings,
	Mayfield Road, Edinburgh,
	EH9 3JZ, Scotland.}
\email{j.azzam "at" ed.ac.uk}

\author{Xavier Tolsa}
\address{
ICREA, Passeig Llu\'{\i}s Companys 23 08010 Barcelona, Catalonia, and\\
Departament de Matem\`atiques and BGSMath
\\
Universitat Aut\`onoma de Barcelona
\\
08193 Bellaterra, Catalonia
}
\email{xtolsa@mat.uab.cat}

\author{Tatiana Toro}
\address{University of Washington \\
	Department of Mathematics \\
	Seattle, WA 98195-4350, USA}
\email{toro@uw.edu}

\thanks{X.T. was supported by the ERC grant 320501 of the European Research Council, and also partially supported by the grants 2017-SGR-395 (Catalonia), MTM-2016-77635-P and  MDM-2014-044 (MICINN, Spain). TT was partially supported by the Craig McKibben \& Sarah Merner Professor in Mathematics and by NSF grant number DMS-1664867}

\def\Lip{{\rm Lip}}
	\def\lec{\lesssim}
\def\gec{\gtrsim}

\begin{abstract}
	We characterize Radon measures $\mu$ in $\R^{n}$ that are $d$-rectifiable in the sense that their supports are covered up to $\mu$-measure zero by countably many $d$-dimensional Lipschitz graphs and $\mu \ll \HH^{d}$. The characterization is in terms of a Jones function involving the so-called $\alpha$-numbers. This answers a question left open in a former work by Azzam, David, and Toro. 
\end{abstract}

\maketitle

\section{Introduction}

\xavi{Added ``countably many" in the abstract}

A Borel measure $\mu$ in $\R^{n}$ is called {\it $d$-rectifiable} if there are countably many Lipschitz images $\Gamma_{i}$ of $\R^d$ such that 
\begin{equation}\label{e:rectifiable-support}
\mu\biggl(\R^{n}\setminus \bigcup_i \Gamma_{i}\biggr) =0
\end{equation}
and additionally $\mu\ll \cH^{d}$, where $\HH^d$ denotes the $d$-dimensional Hausdorff measure. A set $E$ is called $d$-rectifiable if $\cH^{d}|_{E}$ is a $d$-rectifiable measure.

The goal of this paper is to give sufficient conditions for the $d$-rectifiability of a  Borel measure $\mu$ in the above sense. Such conditions are desirable since rectifiable measures and sets enjoy many useful properties and are ubiquitous in analysis. 
\jonas{I removed a citation to harmonic measure here, thought it wasn't too relevant.}
Characterizations of rectifiability usually arise from the study of certain properties that are trivial for the Lebesgue measure in Euclidean space. These properties do not necessarily hold for rectifiable sets and measures except in an approximate way. For example, the property that a measure $\mu$ satisfies $\mu(B(x,r))=r^{d}$ for all $x\in \supp \mu$ and $r>0$ is trivially satisfied by Lebesgue measure, though not for general rectifiable measures. However, the weaker property that $\lim_{r\rightarrow 0} \frac{\mu(B(x,r))}{r^{d}}\in (0,\infty)$ for $\mu$-almost every $x$ is satisfied by rectifiable measures, and this also implies $d$-rectifiability by the amazing work of Preiss \cite{Pr87}. See also \cite{TT15} and \cite{Tol17} for related characterizations in terms of densities. 

In this paper, we will study $d$-rectifiability from the perspective of how well a measure resembles  $d$-dimensional Lebesgue measure at various scales and locations. It is a classical result that if $\mu$ is $d$-rectifiable, then for $\mu$-almost every $x\in \R^{n}$, the measures $\mu_{x,r}$ defined by 
\[
\mu_{x,r}(A) = r^{-d} \mu(rA+x)
\]
converge weakly to a constant times Lebesgue measure restricted to a $d$-dimensional plane (see \cite{DeL08} and \cite{Pr87}). In particular, the distance between these rescaled measures and the class of $d$-dimensionally ``flat" measures tends to zero. We can make this distance more precise as follows. For measures $\mu$ and $\nu$ and an open ball $B$ we define
$$F_B(\sigma,\nu):= \sup\Bigl\{ \Bigl|{\textstyle \int \phi \,d\sigma  -
	\int \phi\,d\nu}\Bigr|:\, \phi\in \Lip_{1}(B) \Bigr\},$$
where 
\[
\Lip_{1}(B)=\{\phi:{\rm Lip}(\phi) \leq1,\,\supp f\subset
B\}
\]
and $\Lip(\phi)$ stands for the Lipschitz constant of $\phi$.

It is easy to check that this is indeed a distance in the space of finite Borel measures supported in the open ball $B$. See [Chapter 14, Ma] for other properties of this distance. In fact, this is a variant of the well known Wasserstein $1$-distance from mass transport theory. 

For a measure $\mu$ and $d\in \N$, we define
\begin{equation}\label{alpha-def}
\alpha_\mu^{d}(B) := \frac1{r_{B}\,\mu(B)}\,\inf_{c\geq0,L} \,F_{B}(\mu,\,c\HH^{d}|_{L}),
\end{equation}
where the infimum is taken over all $c\geq0$ and all $d$-dimensional planes $L$. Also, { if $\mu(B^{\circ})>0$,} we denote by $c_B$ and $L_B$ a constant and a plane such that, if we set
\begin{equation}\label{L-def}
\LL_{B} :=c_B\HH^{d}|_{L_B}, 
\end{equation}
then
\begin{equation}\label{cL-ppts}
\alpha_\mu^{d}(B) =  \frac1{r_{B}\mu(B)}\,F_{B}(\mu,\,\LL_B). 
\end{equation}
Let us remark that $c_B$ and $L_B$ (and so $\LL_B$) may be not unique. Moreover, we may (and will) assume that 
$L_B\cap B\neq\varnothing$.
When $B=B(x,r)$, we will also write $\alpha_{\mu}^d(B)=\alpha_{\mu}^d(x,r)$, and $c_{B}=c_{x,r}$. 
{Further we may drop the superindex $d$ quite often, to shorten notation.} \xavi{Added this sentence}

These are the so-called $\alpha$ coefficients from \cite{Tol09}. If $\mu$ is $d$-rectifiable, the convergence of $\mu_{x,r}$ to $d$-dimensional Lebesgue on a $d$-plane as $r\rightarrow 0$ for a.e.\ $x$ implies the weaker property that 
\begin{equation}\label{e:arightarrow0}
\lim_{r\rightarrow 0} \alpha_{\mu}^{d}(x,r)=0 \quad\mbox{for $\mu$-a.e. }x\in \R^{n}. 
\end{equation}
However, this limit being zero is not enough to imply rectifiability. This can be seen by considering a variant of the Von Koch snowflake such that if $K_k$ denotes the $k$-th stage of the construction, $K_{k+1}$ is obtained from $K_k$ by introducing new edges that make an angle equal to  $\frac{1}{\sqrt{k}}$ with the previous edges, and then let $\mu_{k} = [\cH^{1}(K_{k})]^{-1}\cH^{1}|_{K_{k}}$. These measures converge weakly to a measure $\mu$ for which \eqref{e:arightarrow0} holds (with $d=1$) yet the measure is singular with respect to $\cH^{1}$. Thus, it is a natural question to ask what additional information is needed aside from \eqref{e:arightarrow0} to imply rectifiability. 

In \cite{ADT16}, the first author, David and the third author considered \xavi{I changed the wording in this paragraph}
some variant of the $\alpha$ coefficients. Denote $T_{x,r}(y)= (y-x)/r$ and let
$W_{1}$ be the $1$-Wasserstein distance between probability measures and the infimum is taken over all $d$-planes. Then one sets
\begin{equation}
\label{e:tol15}
\wt{\alpha}_{\mu}^{d}(x,r) = \inf_{L} W_{1}\bigl(\mu(B(x,r))^{-1}\,T_{x,r}[\mu] , \,\cH^{d}(L\cap B(0,1))^{-1}\,\HH^{d}|_{L\cap B(0,1)}\bigr),
\end{equation}
where the infimum is taken over all $d$-planes. 
In \cite{ADT16} it was shown that if $\mu$ is doubling and 
\[
\int_{0}^{1} \wt{\alpha}_{\mu}^{d}(x,r)\frac{dr}{r} < \infty \quad\mbox{ for $\mu$-a.e. }x\in \R^{n},
\] 
then $\mu$ is $d$-rectifiable.
In \cite{ADT16} it was also conjectured that the same result should be true if $\wt{\alpha}_{\mu}^{d}(x,r)$ were replaced with $\wt{\alpha}_{\mu}^{d}(x,r)^{2}$. 

In \cite{Orp17}, Orponen showed the conjecture is true for $n=d=1$. In fact, he proved that if $\mu$ and $\nu$ are two Radon measures on the real line (where $\nu$ is doubling) then $\mu\ll \nu$ if $\int_{0}^{1}\wt{\alpha}_{\mu,\nu}^1(x,r)^{2}\frac{dr}{r}<\infty$ holds $\mu$-almost everywhere, where now $\wt{\alpha}_{\mu,\nu}^1$ measures the $1$-Wasserstein distance between $\mu$ and $\nu$, normalized appropriately.

If one assumes absolute continuity a priori, then there are other some related results in the literature. Define the Jones' $\beta$-numbers
\[
\beta_{\mu,p}^{d}(x,r)^{p}  = \inf_{L} \frac{1}{r_{B}^{d}} \int_{B(x,r)}\ps{\frac{\dist(y,L)}{r}}^{p} d\mu(y),
\]
where the infimum is over all $d$-dimensional planes $L$.
In a sense, these coefficients are weaker than the $\alpha$-numbers that we described above since they only measure how close the measure is to lying on a $d$-plane, not how much it resembles $d$-dimensional Lebesgue measure (so for example, if $\mu$ is supported in a plane but not supported on a portion inside the ball $B(x,r)$ with positive area in this plane, then the $\beta$-number of $B(x,r)$ is zero while the $\alpha$-number is positive). If $\mu\ll\HH^d|E$ for some set $E$ of finite $\HH^d$-measure, it has been shown recently by the first and second authors \cite{AT15} that $\mu$ is rectifiable if 
\begin{equation}\label{e:Bjones}
\int_{0}^{1}\beta_{\mu,2}^{d}(x,r)^{2} \frac{dr}{r} d\mu(x)<\infty,
\end{equation}
for $\mu$-almost every $x\in \R^{n}$. More recently, Edelen, Naber, and Valtorta \cite{ENV16} have obtained a related 
result of more quantitative nature. 

\xavi{I changed a little the wording in this paragraph}
The converse to the result obtained in \cite{AT15} also holds, as shown by the second author \cite{Tol15-I}. That is, if $\mu$ is $d$-rectififable, then \eqref{e:Bjones} holds. 
Further in the same work it is shown that if $\mu$ is $d$-rectififable, then \eqref{e:Bjones} is satisfied  with $\beta_{\mu,2}^{d}(x,r)$ replaced by $\alpha_\mu^d(x,r)$.
This fact motivated the above conjecture about the characterization of rectifiability in terms of the $\alpha$-numbers. 

In this paper, we confirm this conjecture for measures that are pointwise doubling. More precisely, we prove the following:

\begin{main}\label{thmi}
	Let $\mu$ be a Radon measure in $\R^{n}$, $0<d\leq n$, and $E$ a Borel set with $\mu(E)>0$ such that 
	\begin{equation}\label{e:main}
	J_{\alpha,1}(x):=\int_{0}^{1} \alpha_{\mu}^{d}(x,r)^{2}\frac{dr}{r} <\infty \mbox{ for all }x\in E
	\end{equation}
	and 
	\begin{equation}\label{e:asymdub}
	\limsup_{r\rightarrow 0}\frac{\mu(B(x,2r))}{\mu(B(x,r))} <\infty \mbox{ for all }x\in E.
	\end{equation}
	Then $\mu|_{E}$ is $d$-rectifiable. 
\end{main}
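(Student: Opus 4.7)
The plan is to construct Lipschitz graphs covering $\mu$-a.e.\ point of $E$ via a stopping-time scheme on a Christ-David dyadic system, with the square-integrability of the $\alpha$-numbers exploited in two crucial places: the Lipschitz bound on the graphs (tilt increments summing in $\ell^{2}$) and the packing of bad cubes (Chebyshev on $\alpha^{2}$). First I would apply Egorov and exhaustion to the pointwise hypotheses \eqref{e:main} and \eqref{e:asymdub} to pass to a compact subset $F \subset E$ with $\mu(F)$ arbitrarily close to $\mu(E)$, on which there exist $r_{0}, C_{0}, M < \infty$ with $\mu(B(x,2r)) \leq C_{0}\mu(B(x,r))$ and $\int_{0}^{r_{0}} \alpha_{\mu}^{d}(x,r)^{2}\,dr/r \leq M$ uniformly for $x \in F$ and $r \leq r_{0}$. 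The $\alpha$-decay at $\mu$-a.e.\ $x \in F$ together with the doubling forces tangent measures to be multiples of $\HH^{d}|_{L}$ and yields uniform positive and finite $d$-densities, so after a further restriction $\mu|_{F}$ is Ahlfors $d$-regular below scale $r_{0}$, and it suffices to prove $\mu|_{F}$ is $d$-rectifiable.

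Next I would build a Christ-David dyadic lattice $\cD$ on $(F,\mu)$ with $\mu(Q) \sim \ell(Q)^{d}$, and for each $Q$ select a near-optimal plane $L_{Q}$ from the definition of $\alpha_{\mu}^{d}(B_{Q})$, where $B_{Q} \supset Q$ is a ball of comparable radius. Testing $F_{B}$ against suitable Lip-$1$ functions yields the key geometric comparison $\angle(L_{Q}, L_{Q'})^{2} \lec \alpha_{\mu}^{d}(B)^{2}$ for nested cubes $Q \subset Q'$ on a common enlarged ball $B$. Fixing small thresholds $\tau, \theta$, I would grow stopping-time trees $\tree(R)$ rooted at top cubes $R$ with $\alpha_{\mu}^{d}(B_{R}) < \tau$, descending as long as the associated plane stays within angle $\theta$ of $L_{R}$ and stopping when either $\alpha_{\mu}^{d}(B_{Q}) \geq \tau$ ($\alpha$-bad) or the accumulated tilt exceeds $\theta$ (tilt-bad).

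Inside each tree the coherent family $\{L_{Q}\}$ admits a Reifenberg/David-Toro style Lipschitz-graph parametrization of the associated part of $\supp \mu$ over $L_{R}$, with Lipschitz constant controlled by $\bigl(\sum_{Q} \alpha_{\mu}^{d}(B_{Q})^{2}\bigr)^{1/2}$ along chains --- and here is where the \emph{square} in the hypothesis is essential, since tilt angles compose in $\ell^{2}$, not $\ell^{1}$. The bad cubes are packed by a Chebyshev-Carleson estimate
\[
\sum_{Q :\, \alpha_{\mu}^{d}(B_{Q}) > \tau} \mu(Q) \lec \tau^{-2} \int_{F} J_{\alpha,1}\, d\mu \leq M \tau^{-2} \mu(F),
\]
together with a telescoping bound for tilt-bad cubes; iterating the construction over a Whitney-type covering of the remaining bad region by new top cubes and summing the resulting geometric decrease, the $\mu$-mass of points not lying on some Lipschitz graph can be made arbitrarily small, which proves $d$-rectifiability.

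The main obstacle, in my view, is not the packing estimate (which square-integrability handles gracefully) but the graph-parametrization step: coherent approximating planes alone do not prevent $\mu$ from concentrating on unbalanced subsets of the $L_{Q}$'s, which would give a planar approximation but no actual graph. Here the genuine advantage of working with $\alpha$-numbers rather than $\beta$-numbers must be exploited: one should use the mass-density constants $c_{B_{Q}}$ (invisible to the $\beta$-based argument of \cite{AT15}) and coordinate them across scales within a tree so that the ``$\mu$-projection'' onto $L_{R}$ behaves like $c\,\HH^{d}|_{L_{R}}$. This coordination is what distinguishes the $\alpha^{2}$-proof from its $\beta^{2}$-counterpart and is where the bulk of the technical work must go.
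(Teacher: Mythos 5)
Your high-level skeleton (Egorov reduction, stopping-time trees, a Reifenberg/David--Toro parametrization, a Chebyshev--Carleson packing of $\alpha$-bad cubes, tilt increments composing in $\ell^{2}$) matches the paper's, and you correctly pinpoint the genuine difficulty: coordinating the density constants $c_{B}$ so that the approximating measure behaves like $c\,\HH^{d}$ on the graph. But there are two gaps, one of which is fatal and one of which is where the paper's real contribution lives.

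First, the reduction ``so after a further restriction $\mu|_{F}$ is Ahlfors $d$-regular below scale $r_{0}$'' is circular. The hypotheses \eqref{e:main} and \eqref{e:asymdub} do \emph{not} force positive finite densities: pointwise doubling and $\alpha$-square-summability are perfectly compatible with $\Theta^{d}_{\mu}(x,r)\to 0$ or $\to\infty$ as $r\to 0$ (take, e.g., $d\mu = f\,d\HH^{d}|_{L}$ with $f$ varying like $\log(1/r)$, where the $\alpha$'s are $\sim 1/\log(1/r)$, hence square-summable, but the density blows up). Proving density bounds is essentially equivalent to proving $\mu\ll\HH^{d}$, which is what the theorem asserts. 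Once you grant yourself AD-regularity, the remaining work is close to the AD-regular $\beta$-theory and the theorem loses its content. The paper does the opposite: it adds \emph{density stopping conditions} ($\LD$: density $\leq\tau$, $\HD$: density $\geq A$) alongside the angle condition, applies David--Toro on the stopped region to get a Lipschitz graph $\Sigma$ that approximates $\supp\mu$ at all non-stopped scales, and then must show that the sets $\LD$ and $\HD$ are small.

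Second, having identified that the $c$-constants must be coordinated, you do not supply a mechanism, and this is precisely where the argument is nontrivial. The paper's route is: (i) argue by contradiction, so that by Badger--Schul the set $G$ where $\Theta^{d}_{\mu}\in[\tau,A]$ for all small $r$ has $\mu(G)=0$; (ii) build an auxiliary AD-regular measure $\nu=\sum_{j}c_{j}\theta_{j}\,\HH^{d}|_{\Sigma}$ on the Lipschitz graph whose local densities $c_{j}$ are read off from $\mu$; (iii) prove the $L^{2}$ estimate $\|\,d\pi[\nu]/d\HH^{d}|_{P_{0}} - c_{C_{2}B_{0}}\,\|^{2}_{L^{2}}\lec \ve^{1/4}$ via a Littlewood--Paley / $\Lambda$-square function argument driven by the $\alpha_{\nu}$- and $\alpha_{\mu}$-numbers (Section 8, following ideas from \cite{Tol17}); (iv) use that $L^{2}$ bound and the Hardy--Littlewood maximal function on $P_{0}$ to show $\mu(\HD)$ is small, while $\mu(\LD)$ is handled by a L\'eger-style covering and $\mu(\BA)$ by the $L^{2}$ bound on $Dh$ from the bi-Lipschitz property of the David--Toro map. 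This contradicts $\mu(G)=0$. The $\Lambda$-estimate controlling the $\HD$ set is the main new ingredient your proposal is missing.
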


As stated above, in \cite{Tol15-I} it is shown that if $\mu$ is any $d$-rectifiable measure (not necessarily doubling), then \eqref{e:main} holds. Thus combining this result with  the theorem above we obtain a characterization of rectifiable measures in terms of their $\alpha$-coefficients and the doubling condition
\rf{e:asymdub}. 

It is not hard to see using the definition of Wasserstein distance that 
$\alpha_{\mu}^d(x,r) \leq \wt{\alpha}_{\mu}^d(x,r)$, and so Theorem \ref{thmi} implies the conjecture from \cite{ADT16} for measures satisfying \rf{e:asymdub}.

The doubling condition \eqref{e:asymdub} is necessary as shown by the following result.

\begin{main}\label{teocount1}
	There exists a  Radon measure $\mu$ in $\R^2$ which satisfies
	\begin{equation*}
	\int_0^1\alpha_\mu^1(x,r)^2\,\frac{dr}r<\infty\quad \mbox{for all $x\in \supp\mu$},
	\end{equation*}
	and such that 
	$$\lim_{r\to 0}\frac{\mu(B(x,r))}r =0\quad\mbox{ for all $x\in\supp\mu$.}$$
	In particular, $\mu$ is not $1$-rectifiable. 
\end{main}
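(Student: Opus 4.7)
The plan is to construct a Koch-type fractal measure $\mu$ in $\R^2$ whose similarity dimension exceeds $1$ and whose scale-$k$ angle perturbations $\theta_k$ satisfy $\sum_k\theta_k^2<\infty$ (for example $\theta_k=1/k$). The non-doubling behavior and vanishing lower $1$-density will follow from the dimension being $>1$, while the $L^2$-smallness of the perturbations will control the $\alpha$-numbers uniformly.

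\textbf{Construction.} Start with $E_0=[0,1]\times\{0\}$. Inductively, $E_k$ is obtained from $E_{k-1}$ by replacing each segment $J$ of length $3^{-(k-1)}$ of $E_{k-1}$ by four subsegments of length $3^{-k}$: two aligned with $J$ near its endpoints and two placed in the middle and tilted by $\pm\theta_k$ from $J$. Positions and signs are chosen so that the four children are pairwise disjoint, the tilts cancel on average (preventing long-range drift), and the entire iteration remains in a fixed bounded region. Define $\mu_k$ to be the probability measure giving mass $4^{-k}$ uniformly to each of the $4^k$ segments of $E_k$, and let $\mu$ be a weak-$*$ subsequential limit.

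\textbf{Density and non-rectifiability.} The bounded geometry of the construction implies that only $O(1)$ segments of $E_k$ meet any ball of radius $3^{-k}$, whence $\mu(B(x,3^{-k}))\lec 4^{-k}$. Consequently, for $r\in[3^{-(k+1)},3^{-k}]$ and every $x\in\supp\mu$,
\[
\frac{\mu(B(x,r))}{r}\lec \ps{\tfrac{3}{4}}^{\!k}\longrightarrow 0\qquad\text{as }r\to 0.
\]
Since any $1$-rectifiable measure has strictly positive lower $1$-density at $\mu$-a.e.\ point, $\mu$ cannot be $1$-rectifiable.

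\textbf{Alpha bound.} Fix $x\in\supp\mu$ and $r\in[3^{-(k+1)},3^{-k}]$, and let $J_k$ be the level-$k$ segment through $x$ and $L_k$ the line supporting $J_k$. For $y\in\supp\mu\cap B(x,r)$, the perpendicular displacement from $L_k$ is controlled by the telescoping sum
\[
\dist(y,L_k)\lec \sum_{i\ge 1}\theta_{k+i}\cdot 3^{-(k+i)}\lec \theta_k\cdot r
\]
(using $\theta_{k+i}\le \theta_k$ together with the geometric factor $3^{-i}$). Choosing the approximating measure $\LL=c\,\HH^1|_{L_k}$ with $c$ so that the masses match,
\[
F_{B(x,r)}(\mu,\LL)\le \int_{B(x,r)}\dist(y,L_k)\,d\mu(y)+E_{\text{long}}\lec \theta_k\cdot r\cdot \mu(B(x,r)),
\]
where the longitudinal error $E_{\text{long}}$ (coming from the $\cos\theta_{k+i}$-corrections to the projected lengths on $L_k$) is of size $\theta_k^2\cdot r\cdot\mu(B(x,r))$ and thus negligible. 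Dividing by $r\,\mu(B(x,r))$ yields $\alpha_\mu^1(x,r)\lec \theta_k$, so
\[
\int_0^1\alpha_\mu^1(x,r)^2\,\frac{dr}{r}\lec (\log 3)\sum_{k\ge 0}\theta_k^2<\infty
\]
uniformly in $x\in\supp\mu$.

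\textbf{Main obstacle.} The delicate point is the geometric bookkeeping of the Koch iteration: the signs and positions of the tilts $\pm\theta_k$ must be chosen so that (i) the children at every level are pairwise disjoint and the iteration remains in a fixed compact set, and (ii) the perpendicular displacements from $L_k$ truly accumulate into a geometric sum bounded by $\theta_k\cdot r$ rather than a divergent series like $\sum_i\theta_{k+i}$. A secondary technicality is that the $\alpha$-bound and the density vanishing must hold for \emph{every} point of $\supp\mu$ and \emph{every} scale, not merely $\mu$-a.e.; this is guaranteed by the uniformity of the construction.
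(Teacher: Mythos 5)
Your construction has a gap that is fatal. The $\alpha$-number measures not just how close $\supp\mu$ is to a line, but how close $\mu$ is (in transport distance) to a \emph{constant multiple of length measure} on that line. Your estimate
\[
F_{B(x,r)}(\mu,\LL)\le \int_{B(x,r)}\dist(y,L_k)\,d\mu+E_{\text{long}},\qquad E_{\text{long}}\lec \theta_k^2\,r\,\mu(B(x,r)),
\]
controls only the perpendicular displacement. The dominant error is longitudinal, and it is $O(1)$, not $O(\theta_k^2)$: the four children have combined length $4\cdot 3^{-k}$, exceeding the parent length $3\cdot 3^{-k}$, so when you project them onto $L_k$ their footprints must either overlap or extend past the parent; in the bounded case they overlap. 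The projected measure $\pi[\mu]$ is therefore not close to uniform on a $3^{-k}$-interval — some sub-intervals carry roughly twice the mass density of others — and this discrepancy is of order one relative to any constant $c$. Concretely, testing against a $1$-Lipschitz bump concentrated where the projected density is higher gives $\alpha_\mu(x,r)\gtrsim 1$ at every scale $r\approx 3^{-k}$, so the Jones integral diverges. This is not a bookkeeping issue that can be fixed by careful positioning: any self-similar construction with Hausdorff dimension $>1$ (more than $s$ children at scale $1/s$) forces this excess projected length, and hence non-uniformity of the projection, at every scale. The paper's own opening remark about the angle-$1/\sqrt{k}$ Koch variant gives \emph{only} $\alpha_\mu(x,r)\to 0$, not square-summability, and those angles are not in $\ell^2$; with $\theta_k\in\ell^2$ the Koch-tent construction becomes a finite-length rectifiable curve, so the fractal route cannot produce the desired example.

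The paper proceeds in an essentially orthogonal way: $\mu$ is a weak limit of measures $\mu_k=\sum_j c_j^k\,\cH^1|_{L_j^k}$ supported on finitely many \emph{parallel horizontal lines}, where each line $L_j^k$ is replaced at step $k+1$ by two parallel copies carrying fractions $1-a_{k+1}$ and $a_{k+1}$ of its mass, separated by a tiny gap $h_{k+1}\ll d_k$. Because the support is always a union of lines parallel to $P_0$, the projection $\pi[\mu]$ onto $P_0$ is \emph{exactly} Lebesgue measure, so there is no longitudinal error at all; the only error is transverse and of size $a_{k+1}h_{k+1}/r$. One then gets $\int_0^\infty\alpha_\mu^2\,\frac{dr}{r}\lesssim 1+\sum_k a_k^2|\log a_k|$, which is finite for $a_k=1/k$, while the density on each line is bounded by $\prod_j(1-a_j)\to 0$ because $\sum a_k=\infty$. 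The crucial design decision — keeping the dimension exactly $1$ and making the measure thin out across many parallel lines rather than raising the dimension — is precisely what allows both properties to hold simultaneously and what your proposal misses.
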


\xavi{I added this paragraph}
We remark that a related phenomenon occurs for the $\beta_p$ coefficients when $p<2$ in the absence of doubling conditions. Indeed, it is
has been shown recently in \cite{Tolsa-betap} that there exists a set $E\subset \R^2$ with $\HH^1(E)<\infty$ which is not $1$-rectifiable and such that, for all $1\leq p<2$, 
\begin{equation*}
	\int_0^1\beta_{\HH^1|_E,p}^1(x,r)^2\,\frac{dr}r<\infty\quad \mbox{ for $\HH^1$-a.e. $x\in E$}.
	\end{equation*}
On the other hand, by a result due to Pajot \cite{Pajot97} it follows that, for all $p\in[1,2]$, the above condition implies the  rectifiability of
$E$ under the additional assumption that $$\liminf_{r\to0} \frac{\HH^1(E\cap B(x,r))}r>0
\quad \mbox{ for $\HH^1$-a.e. $x\in E$}, $$ which is stronger than the pointwise doubling assumption \rf{e:asymdub} (for  $\mu=\HH^1|_E$ with $\HH^1(E)<\infty$).

We should also mention that there are results that provide necessary and/or sufficient conditions for 
a different notion of rectifiability of measures introduced by Federer. This notion of rectifiability only asks that condition \rf{e:rectifiable-support} hold, and does not require the absolute continuity with respect to $\HH^d$. The charaterization of Federer rectifiability is a more difficult
problem. Part of the interest in this topic was motivated by an example of Garnett, Killip, and Schul \cite{GKS10} of a doubling measure $\mu$ with $\supp \mu=\R^{2}$ that satisfies \eqref{e:rectifiable-support}. This was a surprising result since doubling measures are considered to be well-behaved apart from possibly being singular, so it was anticipated that, if a doubling measure has support equal to $\R^{2}$, then it should give zero measure to any rectifiable curve. 
Later on  Badger and Schul \cite{BS16}  characterized the measures in Euclidean space that can be covered up to measure zero by Lipschitz curves, assuming a positive lower density condition on the measure.  Also, the first author and Mourgoglou showed in \cite{AM16}
that if a measure is doubling with connected support and positive $1$-dimensional lower density, then it is $1$-rectifiable. Previously, in \cite{Ler03}, Lerman gave sufficient conditions for $1$-rectifiability in terms of $\beta$-type numbers without any lower density assumption. Thus far, the most general necessary conditions for this kind of rectifiability using $\beta$-type numbers is given in \cite{BS15}. Unfortunately, these necessary conditions are not sufficient, as shown by an example of Martikainen and Orponen \cite{MO16}. However, see \cite{BS17} for a characterization for measures with positive lower density using a different $\beta$-type quantity.

\vv

\section{Notation}\label{sec:notation}
 We will write $A\lesssim B$ if $A\leq CB$ for some universal constant $C$. Throughout this paper, we will assume all such implicit constants depend on the dimension $n$; otherwise, we will write $A\lesssim_{t}B$ if the constant $C$ depends on some parameter $t$. We will write $A\approx B$ to mean $A\lesssim B\lesssim A$ and define $A\approx_{t}B$ similarly.
 
We denote by $B(x,r)$  the open ball centered at $x$ of radius $r>0$ in $\R^n$. If $B$ is a ball, we write $x_{B}$ for its center and $r_{B}$ for its radius. If $B=B(x,r)$ and $\lambda>0$, we will write
\[
\lambda B= B(x,\lambda r),
\]
that is, the ball with same center but $\lambda$-times the radius. 

For a measure $\mu$ in $\R^n$ and a ball $B=B(x,r)$, we write
\[
\Theta_{\mu}^{d}(x,r) = \Theta_{\mu}^{d}(B) = \frac{\mu(B(x,r))}{r^{d}}=\frac{\mu(B)}{r_{B}^{d}} . \]
Given $E,F\subset \R^n$ closed sets, $d_H(E,F)$ stands for the Hausdorff distance between $E$ and $F$. For $x\in
\R^n$ and $r>0$ we also consider the following local scale invariant version of Hausdorff distance
$$\hdis{x,r}(E,F) = \frac1r\,\max\Bigl(\,\sup_{y\in E\cap B(x,r)}\dist(y,F); \sup_{y\in F\cap B(x,r)}\dist(y,E)\Bigr).$$

Given two $d$-planes $L_{1}$ and $L_{2}$,  let $L_{1}', L_2'$ be the respective parallel $d$-planes passing through the origin.
Then we denote
\[
\angle(L_{1},L_{2})=\dist_{H}(L_{1}'\cap B(0,1),L_{2}'\cap B(0,1)).
\]
In a sense, $\angle(L_{1},L_{2})$ is the angle between $L_1$ and $L_2$.

\vv 


\section{Preliminaries}


Below we use constants $A,\tau,C_{1}$, and $\ve>0$. We choose them so that 
\begin{equation}\label{constants-1}
\tau\ll1\ll \min\{A,C_1\}\hbox{      and      }
\ve\ll\min\{A^{-1},\tau^4,C_{1}^{-1}\},
\end{equation}

We recall  Besicovitch covering lemma as we will use it frequently. There exists $N=N(n)$ depending only on $n$ such that for any bounded set $E\subset \R^n$, and any collection of closed balls 
$\{B(x,r(x)): x\in E\}$ with $\sup\{r(x):x\in E\}<\infty$ there are 
$\mathcal G_1\cdots, \mathcal G_N$ countable disjoint subcollections 
 such that
\begin{equation}\label{besicovitch-1}
E\subset \bigcup_{j=1}^N\bigcup_{B\in \mathcal G_j}B \quad\hbox{   consequently   }\quad
\chi_{E}\leq \sum_{j=1}^N \chi_{\mathcal B_j}\lec_{n} 1,
\end{equation}
where $\mathcal B_j=\bigcup_{B\in\mathcal G_j}B$.
In particular, for a measure $\mu$, there is $j_0\in\{1,\cdots, N\}$ such that 
\begin{equation}\label{e:besicovitch}
\mu(E) \leq \sum_{j}\mu(\mathcal B_{j}) \le N \mu( \mathcal B_{j_0})= N \mu\biggl(\bigcup_{B\in \mathcal B_{j_0}}B\biggr)
\end{equation}
Such covering will often be referred to as a Besicovitch subcovering of the collection $\{B(x,r(x)): x\in E\}$.

We now go over some basic facts about $\alpha$ numbers. Some of them are proven in \cite{Tol09} for $d$-AD-regular measures and in \cite{Tol17} for general measures. However we supply some more precise estimates here.

\begin{lemma}\label{l:alpha-monitone}
	For $x,y\in \R^{n}$, if $B(x,r)\subset B(y,s)$, then
	
	\begin{equation}\label{e:alpha-monitone-general}
	\alpha_{\mu}(x,r)\leq  \frac{s\,\mu(B(y,s))}{r\,\mu(B(x,r))}\,\alpha_{\mu}(y,s).
	\end{equation}
\end{lemma}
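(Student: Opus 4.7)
The plan is to exploit the fact that the infimum defining $\alpha_\mu(y,s)$ is nearly attained (in fact attained, by the convention on $\LL_B$ in \eqref{L-def}--\eqref{cL-ppts}) by a specific flat measure $\LL_{B(y,s)} = c_{y,s}\HH^d|_{L_{y,s}}$, and that this same measure is an admissible candidate for the infimum defining $\alpha_\mu(x,r)$, since the latter infimum ranges over all constants $c\geq 0$ and all $d$-planes $L$ without any restriction tied to the ball $B(x,r)$. So the strategy is to bound $F_{B(x,r)}(\mu,\LL_{B(y,s)})$ by $F_{B(y,s)}(\mu,\LL_{B(y,s)}) = s\,\mu(B(y,s))\,\alpha_\mu(y,s)$, and then divide by $r\,\mu(B(x,r))$.

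The bound on the $F$-distances is immediate from the set inclusion $\Lip_1(B(x,r)) \subset \Lip_1(B(y,s))$: every $\phi$ with $\supp\phi \subset B(x,r) \subset B(y,s)$ and $\Lip(\phi)\leq 1$ also qualifies as a test function in the supremum defining $F_{B(y,s)}$. Taking the supremum of the left-hand quantity $|\int\phi\,d\mu - \int\phi\,d\LL_{B(y,s)}|$ over the smaller class is therefore dominated by the same supremum over the larger class, giving
\[
F_{B(x,r)}(\mu,\LL_{B(y,s)}) \leq F_{B(y,s)}(\mu,\LL_{B(y,s)}).
\]
Combining with the definition \eqref{alpha-def} applied at $B(x,r)$ with the candidate pair $(c_{y,s},L_{y,s})$ yields
\[
\alpha_\mu(x,r) \leq \frac{1}{r\,\mu(B(x,r))}\,F_{B(x,r)}(\mu,\LL_{B(y,s)}) \leq \frac{s\,\mu(B(y,s))}{r\,\mu(B(x,r))}\,\alpha_\mu(y,s),
\]
which is exactly \eqref{e:alpha-monitone-general}. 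There is essentially no obstacle: the lemma is a direct repackaging of the definitions, and the only point worth emphasising is the monotonicity of the test-function class $\Lip_1(\cdot)$ with respect to inclusion of balls.
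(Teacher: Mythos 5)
Your argument is essentially the paper's: both exploit that $\Lip_1(B(x,r))\subset\Lip_1(B(y,s))$, so a flat measure that nearly minimises $F_{B(y,s)}(\mu,\cdot)$ is also an admissible competitor in the infimum defining $\alpha_\mu(x,r)$. The only cosmetic difference is that the paper works with an $\ve$-approximate minimiser and lets $\ve\to 0$ at the end, whereas you invoke the exact minimiser $\LL_{B(y,s)}$ guaranteed by the convention in \eqref{cL-ppts}; either variant is fine.
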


\begin{proof}
	Let  $\ve>0$ and pick $\LL_{y,s}=c\HH^{d}|_{L}$ so that 
	\[
	\frac{1}{s\mu(B(y,s))}F_{B(y,s)}(\mu,\LL_{y,s})\leq(1+\ve)\alpha_{\mu}(y,s).
	\]
	For $\phi\in \Lip_{1}(B(x,r))$
	\[ 
	\av{\int\phi \,d\mu - \int \phi\, d\LL_{y,s}}
	\leq s\mu(B(y,s)) (1+\ve)\alpha_{\mu}(y,s).
	\]
	Taking the supremum over $\phi\in \Lip_{1}(B(x,r))$ and using \eqref{alpha-def} we have
	\[
	\alpha(x,r)r\mu(B(x,r))\le F_{B(x,r)}(\mu, \LL_{y,s})\leq(1+\ve)s\mu(B(y,s))\alpha_{\mu}(y,s).
	\]
	Hence,
	\[
	\alpha_{\mu}(x,r) 
	\leq \frac{s\mu(B(y,s))}{r\mu(B(x,r))}\,(1+\ve) \,\alpha_{\mu}(y,s)
	\]
	and letting $\ve\rightarrow 0$ we obtain \eqref{e:alpha-monitone-general}.
\end{proof}

\begin{lemma} \label{l:LcapB} For $x\in \mathbb{R}^{n}$, if $y\in B(x,r/2)$, $B(y,2s)\subset B(x,r)$, $\LL$ is a measure supported on a $d$-plane $L$, and {$F_{B(x,r)}(\mu,\LL)<{s\,\mu(B(y,s))}$,} then
	\begin{equation}
	\label{e:LcapB-general} 
	L\cap B(y,2s)\neq\varnothing.
	\end{equation} 
	In particular, if, $\alpha_{\mu}(x,r)<\frac{\mu(B(x,r/8))}{8\,\mu(B(x,r))}$, then 
\[
	L_{x,r}\cap B(x,r/4)\neq\varnothing.
\]
\end{lemma}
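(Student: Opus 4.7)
My plan is to prove the first (general) assertion by contraposition and then derive the ``in particular'' clause by plugging in specific parameters. Suppose that $L\cap B(y,2s)=\varnothing$, so $\dist(y,L)\ge 2s$. The goal is to exhibit a single test function $\phi\in\Lip_1(B(x,r))$ that detects this disjointness by forcing
\[
F_{B(x,r)}(\mu,\LL)\ \ge\ s\,\mu(B(y,s)),
\]
which contradicts the standing hypothesis $F_{B(x,r)}(\mu,\LL)<s\,\mu(B(y,s))$.

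The natural candidate is the tent $\phi(z)=\max(2s-|z-y|,0)$. It is $1$-Lipschitz, vanishes outside $\bar B(y,2s)$, and, because $B(y,2s)\subset B(x,r)$, its support is contained in $\bar B(x,r)$. A harmless contraction $\phi_{\delta}(z)=\max(2s(1-\delta)-|z-y|,0)$ places the support strictly inside $B(x,r)$, and sending $\delta\to 0$ at the end recovers the claimed bound. Two properties then do all the work: since $\dist(y,L)\ge 2s$, the function $\phi_{\delta}$ is identically zero on $L$, so $\int \phi_{\delta}\,d\LL=0$; and on $B(y,s)$ one has $\phi_{\delta}\ge s(1-2\delta)$, hence $\int \phi_{\delta}\,d\mu\ge s(1-2\delta)\,\mu(B(y,s))$. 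Combining these with the definition of $F_{B(x,r)}$ and letting $\delta\to 0$ yields the desired lower bound, producing the contradiction.

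For the ``in particular'' clause, I would apply the statement just proved with $y=x$ and $s=r/8$: then $y\in B(x,r/2)$ and $B(y,2s)=B(x,r/4)\subset B(x,r)$ trivially, and the assumed bound $\alpha_\mu(x,r)<\mu(B(x,r/8))/(8\,\mu(B(x,r)))$, multiplied through by $r\,\mu(B(x,r))$ and combined with \eqref{cL-ppts} applied to $\LL=\LL_{x,r}$, is precisely $F_{B(x,r)}(\mu,\LL_{x,r})<s\,\mu(B(y,s))$. The general assertion then forces $L_{x,r}\cap B(x,r/4)\ne\varnothing$.

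The only genuine (and minor) obstacle is the open-versus-closed support technicality in the definition of $\Lip_1(B(x,r))$: the hypothesis $B(y,2s)\subset B(x,r)$ only guarantees $\bar B(y,2s)\subset\bar B(x,r)$, so the tent function as originally defined may have support touching $\partial B(x,r)$. The $\delta$-contraction described above bypasses this cleanly at the cost of a factor that disappears in the limit, so no real analytical difficulty arises.
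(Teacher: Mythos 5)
Your proof is correct and follows essentially the same approach as the paper: test against the tent function $\phi(z)=(2s-|y-z|)_+$, use that it vanishes on $L$ when $L\cap B(y,2s)=\varnothing$ while being at least $s$ on $B(y,s)$, and contradict the hypothesis on $F_{B(x,r)}(\mu,\LL)$. The $\delta$-contraction you add to keep $\supp\phi$ strictly inside the open ball is a careful touch that the paper glosses over, but it does not change the substance of the argument.
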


{
\begin{proof}
	Let $\phi(z) = (2s-|y-z|)_+$. Note that $\phi\in\Lip_{1}(B(x,r))$
	and $\phi\geq s$ on $B(y,s)$. If $L\cap B(y,2s)=\varnothing$, then
\begin{equation*}
	s\,\mu(B(y,s)) 
	\leq \int \phi\, d\mu \leq \left|\int \phi \,d(\mu-\LL)\right| 
	\leq
	F_{B(x,r)}(\mu,\LL)
	<{s}\,\mu(B(y,s)),
\end{equation*}
which is a contradiction. Thus, $\dist(y,L)<2s$.
%
\end{proof}
}

\begin{lemma}\label{l:cb-theta}
	For $x\in \mathbb{R}^{n}$, if $\LL=c\cH^{d}|_{L}$ and { $F_{B(x,r)}(\mu,\LL)<\frac r8\,\mu(B(y,\frac r8))$,} then
	\begin{equation}\label{e:cb-theta-gen}
	\Theta^d_{\mu}(x,r/2) \lec  c \lec \Theta^d_{\mu}(x,r).
	\end{equation}
	In particular, if $\alpha_{\mu}(x,r) < \frac{\mu(B(x,r/8))}{8\,\mu(B(x,r))}$, then 
	\begin{equation}\label{e:cb-theta}
		 \Theta^d_{\mu}(x,r/2) \lec  c_{x,r}:=c_{B(x,r)} \lec \Theta^d_{\mu}(x,r).
	\end{equation}
\end{lemma}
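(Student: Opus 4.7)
The plan is to prove both inequalities in \eqref{e:cb-theta-gen} by testing $\mu$ and $\LL$ against appropriate $1$-Lipschitz tent functions on $B(x,r)$ and exploiting the hypothesis $F_{B(x,r)}(\mu,\LL)<(r/8)\,\mu(B(y,r/8))$ to close the estimates; I will take $y=x$ throughout (this reading is consistent with the specialisation to \eqref{e:cb-theta}, since the condition $\alpha_\mu(x,r)<\mu(B(x,r/8))/[8\mu(B(x,r))]$ is precisely $F_{B(x,r)}(\mu,\LL_{x,r})<(r/8)\mu(B(x,r/8))$ via $F_{B(x,r)}(\mu,\LL_{x,r})=\alpha_\mu(x,r)\,r\,\mu(B(x,r))$). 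The second assertion of the lemma is then immediate from the first applied with $\LL=\LL_{x,r}$.

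For the upper bound $\Theta_\mu^d(x,r/2)\lec c$, I would use the tent function
\[
\phi(z)=\min\bigl(r/2,\,(r-|z-x|)_+\bigr)\in\Lip_{1}(B(x,r)),
\]
which satisfies $\phi\ge r/2$ on $B(x,r/2)$ and is bounded by $r/2$ everywhere. Comparing $\int\phi\,d\mu$ with $\int\phi\,d\LL\le c\,(r/2)\,\HH^d(L\cap B(x,r))\lec c\,r^{d+1}$ and using the hypothesis to bound the difference by $(r/8)\,\mu(B(x,r/8))$, one obtains $(r/2)\,\mu(B(x,r/2))\lec c\,r^{d+1}+(r/8)\,\mu(B(x,r/8))$. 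Since $\mu(B(x,r/8))\le \mu(B(x,r/2))$, the error term can be absorbed into the left-hand side, giving $\mu(B(x,r/2))\lec c\,r^d$.

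For the lower bound $c\lec \Theta_\mu^d(x,r)$, I first invoke Lemma~\ref{l:LcapB} with $y=x$ and $s=r/8$ (its hypothesis is exactly the present one, since $B(x,2s)=B(x,r/4)\subset B(x,r)$) to produce a point $z\in L\cap B(x,r/4)$. Then I test against $\psi(w)=(r/4-|w-z|)_+\in \Lip_{1}(B(x,r))$, the containment being valid because $B(z,r/4)\subset B(x,r/2)\subset B(x,r)$. Since $z\in L$ and $L$ is a $d$-plane, $L\cap B(z,r/4)$ is a full $d$-disc of radius $r/4$, so a direct computation yields $\int\psi\,d\LL=c_{n,d}\,c\,r^{d+1}$ for an explicit dimensional constant $c_{n,d}>0$. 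On the other hand $\int\psi\,d\mu\le (r/4)\,\mu(B(x,r))$ and the error is controlled by $(r/8)\,\mu(B(x,r))$, so one concludes $c\,r^{d+1}\lec r\,\mu(B(x,r))$, i.e.\ $c\lec \Theta_\mu^d(x,r)$.

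The main thing to get right is the calibration of the two test functions: each must lie in $\Lip_{1}(B(x,r))$ while simultaneously being large enough on the relevant sub-ball so that $\int\phi\,d\mu$ (respectively $\int\psi\,d\LL$) quantitatively dominates the desired quantity ($\mu(B(x,r/2))$ or $c\,r^{d+1}$), and the error term $F_{B(x,r)}(\mu,\LL)$ coming from the hypothesis must be small enough to be absorbed. The use of Lemma~\ref{l:LcapB} to locate a point of $L$ inside $B(x,r/4)$ is essential in the lower bound, since without it one could not guarantee that the tent function $\psi$ catches a full $d$-disc of $L$; no further complication arises beyond this.
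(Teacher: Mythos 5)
Your proof is correct and follows essentially the same strategy as the paper: test $\mu$ and $\mathcal{L}$ against $1$-Lipschitz tent functions supported in $B(x,r)$, invoke Lemma~\ref{l:LcapB} to guarantee $L$ meets $B(x,r/4)$ so that $\mathcal{L}$ carries mass $\approx c\,r^{d}$ near $x$, and absorb the error $F_{B(x,r)}(\mu,\mathcal{L})<\tfrac r8\mu(B(x,r/8))$. The only cosmetic difference is that the paper reuses one tent $\phi(z)=(r-|z-x|)_+$ centered at $x$ for both inequalities, while you switch to a tent centered at the point $z\in L\cap B(x,r/4)$ for the bound $c\lesssim\Theta_\mu^d(x,r)$; both variants carry out the same estimate.
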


\begin{proof}
	Let $\phi(x) = (r-|x-y|)_+$, so that
	 $\phi\in \Lip_{1}(B(x,r))$ and $\phi\geq r/2$ on $B(x,r/2)$. Since $L\cap B(x,r/4)\neq\varnothing$ by the previous lemma, we have		
	\[
	c\,r^{d+1}
	\approx r\LL(B(x,r/2))
	\lec \int \phi\, d\LL
	\leq F_{B(x,r)}(\mu,\LL) +  \int \phi \,d\mu
	\leq 2r \mu(B(x,r))
	\]
	and hence $c \lesssim \Theta^d_{\mu}(x,r)$. A similar computation reversing the roles of $\mu$ and $\LL$ yields $c \gec  \Theta^d_{\mu}(x,r/2)$. 
\end{proof}


\begin{lemma}\label{l:angle-general}
	Let $x,y\in \mathbb{R}^{n}$, $B(x,2r)\subset B(y,s)$, and $\LL_{i}= c_{i}\HH^{d}|_{L_{i}}$ for $i=1,2$.
	If  $F_{B(x,r)}(\mu,\LL_{1})< \frac r8\,\mu(B(x,r/8))$, and $L_2\cap B(y,s)\neq\varnothing$, then
		\begin{equation}\label{e:angle-general1}
	\angle(L_1,L_2)+	\hdis{x,r/2}(L_1,L_2)\lec \frac{ F_{B(x,r)}(\mu,\LL_{1}) + F_{B(y,s)}(\mu,\LL_{2})}{{ r\,\mu(B(x,r/2))}}.
	\end{equation}
	 In particular, if $\alpha_{\mu}(x,r)<\frac{\mu(B(x,r/8))}{8\,\mu(B(x,r))}$, then
	\begin{equation}\label{e:angle-general}
	\angle(L_{x,r},L_{y,s})+ \dist_{x,r/2}(L_{x,r},L_{y,s})\lec \frac{s\,\mu(B(y,s))}{{ r\,\mu(B(x,r/2))}}\, \alpha_{\mu}(y,s).
	\end{equation}
\end{lemma}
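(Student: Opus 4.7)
The plan is to compare $\LL_1$ and $\LL_2$ by testing with Lipschitz functions supported in $B(x,r)\subset B(y,s)$; since any $\phi\in\Lip_1(B(x,r))$ also lies in $\Lip_1(B(y,s))$, the triangle inequality gives
\begin{equation*}
F_{B(x,r)}(\LL_1,\LL_2)\leq F_{B(x,r)}(\mu,\LL_1)+F_{B(y,s)}(\mu,\LL_2)=:F.
\end{equation*}
The hypothesis on $\LL_1$ and \Lemma{LcapB} (with $y=x$, $s=r/8$) provide a point $w_1\in L_1\cap B(x,r/4)$, while \Lemma{cb-theta} yields $c_1\approx \mu(B(x,r/2))/r^d$. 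As a consequence, $L_1\cap B(x,3r/8)$ contains a $d$-dimensional disk in $L_1$ of radius $\gtrsim r$, since $\dist(x,L_1)<r/4$.

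The main test function will be
\begin{equation*}
\phi(z)=\eta(z)\,\min(\dist(z,L_2),r/4),\qquad \eta(z)=\min\bigl((r/2-|z-x|)_+,\,r/8\bigr).
\end{equation*}
Both factors are $1$-Lipschitz and bounded by $r/8$ and $r/4$ respectively, so $\Lip(\phi)\lesssim r$, and $\phi$ vanishes on $L_2$; combining with the inequality above,
\begin{equation*}
c_1\int_{L_1}\eta(z)\min\bigl(\dist(z,L_2),r/4\bigr)\,d\HH^d(z)=\int\phi\,d\LL_1\lesssim rF.
\end{equation*}
To convert this into geometric information I parametrize $L_1=w_1+L_1'$ and note $\dist(w_1+v,L_2)=|Av+\beta|$, where $A=\pi_{L_2'^\perp}|_{L_1'}$ satisfies $\|A\|_{op}=\angle(L_1,L_2)=:\alpha_2$ and $|\beta|=\dist(w_1,L_2)=:\alpha_1$. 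Jensen's inequality gives $\int|Av+\beta|\,dv\gtrsim\alpha_1 R^d$ over the ball of radius $R$, while aligning $v$ with the top right-singular direction of $A$ gives $\int|Av|\,dv\gtrsim\alpha_2 R^{d+1}$; combining these together with a short case split leads to the crucial lower bound
\begin{equation*}
\int_{L_1\cap B(x,3r/8)}\min\bigl(|Av+\beta|,r/4\bigr)\,d\HH^d\gtrsim r^d\min(\alpha_1+r\alpha_2,\,r/4).
\end{equation*}
Inserting this and using $c_1\gtrsim\mu(B(x,r/2))/r^d$, I conclude that either $\alpha_1+r\alpha_2>r/4$ (in which case $F/(r\,\mu(B(x,r/2)))\gtrsim 1$ and the inequality is automatic from $\angle+\hdis{x,r/2}\leq 3$) or $\alpha_1+r\alpha_2\lesssim F/\mu(B(x,r/2))$.

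To finish, I observe $\angle(L_1,L_2)=\alpha_2\leq(\alpha_1+r\alpha_2)/r$ and check that both $\sup_{z\in L_1\cap B(x,r/2)}\dist(z,L_2)$ and $\sup_{z\in L_2\cap B(x,r/2)}\dist(z,L_1)$ are $\lesssim\alpha_1+r\alpha_2$. The first follows directly from the affine-norm expression above; for the second I take $p_2\in L_2$ closest to $w_1$ (so that $|p_2-w_1|=\alpha_1$), note $\dist(p_2,L_1)=|\pi_{L_1'^\perp}(p_2-w_1)|\leq\alpha_1$, and repeat the parametrization on $L_2$ using that $\|\pi_{L_1'^\perp}|_{L_2'}\|_{op}=\alpha_2$. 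This establishes \eqref{e:angle-general1}. The ``in particular'' statement follows by specializing $\LL_i=\LL_{B_i}$ (whose intersections with $B_i$ are nonempty by the convention stated after~\eqref{cL-ppts}), rewriting $F_{B(x,r)}(\mu,\LL_{x,r})=\alpha_\mu(x,r)\,r\,\mu(B(x,r))$, and invoking \Lemma{alpha-monitone} to bound this quantity by a constant times $s\,\mu(B(y,s))\,\alpha_\mu(y,s)$, giving \eqref{e:angle-general}. The main technical obstacle is the sharp lower bound on the integral of $\min(|Av+\beta|,r/4)$, which requires careful attention to the $\alpha_1$-dominated and $\alpha_2$-dominated regimes and a modest use of the singular value decomposition of $A$.
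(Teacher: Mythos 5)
Your argument follows the same overall template as the paper's (produce a $1$-Lipschitz test function that vanishes on $L_2$, integrate it against $\LL_1$ to force the integral of $\dist(\cdot,L_2)$ over $L_1$ to be small, then convert that integral bound into control of the angle and the local Hausdorff distance; the ``in particular'' clause is handled exactly as in \eqref{e:3894}). The two points where you diverge are improvements: first, applying the triangle inequality $F_{B(x,r)}(\LL_1,\LL_2)\le F_{B(x,r)}(\mu,\LL_1)+F_{B(y,s)}(\mu,\LL_2)$ up front and then truncating the distance by $r/4$ lets you use a single bounded test function, whereas the paper must split into the cases $L_2\cap B(x,2r)=\varnothing$ and $\neq\varnothing$ (the latter to make $\Phi\cdot\dist(\cdot,L_2)$ Lipschitz with a uniform constant). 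Second, your geometric conversion via $\dist(w_1+v,L_2)=|Av+\beta|$, $\|A\|_{op}=\angle(L_1,L_2)$, and a singular-value argument is more rigorous than the paper's \eqref{dist-1}, which as written is not a genuine identity. Your lower bound $\int_{B_R}\min(|Av+\beta|,T)\,dv\gtrsim R^d\min(|\beta|+R\|A\|,T)$ does hold with the case split you indicate and is the right technical lemma.

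There is, however, one step that does not go through as stated, and it occurs at exactly the same place where the paper's proof is also loose. In the regime $\alpha_1+r\alpha_2>r/4$ you assert that $\angle(L_1,L_2)+\hdis{x,r/2}(L_1,L_2)\le 3$ and deduce the conclusion from $F/(r\mu(B(x,r/2)))\gtrsim1$. But $\hdis{x,r/2}(L_1,L_2)$ is defined via $\sup_{z\in L_1\cap B(x,r/2)}\dist(z,L_2)$, which is comparable to $\alpha_1/r$ and can be arbitrarily large when $L_2$ lies far from $B(x,r)$ (the hypotheses permit $\dist(x,L_2)$ up to about $2s$). Your truncated test function is $O(r^2)$, so the information it yields, $\min(\alpha_1+r\alpha_2,r/4)\lesssim F/\mu(B(x,r/2))$, cannot detect $\alpha_1$ once $\alpha_1\gg r$; to close this one would need a test function that grows like $\dist(\cdot,L_2)$ without truncation, which your setup does not allow since $\phi$ must be supported in $B(x,r)$ for the triangle inequality step. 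To be fair, the paper's own treatment of the far case claims ``it is also immediate that $\dist_{x,r/2}(L_1,L_2)\lesssim 1$'', which suffers from exactly the same problem, so your proof matches the paper's level of rigor here; but you should not present $\angle+\hdis{x,r/2}\le3$ as a tautology.
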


\begin{proof}
Suppose first that $L_2\cap B(x,2r)= \varnothing$. Let $\phi_0(z)=(2r-|x-z|)_+$.
Then we have
$$F_{B(y,s)}(\mu,\LL_{2})\geq \int \phi_0\,d(\mu-\LL_2)= \int \phi_0\,d\mu\geq r\,\mu(B(x,r)).$$
It is also immediate that 
$\dist_{x,r/2}(L_1,L_2)\lec 1$, and thus
$$\angle(L_1,L_2)+\dist_{x,r/2}(L_1,L_2)\lec \frac{ F_{B(y,s)}(\mu,\LL_{2})}{{r\, \mu(B(x,r))}},$$
and so \eqref{e:angle-general1} holds in this case.

Suppose now that $L_2\cap B(x,2r)\neq \varnothing$.
	Let $\Phi$ be a $\frac{2}{r}$-Lipschitz function that equals $1$ on $B(x,r/2)$ and $0$ outside $B(x.r)$. Also set 
	\[
	\phi(z) = \Phi (z)\cdot  \dist(z,{ L_2}).
	\]
	Using that  $\dist(z,{ L_2})\leq3r$ on $\supp\Phi$,
	it is immediate to check that $\phi$ is { $7$-Lipschitz} on $B(x,r)$. 
	By Lemma \ref{l:LcapB}, {$L_1\cap B(x,r/4)\neq\varnothing$}, and so $\cH^{d}({ L_1}\cap B(x,r/2))\approx r^{d}$. Thus, using that $\phi$ vanishes on $L_2$,
	\begin{align}\label{int-est}
	\avint_{B(x,r/2)} \frac{\dist(z,L_{2})}{r} \,d\cH^{d}|_{L_{1}}
	& \stackrel{\eqref{e:cb-theta-gen}}{\lec}  \frac1{\Theta^d_{\mu}(x,r/2)\, r^{d+1}}\int \phi \,d\LL_{1} 
	= \frac{1}{r\mu(B(x,r/2))}\int \phi \,d\LL_{1} \\
	& \lec \frac{F_{B(x,r)}(\mu,\LL_{1})}{r\,\mu(B(x,r/2))}
	+ \frac{1}{r\,\mu(B(x,r/2))} \int  \phi \,d(\mu-\LL_2) \nonumber\\
	& \lesssim \frac{ F_{B(x,r)}(\mu,\LL_{1}) + F_{B(y,s)}(\mu,\LL_{2})}{r\,\mu(B(x,r/2))}.\nonumber
	\end{align}
	
	Let $z_0\in B(x,r/2)\cap L_1$ be such that $\dist(z_0,L_2)=\inf\{\dist(z,L_2):z\in B(x,r/2)\cap L_1$. Then since $L_1$ and $L_2$ are $d$-planes, for $z\in B(x,r/2)\cap L_1$ we have
	\begin{equation}\label{dist-1}
	\dist(z,L_2)= \dist(z_0,L_2) + \dist(z-z_0, L_2-z_0)=\dist(z_0,L_2) + |z-z_0|\angle(L_1,L_2)
		\end{equation}
	Integrating \eqref{dist-1} over $B(x,r/2)\cap  L_1$ and using \eqref{int-est} we obtain that
	\begin{equation}\label{dist-2}
\dist(z_0,L_2) + r\angle(L_1,L_2)\lesssim \frac{ F_{B(x,r)}(\mu,\LL_{1}) + F_{B(y,s)}(\mu,\LL_{2})}{r\,\mu(B(x,r/2))}.
\end{equation}
Thus \eqref{dist-1} and \eqref{dist-2} yield
	\begin{eqnarray}\label{dist-3}
	\frac{1}{r}\sup\{\dist(z,L_2):z\in B(x,r/2)\cap L_1\}&\le &\frac{1}{r}\dist(z_0,L_2) + 2\angle(L_1,L_2)\nonumber\\
	&\lesssim &\frac{ F_{B(x,r)}(\mu,\LL_{1}) + F_{B(y,s)}(\mu,\LL_{2})}{\mu(B(x,r/2))},
	\end{eqnarray}
	and 
	\begin{equation}\label{angle-1}
	\angle(L_1,L_2)\lesssim \frac{ F_{B(x,r)}(\mu,\LL_{1}) + F_{B(y,s)}(\mu,\LL_{2})}{r\,\mu(B(x,r/2))}.
	\end{equation}
	Since $L_1$ and $L_2$ are planes this is enough to conclude \eqref{e:angle-general1}.
	 Now \eqref{e:angle-general} follows from \eqref{e:angle-general1} and \eqref{e:alpha-monitone-general}, by taking $L_1= L_{x,r}$ and $L_2=L_{y,s}$.
Indeed,	we derive 
	\begin{align}\label{e:3894}
	F_{B(x,r)}(\mu,\LL_{x,r}) + F_{B(y,s)}(\mu,\LL_{y,s}) &\lec \alpha_\mu(x,r) \,r\,\mu(B(x,r))
	+ \alpha_\mu(y,s) \,s\,\mu(B(y,s))\\
	& \lec 
	 \alpha_\mu(y,s) \,s\,\mu(B(y,s)).\nonumber
	\end{align}
	Plugging this estimate into \rf{e:angle-general1}, we obtain \rf{e:angle-general}.	
\end{proof}

\begin{lemma}\label{l:cx-cy-general}
	Let $x,y\in \mathbb{R}^{n}$ be such that $B(x,2r)\subset B(y,s)$, $\LL_{i}= c_{i}\HH^{d}|_{L_{i}}$,  { $F_{B(x,r)}(\mu,\LL_{1})< \frac r8\,\mu(B(x,\frac r8))$ and  { $F_{B(y,s)}(\mu,\LL_{2})< \frac s8\,\mu(B(y,\frac s8))$}}. Then
	\begin{equation}
	\label{e:cx-cy-general1}
	{ |c_{1}-c_{2}| \lec  \frac{ F_{B(x,r)}(\mu,\LL_{1}) + F_{B(y,s)}(\mu,\LL_{2})}{r^{d+1}} \biggl(1+ \frac{\Theta^d_{\mu}(y,s)}{\Theta^d_{\mu}(x,r/2)}\biggr)\frac sr.}
	\end{equation}
	In particular, if  $\alpha_{\mu}(x,r)<\frac{\mu(B(x,r/8))}{8\,\mu(B(x,r))}$
	and $\alpha_{\mu}(y,s)<\frac{\mu(B(y,s/8))}{8\,\mu(B(y,s))}$, then
	\begin{equation}
	\label{e:cx-cy-general}
	{ |c_{x,r}-c_{y,s}| \lec  \alpha_\mu(y,s)\,\Theta^d_\mu(y,s) \biggl(1+ \frac{\Theta^d_{\mu}(y,s)}{\Theta^d_{\mu}(x,r/2)}\biggr)\,\frac{ s^{d+2}}{r^{d+2}}.}
	\end{equation}
\end{lemma}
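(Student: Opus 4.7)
The strategy is to test both $\LL_1$ and $\LL_2$ against a common Lipschitz function and exploit the algebraic identity
\begin{equation*}
\int\phi\,d(\LL_1-\LL_2) = (c_1-c_2)\int\phi\,d\HH^d|_{L_1} + c_2\Bigl(\int\phi\,d\HH^d|_{L_1}-\int\phi\,d\HH^d|_{L_2}\Bigr)
\end{equation*}
to isolate $c_1-c_2$. Setting $F_1:=F_{B(x,r)}(\mu,\LL_1)$ and $F_2:=F_{B(y,s)}(\mu,\LL_2)$, I would take $\phi(z)=(r/2-|z-x|)_+$, which is $1$-Lipschitz and supported in $B(x,r/2)$. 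Since $B(x,2r)\subset B(y,s)$, $\phi\in\Lip_1(B(x,r))\cap\Lip_1(B(y,s))$, so $\bigl|\int\phi\,d(\LL_1-\LL_2)\bigr|\leq F_1+F_2$. Lemma \ref{l:LcapB} applied to $\LL_1$ gives $L_1\cap B(x,r/4)\neq\varnothing$ and hence $\int\phi\,d\HH^d|_{L_1}\gec r^{d+1}$; Lemma \ref{l:cb-theta} applied in $B(y,s)$ gives $c_2\lec \Theta_\mu^d(y,s)$.

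It remains to control the flat-integral difference, for which I would split into two cases. If $L_2\cap B(x,2r)\neq\varnothing$, Lemma \ref{l:angle-general} yields
\begin{equation*}
\hdis{x,r/2}(L_1,L_2)+\angle(L_1,L_2)\lec \frac{F_1+F_2}{r\,\mu(B(x,r/2))}=\frac{F_1+F_2}{r^{d+1}\,\Theta_\mu^d(x,r/2)},
\end{equation*}
and a direct geometric computation---parametrizing $L_2\cap B(x,r/2)$ as a graph over $L_1$ via orthogonal projection and tracking both the positional shift and the small-angle Jacobian---gives $\bigl|\int\phi\,d\HH^d|_{L_1}-\int\phi\,d\HH^d|_{L_2}\bigr|\lec r^{d+1}(\hdis{x,r/2}(L_1,L_2)+\angle(L_1,L_2))$. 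Plugging this into the identity and dividing by $\int\phi\,d\HH^d|_{L_1}\gec r^{d+1}$ yields $|c_1-c_2|\lec (F_1+F_2)\,r^{-(d+1)}\bigl(1+\Theta_\mu^d(y,s)/\Theta_\mu^d(x,r/2)\bigr)$, which implies \eqref{e:cx-cy-general1} (in fact without the $s/r$ factor). If instead $L_2\cap B(x,2r)=\varnothing$, then $\int\phi\,d\HH^d|_{L_2}=0$, the identity collapses to $c_1\int\phi\,d\HH^d|_{L_1}=\int\phi\,d(\LL_1-\LL_2)$, and we obtain $c_1\lec (F_1+F_2)/r^{d+1}$; moreover, testing against $(2r-|z-x|)_+$ as in the proof of Lemma \ref{l:angle-general} gives $F_2\gec r\,\mu(B(x,r))\gec r^{d+1}\,\Theta_\mu^d(x,r/2)$, so $|c_1-c_2|\leq c_1+c_2$ is absorbed into the right-hand side of \eqref{e:cx-cy-general1} (with slack from the $s/r\geq 2$ factor).

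The ``in particular'' estimate \eqref{e:cx-cy-general} then follows from \eqref{e:cx-cy-general1} by substituting $F_2\lec \alpha_\mu(y,s)\,s\,\mu(B(y,s))$ (from the definition of $\alpha$) and $F_1\lec \alpha_\mu(x,r)\,r\,\mu(B(x,r))\lec \alpha_\mu(y,s)\,s\,\mu(B(y,s))$ (via Lemma \ref{l:alpha-monitone}); the resulting $(s/r)^{d+1}$ ratio combines with the $s/r$ factor in \eqref{e:cx-cy-general1} to produce the $s^{d+2}/r^{d+2}$ in \eqref{e:cx-cy-general}. I expect the main technical obstacle to be the geometric comparison $\bigl|\int\phi\,d\HH^d|_{L_1}-\int\phi\,d\HH^d|_{L_2}\bigr|\lec r^{d+1}(\hdis{x,r/2}(L_1,L_2)+\angle(L_1,L_2))$: since $L_1$ and $L_2$ are $d$-planes that can differ both by translation and rotation, one must carefully track how the positional offset, the change of the integration domain $L_i\cap B(x,r/2)$, and the rotation Jacobian each contribute at the correct order in $r$.
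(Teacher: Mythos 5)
Your proof is correct and follows essentially the same route as the paper: you test a bump $\phi$ centered at $x$ against $\LL_1-\LL_2$, split off $F_1+F_2$ by the triangle inequality through $\mu$, bound the residual plane--plane term via \eqref{e:angle-general1} and $c_2\lec\Theta_\mu^d(y,s)$, and divide by $\int\phi\,d\HH^d|_{L_1}\gec r^{d+1}$. The paper handles the case $L_2\cap B(x,2r)=\varnothing$ implicitly (it is absorbed into the uniform validity of \eqref{e:angle-general1}) rather than splitting it out as you do, and your observation that the extra factor $s/r$ in \eqref{e:cx-cy-general1} is not actually needed is a correct, harmless sharpening.
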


\begin{proof}
	Let $\phi(z) =(r-|x-z|)_{+}$. Then, by \eqref{e:angle-general1} and
	\eqref{e:cb-theta-gen}, since $2r\le s$
	\begin{align*}
	r^{d+1} |c_{1} -c_{2} |
	&  \lec \av{\int \phi \,c_{1}\, d\cH^{d}|_{L_{1}}  -\int \phi\, c_{2}\, d\cH^{d}|_{L_{1}} }\\
	& \leq \av{\int \phi \,c_{1} d\cH^{d}|_{L_{1}}  -\int \phi \,d\mu}
	+\av{\int \phi \,d\mu -  \int \phi \,c_{2}\, d\cH^{d}|_{L_{2}} }\\
	& \qquad		+c_{2} \av{\int \phi \, d\cH^{d}|_{L_{2}}  -\int \phi\, d\cH^{d}|_{L_{1}} }\\
	& \lec F_{B(x,r)}(\mu,\LL_{1}) + F_{B(y,s)}(\mu,\LL_{2}) \\
	&\quad + 
	\Theta^d_\mu(y,s)\, \frac{ F_{B(x,r)}(\mu,\LL_{1}) + F_{B(y,s)}(\mu,\LL_{2})}{\mu(B(x,r))} \,r^{d-1}\,s\\
	& \lesssim \Bigl( F_{B(x,r)}(\mu,\LL_{1}) + F_{B(y,s)}(\mu,\LL_{2})\Bigr) \biggl(1+ \frac{\Theta^d_{\mu}(y,s)}{\Theta^d_{\mu}(x,r)}\biggr)\frac sr,
	\end{align*}
	which yields \rf{e:cx-cy-general1}.
	
	To get \rf{e:cx-cy-general} we apply \rf{e:3894} using the fact $B(x,2r)\subset B(y,s)$and then we obtain
	\begin{align}
	\frac{ F_{B(x,r)}(\mu,\LL_{x,r}) + F_{B(y,s)}(\mu,\LL_{y,s})}{r^{d+1}} 
		& \leq 
	 \alpha_\mu(y,s) \,\frac{s\,\mu(B(y,s))}{r^{d+1}} = \alpha_\mu(y,s) \,\Theta^d_\mu(B(y,s))\,\frac{s^{d+1}}{r^{d+1}}
	 .\nonumber
	\end{align}
	Plugging this estimate into \rf{e:cx-cy-general1}, we derive \rf{e:cx-cy-general}.	
\end{proof}

\vv

\section{Outline of proof}

\xavi{In this section I made some cosmetic changes: added commas, separated or joining equations, corrected a typo...}
In order to present an outline of the proof to Theorem \ref{thmi} we first explore the consequences of the hypotheses. Consider a Radon measure $\mu$ and a Borel set $E$, with $\mu(E)>0$ and satisfying 
\rf{e:main} and \rf{e:asymdub}. Let $E_1=E\cap B(0,R)$ with $R$ large enough so $0<\mu(E_1)<\infty$. By  \rf{e:asymdub} for $M>1$ large enough there exists a closed set $\wt E\subset E_1$ such that $\mu(\wt E)>0$ and for all
$x\in \wt E$
$$	\lim_{k\to\infty}\sup_{0<r<2^{-k}}\frac{\mu(B(x,2r))}{\mu(B(x,r))} \le \frac{M}{2}\quad \text{ and }\quad
\lim_{k\to\infty}\int_{0}^{2^{-k}} \alpha_{\mu}^{d}(x,r)^{2}\frac{dr}{r}=0.$$
By Egoroff, there exists a closed set $\wt E_0\subset \wt E$, with $\mu(\wt E_0)\ge \frac{9}{10}\mu(\wt E)>0$ so that for $\ve\in (0,10^{-3})$ there is $k_0=k_0(M,\ve)>1$ so that for $k\ge k_0$ and $x\in \wt E_0$
\begin{align}\label{quantified-hyp}
	\sup_{0<r<2^{-k}}\frac{\mu(B(x,2r))}{\mu(B(x,r))} \le M\quad \text{ and }\quad
\int_{0}^{2^{-k}} \alpha_{\mu}^{d}(x,r)^{2}\frac{dr}{r}<\ve^2.
	\end{align}
Since $0<\mu(\wt E_0)<\infty$, for $\mu$-a.e. $x\in \wt E_0$ (\cite[Corollary 2.14]{Mattila}),
\begin{equation}\label{rel-density}
\lim_{r\to 0}\frac{\mu(B(x,r)\cap \wt E_0)}{\mu(B(x,r))}=1.
\end{equation}	
By Egoroff, once again, given $\delta\in (0,\frac{1}{10})$ there exists a closed set $\wt F_0\subset \wt E_0$, with $\mu(\wt F_0)\ge (1-\delta)\mu(\wt E_0)\ge\frac{81}{100}\mu(\wt E)>0$ so that for $\ve\in (0,10^{-3})$ there is $k_1=k(\ve,\delta)>1$ so that 
for $r<2^{-k_1}$ and $x\in \wt F_0$
\begin{equation}\label{quant-rel-den}
\mu(B(x,r)\backslash \wt E_0)\le \ve\mu(B(x,r)).
\end{equation}

Summarizing we have that given $M>1$ large enough, $\delta\in (0,\frac{1}{10})$ and $\ve\in (0,10^{-3})$ there exist closed sets $\wt F_0\subset \wt E_0\subset E\cap B(0,R)$ and $\rho_o>0$ such that  $\mu(\wt F_0)\ge(1-\delta)\mu(\wt E_0)>0$ and for 
every $x\in \wt E_0$ and every $0<r<\rho_o$ (see \rf{quantified-hyp})  
\begin{align}\label{working-hyp-1}
\mu(B(x,2r))\le M\mu(B(x,r)),
\end{align}
\begin{align}
&J_{\alpha, {\rho_o}}(x):=\int_{0}^{\rho_o} \alpha_{\mu}^{d}(x,r)^{2}\frac{dr}{r}<\ve^2,\label{working-hyp-2}
\end{align}
and for every $x\in \wt F_0$  and every $0<r<\rho_o$ (see  \rf{quant-rel-den})
\begin{equation}\label{working-hyp-3}
\mu(B(x,r)\backslash \wt E_0)\le \ve\mu(B(x,r)).
\end{equation}

Without loss of generality we may assume that $0\in \wt F_0$. Moreover note that if $\wt\mu_{r}(A) =\mu(B(0,r))^{-1}\mu(r A)$ and $c>0$ then for $y=\frac{x}{r}$ with $x\in \wt E_0$
 \begin{equation}\label{rescale-mu}
 \alpha_{c\wt\mu_r}^{d}(y,s)=\alpha_{\mu}^d(x,sr).
\end{equation}
Letting $\rho_o=4C_1r_0$ where $C_1$ is as in \rf{constants-1} and replacing $\wt E_0$ by $E_0=\frac{1}{r_0}\wt E_0$, $\wt F_0$ by $F_0=\frac{1}{r_0}\wt E_0$, and $\mu$ by 
$\mu(B(0,3C_1r_0))^{-1}\wt\mu_{r_0}$ and relabeling it $\mu$ we have that $0\in F_0$
\begin{equation}\label{meas=1}
\mu(B(0,3C_1))=1,
\end{equation}
and for given $M>1$ large enough, $\delta\in (0,\frac{1}{10})$, and $\ve\in (0,10^{-3})$ there exist closed bounded sets $ F_0\subset  E_0\subset \frac{1}{r_0} E$  such that  $\mu(F_0)\ge(1-\delta)\mu( E_0)>0$ and for 
every $x\in  E_0$ and every $0<r<4C_1$ (see \rf{working-hyp-1} and \rf{working-hyp-2})  
\begin{align}\label{working-hyp-1A}
&\mu(B(x,2r))\le M\mu(B(x,r)),\end{align}
\begin{align}
&J_{\alpha, {4C_1}}(x):=\int_{0}^{4C_1} \alpha_{\mu}^{d}(x,r)^{2}\frac{dr}{r}<\ve^2,\label{working-hyp-2A}
\end{align}
and for every $x\in  F_0$ and every $0<r<4C_1$ (see \rf{working-hyp-3})
\begin{equation}\label{working-hyp-3A}
\mu(B(x,r)\backslash E_0)\le \ve\mu(B(x,r)).
\end{equation}
\jonas{That could be the case, but yeah, probably best to not tinker too much with the machine at this point}

Note that \rf{working-hyp-2A}  ensures that for $x\in E_0$ and $r\in (0, 2C_1)$ there exits $t\in [r,2r]$ such that 
\begin{equation}\label{pointwise-a-bound-1}
\alpha_{\mu}^{d}(x,t)^{2}\le 2\int_{r}^{2r} \alpha_{\mu}^{d}(x,s)^{2}\frac{ds}{s}<2\,\ve^2.
\end{equation}
Then \rf{pointwise-a-bound-1} and \rf{working-hyp-1} combined with \rf{e:alpha-monitone-general} ensure that for $x\in E_0$ and $r\in (0, 2C_1)$ 
\begin{equation}\label{pointwise-a-bound-2}
\alpha_{\mu}^{d}(x,r)\le \frac{t}{r}\frac{\mu(B(x,t))}{\mu(B(x,r))}\alpha_{\mu}^{d}(x,t)\le 2\frac{\mu(B(x,2r))}{\mu(B(x,r))}\sqrt{2}\ve\le 4M\ve.
\end{equation}

Now we outline the plan for the rest of the proof: Note that on the set $E_0$ the rescaled measure $\mu$ is doubling on a range of scales \rf{working-hyp-1A}, the Jones function $J_\alpha$ and $\alpha$-numbers corresponding to 
$\mu$ and also small (see \rf{working-hyp-2A} and \rf{pointwise-a-bound-1}). For each point in $E_0$ we consider the supremum over all radii, less than a fraction of $4C_1$, 
for which $\mu$ does not behave like an Ahlfors regular 
measure above these scales. Hence, for most of these scales, the measure is either too large or too small. Our goal is to show that the subset of $E_0$ for which this supremum is not 0, is small.
To do this we use techniques from \cite{DT12}, to build a Lipschitz graph  which approximates $\supp\mu$ at every good scale and location. Upon this graph, we 
construct a projection $\nu$ of the measure $\mu$. The nice estimates on the $\alpha$-numbers for $\mu$, yield even nicer estimates for $\nu$. The advantage now is that we have a surrogate $\nu$ for $
\mu$, supported on a graph, and $\nu$ is Ahlfors regular (see \rf{e:nu-AD}). 
To estimate the set where the density drops at small scales we use techniques that come from \cite{Leger} and which were also used in other works, such as \cite{AT15}. To control the measure of the set where the density increases too much at small scales we use our $\alpha$-number estimates to estimate the $L^{2}$-norm of the density of $\nu$ into the domain of the graph (that is, $\R^{d}$). 
 This idea is newer and comes from \cite{Tol17}.
  Altogether, these techniques give us control on the total mass of the area where $\nu$ (and thus $\mu$) can have low or high  density with respect to surface measure. This will show that in fact in most places the density of $\mu$ stays bounded away from $0$ and $\infty$, implying absolute continuity with respect $d$-dimensional Hausdorff measure and rectifiability. It is important to note that this argument proves 
  the rectifiability if a rescaled version of $\mu$, namely $[\mu(B(0,3C_1r_0)]^{-1}\wt\mu_{r_0}$ restricted to the set $\frac{1}{r_0}E$, which is equivalent to the rectifiability of our original $\mu$ restricted to the set 
  $E$.


\section{The stopping time}\label{sec5}

The rest of the paper will be devoted to proving the following lemma, which implies Theorem \ref{thmi} by an exhaustion argument. 

\begin{lemma}
	\label{l:main}
	With the assumptions of Theorem \ref{thmi}, there is $E'\subset E$ with $\mu(E')>0$ such that $\mu|_{E'}$ is $d$-rectifiable.
\end{lemma}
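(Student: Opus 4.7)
The plan is to implement the strategy sketched at the end of Section 4, working with the rescaled measure $\mu$ and the closed sets $F_0\subset E_0$ satisfying \rf{working-hyp-1A}--\rf{working-hyp-3A}, with $M$, $\delta$, $\varepsilon$ chosen according to \rf{constants-1}. The target set $E'$ will be a positive-measure subset of $F_0$ on which both the upper and lower $d$-densities of $\mu$ can be controlled and which lies close to a single Lipschitz graph.

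First, I would set up a stopping-time decomposition on a suitable dyadic (or Christ-type) family of balls rooted at a top ball $B_0\supset F_0$. A descendant $B$ is declared to stop when one of the following fails: (i) the density $\Theta_\mu^d(x_B,r_B)$ lies in a fixed interval $[\theta_0,\theta_1]$ comparable to $c_{B_0}$; (ii) $\alpha_\mu^d(x_B,r_B)<\tau$; (iii) the angle between $L_B$ and $L_{B_0}$ stays below a small threshold; (iv) $B$ still meets $F_0$. By Lemma \ref{l:angle-general} and Lemma \ref{l:cx-cy-general}, the angle and density drifts between a ball and its parent are controlled by the corresponding $\alpha$-number, so telescoping along chains and applying Cauchy--Schwarz against the Jones bound \rf{working-hyp-2A} (in the spirit of \cite{DT12}) should show that the stopping balls form a Carleson-type family and that, outside an exceptional subset of $F_0$ of small $\mu$-measure, all scales down to $0$ remain good.

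On the good tree, the David--Toro construction from \cite{DT12}, as adapted in \cite{ADT16}, produces a $d$-dimensional Lipschitz graph $\Gamma$ over a fixed $d$-plane $L_0$, with small Lipschitz constant, that approximates $\supp\mu\cap B_0$ at every good scale. Let $\pi\colon\R^n\to L_0$ be the orthogonal projection; the tilt control from Lemma \ref{l:angle-general} ensures $\pi$ is bi-Lipschitz on the relevant portion of $\Gamma$, and Lemma \ref{l:cx-cy-general} combined with \rf{e:cb-theta-gen} makes the projected measure $\nu:=\pi_\#(\mu|_{\text{good part}})$ Ahlfors--David regular on $L_0$.

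The final step splits into two halves. For the lower density, I would run a L\'eger-type argument as in \cite{Leger,AT15}: the smallness of $\alpha_\mu^d(x,r)$ forces $\mu$ near $x$ to resemble a flat measure, so a point of low density would propagate upward along the tree and contradict the Carleson bound. For the upper density, I would follow \cite{Tol17}, using the $\alpha$-numbers as $L^2$ coefficients and summing their squares dyadically in order to bound the $L^2(L_0)$-norm of the density of $\nu$ with respect to $\HH^d|_{L_0}$; the Jones control \rf{working-hyp-2A} then yields a uniform $L^2$ bound, which rules out high-density points on a large set. The hard part will be this $L^2$-density estimate: one must localize the $\alpha$-contributions carefully and exploit the fact that $\Gamma$ has small Lipschitz constant so that $L^2$-norms on $\Gamma$ transfer to $L_0$ without essential loss, and one must ensure that summing over scales and locations produces a global bound rather than just local ones. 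Combining the two halves produces $E'\subset F_0$ with $\mu(E')>0$ on which the $d$-densities of $\mu$ are positive and finite and $\mu$ is comparable to $\HH^d|_\Gamma$; since $\Gamma$ is a Lipschitz graph, this gives $\mu|_{E'}\ll\HH^d|_\Gamma$ and therefore $d$-rectifiability.
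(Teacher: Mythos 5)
Your plan follows the high-level outline that the paper itself sketches in Section 4: a stopping-time decomposition, the David--Toro Reifenberg construction producing a Lipschitz graph $\Sigma$ over a reference $d$-plane, an AD-regular surrogate measure $\nu$ supported on $\Sigma$, a L\'eger-type argument to bound the low-density set, and an $L^2$ argument to bound the high-density set. However, two steps as written have genuine gaps.

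The first is the construction of $\nu$. You take $\nu := \pi_{\#}\bigl(\mu|_{\text{good part}}\bigr)$ and assert that it is AD-regular, but this does not follow: $\mu$ is only controlled at scales at or above the stopping scale $\delta(x)$, and below that scale $\mu$ may concentrate arbitrarily, so the pushforward need not be AD-regular and its density on $L_0$ could have large $L^2$-norm even though all relevant $\alpha$-numbers are small. The paper instead constructs $\nu = \sum_j c_j\,\theta_j\,\cH^d|_\Sigma$ using a partition of unity $\{\theta_j\}$ adapted to Besicovitch balls of radius $\approx \eta\,d(\xi_j)$; the weights $c_j = \int\theta_j\,d\mu / \int\theta_j\,d\sigma$ make $\nu$ agree with $\mu$ (up to $\ve$-errors) at scales above the stopping scale, while below it $\nu$ is simply a smooth multiple of surface measure. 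This regularization is what allows the $\Lambda$-estimates of Section 8 to go through; it is not optional. (Only afterwards is the projection $\pi[\nu]$ taken.)

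The second is the treatment of the set $Z=\{d=0\}$ where the stopping time never triggers, and the related fact that $\Sigma$ is a genuine Lipschitz \emph{graph}. You present a direct argument, but the paper argues by contradiction for a reason: assuming no positive-measure rectifiable piece exists, the set $G$ on which $\Theta_\mu^d$ stays in $[\tau 2^{-d}, A 2^d]$ is rectifiable by a $\beta_1$-numbers criterion (Badger--Schul), hence $\mu(G)=\cH^d(Z)=0$. This vanishing is used both to upgrade the David--Toro bi-Lipschitz image to a Lipschitz graph over $P_0$ (Lemma \ref{lem6.8}) and to justify that $\theta\equiv 1$ $\mu$-a.e.\ on $E_0$ (so that $\theta\mu$ can be compared with $\mu$). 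A direct argument would need to make the case split ``either $\mu(Z)>0$ and we are already done, or $\cH^d(Z)=0$'' explicit.

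Two smaller points. Including ``$\alpha_\mu^d(x_B,r_B)<\tau$'' as a stopping condition is unnecessary: the Jones hypothesis \rf{working-hyp-2A} together with the doubling bound already yields $\alpha_\mu^d(x,r)\lec M\ve$ at \emph{every} scale on $E_0$ (see \rf{pointwise-a-bound-2}), so this condition never fires; the paper instead uses a ``not dense'' condition comparing $\mu(B\setminus E_0)$ to $\mu(B)$, which is essential for transferring estimates between $\mu$ and $\theta\mu$. And the assertion that telescoping angle drifts with Cauchy--Schwarz ``shows the stopping balls form a Carleson-type family'' is not how the paper controls $\BA$; instead, the big-angle set is bounded via the estimate $\int_{P_0}|Dh|^2\lec\ve$ on the graph function $h$, which in turn comes from the $(1+C\ve)$-bi-Lipschitz property of the Reifenberg parametrization guaranteed by the Jones bound \eqref{e:DTsum}. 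This is a distinct mechanism and the sharper power $\ve$ (rather than $\ve^{1/2}$) is needed there.
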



Let $E$ and $\mu$ be as in Theorem \ref{thmi}. We assume that there is no set $E'\subset E$ as in the lemma. Using the notation introduced in the previous section we obtain a contradiction as follows.
For $\tau$ and $A$ as in \rf{constants-1}, and $B_0=\overline{B(0,1)}$ let
\begin{equation}\label{e:goodA}
	G=\{x\in {{E_{0}}}\cap B_0: \Theta^d_{\mu}(x,r)\in [2^{-d}\tau,2^dA] \,\mbox{ for all }r\in (0,C_{1})\}.
	\end{equation}
	Under the hypothesis on $E_0$, $\mu|G$ is $d$-rectifiable (see proof of Lemma \ref{l:good}). Therefore $\mu(G)=0$ by the contradiction assumption. Using \cite{DT12} we construct an approximating Lipschitz surface $\Sigma$ near $E_0$ (see Section
	\ref{approximating}). We then construct an Ahlfors regular measure $\nu$ on $\Sigma$ which captures the behavior of $\mu$ on $E_0$ (see Section \ref{meas-approx}). This allows us to conclude in Section
	\ref{end} that $\mu(G)$ is proportional to $\mu(E_0\cap B_0)\ge (1-\ve)\mu(B_0)\ge C(\ve, M, C_1)\mu(B(0,3C_1)>0$ (see \rf{working-hyp-1A}, \rf{working-hyp-3A} and \rf{meas=1}), which contradicts the fact that $\mu(G)=0$.


For $x\in E_{0}\cap B_0$, we define $\delta(x)$ to be the supremum over all radii $0<r\leq C_{1}$ such that either
the density ratio of $B(x,r)$ is either too big or too small or the angle between $L_{x,r}=L_{B(x,r)}$ as in \rf{L-def} and \rf{cL-ppts}
and $L_{B_0}$ is too big, that is:
\begin{center}
\begin{itemize}
\item[\ND:] $\mu(B(x,r)\setminus E_{0})\geq\ve^\frac{12} \mu(B(x,r))$,
\item[\LD:] 
	$\Theta_{\mu}^{d}(B(x,r))\leq\tau$,
	\item[\HD:]  
	$\Theta_{\mu}^{d}(B(x,r))\ge A$, or
	\item[\BA:] 
	$\angle(L_{x,r},L_{B_{0}})\geq\ve^{\frac{1}{4}}$. 

\end{itemize}
\end{center}
\noindent The abbreviations stand for ``not dense",``low density", ``high density", and ``big angle", respectively. Note that by \rf{working-hyp-3A} if $x$ is such that $\mu(B(x,r)\setminus E_{0})\geq\ve \mu(B(x,r))$ for some $r\in (0,4C_1) $ then $x\in F_0^c$.



For $x\in \R^{n}$ define
\begin{equation}\label{def-d}
d(x) = \inf_{y\in E_{0}\cap  B_{0}} \bigl\{\delta(y) +|x-y|\bigr\}.
\end{equation}
Note that $d$ is a continuous function. Indeed, this is a $1$-Lipschitz function since this is defined
as an infimum over the family of $1$-Lipschitz functions $\bigl\{\delta(y) +|\cdot -y|: y\in E_{0}\cap  B_{0}\bigr\}$.
\xavi{I would erase the red text, since the new explanation I wrote before in terms of $1$-Lipschitz functions i
s simpler, in my opinion.}

\begin{lemma}\label{l:good-range}
For $A$ and $\tau^{-1}$ large enough, depending on $C_{1},$ and $M$, 
\begin{equation}
\label{e:delta0}
d(x)\leq \delta(x)\leq 10^{-3} \quad\mbox{ for all }x\in E_{0}\cap B_{0}.
\end{equation}
Moreover, for all $r$ such that $ \delta(x)\leq r<2C_{1}$ and $x\in E_{0}\cap B_{0}$,
\begin{equation}
\label{e:good-range}
\Theta^d_{\mu}(x,r)\in [\tau,A], 
\quad\frac{\mu(B(x,r)\setminus E_{0})}{ \mu(B(x,r))}\leq\ve^{\frac{1}{2}}, \;
\;\mbox{ and } \;\;  \angle(L_{x,r},L_{B_{0}})\leq \ve^{\frac14}.
\end{equation}
\end{lemma}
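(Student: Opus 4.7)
The bound $d(x)\le\delta(x)$ is immediate from \eqref{def-d}: $\delta(x)$ itself arises as a candidate by taking $y=x$ in the infimum. For $\delta(x)\le 10^{-3}$, I fix $x\in E_0\cap B_0$ and $r\in(10^{-3},C_1]$ and show that none of $\ND,\LD,\HD,\BA$ holds at scale $r$. The strategy is to transfer information from the reference configuration at $0\in F_0\subset E_0$. Starting from $\mu(B(0,3C_1))=1$ (see \eqref{meas=1}), iterated doubling \eqref{working-hyp-1A} at $0$ produces $\mu(B(0,R))\gtrsim (R/C_1)^{\log_2 M}$ for $R<4C_1$. Combining this with $B(x,R+1)\supset B(0,R)$ (valid for $x\in B_0$) and with another application of doubling at $x$, I obtain a two-sided density bound
\[
c(M,C_1)\;\le\;\Theta^d_\mu(x,r)\;\le\;10^{3d}\qquad\text{for all } r\in(10^{-3},2C_1).
\]
Choosing $\tau<c(M,C_1)$ and $A>10^{3d}$ — consistent with the hierarchy \eqref{constants-1} — therefore rules out $\LD$ and $\HD$.

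For $\BA$, pick $s\in[C_1,2C_1)$ with $\alpha_\mu^d(0,s)\le\sqrt{2}\,\ve$ via \eqref{pointwise-a-bound-1}, chosen so that $B(x,2r)\subset B(0,s)$ (possible for $r$ below some $C_1-O(1)$; a standard end-point adjustment handles the thin leftover range). By \eqref{pointwise-a-bound-2}, $\alpha_\mu^d(x,r)\le 4M\ve < \mu(B(x,r/8))/(8\mu(B(x,r)))$, the latter being $\ge 1/(8M^3)$ by doubling, so the hypotheses of Lemma \ref{l:angle-general} are satisfied. That lemma, together with the density lower bound just established, produces $\angle(L_{x,r},L_{0,s})\lesssim_{M,C_1}\ve$; the same argument applied with $(x,r)=(0,1)$ yields $\angle(L_{B_0},L_{0,s})\lesssim_{M,C_1}\ve$. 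The triangle inequality then bounds $\angle(L_{x,r},L_{B_0})\le C(M,C_1)\,\ve\le\ve^{1/4}$, provided $\ve$ is small enough relative to $M,C_1$. For $\ND$, apply \eqref{working-hyp-3A} at $0\in F_0$ with radius $3C_1$ to get $\mu(B(0,3C_1)\setminus E_0)\le\ve$; since $B(x,r)\subset B(0,3C_1)$ and $\mu(B(x,r))\ge c(M,C_1)\,r^d\ge c(M,C_1)\,10^{-3d}$, the ratio appearing in $\ND$ is at most $C(M,C_1)\,\ve\le\ve^{12}$ for $\ve$ sufficiently small.

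Finally, for \eqref{e:good-range}: if $r\in(\delta(x),C_1]$, then $r$ does not belong to the set whose supremum is $\delta(x)$, so none of $\ND,\LD,\HD,\BA$ holds at scale $r$, and negating these conditions immediately yields the three inequalities of \eqref{e:good-range} (using $\ve^{12}\le\ve^{1/2}$ in the $E_0$-density bound). For $r\in(C_1,2C_1)$, the definition of $\delta$ says nothing, but the arguments of the preceding two paragraphs go through verbatim, since \eqref{working-hyp-1A}, \eqref{working-hyp-2A}, and \eqref{working-hyp-3A} are all valid up to scale $4C_1$. The main obstacle throughout is the careful bookkeeping of constants: every estimate of the form $C(M,C_1)\,\ve\le\ve^{12}$ or $\le\ve^{1/4}$ reduces to a smallness condition $\ve\le\ve_0(M,C_1)$, which is granted by \eqref{constants-1} since $\ve$ is the smallest parameter and may be taken as small as necessary relative to $\tau,A,C_1,M$.
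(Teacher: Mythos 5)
Your proposal is correct and follows essentially the same route as the paper: both obtain $\delta(x)\le 10^{-3}$ by directly verifying the three bounds in \eqref{e:good-range} for all $r\in[10^{-3},2C_1)$, using the normalization $\mu(3C_1B_0)=1$, doubling at $x$ to get the two-sided density bound, Lemma \ref{l:angle-general} for the tilt, and \eqref{working-hyp-3A} at $0$ for the $E_0$-density. The only cosmetic differences are that you route the density lower bound through iterated doubling at $0$ (unnecessary, since $\mu(B(0,3C_1))=1$ is given and the paper just uses $3C_1B_0\subset B(x,4C_1)$ with doubling at $x$), and you compare against an intermediate plane $L_{0,s}$ rather than $L_{B_0}$ directly — both of which are fine.
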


\begin{proof}
First note that since $C_1>1$, if $x\in E_{0}\cap B_{0}$ and $10^{-3}\leq r<2C_{1}$, then $B(x,r)\subset 3C_{1}B_{0}$. Hence,
\begin{equation}\label{density-upper-bound}
\Theta^d_{\mu}(x,r)
\leq 10^{3d}\,\mu(3C_{1} B_{0}) \lec 1
\end{equation}
and since $3C_1B_0\subset B(x,4C_1)$ \rf{working-hyp-1A} yields
\begin{equation}\label{density-lower-bound}
\Theta^d_{\mu}(x,r)\gec_{M,C_{1}} \Theta^d_{\mu}(x,4C_{1}) 
\gec \frac{\mu(3C_{1}B_{0})}{(4C_{1})^{d}} \gec C_{1}^{-d}.
\end{equation}
Thus, for $A,\tau^{-1}$ large enough depending on $C_{1}$, and $M$, \rf{density-upper-bound} and \rf{density-lower-bound} imply
\begin{equation}\label{e:ugh}
\Theta^d_{\mu}(x,r)\in [\tau,A]\quad \mbox{ for all } x\in E_{0}\cap B_{0} \mbox{ and }10^{-3}\leq r<2C_{1}.\end{equation}

Furthermore, by \eqref{pointwise-a-bound-2}, \eqref{e:angle-general}, and \eqref{e:ugh}, for the same choice of $x$ and $r$,
\[
\angle(L_{x,r},L_{B_{0}}) \lec_{C_{1},A,\tau,M} \alpha(2C_{1}B_{0}) \]
and so for $\ve>0$ small enough, we can guarantee that $\angle(L_{x,r},L_{B_{0}}) <\ve^{\frac14}$ for all $x\in E_{0}\cap B_0$ and $10^{-3}\leq r<2C_{1}$. 

Finally, for $x\in E_{0}\cap B_{0}$ and $10^{-3}\leq r<2C_{1}$, by \eqref{e:ugh},
\[
\mu(B(x,r)\setminus E_{0})
\leq \mu(3C_{1}B_{0}\setminus E_{0})<\ve\, \mu(3C_{1}B_{0})
\approx_{M,C_{1}} \ve \,\mu(B(x,r)),\]
and so $\mu(B(x,r)\setminus E_{0})<\ve^{\frac{1}{2}}\mu(B(x,r))$ for $\ve$ small enough. These facts imply that $\delta(x)\leq 10^{-3}$, and \eqref{e:good-range} follows immediately.
\end{proof}
%
%

\begin{rem}\label{remark1}
Using \rf{working-hyp-1A} and a similar argument to the one  that appears in the proof of Lemma \ref{l:good-range} we deduce that for any given constant $0<c_0\leq1$, given $r$ such that $ c_0\delta(x)\leq r<2C_{1}$ and $x\in E_{0}\cap B_{0}$, we have
\begin{equation}
\label{e:good-range'}
\tau\lesssim_{c_0,M}\Theta^d_{\mu}(x,r)\lesssim_{c_0,M} A, 
\quad\frac{\mu(B(x,r)\setminus E_{0})}{ \mu(B(x,r))}\lesssim_{c_0,M}\ve^{\frac{1}{2}}, \;
\;\mbox{ and } \;\;  \angle(L_{x,r},L_{B_{0}})\lesssim_{c_0,M} \ve^{\frac14}.
\end{equation}
\end{rem}

 
\begin{lemma} 
For $x\in \mathbb{R}^{n}$ and $ 2d(x) \le r< C_{1}$,
\begin{equation}\label{e:muAD}
2^{-d}\tau r^{d}\leq \mu(B(x,r))\leq 2^{d}Ar^{d}
\end{equation}
and
\begin{equation}\label{eqe0*}
\frac{\mu(B(x,r)\setminus E_{0})}{ \mu(B(x,r))}\lesssim_{M}\ve^{\frac{1}{2}}.
\end{equation}
\end{lemma}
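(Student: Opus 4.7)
The plan is to select a good nearby center $y \in E_0 \cap B_0$ and sandwich $B(x,r)$ between two balls centered at $y$ whose radii lie in the range where Lemma~\ref{l:good-range} applies. Since $d(x) = \inf_{y \in E_0 \cap B_0}\{\delta(y)+|x-y|\}$, for any small $\eta>0$ I can pick $y=y_\eta \in E_0 \cap B_0$ with
\[
\delta(y) + |x-y| < d(x) + \eta \le r/2 + \eta.
\]
This gives both a point close to $x$ and a small upper bound for $\delta(y)$, which are the two things needed to invoke \Lemma{good-range}.

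Next I would use the sandwich $B(y,s_-) \subset B(x,r) \subset B(y,s_+)$ where $s_- := r/2 - \eta + \delta(y)$ and $s_+ := r + |x-y|$. The containments follow because $|x-y| \le r/2 + \eta - \delta(y)$. One checks that $\delta(y) \le s_-$ (since $r/2 \ge \eta$ for $\eta$ small) and $\delta(y) \le r/2+\eta < r \le s_+$, while $s_+ \le 3r/2 + \eta < 2C_1$ for $\eta$ small and $r < C_1$. Thus both $s_\pm$ lie in the admissible range $[\delta(y),2C_1)$ for Lemma~\ref{l:good-range} at the point $y \in E_0 \cap B_0$.

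Applying \Lemma{good-range} at $y$ to the radii $s_\pm$ gives $\tau s_\pm^d \le \mu(B(y,s_\pm)) \le A s_\pm^d$. Combining this with the sandwich and with $s_- \ge r/2 - \eta$ and $s_+ \le 3r/2 + \eta$,
\[
\tau (r/2 - \eta)^d \le \mu(B(y,s_-)) \le \mu(B(x,r)) \le \mu(B(y,s_+)) \le A (3r/2 + \eta)^d.
\]
Letting $\eta \to 0$ yields $2^{-d}\tau\, r^d \le \mu(B(x,r)) \le (3/2)^d A\, r^d \le 2^d A\, r^d$, which is \eqref{e:muAD}. For \eqref{eqe0*}, the outer sandwich together with the $E_0$-density bound from Lemma~\ref{l:good-range} gives
\[
\mu(B(x,r) \setminus E_0) \le \mu(B(y,s_+) \setminus E_0) \le \ve^{1/2} \mu(B(y,s_+)) \le \ve^{1/2} A (3r/2 + \eta)^d,
\]
and dividing by the lower bound $\mu(B(x,r)) \ge \tau (r/2 - \eta)^d$ and letting $\eta \to 0$ produces $\mu(B(x,r)\setminus E_0)/\mu(B(x,r)) \le 3^d A\tau^{-1} \ve^{1/2} \lesssim_M \ve^{1/2}$, since $A$ and $\tau^{-1}$ are chosen depending only on $M$ and $C_1$.

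The only subtlety (and the closest thing to an obstacle) is the bookkeeping that ensures the sandwich radii stay admissible for Lemma~\ref{l:good-range}, i.e. that $s_- \ge \delta(y)$ and $s_+ < 2C_1$, and that the resulting multiplicative constants $(1/2)^d$ and $(3/2)^d$ from the sandwich are absorbed within the $2^{\pm d}$ factors in the statement. The hypothesis $r \ge 2 d(x)$ is exactly what makes $r/2$ an effective scale at $y$, and the hypothesis $r < C_1$ keeps $s_+ < 2C_1$; no other ingredient is needed.
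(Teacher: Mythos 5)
Your derivation of \eqref{e:muAD} is essentially the paper's argument: pick a good center $y\in E_0\cap B_0$, sandwich $B(x,r)$ between balls centered at $y$ of comparable radius, and invoke Lemma~\ref{l:good-range}. The $\eta$-bookkeeping is a minor variant of the paper picking $y$ with $\delta(y)+|x-y|\le r/2$ directly.

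The derivation of \eqref{eqe0*}, however, has a real gap in the constant. You bound $\mu(B(x,r)\setminus E_0)\le \ve^{1/2}A(3r/2)^d$ and divide by $\mu(B(x,r))\ge\tau(r/2)^d$, producing a constant $3^d A\tau^{-1}$. This is $\lesssim_{A,\tau}\ve^{1/2}$, not $\lesssim_M\ve^{1/2}$ as the lemma asserts. Your justification --- that $A$ and $\tau^{-1}$ are ``chosen depending only on $M$ and $C_1$'' --- does not rescue this: at best it would give $\lesssim_{M,C_1}$, and in fact $A,\tau$ are free parameters only constrained to be large/small enough (e.g.\ $A$ must be taken very large again in Lemma~\ref{lemhd}), so $A\tau^{-1}$ is not bounded by any function of $M$. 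Since the paper is careful to track $A,\tau$ as separate parameters (the hierarchy in \eqref{constants-1} matters: $\ve$ is chosen after $A,\tau$), this degradation is substantive. The paper avoids it by staying with measures rather than passing through $\Theta^d_\mu$ bounds: using \eqref{e:good-range} at scale $\tfrac32 r$ and then the doubling hypothesis \eqref{working-hyp-1A} twice at $y$,
\[
\mu(B(x,r)\setminus E_0)\le \mu(B(y,\tfrac32 r)\setminus E_0)\le \ve^{1/2}\mu(B(y,\tfrac32 r))\le M^2\ve^{1/2}\mu(B(y,\tfrac12 r))\le M^2\ve^{1/2}\mu(B(x,r)),
\]
which yields the claimed $\lesssim_M$. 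You should replace the $A/\tau$ step with this doubling argument.
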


\begin{proof} If $d(x)=0$, since $E_0$ is closed by \rf{def-d} $x\in E_0\cap B_0$ thus by \rf{working-hyp-1A} and \rf{e:good-range}, \rf{e:muAD} and \rf{eqe0*} hold.
Suppose that $d(x)>0$.
	Let $y\in E_{0}\cap B_{0}$ be such that 
	\[
	\delta(y) + |x-y|\le \frac{1}2\,r.
\]
	Then $r/2\ge\delta(y)$ and $|x-y|\le r/2$. Recalling that $r<C_1$, we deduce that $\Theta^d_{\mu}(y,r/2)\geq \tau$ and 
	$\Theta^d_{\mu}(y,3r/2)\leq A$ (this follows from the definition of $\delta(y)$ if $3r/2<C_1$ and from
	the fact that $\mu(3C_{1}B_{0})=1$ otherwise). Hence,
\begin{equation}\label{eqe1*}
	\mu(B(x,r))\geq \mu(B(y,r-|x-y|)) \geq \mu(B(y,\tfrac12r))\geq 2^{-d} \tau r^{d},
\end{equation}
	and also
\begin{equation}\label{eqe2*}
	\mu(B(x,r))\leq \mu(B(y,r+|x-y|))\leq  \mu(B(y,(1+\tfrac12)r))\leq 2^{d}Ar^{d}.
\end{equation}

On the other hand, arguing as in the preceding estimate, using also  \rf{working-hyp-1A},
we have
\begin{align*}
\mu(B(x,r)\setminus E_0)& \leq  \mu(B(y,\tfrac32r)\setminus E_0)\leq
\ve^\frac{1}{2}\, \mu(B(y,\tfrac32r))\\
&\lesssim_M \ve^\frac{1}{2}\, \mu(B(y,\tfrac12r))
\lesssim_M\ve^\frac{1}{2}\,\mu(B(x,r)).
\end{align*}

\end{proof}

The following is an immediate consequence of \eqref{e:muAD} and Lemma \ref{l:alpha-monitone}.

\begin{lemma}\label{a-comparison}
	For $x,y\in \R^{n}$, $2d(x)<r<s<C_{1}$, if $B(x,2r)\subset B(y,s)$,

\begin{equation}\label{e:alpha-monitone}
\alpha_{\mu}(x,r)\lec_{A,\tau} \ps{\frac{s}{r}}^{d+1}\alpha_{\mu}(y,s).
\end{equation}
\end{lemma}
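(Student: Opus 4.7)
The lemma follows from combining Lemma \ref{l:alpha-monitone} with the two-sided Ahlfors estimate \eqref{e:muAD}. Since $B(x,2r)\subset B(y,s)$ contains $B(x,r)$, applying Lemma \ref{l:alpha-monitone} gives
$$
\alpha_\mu(x,r)\le \frac{s\,\mu(B(y,s))}{r\,\mu(B(x,r))}\,\alpha_\mu(y,s),
$$
so the task reduces to proving $\mu(B(y,s))/\mu(B(x,r))\lec_{A,\tau}(s/r)^d$.

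The lower bound $\mu(B(x,r))\ge 2^{-d}\tau r^d$ is immediate from \eqref{e:muAD} and the hypothesis $2d(x)<r<C_1$. For the upper bound on $\mu(B(y,s))$, the inclusion $B(x,2r)\subset B(y,s)$ forces $|x-y|\le s-2r<s$, so that $B(y,s)\subset B(x,2s)$. I plan to split according to whether the auxiliary radius $2s$ lies below $C_1$. If $2s<C_1$, then \eqref{e:muAD} applies at the center $x$ with radius $2s$ (valid since $2d(x)<r<2s<C_1$) and yields $\mu(B(y,s))\le\mu(B(x,2s))\le 2^d A(2s)^d\lec_A s^d$. If instead $C_1/2\le s<C_1$, then choosing $z\in E_0\cap B_0$ with $\delta(z)+|x-z|<r/2$ gives $|x|\le |x-z|+|z|<1+C_1/2$, hence $B(x,2s)\subset B(0,3C_1)=3C_1 B_0$ for $C_1\ge 2$ (guaranteed by \rf{constants-1}); therefore $\mu(B(y,s))\le\mu(3C_1 B_0)=1\le (2/C_1)^d s^d\lec s^d$ by \rf{meas=1}.

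Combining the two bounds yields $\mu(B(y,s))/\mu(B(x,r))\lec_{A,\tau}(s/r)^d$, which together with the application of Lemma \ref{l:alpha-monitone} gives the desired inequality. No real obstacle is expected here, as the argument is just algebraic manipulation modulo a short case split on the range of $s$; the only subtle point is recognizing that \eqref{e:muAD} can fail when the auxiliary radius $2s$ reaches $C_1$, and patching that regime using the global normalization $\mu(3C_1 B_0)=1$ from \rf{meas=1}.
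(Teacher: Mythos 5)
Your proof is correct, and it follows the same basic route the paper has in mind: the paper states only that the lemma is ``an immediate consequence of \eqref{e:muAD} and Lemma \ref{l:alpha-monitone},'' which is precisely the composition you carry out. You apply Lemma \ref{l:alpha-monitone} to reduce the claim to the density ratio bound $\mu(B(y,s))/\mu(B(x,r))\lesssim_{A,\tau}(s/r)^d$, then get the lower bound on $\mu(B(x,r))$ from \eqref{e:muAD} directly, and obtain the upper bound on $\mu(B(y,s))$ via the enclosure $B(y,s)\subset B(x,2s)$. The one place where you do more than the paper's one-line attribution suggests is the observation that \eqref{e:muAD} cannot be applied at the center $y$ (we have no control on $d(y)$ when $s$ is not comparable to $r$) nor necessarily at center $x$ with radius $2s$ when $2s\ge C_1$, and you patch the latter regime using the normalization $\mu(3C_1B_0)=1$ together with the elementary bound $|x|\lesssim C_1$ coming from $d(x)<r/2<C_1/2$. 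That case split is genuinely needed for the argument to be airtight, and it only costs an absolute constant $(2/C_1)^d\le 1$, so the stated dependence $\lesssim_{A,\tau}$ survives. All the inclusions and constants check out.
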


%
%
%
%
%


\begin{lemma} \label{lem5.6}
For $\ve>0$ small enough, $x\in \mathbb{R}^{n}$ and $2d(x)\le r<C_{1}$, 
\begin{equation}\label{e:LcapB}
L_{x,r}\cap B(x,r/4)\neq\varnothing,
\end{equation} 

	\begin{equation}\label{e:cb-mub}
c_{x,r} \approx \Theta^d_{\mu}(x,r) \approx_{A,\tau} 1.
\end{equation}
%
%
%
%
and if $B(x,2r)\subset B(y,s)$, then
	\begin{equation}\label{e:angle}
	\dist_{x,r/2}(L_{x,r},L_{y,s})\lec_{A,\tau} \frac{s^{d+1}}{r^{d+1}}\,\alpha_{\mu}(y,s).
	\end{equation}
%

Further, if $x,y\in \mathbb{R}^{n}$, $2\,\max\{d(x),d(y)\}<r<s<C_{1}$, and $B(x,2r)\subset B(y,s)$, then,
		\begin{equation}
		\label{e:cx-cy}
		|c_{x,r}-c_{y,s}| \lec_{A,\tau} \ps{\frac{s}{r}}^{d+2}\alpha_{\mu}(y,s).
		\end{equation}
\end{lemma}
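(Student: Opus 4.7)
The plan is to derive each of the four conclusions of Lemma \ref{lem5.6} from the corresponding generic estimate of Section~3---Lemmas \ref{l:LcapB}, \ref{l:cb-theta}, \ref{l:angle-general}, and \ref{l:cx-cy-general}, used in the ``in particular'' forms. Each of those lemmas has as its hypothesis
\[
\alpha_\mu(x,r) \,<\, \frac{\mu(B(x, r/8))}{8\,\mu(B(x,r))}
\]
(and analogously at $(y,s)$ for \eqref{e:cx-cy}), so the heart of the proof is to verify this hypothesis under the stopping condition $2 d(x) \le r < C_1$.

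I bound $\alpha_\mu(x,r)$ from above as follows. By the definition of $d(x)$, I can pick $y_0 \in E_0 \cap B_0$ with $\delta(y_0) + |x-y_0| \le (1+\eta)d(x)$ for an arbitrarily small $\eta > 0$; then $|x-y_0| \le r/2$ and $\delta(y_0) \le r/2$, so $B(x, 2r) \subset B(y_0, s)$ for some $s$ comparable to $r$ with $s < C_1$ (after restricting $r$ to a suitable fraction of $C_1$; the case $r$ close to $C_1$ is handled routinely using the total mass bound \eqref{meas=1}). Lemma \ref{a-comparison} and \eqref{pointwise-a-bound-2} then give
\[
\alpha_\mu(x,r) \lec_{A,\tau} (s/r)^{d+1}\,\alpha_\mu(y_0, s) \lec_{A,\tau,M} \ve.
\]
For the density ratio, \eqref{e:muAD} yields $\mu(B(x,r)) \lec_A r^d$, and the lower bound $\mu(B(x, r/8)) \gec_\tau r^d$ follows directly from \eqref{e:muAD} when $r \ge 16\,d(x)$, and in the tight regime $2 d(x) \le r < 16\,d(x)$ by propagating the doubling condition \eqref{working-hyp-1A} at $y_0 \in E_0$ down to the scale $r/8$ near $x$, exploiting that $|x-y_0| \lesssim r$.

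Granting the hypothesis, conclusion \eqref{e:LcapB} is immediate from Lemma \ref{l:LcapB}. For \eqref{e:cb-mub}, Lemma \ref{l:cb-theta} gives $c_{x,r} \approx \Theta_\mu^d(x, r)$, and the density bound $\Theta_\mu^d(x,r) \in [2^{-d}\tau, 2^d A]$ from \eqref{e:muAD} forces $c_{x,r} \approx_{A,\tau} 1$. For \eqref{e:angle}, the generic estimate \eqref{e:angle-general} combined with \eqref{e:muAD} (which bounds the density quotient $s\,\mu(B(y,s))/(r\,\mu(B(x, r/2)))$ by $C_{A,\tau}(s/r)^{d+1}$ when the hypothesis is satisfied at both $(x,r)$ and $(y,s)$) yields the claimed bound. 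Finally, for \eqref{e:cx-cy} I apply \eqref{e:cx-cy-general}---whose hypothesis at $(y,s)$ is verified by the same reasoning, using $2 d(y) < s$---and invoke \eqref{e:muAD} to collapse the $\Theta$-factors on the right-hand side of \eqref{e:cx-cy-general1} into an $A,\tau$-constant, leaving the factor $(s/r)^{d+2}\,\alpha_\mu(y,s)$.

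The main obstacle is the density-ratio lower bound $\mu(B(x,r/8)) \gec r^d$ in the tight regime $r \approx d(x)$: in this regime the scale $r/8$ lies below the stopping scale of every nearby point in $E_0$, so \eqref{e:muAD} cannot be applied directly to $B(x, r/8)$, and the bound must be obtained by carefully combining the doubling condition \eqref{working-hyp-1A} at $y_0 \in E_0$ with the comparison $|x - y_0| \lesssim r$ to transfer control of $\mu$ from a ball around $y_0$ to $B(x, r/8)$.
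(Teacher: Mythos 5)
Your approach coincides with the paper's: pick a quasi-minimizer $z \in E_0 \cap B_0$ with $\delta(z)+|x-z|\le r/2$, so that $B(x,r)\subset B(z,2r)$, use \eqref{pointwise-a-bound-2} together with Lemma \ref{a-comparison} (or \eqref{e:alpha-monitone-general}) to get $\alpha_\mu(x,r)\lec_{A,\tau}\ve$, and then invoke \eqref{e:muAD} to feed the ``in particular'' hypotheses of Lemmas \ref{l:LcapB}--\ref{l:cx-cy-general}. The paper states this very tersely, but the reduction to those four lemmas is exactly what you propose.

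There is, however, a gap in the step you single out as the main obstacle. In the tight regime $2d(x)\le r<16\,d(x)$ you claim the lower bound $\mu(B(x,r/8))\gec_\tau r^d$ ``by propagating the doubling condition \eqref{working-hyp-1A} at $y_0 \in E_0$ down to the scale $r/8$ near $x$, exploiting that $|x-y_0|\lesssim r$.'' This does not go through: \eqref{working-hyp-1A} is available only for balls \emph{centered at points of $E_0$}, and the only control you have on $y_0$ is $|x-y_0|\le d(x)\le r/2$; when $|x-y_0|\ge r/8$ no ball centered at $y_0$ sits inside $B(x,r/8)$, so doubling at $y_0$ gives no lower bound on $\mu(B(x,r/8))$, and in fact that quantity can be small. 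The repair that stays within the tools at hand is to apply the \emph{general} (not the ``in particular'') forms of Lemmas \ref{l:LcapB}--\ref{l:cx-cy-general}, taking the auxiliary center $y=z$ and a radius $s\approx r$ there; Remark \ref{remark1} (applied at $z\in E_0\cap B_0$ with a suitable $c_0\le 1/2$) supplies $\mu(B(z,s))\gec_{M,\tau}s^d$, which is what those hypotheses actually require. This changes the localization in \eqref{e:LcapB} from $B(x,r/4)$ to a ball around $z$ contained in $B(x,r)$, and correspondingly enlarges a few absolute constants, but that is all the downstream arguments use; note also that every later invocation of the lemma (cf.\ Remarks \ref{remark1}, \ref{remark2}) is at a point of $E_0\cap B_0$, where one may take $z=x$ and the issue disappears.
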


This lemma follows from \eqref{e:muAD}, and Lemmas \ref{l:LcapB}, \ref{l:cb-theta}, \ref{l:angle-general}, and \ref{l:cx-cy-general}. In fact note that if $2d(x)<r<C_1$, there is $z\in E_0\cap B_0$ such that 
$\delta(z)+|x-z|\le r/2$, then $B(x,r)\subset B(z,2r)$ and $\alpha_\mu^d(z,2r)\le 4M\ve$ by \rf{pointwise-a-bound-2} then as in Lemma \ref{a-comparison}, $\alpha_{\mu}(x,r)\lec_{A,\tau}\ve$,  which by
\eqref{e:muAD} ensures that the conclusions to Lemmas \ref{l:LcapB}, \ref{l:cb-theta}, \ref{l:angle-general}, and \ref{l:cx-cy-general} hold.

%

\begin{rem}\label{remark2}
In the preceding lemma, if we assume that $x,y\in E_0\cap B_0$ and we allow $c_0d(x)\le r\le C_{1}$, with $c_0<2$,
then \rf{e:LcapB}, \rf{e:cb-mub}, \rf{e:angle}, and \rf{e:cx-cy} also hold, with implicit constants
depending on $A,\tau,M,c_0$, assuming $\ve$ small enough.
\end{rem}

\begin{lemma}\label{l:good}
	Under the contradiction assumption for Lemma \ref{l:main} and using the notation above we have that the set 
	\begin{equation}\label{e:good}
	G=\{x\in {E_{0}}: \Theta^d_{\mu}(x,r)\in [2^{-d}\tau,A2^{d}] \,\mbox{ for all }r\in (0,C_{1})\}.
	\end{equation}
	satisfies $\mu(G)=\cH^{d}(G)=0$. In particular, if $Z=\{x:d(x)=0\}$, then $Z\subset G\subset E_0$ and $\mu(Z)=\cH^{d}(Z)=0$.
\end{lemma}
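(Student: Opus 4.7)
My plan for Lemma \ref{l:good} has four parts: (i) the containment $Z\subset G$, (ii) the $d$-rectifiability of $\mu|_G$, (iii) the forced vanishing $\mu(G)=0$ from the contradiction hypothesis, and (iv) the upgrade to $\cH^d(G)=\mu(Z)=\cH^d(Z)=0$ via density comparison.

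First I will verify $Z\subset G$. If $d(x)=0$, then by \rf{def-d} a minimizing sequence $y_k\in E_0\cap B_0$ with $\delta(y_k)+|x-y_k|\to 0$, together with the compactness of $E_0\cap B_0$ ($E_0$ closed, $B_0=\overline{B(0,1)}$), forces $x\in E_0\cap B_0\subset E_0$. For every $r\in (0,C_1)$, since $r>2d(x)=0$, inequality \rf{e:muAD} gives $\Theta^d_\mu(x,r)\in [2^{-d}\tau,\,2^d A]$, so $x\in G$.

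Next, the crux is to show $\mu|_G$ is $d$-rectifiable. The density bounds defining $G$ at all scales $r\in(0,C_1)$ yield, by standard density theory and a Vitali covering argument, that $\mu|_G\ll\cH^d$, that $\cH^d(G)<\infty$, and that $\mu|_G$ is mutually comparable to $\cH^d|_G$; in particular $\mu$ is essentially $d$-Ahlfors regular on $G$ at scales $<C_1$. Combined with the uniform Jones bound $\int_0^{4C_1}\alpha_\mu(x,r)^2\,dr/r<\ve^2$ from \rf{working-hyp-2A} on $E_0\supset G$, the $d$-rectifiability of $\mu|_G$ follows via either (a) Tolsa's $\alpha^2$-characterization of uniform rectifiability \cite{Tol09}, applied on a uniformly controlled subset of $G$ extracted by Egoroff, or (b) the $\beta_2$-characterization of rectifiability of \cite{AT15}, applied after converting $\alpha^2$-summability of $\mu$ on $G$ into $\beta_{\mu|_G,2}^2$-summability via test-function estimates against $F_B(\mu,\LL_B)$ (using the bounded density on $G$ and a layer-cake argument).

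Once $\mu|_G$ is $d$-rectifiable, the contradiction hypothesis for Lemma \ref{l:main}, which is preserved by the linear rescaling introduced in the outline of proof, forces $\mu(G)=0$. The lower density bound on $G$ together with a Vitali covering then gives $\cH^d(G)\lec \tau^{-1}\mu(G)=0$, and the containment $Z\subset G$ yields $\mu(Z)=\cH^d(Z)=0$. The main obstacle is step (ii): a naive layer-cake / test-function argument only gives $\beta_{\mu,2}^2\lec \alpha$ rather than the $\beta_{\mu,2}^2\lec \alpha^2$ needed to integrate against the hypothesis $\int\alpha^2\,dr/r<\infty$, so one must either extract a sharper $\beta$-to-$\alpha$ comparison using the near-AD-regular structure on $G$ or pass through Tolsa's AD-regular characterization on a uniformly summable subset of $G$.
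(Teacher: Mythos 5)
Your parts (i), (iii), and (iv) match the paper: $Z\subset G$ follows from \rf{e:muAD}, $\mu(G)=0$ from the contradiction hypothesis once $\mu|_G$ is known to be $d$-rectifiable, and $\cH^d(G)=0$ because $\mu|_G$ and $\cH^d|_G$ are mutually absolutely continuous thanks to the density bounds \rf{e:t<theta<A} (cf.\ \cite[Theorem 6.9]{Mattila}).

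The genuine gap is in step (ii), which you acknowledge but do not close. Neither of your proposed fixes works: Tolsa's $\alpha$-number criterion for uniform rectifiability in \cite{Tol09} requires $\mu$ to be $d$-AD-regular globally (only $\mu|_G$ has controlled density here, and the pointwise Jones bound is not the Carleson packing condition that theorem needs), and there is no sharper $\beta_2$-to-$\alpha$ comparison to be extracted, since the deficiency is structural: $F_B$ tests against $1$-Lipschitz functions while $\beta_2$ is an $L^2$-average, so the test-function route can only produce $\beta_2^2\lec\alpha$ at first power. The missing idea is to abandon $\beta_2$ and use the $L^1$ variant: testing against the majorant $\phi(y)\,\dist(y,L_{x,2r})/r$ with $\phi(y)=\frac1r(2r-|x-y|)_+$ and using $\Theta^d_\mu(x,2r)\lec_A 1$ on $G$ yields the pointwise \emph{first-power} bound
\[
\beta_{\mu|_G,1}^d(x,r)\lec\alpha_\mu(x,2r)\,\frac{\mu(B(x,2r))}{r^d}\lec_A\alpha_\mu(x,2r),
\]
which squares to $\beta_1^2\lec_A\alpha^2$ and integrates directly against \rf{working-hyp-2A}. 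The correct sufficiency theorem is then \cite[Theorem A]{BS16}, not \cite{AT15}: the density bounds defining $G$ together with $\int_0^1\beta_{\mu|_G,1}^d(x,r)^2\,dr/r<\infty$ for every $x\in G$ give $d$-rectifiability of $\mu|_G$. With this step repaired, the remainder of your outline is sound.
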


\begin{proof}
	It is easy to see that $\mu|_{G}\ll \cH^{d}|_{G}\ll \mu|_{G}$ since
	\begin{equation}\label{e:t<theta<A}
	2^{-d}\tau\leq \liminf_{r\rightarrow 0} \Theta_{\mu}^{d}(x,r) 
	\leq  \limsup_{r\rightarrow 0} \Theta_{\mu}^{d}(x,r) 
	\leq 2^{d} A \;\; \mbox{for all }x\in G.
	\end{equation}
	See for example \cite[Theorem 6.9]{Mattila}. Given $x\in G$ and $0<r<C_{1}/2$,
consider the function $\phi(y) = \frac1r(2r-|x-y|)_+$. Then we have	
	\begin{align}\label{eq514}
	\beta_{\mu|_{G},1}^{d}(x,r)
	& :=\inf_{L}\frac{1}{r^{d}}\int_{B(x,r)}\frac{\dist(y,L)}{r}\, d\mu|_{G}(y)
	 \leq \frac{1}{r^{d}}\int_{B(x,r)} \phi(y)\,\frac{\dist(y,L_{x,2r})}{r}\,d\mu(y)\\
	& \lec \alpha_{\mu}(x,2r)\,\frac{\mu(B(x,2r))}{r^{d}}
	\stackrel{\eqref{e:muAD}}{\lec_{A}} \alpha_{\mu}(x,2r).\nonumber
	\end{align}
	Thus, $	\int_{0}^{1} \beta_{\mu|_{G},1}^{d}(x,r)^{2}\frac{dr}{r}<\infty$ for each $x\in G$, and so $\mu|_{G}$ is $d$-rectifiable by \cite[Theorem A]{BS16}. Therefore, $\mu(G)=0$ by our assumption at the beginning of the proof that $\mu$ vanishes on any $d$-rectifiable subset of positive measure. Now we just observe that by \eqref{e:muAD}, $Z\subset G$, and so the proof is finished. 
\end{proof}

As explained at beginning of Section \ref{sec5} the goal of the rest of the paper is to show that in fact $\mu(G)>0$.

\vv

\section{The approximating surface}\label{approximating}

We will rely on the following theorem.

\begin{theorem}\cite{DT12}\label{teo61}
	For $k\in \N\cup\{0\}$, set $r_{k}=10^{-k}$ and let $\{x_{j,k}\}_{j\in J_{k}}$ be a collection of points so that for some $d$-plane $P_{0}$,
	\[\{x_{j,0}\}_{j\in J_{0}}\subset P_{0},\] 
	\[|x_{i,k}-x_{j,k}|\geq r_{k} \quad\mbox{ for all $i,j\in J_k$},\]
	and, denoting $B_{j,k}=B(x_{j,k},r_{k})$, 
	\begin{equation}
	x_{i,k}\in V_{k-1}^{2}\quad\mbox{ for all $i\in J_k$,}
	\label{V2}
	\end{equation}
	where
	\[ V_{k}^{\lambda}:=\bigcup_{j\in J_{k}}\lambda B_{j,k}.\]
	To each point $x_{j,k}$, associate a $d$-plane $P_{j,k}\subset \R^n$  
	{such that $P_{j,k} \ni x_{j,k}$}
	and set
	\[
	\ve_{k}(x)=\sup\{\hdis{x,10^{4}r_{l}}(P_{j,k},P_{i,l}): j\in J_{k}, |l-k|\leq 2, i\in J_{l}, 
	x\in 100 B_{j,k}\cap 100 B_{i,l}\}.
	\]
	There is $\ve_{1}>0$ such that if $\ve\in (0,\ve_{1})$ and 
	\begin{equation}
	\label{e:vek<ve}
	\ve_{k}(x_{j,k})<\ve \mbox{ for all }k\geq 0\;\mbox{ and }\;j\in J_{k},
	\end{equation}
	then there is a bijection $g:\R^{n}\rightarrow \R^{n}$ so that the following hold
	\begin{enumerate}[(i)]
		\item We have 
		\begin{equation}\label{eqahj0}
		E_{\infty}:=\bigcap_{K=1}^{\infty}\overline{\bigcup_{k= K}^{\infty} \{x_{j,k}\}_{j\in J_{k}}}\subset \Sigma:= g(\R^{d}).
		\end{equation}
		\item $g(z)=z$ when $\dist(z,P_{0})>2$.
		\item {There is some $\tau_0>0$} such that, for $x,y\in \R^{n}$,
		\[ \frac{1}{4}|x-y|^{1+\tau_0}\leq |g(x)-g(y)|\leq 10|x-y|^{1-\tau_0}.\]
		\item We have 
		\begin{equation}
		\label{e:gz-z}
		|g(z)-z|\lec \ve \;\mbox{ for }\;z\in \R^{n}.
		\end{equation}
		\item There is a maximal $\frac{r_{k}}{2}$-separated set $\{x_{j,k}\}_{j\in L_{k}}$ in $\R^{n}\setminus V_{k}^{9}$ such that setting
		\[
		B_{j,k}=B(x_{j,k},r_{k}/10)\; \mbox{ for $\;j\in L_{k}$},\] 
		we have $g(x)=\lim_{k} \sigma_{k}\circ\cdots \sigma_{0}(x)$ all for $x\in P_0$, where 
		$\sigma_k:\R^n\to\R^n$ is defined by
		\begin{equation}
		\sigma_{k}(y)=\psi_{k}(y)y+\sum_{j\in J_{k}}\theta_{j,k}(y)\, \pi_{j,k}(y),
		\label{e:sigmak}
		\end{equation}
		and where $\pi_{j,k}$ is the orthogonal projection onto $P_{j,k}$,
		$\{\theta_{j,k}\}_{j\in L_{k}\cup J_{k}}$ is a partition of unity such that $\chi_{9B_{j,k}}\leq \theta_{j,k}\leq \chi_{10 B_{j,k}}$ for all $k$ and $j\in L_{k}\cup J_{k}$, and $\psi_{k}=\sum_{j\in L_{k}}\theta_{j,k}$. 
		 
		\item \cite[Equation (4.5)]{DT12} For $k\geq 0$,
		\begin{equation}\label{e:v10c}
		\sigma_{k}(y)=y \;\mbox{ and } \;D\sigma_{k}(y)=Id\; \mbox{ for } \;y\in \R^{n}\setminus V_{k}^{10}.
		\end{equation}
		\item\cite[Proposition 5.1]{DT12} Let $\Sigma_{0}=P_{0}$ and
		\[
		\Sigma_{k+1}=\sigma_{k}(\Sigma_{k}).\] 
		There is a function $A_{j,k}: P_{j,k}\cap 49B_{j,k}\rightarrow P_{j,k}^{\perp}$ of class $C^{2}$ such that $|A_{j,k}(x_{j,k})|\lec \ve r_{k}$, $|DA_{j,k}|\lec \ve$ on $P_{j,k}\cap 49B_{j,k}$, and if $\Gamma_{j,k}$ is its graph over $P_{j,k}$, then 
		\begin{equation}\label{e:49graph}
		\Sigma_{k+1}\cap D(x_{j,k},P_{j,k},49r_{k})=\Gamma_{j,k}\cap  D(x_{j,k},P_{j,k},49r_{k}),
		\end{equation}
		where 
		\begin{equation}\label{e:D-is-a-cylinder}
		D(x,P,r)=\{z+w: z\in P\cap B(x,r), w\in P^{\perp} \cap B(0,r)\}. 
		\end{equation}
		(Above $P^\perp$ is the $(n-d)$-plane perpendicular to $P$ going through 0.)
		In particular,
		\begin{equation}
		\hdis{x_{j,k},49r_{j,k}}(\Sigma_{k+1},P_{j,k})\lec\ve.
		\label{e:49r}
		\end{equation}
		\item \cite[Lemma 6.2]{DT12} For $k\geq 0$ and $y\in \Sigma_{k}$, there is an affine $d$-plane $P$ through $y$ and a $C\ve$-Lipschitz and $C^{2}$ function $A:P\rightarrow P^{\perp}$ so that if $\Gamma$ is the graph of $A$ over $P$, then 
		\begin{equation}\label{e:ygraph}
		\Sigma_{k}\cap B(y,19r_{k})=\Gamma\cap B(y,19r_{k}).
		\end{equation}
		\item \cite[Proposition 6.3]{DT12} $\Sigma=g(P_0)$ is $C\ve$-Reifenberg flat in the sense that for all $z\in \Sigma$, and $t\in (0,1)$, there is a $d$-plane $P=P(z,t)$ so that $F_{z,t}(\Sigma,P)\lec \ve$. 
		\item \cite[Equation (6.7)]{DT12} For all $y\in \Sigma_{k}$, 
		\begin{equation}\label{e:sky-y}
		|\sigma_{k}(y)-y|\lec \ve r_{k}.
		\end{equation}
		In particular, it follows that
		\begin{equation}\label{e:ytosigma}
		\dist(y,\Sigma)\lec \ve r_{k}\quad \mbox{for }y\in \Sigma_{k}.
		\end{equation}
		
		\item \cite[Lemma 7.2]{DT12} For $k\geq 0$, $y\in \Sigma_{k}\cap V_{k}^{8}$, choose $i\in J_{k}$ such that $y\in 10 B_{i,k}$. Then
		\begin{equation}\label{e:skCloseInBall}
		|\sigma_{k}(y)-\pi_{i,k}(y)|\lec \ve_{k}(y)r_{k}.
		\end{equation} 
%

\item \cite[Proposition 8.3]{DT12} If $g_{k}(x)= \sigma_{k}\circ\cdots \circ \sigma_{0}(x)$ and, for all $x\in P_{0}$,
		\begin{equation}\label{e:DTsum}
			\sum_{k\geq 0} \ve_{k}(g_{k}(x))^{2}\leq \ve,		
		\end{equation}
	then for $\ve$ small enough, $g$ is $\exp(C\ve)$-bi-Lipschitz, and hence $(1+C\wt\ve)$-bi-Lipschitz (this is not stated as such in \cite[Proposition 8.3]{DT12}, but it follows from its proof. To observe this, the crucial inequalities are  (8.10)-(8.11) and (8.22)-(8.23) in \cite{DT12}).
		\item \cite[Lemma 13.2]{DT12} Under the assumption \rf{e:DTsum}, for $x\in \Sigma$ and $r>0$, 
		\begin{equation}
		\cH^{d}_{\infty}(B(x,r)\cap \Sigma)\geq (1-C\ve)\,\omega_{d} r^{d},
		\label{e:sigmalr}
		\end{equation}
		where $\omega_{d}$ is the volume of the unit ball in $\R^{d}$ (this statement is proven in \cite{DT12} with $\cH^{d}$ in place of $\cH^{d}_{\infty}$, but the same proof works for $\cH^{d}_{\infty}$). 
	\end{enumerate}

	%
	\label{t:DT}
\end{theorem}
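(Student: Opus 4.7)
The plan is to invoke directly the Reifenberg-type construction of David--Toro: since the statement is essentially a compilation of results from \cite{DT12}, my ``proof'' would be an item-by-item cross-reference to the cited propositions and equations there, with only a small amount of extra verification needed in two places.

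First I would observe that the hypothesis \eqref{e:vek<ve} is precisely the coherence condition under which the entire DT12 machine runs, so once it is granted one obtains, following Sections 3--4 of \cite{DT12}, the partition-of-unity functions $\theta_{j,k}$, the auxiliary maps $\sigma_k$ as in \eqref{e:sigmak}, the iterated surfaces $\Sigma_{k+1}=\sigma_k(\Sigma_k)$, and the limiting bijection $g:\R^n\to\R^n$. Items (i)--(vi) are then just the basic transport properties of this iteration: the containment \eqref{eqahj0} of the limit set of centers into $\Sigma=g(\R^d)$, triviality of $g$ outside the slab around $P_0$, the H\"older bi-estimate for $g$, the uniform closeness $|g(z)-z|\lec \varepsilon$, the explicit partition-of-unity formula for $\sigma_k$, and the identity $\sigma_k=\mathrm{Id}$ on $\R^n\setminus V_k^{10}$. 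Each is either a definition in \cite{DT12} or is stated as cited (in particular (vi) is their (4.5)).

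Items (vii)--(xi) constitute the ``local geometry'' of the construction. I would cite in turn: Proposition 5.1 of \cite{DT12} for the graph representation \eqref{e:49graph} of $\Sigma_{k+1}$ over $P_{j,k}$ in a cylinder of radius $49r_k$ (and the consequence \eqref{e:49r}); Lemma 6.2 of \cite{DT12} for the analogous graph representation \eqref{e:ygraph} at an arbitrary point $y\in\Sigma_k$; Proposition 6.3 of \cite{DT12} for the $C\varepsilon$-Reifenberg flatness of $\Sigma$; and equations (6.7) and Lemma 7.2 of \cite{DT12} for the quantitative closeness estimates \eqref{e:sky-y} and \eqref{e:skCloseInBall}. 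The inequality \eqref{e:ytosigma} then follows from \eqref{e:sky-y} by summing a geometric series.

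The two items that need more than a straight citation are (xii) and (xiii). For (xii), \cite[Proposition 8.3]{DT12} gives $g$ as $\exp(C\varepsilon)$-bi-Lipschitz under the summability condition \eqref{e:DTsum}; to upgrade this to $(1+C\widetilde{\varepsilon})$-bi-Lipschitz my plan is to trace through inequalities (8.10)--(8.11) and (8.22)--(8.23) of \cite{DT12}, which control the derivatives of consecutive $\sigma_k$ in terms of $\varepsilon_k(g_k(x))^2$; summing against \eqref{e:DTsum} and using $\exp(t)\le 1+Ct$ for small $t$ yields the sharper form. For (xiii), \cite[Lemma 13.2]{DT12} is stated with $\cH^d$, but its proof constructs a covering at every scale and concludes by a lower bound on the Hausdorff content \emph{before} sending mesh size to zero, so the same argument works for $\cH^d_\infty$ without modification; I would simply record this observation. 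The real work — and the only genuine obstacle — is of course not in this lemma but in verifying, back in the main argument, that the points $x_{j,k}$ and planes $P_{j,k}$ produced by the stopping-time construction of Section \ref{sec5} actually satisfy \eqref{e:vek<ve} and \eqref{e:DTsum}; once this is done, the present theorem is purely bookkeeping.
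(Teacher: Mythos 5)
Your proposal is correct and matches the paper's treatment exactly: the theorem is a compilation of results cited verbatim from \cite{DT12}, and the paper's own ``proof'' consists only of the two parenthetical remarks inside the statement of items (xii) and (xiii) — precisely the two places you single out, with essentially the same justifications (tracing (8.10)--(8.11) and (8.22)--(8.23) of \cite{DT12} to extract the $\exp(C\ve)$ constant, and noting that the Lemma 13.2 argument passes unchanged to $\cH^d_\infty$). Your closing observation, that the real content is in verifying \eqref{e:vek<ve} and \eqref{e:DTsum} for the stopping-time data of Section \ref{sec5}, is exactly what the paper then goes on to do in \eqref{e:ve<a} and Lemma \ref{lem-sigma-AD}.
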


We now apply this result to our situation. For $k\geq0$, let $r_{k}=10^{-k}$ and $\{x_{j,k}'\}_{j\in J_{k}}$ be a maximally $(1+1/10)r_{k}$ separated set in $E_{k}$, where
\begin{equation}\label{def-E_k}
E_{k}:=\{x\in E_{0}\cap B_{0} :d(x)< r_{k}\}\subset E_{0}\cap B_{0}.
\end{equation}
Here $E_0$, $B_0$ and $d(x)$ are as in Section \ref{sec5} and \rf{def-d}.
 Note that by \eqref{e:delta0} 
 $E_k=E_0\cap B_0$ for $k=0,1,2,3$. If $E_k=\emptyset$ then $J_k=\emptyset$.
 Let $C_2$ be such that $1<C_{2}^2<C_{1}$.
\begin{equation}\label{coice-x'}
P_{j,k}'=L_{x_{j,k}',C_{2} r_{k}}, \;\;\; \LL_{j,k}'=\LL_{x_{j,k}',C_{2} r_{k}}
\end{equation}
and $P_{0}=P_{0,0}'$. 

These would be good planes and points for the purpose of applying Theorem \ref{t:DT} if each $d$-plane $P_{j,k}$ passed through  $x_{j,k}'$. Since this may fail, some extra care must be taken. 

Note that $r_{k}>d(x_{j,k}')$, and so by Lemma \ref{l:good-range} and the subsequent remark, arguing as in
\rf{eq514}, we obtain
\begin{align*}
\avint_{B(x_{j,k}', r_{k}/2)\cap E_{0}}\frac{\dist(x,P_{j,k}')}{r_{k}}d\mu
& \lec \alpha(x_{j,k}',2C_{2}r_{k})\frac{\mu(B(x_{i,k}',2C_{2} r_{k}))}{\mu(B(x_{j,k}', r_{k}/2)\cap E_{0}))}\\
& \stackrel{\eqref{e:good-range}}{\leq} \alpha(x_{j,k}',2C_{2}r_{k}) \frac{\mu(B(x_{i,k}',2C_{2} r_{k}))}{\mu(B(x_{j,k}',r_{k}/2))(1- c(M)\ve^{\frac{1}{2}})}\\
&\approx_{A,\tau,M, C_2} \alpha(x_{j,k}',2C_{2}r_{k}).
\end{align*}
Thus, for $\ve>0$ small enough, there is $x_{j,k}\in B(x_{j,k}',r_{k}/10)\cap E_{0}$ so that 
\begin{equation}\label{e:xP'}
\dist(x_{j,k},P_{j,k}')\lec_{A,\tau, C_2}  \alpha(x_{j,k}',2C_{2}r_{k}) \,r_{k}.
\end{equation}
Let $B_{j,k} = B(x_{j,k},r_{k})$, $B_{j,k}'=B(x_{j,k}',r_{k})$, and $V_{k}^{\lambda}$ be as in Theorem \ref{t:DT}. Notice that since $\{x_{j,k}'\}_{j\in J_{k}}$ is a maximal $(1+1/10)r_{k}$-net for $E_{k}\cap B_{0}$, the sequence $\{x_{j,k}\}_{j\in J_{k}}$ is now $r_{k}$-separated (because $\alpha_{\mu}(2C_{2}B_{j,k}')\ll1$), and we have 
\begin{equation}
\label{e:E<V}
E_{k}\cap B_{0} \subset V_{k}^{3/2}.  
\end{equation}
Moreover, since $E_{k+1}\subset E_{k}$, $x_{j,k+1}'\in \bigcup_{i} B(x_{i,k}',r_{k})$, and so
\begin{equation}\label{e:E<V-1}
x_{j,k+1}\subset \bigcup_{i} B(x_{i,k}',r_{k}+C\ve r_{k+1})\subset \bigcup_{i} B(x_{i,k},r_{k}+r_{k}/10+C\ve r_{k})\subset V_{k}^{3/2},
\end{equation}
which ensures that \rf{V2} holds.

Let $P_{j,k}$ be the $d$-plane parallel with $P_{j,k}'$ that passes through $x_{j,k}$ and let
\[
c_{j,k} := c_{B(x_{j,k}',C_{2}r_{k})}=c_{C_{2}B_{j,k}'}.\] 
%
Similarly, let $\LL_{j,k}=c_{j,k} \HH^{d}|_{P_{j,k}}$ be the translate of $\LL_{j,k}'$. Note that $B_{j,k}\subset 2B_{j,k}'$, and so 
\begin{align}
F_{C_{2}B_{j,k}}(\mu, \LL_{j,k})
& \leq F_{C_{2}B_{j,k}}(\mu, \LL_{j,k}')
+F_{C_{2}B_{j,k}}(\LL_{j,k}', \LL_{j,k}) \notag \\
& \stackrel{\eqref{e:xP'}}{\lec}_{A,\tau,C_2} F_{2C_{2}B_{j,k}'}(\mu, \LL_{j,k}) +  r_{k}^{d+1}\alpha_{\mu}(2C_{2}B_{j,k}') \notag \\
& \stackrel{\eqref{e:muAD}}{\lec}_{A,\tau,C_2}  r_{k}^{d+1}\alpha_{\mu}(2C_{2}B_{j,k}') .
\label{e:muLjk1}
\end{align}

In the case $k=0$, since $B_{0}=\cnj{B(0,1)}$  we may assume that $\{x_{j,0}\}_{j\in J_0}= \{x_{0,0}\}=\{0\}$ and so $P_{0,0}$ passes through the center of $B_0$.

\begin{lemma}
For $C_{2}$ large enough and $x\in E_k$,
	\begin{equation}\label{e:ve<a}
	\ve_{k}(x) \lec_{A,\tau, C_2} \alpha_{\mu}(x,C_{2}^2 r_{k}).
	\end{equation}
\end{lemma}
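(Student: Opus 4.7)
The plan is to bound $\hdis{x,10^4 r_l}(P_{j,k},P_{i,l})$ by the triangle inequality, inserting the unshifted minimizing planes $P_{j,k}' = L_{x_{j,k}',C_2 r_k}$ and $P_{i,l}' = L_{x_{i,l}',C_2 r_l}$ from \eqref{coice-x'}, namely
\[
\hdis{x,10^4 r_l}(P_{j,k},P_{i,l}) \leq \hdis{x,10^4 r_l}(P_{j,k},P_{j,k}') + \hdis{x,10^4 r_l}(P_{j,k}',P_{i,l}') + \hdis{x,10^4 r_l}(P_{i,l}',P_{i,l}).
\]
Since $x\in 100B_{j,k}\cap 100 B_{i,l}$ and $|l-k|\leq 2$, the centers $x_{j,k}'$ and $x_{i,l}'$ both lie within distance $\lec r_k$ of $x$, so for $C_2$ sufficiently large (but with $C_2^2 < C_1$) the balls $B(x_{j,k}', 2C_2 r_k)$ and $B(x_{i,l}', 2C_2 r_l)$ are both contained in $B(x, C_2^2 r_k)$, which is the ball on whose $\alpha$-number we want to close.

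For the outer two terms, note that $P_{j,k}$ is by construction the parallel translate of $P_{j,k}'$ through $x_{j,k}$, so the two planes differ by a translation of size $\dist(x_{j,k},P_{j,k}') \lec_{A,\tau,C_2} \alpha_\mu(x_{j,k}',2C_2 r_k)\,r_k$ by \eqref{e:xP'}. Normalizing by $10^4 r_l \approx r_k$ gives
\[
\hdis{x,10^4 r_l}(P_{j,k},P_{j,k}') \lec_{A,\tau,C_2} \alpha_\mu(x_{j,k}',2C_2 r_k),
\]
and likewise for $\hdis{x,10^4 r_l}(P_{i,l}',P_{i,l})$. Each of these is then replaced by a constant multiple of $\alpha_\mu(x,C_2^2 r_k)$ via the monotonicity Lemma \ref{a-comparison} (whose hypothesis $2d(x_{j,k}') < C_2 r_k$ holds because $x_{j,k}'\in E_k$ gives $d(x_{j,k}') < r_k$, and analogously for $x_{i,l}'$).

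For the middle term I would compare both $P_{j,k}'$ and $P_{i,l}'$ to the common minimizer $L_{x,C_2^2 r_k}$, applying \eqref{e:angle} from Lemma \ref{lem5.6} twice; this is legal since $d(x) < r_k \ll C_2^2 r_k$ for $x\in E_k$ and the ball inclusions above hold. Each application yields $\angle(P_{j,k}',L_{x,C_2^2 r_k}) \lec_{A,\tau,C_2}\alpha_\mu(x,C_2^2 r_k)$ together with a matching point-to-plane estimate at an interior point, and two triangle inequalities produce the same bound for the Hausdorff distance between $P_{j,k}'$ and $P_{i,l}'$ at any scale $\gec r_k$; in particular at the scale $10^4 r_l \approx r_k$, only a $C_2$-dependent factor is lost. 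Summing the three pieces gives \eqref{e:ve<a}. The main technical obstacle is bookkeeping the chain of ball inclusions and scale ratios needed to invoke the estimates of Section \ref{sec5}: $C_2$ must be large enough for $B(x,C_2^2 r_k)$ to swallow every auxiliary ball that appears yet small enough that $C_2^2 < C_1$, and each use of Lemma \ref{lem5.6} or Lemma \ref{a-comparison} must be at a center lying in $E_0\cap B_0$ whose $d$-value is a fixed multiple of $r_k$ below the scale of the ball involved, which is the case for all centers that appear here.
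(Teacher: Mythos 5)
Your argument is correct, but it takes a different route from the paper's. The paper does not pass through the unshifted planes $P_{j,k}'$, $P_{i,l}'$ at all: it observes that for $C_2$ large, $\tfrac{C_2}{2}B_{j,k}\subset C_2B_{j,k}\cap C_2B_{i,l}$, so both shifted measures $\LL_{j,k}$ and $\LL_{i,l}$ have small $F$-distance to $\mu$ on the \emph{same} ball $\tfrac{C_2}{2}B_{j,k}$ (this follows from \eqref{e:muLjk1}, which already absorbs the shift correction via \eqref{e:xP'}), and then one single application of \eqref{e:angle-general1} compares the two planes directly. Your proof instead decomposes the local Hausdorff distance by triangle inequality into a shift term for $P_{j,k}\to P_{j,k}'$, a shift term for $P_{i,l}\to P_{i,l}'$, and a middle term that you estimate by bringing in a \emph{third} plane $L_{x,C_2^2 r_k}$ and applying \eqref{e:angle} twice. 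Both routes lean on the same underlying tool (Lemma \ref{l:angle-general}), but the paper's one-shot comparison on a common ball is shorter and avoids the extra bookkeeping of comparing local Hausdorff distances centered at different points and at different scales; your decomposition has the modest advantage of isolating the shift corrections from the angular ones, making the role of \eqref{e:xP'} more transparent. One small fix: for Lemma \ref{a-comparison} applied at $x_{j,k}'$ with radius $2C_2 r_k$ you actually need $B(x_{j,k}',4C_2r_k)\subset B(x,C_2^2 r_k)$, not just containment of $B(x_{j,k}',2C_2r_k)$; this still holds for $C_2$ large enough since $|x-x_{j,k}'|\lec r_k$, so the conclusion is unaffected.
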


Notice that this lemma ensures that \eqref{e:vek<ve} holds (up to a constant). 

\begin{proof}
Let $i,j,k,l$ be such that  $j\in J_{k}$,  $i\in J_{l}$, $l\leq k\leq l+2,$
and $x\in 100 B_{j,k}\cap 100 B_{i,l}$. Then for $C_{2}$ large enough, $\frac{C_{2}}{2} B_{j,k}\subset C_{2} B_{j,k}\cap C_{2} B_{i,l}$, and so 
\begin{align*}
F_{\frac{C_2}{2} B_{j,k}}(\LL_{j,k},\mu)  & +F_{\frac{C_{2}}{2} B_{j,k}}(\mu,\LL_{i,l})
 \leq F_{C_{2}B_{j,k}}(\LL_{j,k},\mu) +F_{C_{2}B_{i,l}}(\mu,\LL_{i,l})\\
& \stackrel{\eqref{e:muLjk1}}{\lec}_{A,\tau,C_2}   r_{k}^{d+1}\alpha_{\mu}(2C_{2}B_{j,k})+ r_{k}^{d+1}\alpha_{\mu}(2C_{2}B_{i,l})\\ &\lec_{A,\tau,C_2} r_{k}^{d+1}\alpha_{\mu}(x,C_{2}^2 r_{k}).
\end{align*}
The lemma now follows from \eqref{e:angle-general1} as $x_{j,k}\in B(x'_{j,k}, r_k)\cap E_0$, and $d(x'_{j,k})<r_k$ then $d(x_{j,k})\le 2r_k$ applying Remark \ref{remark2}.
\end{proof}

Let $\Sigma_{k},\Sigma,\sigma_{k},g$, and so forth be the data we obtain from applying Theorem \ref{t:DT}. 
If $E_k=\emptyset$ then for all $j\ge k$ $E_j=\emptyset$ and the construction stops.

Observe that $V_{k}^{10}\subset V_{0}^{10}= B(0,10)$, and so by \eqref{e:v10c},
\begin{equation}\label{e:planar-sigma}
\Sigma\setminus B(0,10)=P_{0}\setminus B(0,10).
\end{equation}
Observe also  that in our scenario (recalling $B_0$ is closed)
\begin{equation}\label{e-infty=z}
E_{\infty} = Z\subset  B_0,
\end{equation}
which might a priori be empty.
Note that if $x\in V_{k}^{40}$ for infinitely many $k$, then $d(x)=0$ thus we define $k(x)$ for $x\in \Sigma\cap 40B_{0}\setminus Z$ as follows:
\begin{align}\label{k(x)}
\hbox{For }x\in \Sigma\cap 40B_{0}\setminus Z,\hbox{ let }k(x)\hbox{ be the smallest integer }k\hbox{ for which }x\not\in V_{k}^{40}. 
 \end{align}
Since $0\in \{x_{j,k} \}_{j\in J_{0}}$, we know $10B_{0}\subset V_{0}^{40}$ and hence $k(\cdot)>0 $ is well defined. 
 
Since $k(x)$ is minimal, $x\in V_{k(x)-1}^{40}$, and so 
\begin{equation}
\label{e:ing99}
x\in 40 B_{j,k(x)-1}\quad\mbox{ for some $j\in J_{ k(x)-1}$.}
\end{equation}
 Thus, $B(x,r_{k(x)})\subset 41 B_{j,k(x)-1}$. 

\begin{lemma}\label{lem6.3}
For $x\in \Sigma\cap 40B_{0}\setminus Z$, and recalling the notation from Theorem \ref{t:DT}, 
\begin{equation}
\label{e:ingraph}
B(x,r_{k(x)}) \cap \Sigma = B(x,r_{k(x)})\cap \Sigma_{k(x)}=B(x,r_{k(x)})\cap  \Gamma_{j,k(x)-1},
\end{equation}
for some $j\in J_{ k(x)-1}$.
\end{lemma}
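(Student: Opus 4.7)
The plan is to prove the two equalities separately, starting with the easier one.

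For the equality $B(x,r_{k(x)})\cap\Sigma_{k(x)}=B(x,r_{k(x)})\cap\Gamma_{j,k(x)-1}$, I would use only the minimality of $k(x)$. Since $x\in V_{k(x)-1}^{40}$, there exists $j\in J_{k(x)-1}$ with $x\in 40 B_{j,k(x)-1}$, and $r_{k(x)}=r_{k(x)-1}/10$ gives $B(x,r_{k(x)})\subset B(x_{j,k(x)-1},41\,r_{k(x)-1})$. Because $P_{j,k(x)-1}$ passes through $x_{j,k(x)-1}$ by construction (it is the translate of $P_{j,k(x)-1}'$ through $x_{j,k(x)-1}$), the definition \eqref{e:D-is-a-cylinder} yields $B(x_{j,k(x)-1},41\,r_{k(x)-1})\subset D(x_{j,k(x)-1},P_{j,k(x)-1},49\,r_{k(x)-1})$. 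Invoking \eqref{e:49graph} of Theorem \ref{t:DT} and intersecting with $B(x,r_{k(x)})$ then gives the equality.

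For the first equality, I would prove by induction on $k\geq k(x)$ that $\Sigma_k\cap B(x,r_{k(x)})=\Sigma_{k(x)}\cap B(x,r_{k(x)})$, and then pass to the limit. The key geometric input is the claim
\[
V_k^{10}\cap B(x,2r_{k(x)})=\varnothing\qquad\text{for every }k\geq k(x).
\]
For $k=k(x)$ this is immediate from $x\notin V_{k(x)}^{40}$: a point $y\in 10 B_{i,k(x)}\cap B(x,2r_{k(x)})$ would force $|x-x_{i,k(x)}|\leq 12\,r_{k(x)}<40\,r_{k(x)}$. For $k>k(x)$, every $x_{j,k}'$ sits in $E_k\subset E_{k(x)}$, and the maximality of the $(1+1/10)r_{k(x)}$-net combined with $|x_{j,k}-x_{j,k}'|\leq r_k/10$ places $x_{j,k}$ within $(3/2)r_{k(x)}$ of some $x_{i,k(x)}$; then $y\in 10 B_{j,k}\cap B(x,2r_{k(x)})$ would yield
\[
|x-x_{i,k(x)}|\leq 2r_{k(x)}+10r_k+\tfrac{3}{2}r_{k(x)}\leq \tfrac{9}{2}r_{k(x)},
\]
again contradicting $x\notin V_{k(x)}^{40}$.

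Given the claim, property (vi) of Theorem \ref{t:DT} makes $\sigma_k$ the identity on $B(x,r_{k(x)})$, so $\Sigma_k\cap B(x,r_{k(x)})\subset\Sigma_{k+1}$. Conversely, if $y\in\Sigma_k$ with $\sigma_k(y)\in B(x,r_{k(x)})$, then \eqref{e:sky-y} gives $y\in B(x,r_{k(x)}+C\ve r_k)\subset B(x,2r_{k(x)})$ for $\ve$ small, hence $y\notin V_k^{10}$ and $\sigma_k(y)=y\in B(x,r_{k(x)})$; this closes the induction. To pass to the limit, take $w\in\Sigma\cap B(x,r_{k(x)})$ and set $z=g^{-1}(w)\in P_0$; then $g_k(z)\in\Sigma_{k+1}$ converges to $w$, so lies in $B(x,r_{k(x)})$ for $k$ large, and by induction $g_k(z)\in\Sigma_{k(x)}$. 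Since $\Sigma_{k(x)}$ is locally a $C^2$ graph, hence closed, by \eqref{e:ygraph}, $w\in\Sigma_{k(x)}$. Conversely, any $y\in\Sigma_{k(x)}\cap B(x,r_{k(x)})$ has the form $g_{k(x)-1}(z)$ and is fixed by every subsequent $\sigma_k$, so $y=g(z)\in\Sigma$. The main delicacy is the scale bookkeeping in the geometric claim: I must carry enough slack in the $2r_{k(x)}$ margin that the $O(\ve r_k)$ displacement under $\sigma_k$ cannot drag a point of $\Sigma_k$ from outside $B(x,r_{k(x)})$ into it, while simultaneously controlling the finer-scale nets $\{x_{j,k}\}_{j\in J_k}$ for $k>k(x)$, which sit inside $V_{k(x)}^{3/2}$ by the maximality of the coarser net but must be kept clear of the ball around $x$.
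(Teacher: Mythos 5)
Your proof is correct and takes essentially the same approach as the paper. The only cosmetic differences are that you establish the key geometric claim ($V_k^{10}$ avoiding a ball around $x$ for all $k\geq k(x)$) via the maximality of the scale-$k(x)$ net and the nesting $E_k\subset E_{k(x)}$, while the paper iterates condition \eqref{V2} directly, and you spell out the induction and the limiting argument more explicitly than the paper does; the underlying mechanism — the later nets lie in a small neighbourhood of the $k(x)$-net, which is far from $x$ by minimality of $k(x)$, so each $\sigma_k$ with $k\geq k(x)$ is the identity near $x$ — is identical.
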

\begin{proof}
Indeed, the second equality is from \eqref{e:49graph} (keep in mind for later that $\Gamma_{j,k(x)-1}$ is a $C\ve$-Lipschitz graph over $P_{j,k(x)-1}$). To show the first identity, notice that since $x\not\in V_{k(x)}^{40}$, 
\[
B(x,r_{k(x)})\subset \bR^{n}\setminus V_{k(x)}^{39}.\]
Note that by \eqref{V2}, for $k>k(x)$,
\[
\{x_{i,k}\}_{i\in J_{k}}
\subset V_{k-1}^{2} = \{y\in \bR^{n}: \dist(y,\{x_{i,k}\}_{i\in J_{k-1}})<2r_{k-1}\}.
\]
By iterating this via the triangle inequality and recalling that $r_{k}=10^{-k}$, we get
\begin{align*}
\{x_{i,k}\}_{i\in J_{k}}
& \subset \{y\in \bR^{n}: \dist(y,\{x_{i,k(x)}\}_{i\in J_{k(x)}})<2r_{k-1}+\cdots + 2r_{k(x)}\}\\
& \subset\{y\in \bR^{n}: \dist(y,\{x_{i,k(x)}\}_{i\in J_{k(x)}})<\tfrac{20}{9}r_{k(x)}\}
\subset V_{k(x)}^{3}.
\end{align*}
In particular, $V_{k}^{10}\subset V_{k(x)}^{13}\subset V_{k(x)}^{39}$, hence $B(x,r_{k}(x))\subset \bR^{n}\setminus V_{k}^{10}$ and by \eqref{e:v10c}, $\sigma_{k}$ is the identity on $B(x,r_{k(x)})$ for all $k\geq k(x)$. By Theorem \ref{t:DT} \rf{eqahj0}, \rf{e:sigmak}, \rf{e:v10c}, the first equality of \eqref{e:ingraph} holds and this finishes the claim. 
\end{proof}

\begin{lemma}\label{lem6.4}
We have
\begin{equation}
\label{e:rsimd}
 \frac1{10}\,r_{k(x)}\leq d(x)\leq 60\,r_{k(x)} \;\;\mbox{ for \;$x\in \Sigma\cap  40 B_{0}\setminus Z$}.
\end{equation}
\end{lemma}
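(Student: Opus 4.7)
The plan is to prove each inequality by a direct contradiction argument using only the definitions of $k(x)$, $V_k^\lambda$, the set $E_k$, the $1$-Lipschitz function $d$, and the covering property \eqref{e:E<V}. No heavy machinery from the Reifenberg construction is needed — just the definitions, the structure of the paired nets $\{x_{j,k}\}, \{x'_{j,k}\}$ (recall $x_{j,k}\in B(x'_{j,k},r_k/10)\cap E_0$ with $x'_{j,k}\in E_k$), and triangle inequalities.

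For the lower bound I would argue by contradiction: assume $d(x)<r_{k(x)}/10$. Unpacking the infimum in \eqref{def-d}, I can choose $y\in E_0\cap B_0$ with $\delta(y)+|x-y|<r_{k(x)}/10$. Since $d(y)\leq\delta(y)<r_{k(x)}$, the definition \eqref{def-E_k} gives $y\in E_{k(x)}$, and the covering \eqref{e:E<V} produces $i\in J_{k(x)}$ with $|y-x_{i,k(x)}|<\tfrac{3}{2}r_{k(x)}$. The triangle inequality then yields
\[
|x-x_{i,k(x)}|<\bigl(\tfrac{3}{2}+\tfrac{1}{10}\bigr)r_{k(x)}<40\,r_{k(x)},
\]
so $x\in V_{k(x)}^{40}$, contradicting the minimality in the definition of $k(x)$.

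For the upper bound I would use that, by minimality of $k(x)$, $x\in V_{k(x)-1}^{40}$, so there exists $j\in J_{k(x)-1}$ with $|x-x_{j,k(x)-1}|<40\,r_{k(x)-1}$. The companion point $x'_{j,k(x)-1}$ lies in $E_{k(x)-1}$, whence $d(x'_{j,k(x)-1})<r_{k(x)-1}$, while $|x_{j,k(x)-1}-x'_{j,k(x)-1}|<r_{k(x)-1}/10$ by construction. Since $d$ is $1$-Lipschitz,
\[
d(x)\leq d(x'_{j,k(x)-1})+|x-x'_{j,k(x)-1}|\leq \bigl(1+40+\tfrac{1}{10}\bigr)r_{k(x)-1},
\]
which bounds $d(x)$ by an absolute multiple of $r_{k(x)}$. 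The claimed constant $60\,r_{k(x)}$ is then recovered by tracing the constants carefully (or by noting that since $x\in\Sigma$ is pinned to the Lipschitz graph $\Gamma_{j,k(x)-1}$ coming from Lemma \ref{lem6.3}, one can often shave these constants).

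The only potential obstacle is purely arithmetic: verifying that the extraction yields exactly the numerical constants $1/10$ and $60$ rather than slightly larger ones. Conceptually the lemma is just a compatibility statement asserting that the combinatorially defined scale $r_{k(x)}$ matches the intrinsic scale $d(x)$ at which the approximating surface $\Sigma$ ceases to resemble a plane, and this compatibility is forced immediately by how $k(x)$ and the nets $\{x_{j,k}\}$ were set up.
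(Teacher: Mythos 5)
Your proof is essentially the same as the paper's, step for step: the lower bound argues by contradiction from $d(x)<r_{k(x)}/10$ via the covering property \eqref{e:E<V}, and the upper bound uses $x\in V_{k(x)-1}^{40}$, passes to the companion point $x'_{j,k(x)-1}\in E_{k(x)-1}$, and invokes the $1$-Lipschitz property of $d$.

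One correction to your self-assessment: you should not expect careful tracing to recover the stated constant $60$. Your computation gives $d(x)\le\bigl(1+40+\tfrac1{10}\bigr)r_{k(x)-1}=411\,r_{k(x)}$ (the paper's own version is $41r_{k(x)-1}+10r_{k(x)}=420\,r_{k(x)}$), and since $r_{k(x)-1}=10\,r_{k(x)}$ there is no way to push that down to $60\,r_{k(x)}$. The paper's displayed inequality ``$41r_{k-1}+10r_k\le 60r_k$'' is simply an arithmetic slip. This is harmless downstream — the lemma is only ever invoked through the implicit equivalence $d(x)\approx r_{k(x)}$ — but your derived constant is the correct one, and the hedge about ``shaving constants'' via the Lipschitz graph structure is unnecessary and does not actually reach $60$.
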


\begin{proof}
	Since  $x\in \Sigma\cap  40 B_{0}\setminus Z$, $d(x)>0$ and there is $k=k(x)$ as in \rf{k(x)}. Assume $d(x)<r_{k}/10$. 	
	Let $y\in E_{0}\cap B_{0}$ be such that $\delta(y)+|x-y|< 2d(x)$. Since $d(y)\leq \delta(y)$ then
	$$d(y)\leq 2d(x) <\frac15\,r_k <r_k.$$
	Hence, $y\in E_{k} $ and by \eqref{e:E<V}, there is $x_{j,k}$ so that $|x_{j,k}-y|\leq \frac{3}{2} r_{k}$, thus
	\[
	|x-x_{j,k}|\leq |x-y| + |y-x_{j,k}| \leq \frac{1}{5} r_{k}+\frac{3}{2} r_{k}<2r_{k},\]
	which is a contradiction since $x\not\in V_{k}^{40}$ (by the definition of $k(x)$). Thus, $d(x)\geq r_{k}/10$. 
	
	To prove the upper bound, recall that $x\in 40 B_{i,k(x)-1}$ for some $i\in J_{k-1}$ (see the paragraph before Lemma \ref{lem6.3}). Thus, there is some
 $x'\in \{x_{i,k-1}'\}_{i\in J_{k-1}}$ such that $|x-x'|\leq 41 r_{k-1}$.
 Since $\{x_{i,k-1}'\}_{i\in J_{k-1}}\subset E_{k-1}$, we have $d(x')\leq r_{k-1}=10r_{k}$ by definition. Since $d(\cdot)$ is $1$-Lipschitz, then we get
	\[
	d(x)\leq |x-x'|+d(x')
	\leq  41 r_{k-1}+10r_{k}\leq 60 r_k.
	\]
\end{proof}

Let $\eta=1/1000$ and $\{\mathcal B_j\}_{j=1}^N$ be a Besicovitch subcovering (see \rf{besicovitch-1} and \rf{e:besicovitch}) of the collection 
\begin{equation}\label{besic}
\{B(x, \eta d(x) ):x\in \Sigma\setminus Z\}
\end{equation}
where, by the previous lemma, for our choice of $\eta$,
\begin{equation}\label{e:s<rifxin40}
s_{j}: = \eta \,d(\xi_{j})<\frac{3r_{k(\xi_{j})}}{10}= 3r_{k(\xi_{j})-1}\;\; \mbox{ if } \;\; \xi_{j}\in 40B_{0}\cap \Sigma\setminus  Z.
\end{equation}
Since $d(\xi_j)\leq |d(0)| + |\xi_j|\leq 1 + 40$, we also have
\begin{equation}\label{e:s<rifxin40*}
s_{j}\leq \frac{41}{1000}\leq \frac1{20}.
\end{equation}

For $\xi_{j}\in 40B_{0}$, let $B_{i,k(\xi_j)-1}$ be a ball such that $\xi_j\in 40 B_{i,k(\xi_j)-1}$ (recall \rf{e:ing99}), 
so that $3B_j\subset 49 B_{i,k(\xi_j)-1}$. Denote
\begin{equation}\label{e:btwid}
\wt{B}_{j}= C_2 \,B_{i,k(\xi_j)-1}, \quad \xi_{j}'=x_{i,k(\xi_j)-1}.
\end{equation}
Also, set 
\[ 
P_{j} = P_{i,k(\xi_j)-1},\quad
\Gamma_{j} = \Gamma_{i,k(\xi_j)-1},  \;\mbox{ and }\; \LL_{j} = \LL_{i,k(\xi_j)-1}= c_{\wt{B}_{j}} \HH^{d}|_{P_{j}}
\]
so that by \eqref{e:49graph},  
$\Gamma_{j}$ is a graph of a $C\ve$-Lipschitz function $A_{j}$ over $P_{j}$ so that
\begin{equation}\label{e:3bjcapSigma}
3B_{j} \cap \Sigma = 3B_{j}\cap \Gamma_{j} 
\end{equation}
and since $\xi_{j}'\in P_{j}$ and $A_{j}$ is $C\ve$-Lipschitz, $\dist(\xi_{j},P_{j})\lec \ve r_{k(\xi_{j})}$.  These facts imply that,
for 
\[
\sigma:= \HH^{d}|_{\Sigma},
\]
we have
\begin{equation}
\label{e:dHPj}
F_{3B_{j}}(\sigma,\cH^{d}|_{P_{j}}) \lec \ve r_{k(\xi_{j})}^{d+1} \stackrel{\eqref{e:rsimd}}{\approx} \ve d(\xi_{j})^{d+1}\approx \ve s_{j}^{d+1}  \;\; \mbox{ if }\xi_{j}\in 40B_{0}.
\end{equation}
 
Note also that, by \eqref{e:muLjk1},
\begin{equation}\label{e:muLjk}
F_{\wt{B}_{j}}(\mu,\LL_{j})\lec_{A,\tau} \alpha_{\mu}(2\wt{B}_{j})\,r_{k}^{d+1}  \lec \ve r_{k}^{d+1}.
\end{equation}
\begin{remark}\label{r:wheredefined}
Since $\Sigma$ coincides with $P_0$ in $B(0,10)^c$, we do not need to define $\wt{B}_{j}$ and other related terms for $\xi_{j}\not\in 40 B_{0}$.
\end{remark}
 
Next we record the following lemma for later. 

\begin{lemma}\label{l:2Bjinball}
If $2B_{j}\cap 39B_0\neq\varnothing$, then $2B_j\subset 40B_0$. 
\end{lemma}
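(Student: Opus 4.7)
The plan is to show first that the center $\xi_j$ itself must lie in $40B_0$ (so that the quantitative bound \eqref{e:s<rifxin40*} becomes available), and then deduce the inclusion $2B_j \subset 40B_0$ from a direct triangle inequality together with the resulting smallness of $s_j$.

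The starting point is that $d$ is $1$-Lipschitz and that $0 \in F_0 \subset E_0\cap B_0$ by the normalization at the beginning of Section~\ref{sec5}; hence by Lemma~\ref{l:good-range} we have $d(0)\leq \delta(0)\leq 10^{-3}$. This gives the a priori estimate
\[
d(\xi_j) \;\leq\; d(0) + |\xi_j| \;\leq\; 10^{-3} + |\xi_j|
\]
for every $\xi_j$, and in particular $s_j = \eta\, d(\xi_j) \leq (10^{-3} + |\xi_j|)/1000$.

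Now suppose $2B_j \cap 39B_0 \neq \varnothing$, and pick $z$ in this intersection. Then $|\xi_j| \leq |z| + 2s_j \leq 39 + 2s_j$. Plugging this into the bound above yields
\[
s_j \;\leq\; \frac{10^{-3} + 39 + 2s_j}{1000},
\qquad\text{so}\qquad
998\, s_j \;\leq\; 39.001,
\]
which forces $s_j \leq 39.001/998 < 1/25$, and therefore $|\xi_j| \leq 39 + 2s_j < 39 + 2/25 < 40$. In other words, $\xi_j \in 40B_0$, so by \eqref{e:s<rifxin40*} the sharper bound $s_j \leq 1/20$ is available.

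Finally, for any $w \in 2B_j$ I can estimate
\[
|w| \;\leq\; |\xi_j| + 2s_j \;<\; \tfrac{39\cdot 500}{499} + \tfrac{1}{10} \;<\; 40,
\]
showing $w \in 40B_0$ and completing the proof. There is no substantive obstacle here: the only point to watch is that one cannot invoke \eqref{e:s<rifxin40*} at the outset, since $s_j \leq 1/20$ was established only under the hypothesis $\xi_j \in 40B_0$; this is why the argument must first bootstrap the location of $\xi_j$ using the universal Lipschitz bound on $d$ and the tiny choice of $\eta = 1/1000$.
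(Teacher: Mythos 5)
Your proof is correct and follows essentially the same route as the paper's: use that $d$ is $1$-Lipschitz together with $d(0)\leq\delta(0)\leq 10^{-3}$ from \eqref{e:delta0} and the triangle inequality $|\xi_j|\leq 39+2s_j$ to obtain a self-improving linear inequality in $s_j$, solve it, and conclude by a final triangle inequality. The only difference is cosmetic: you solve the inequality $998\,s_j\leq 39.001$ outright rather than plugging in the bound $s_j\leq 1/20$ from \eqref{e:s<rifxin40*} as the paper does, which sidesteps the apparent circularity (\eqref{e:s<rifxin40*} was stated under the hypothesis $\xi_j\in 40B_0$) that you correctly flag.
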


\begin{proof}
Since $2B_{j}\cap B(0,39)\neq\varnothing$, by \rf{e:s<rifxin40*}
\[
4s_{j} 
=4\eta d(\xi_{j})
\leq 4\eta (|\xi_{j}|+d(0))
\stackrel{\eqref{e:delta0}}{\leq} 4\eta(39+2s_{j}+1)
=160\eta + \frac{8}{20}\eta\le \frac{1}{5} \]
and since $\eta=1/1000$. Thus $\diam 2B_{j} =4s_{j}<1$, and so $2B_{j}\subset B(0,40)$. 
\end{proof}


\begin{rem}
	It may seem like overkill to invoke Theorem \ref{t:DT} to construct a Lipschitz graph. We could instead construct a graph directly as in \cite{DS}. However, our approach is not very harmful because the condition in (xi)  of Theorem \ref{t:DT} will allow to get nice bounds on the $L^{2}$-norm of the gradient of the graph which will be useful to deal with the
	stopping condition $\BA$.
%
\end{rem}

\begin{lemma}\label{lem-sigma-AD} 
Let
\[
\sigma:= \HH^{d}|_{\Sigma}.
\]
For $\ve>0$ small and $C_{1}$ large enough (depending on $C_{2}$ but independent of $\ve$), the map $g$ is $(1+C\ve)$-bi-Lipschitz (with $C$ depending on $A,\tau$, and $C_{1}$). In particular, $\sigma$ is $AD$-regular with constant close to 1. For $\ve>0$ small enough, 
	\begin{equation}
\label{e:sigma-AD}
2^{-1} \cdot(2r)^{d} < (1-C\ve)(2r)^{d}\leq \sigma(B(x,r))\leq (1+C\ve) (2r)^{d}<2 \cdot(2r)^{d}\quad
\mbox{for all $x\in\Sigma$.}
\end{equation}
\end{lemma}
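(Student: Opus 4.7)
The plan is to bootstrap off item (xii) of Theorem~\ref{t:DT}: one verifies the Carleson-type summability $\sum_{k\geq 0}\ve_k(g_k(x))^2\leq\ve$ uniformly in $x\in P_0$, which yields that $g$ is $\exp(C\ve)$- and hence $(1+C\ve)$-bi-Lipschitz for $\ve$ small. The AD-regularity of $\sigma=\HH^d|_\Sigma$ will then follow from a standard bi-Lipschitz distortion computation. The choice of $C_1$ enters only through needing the Jones integral to live on a long enough range of scales (specifically $2C_2^2<4C_1$) to dominate the resulting dyadic sum.

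The key work is the summability. Fix $x\in P_0$. Whenever $g_k(x)\notin V_k^{10}$, the definition of $\ve_k$ combined with \eqref{e:v10c} makes the summand harmless (no ball $100B_{j,k}$ contains $g_k(x)$ relevantly, so we may take the contribution to be $0$). Otherwise $g_k(x)\in 10B_{j_k,k}$ for some $j_k\in J_k$, and comparing the sup-sets in the definition of $\ve_k(g_k(x))$ and $\ve_k(x_{j_k,k})$ gives $\ve_k(g_k(x))\lesssim \ve_k(x_{j_k,k})\lesssim_{A,\tau,C_2}\alpha_\mu(x_{j_k,k},C_2^2 r_k)$ by \eqref{e:ve<a}. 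To move the $\alpha$-numbers onto a single base point, I split on whether $g(x)\in Z$. If $g(x)\in Z\subset E_0$, set $y=g(x)$. If $g(x)\notin Z$, use \eqref{e:delta0} and \eqref{def-d} to pick $y\in E_0\cap B_0$ with $d(y)+|y-g(x)|< 2d(g(x))$, and observe via \eqref{e:rsimd} that only scales $k\leq k(g(x))$, for which $r_k\gtrsim d(g(x))\gtrsim|y-g(x)|$, can contribute. In either case $B(x_{j_k,k},C_2^2 r_k)\subset B(y,2C_2^2 r_k)$ for the contributing indices, so Lemma~\ref{a-comparison} together with \eqref{e:muAD} yields $\alpha_\mu(x_{j_k,k},C_2^2 r_k)\lesssim_{A,\tau}\alpha_\mu(y,2C_2^2 r_k)$. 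Integrating by comparison to dyadic annuli (using monotonicity of $\alpha_\mu$ along the family in \eqref{e:alpha-monitone}),
\[
\sum_{k\geq 0}\ve_k(g_k(x))^2
\lesssim_{A,\tau,C_2}\sum_{k\geq 0}\alpha_\mu(y,2C_2^2 r_k)^2
\lesssim_{C_2}\int_0^{4C_1}\alpha_\mu(y,r)^2\,\frac{dr}{r}=J_{\alpha,4C_1}(y)<\ve^2,
\]
by \eqref{working-hyp-2A}. For $\ve$ small this is below $\ve$, so the hypothesis of item (xii) of Theorem~\ref{t:DT} is in force.

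Given that $g:P_0\to\Sigma$ is $(1+C\ve)$-bi-Lipschitz, for $x\in\Sigma$ write $x=g(y)$ with $y\in P_0$; then
\[
g\bigl(P_0\cap B(y,r/(1+C\ve))\bigr)\subset \Sigma\cap B(x,r)\subset g\bigl(P_0\cap B(y,r(1+C\ve))\bigr),
\]
and the bi-Lipschitz distortion of $d$-dimensional Hausdorff measure yields $(1-C\ve)\omega_d r^d\leq\sigma(B(x,r))\leq(1+C\ve)\omega_d r^d$, from which \eqref{e:sigma-AD} follows (for $\ve$ small the constants absorb the difference between $\omega_d r^d$ and $(2r)^d$ up to the two outer strict inequalities). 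The main obstacle is the uniform-in-$x\in P_0$ bound in the second paragraph: the pointwise Jones data \eqref{working-hyp-2A} only holds on $E_0$, so the whole argument hinges on finding, for every $x\in P_0$, an anchor point $y\in E_0$ for which monotonicity of $\alpha_\mu$ validly transfers the bound to all relevant scales, and on ensuring that the scales where $g_k(x)\in V_k^{10}$ are exactly the scales covered by that transfer.
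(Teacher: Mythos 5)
Your argument follows the paper's proof nearly step for step: both verify the summability hypothesis of item~(xii) of Theorem~\ref{t:DT} by anchoring at a near-minimizer $y\in E_0\cap B_0$ of the infimum defining $d(g(x))$ (the paper calls it $z$), both split on $g(x)\in Z$ versus $g(x)\notin Z$, both transfer the $\alpha$-numbers from the net points to the anchor via the monotonicity in Lemma~\ref{a-comparison} together with \eqref{e:muAD}, and both pass from the dyadic sum to the Jones integral $J_{\alpha,4C_1}$. The deduction of AD-regularity from the $(1+C\ve)$-bi-Lipschitz property of $g$ is likewise what the paper intends.

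The one claim that does not hold as stated is that $g_k(x)\notin V_k^{10}$ forces the contribution $\ve_k(g_k(x))^2$ to vanish. The quantity $\ve_k(g_k(x))$ is a supremum over pairs for which $g_k(x)\in 100B_{j,k}\cap 100 B_{i,l}$, so it can be nonzero whenever $g_k(x)\in V_k^{100}$; the identity property \eqref{e:v10c} governs $\sigma_k$, not $\ve_k$. Consequently your dichotomy (either $g_k(x)\in 10B_{j_k,k}$ or contribution zero) leaves the range $g_k(x)\in V_k^{100}\setminus V_k^{10}$ untreated, and the assertion that only scales $k\leq k(g(x))$ contribute should really read $k\leq C\,k(g(x))$ for an absolute constant $C$ — this is precisely the point the paper flags by writing $Ck(x)$ rather than $k(x)$. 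Replacing the dilation $10$ by $100$ (say) and inserting the constant $C$ repairs the argument without altering its structure, so the gap is one of bookkeeping rather than substance. As a minor normalization note, the paper's Hausdorff measure is normalized so that $\cH^d(B(x,r)\cap\R^d)=(2r)^d$; with that convention the bi-Lipschitz distortion gives \eqref{e:sigma-AD} directly, so there is no discrepancy between $\omega_d r^d$ and $(2r)^d$ to absorb.
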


Recall that $\cH^{d}(B(x,r)\cap \R^{d})=(2r)^{d}$ for $x\in \R^{d}$, so \eqref{e:sigma-AD} is saying that surface measure is very close to being uniform like planar surface measure.

\begin{proof}
By Theorem \ref{teo61}, to prove the lemma it suffices to show that 
\begin{equation}\label{sum-bound}
\sum_{k\geq 0} \ve_{k}(g_{k}(y))^{2}\lesssim\ve^2,
\end{equation}
for $g_{k}(y)= \sigma_{k}\circ\cdots \circ \sigma_{0}(x)$ and for all $y\in P_{0}$.

Suppose first that $x:=g(y)\in \Sigma\cap 10B_{0}\setminus  Z$. 
Then $k(x)<\infty$.
By \eqref{e:sky-y},  $x_{k}=g_{k}(y)$ satisfies $x=\lim x_{k}$ and 
\[
|x_{k}-x|\lec \ve r_{k}.
\]
Note that for $k\geq k(x)$, 
 $x_{k}=x$ by \eqref{e:v10c}, taking also into account that $x\not\in V_{k(x)}^{40}$ by the definition of $k(x)$. In fact, all $z\in B(x,r_{k(x)})$ satisfy $z\not\in V_{k(x)}^{39}$ and thus
 $\sigma_{k}$ is the identity map in $B(x,\,r_{k(x)})$ 
So it follows that $\ve_{k}(x_{k})\neq0$ only for $k\leq Ck(x)$ for some universal constant $C$. \jonas{I added here and below $Ck(x)$ instead of $k(x)$, since $\ve_{k}$ asks for points in a large ball of size like $100r_k$.} 
Let $k\leq Ck(x)$. 

Let $z\in E_{0}\cap B_{0}$ be such that $|x-z|<2d(x)\lec r_{k(x)}$ by Lemma \ref{lem6.4}. Then 
\[
|x_{k}-z|\leq |x_{k}-x|+|x-z|\lec \ve r_{k}+ r_{k(x)} \lec   r_{k}\]
where the implicit constant is universal, and so we can pick $C_{2}$ large enough so that 
\begin{equation}\label{6.30A}
B(x_{k},C_{2}^2r_{k})
\subset B(z,C_{2}^2 r_{k}+C r_{k})
\subset B(z,2C_{2}^2r_{k}).
\end{equation}
Thus,
\begin{equation}\label{6.30B}
\ve_{k}(x_{k})
\stackrel{\eqref{e:ve<a}}{\lec}_{A,\tau,C_2}\alpha_{\mu}(x_{k},C_{2}^2 r_{k})
\stackrel{\eqref{e:alpha-monitone}}{\lec} \alpha_{\mu}(z,2C_{2}^2 r_{k}).
\end{equation}
Hence, for $C_{1}>2C_2^2$ large enough, using \rf{working-hyp-2A} we obtain
\begin{equation}\label{6.30C}
\sum_{k= 0}^{\infty}\ve_{k}(x_{k})^{2} 
\lec_{A,\tau}  \sum_{k=0}^{Ck(x)} \alpha_{\mu}(z, C_{1} r_{k})^{2} 
\stackrel{\eqref{e:alpha-monitone}}{\lec}_{A,\tau} \int_{0}^{C_{1}} \alpha_{\mu}(z,r)^{2} \frac{dr}{r}<\ve^{2}.
\end{equation}


When $x\in \Sigma\setminus 10 B_{0}=P_0\backslash B(0,10)$ (see \rf{e:planar-sigma}), then $\ve_{k}(x_{k})=0$ for $k\geq C$ for some large $C>0$ because $V_0^{10}=10B_0$ and since by \rf{e:E<V-1} $x_{j,k+1}\in V_k^{3/2}$ for all $k$, hence $x_{j,k}\in V_{0}^{3}$ for all $k\geq 0$. But  $(V_1^{10})^c\subset (V_0^3)^c\subset (10B_0)^c$, which means that $\ve_{k}=0$ for $k$ large enough. %
This proves \eqref{e:DTsum} in the case that $x\not\in Z$.

If $x\in Z$, then $d(x)=0$ and for each $k$ there are $x'_{j,k}\in E_k$ such that  $|x-x'_{j,k}|<r_k(1+1/10)$, and $x_{j,k}$ such that $|x-x_{j,k}|<2r_k$, thus $x\in E_\infty$. Let $y\in \R^d$ be such that $x=g(y)$ and let $x_k=g_k(y)$, recall that $|x-x_k|\lec r_k$. Then by \rf{6.30A} and \rf{6.30B} we have that 
\begin{equation}\label{6.30C}
\ve_k(x_k)\lec \alpha_\mu(x, 2C_2^2r_k).
\end{equation}
We conclude \rf{6.30C} as before with $x$ instead of $z$. Thus \rf{sum-bound} follows.

\end{proof}

\begin{lemma}\label{lem6.8}
	There is a constant $C=C(n)>0$ such that the surface $\Sigma$ is a $C\ve^{\frac14}$-Lipschitz graph over $P_{0}$, that is, there is a $C\ve^{\frac14}$-Lipschitz function $h:P_{0}\rightarrow P_{0}^{\perp}$ such that 
	\begin{equation}\label{S-graph}
	\Sigma = \{x+h(x):x\in P_{0}\}.
	\end{equation}
\end{lemma}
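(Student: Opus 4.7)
The plan is to reduce the global graph statement to a local tangent-plane statement: show that at every point of $\Sigma$ the best approximating plane makes angle at most $C\ve^{1/4}$ with $P_0$, and then use that $\Sigma$ equals $P_0$ outside $B(0,10)$ to promote this local information to a global Lipschitz graph representation.

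The key step is to show that every plane $P_{j,k}$ arising in the construction satisfies $\angle(P_{j,k}, P_0) \lesssim \ve^{1/4}$. Since $P_{j,k}$ is parallel to $L_{x'_{j,k}, C_2 r_k}$ and $x'_{j,k} \in E_k$ (so $d(x'_{j,k}) < r_k$), there is $y \in E_0 \cap B_0$ with $\delta(y) + |x'_{j,k} - y| < r_k$, hence $\delta(y) < r_k$. Choosing $s = (2C_2 + 1)r_k$, which lies in $[r_k, C_1]$ provided $C_1$ is large enough, the BA stopping condition at $y$ gives $\angle(L_{y,s}, L_{B_0}) < \ve^{1/4}$. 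The inclusion $B(x'_{j,k}, 2C_2 r_k) \subset B(y, s)$ together with the pointwise bound $\alpha_\mu(y, s) \lesssim M\ve$ from \rf{pointwise-a-bound-2} allows Lemma \ref{l:angle-general} to yield $\angle(L_{x'_{j,k}, C_2 r_k}, L_{y, s}) \lesssim_M \ve$. Finally, $\angle(P_0, L_{B_0}) \leq \ve^{1/4}$ is obtained by applying the BA condition at $0$ at scale $C_2$, since $\delta(0) \leq 10^{-3}$ by Lemma \ref{l:good-range}. The triangle inequality then yields $\angle(P_{j,k}, P_0) \lesssim \ve^{1/4}$.

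Given this angle bound, Theorem \ref{t:DT}(vii) expresses $\Sigma$ locally around each $x \in \Sigma \cap 40 B_0 \setminus Z$ as a $C\ve$-Lipschitz graph over $P_{j, k(x)-1}$; reparametrizing over $P_0$ (which requires only the small tilt between the two planes) gives a local $C\ve^{1/4}$-Lipschitz graph representation. Points in $Z$ are handled by approximation with nearby net points or by Reifenberg flatness (Theorem \ref{t:DT}(ix)) combined with the angle bound, and for $x \in \Sigma \setminus B(0,10)$ one has $\Sigma = P_0$ locally by \rf{e:planar-sigma}. To globalize, consider $\phi := \Pi_{P_0} \circ g|_{P_0}: P_0 \to P_0$; this map equals the identity outside a bounded set (since $g(z) = z$ for $\dist(z, P_0) > 2$), and the local graph property makes $\Pi_{P_0}|_\Sigma$ a local bi-Lipschitz homeomorphism, so $\phi$ is a proper local bi-Lipschitz map equal to the identity at infinity. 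A standard topological degree argument in $P_0 \cong \R^d$ then gives that $\phi$ is a global bi-Lipschitz bijection, and the map $h(z) := g(\phi^{-1}(z)) - z$ (which lies in $P_0^\perp$ by construction) is $C\ve^{1/4}$-Lipschitz with $\Sigma = \{z + h(z) : z \in P_0\}$.

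The main technical obstacle will be the angle comparison above: since one has no direct control on $\delta(x'_{j,k})$ itself (only on $d(x'_{j,k})$), the BA stopping condition cannot be applied at $x'_{j,k}$, and the angle must be transferred from a nearby point $y$ with $\delta(y) < r_k$ via Lemma \ref{l:angle-general} and the pointwise $\alpha$-bound from the stopping-time section. The subsequent passage from local to global graph is relatively standard given the nice behavior of $g$ at infinity and the uniform local Lipschitz constant.
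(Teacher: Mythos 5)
Your proposal follows the same route as the paper: establish $\angle(P_{j,k},P_0)\lesssim\ve^{1/4}$ for all approximating planes, use Theorem \ref{t:DT}(vii)--(viii) to write $\Sigma$ locally as a $C\ve$-Lipschitz graph over $P_{j,k(x)-1}$, combine the two tilts to get local $C\ve^{1/4}$-graphs over $P_0$, handle $Z$ by a limiting argument, and globalize via a degree argument that uses $\Sigma=P_0$ outside $B(0,10)$.

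One point where your write-up is actually \emph{more} careful than the paper's: the paper deduces $\angle(P_{j,k},L_{B_0})\leq\ve^{1/4}$ by invoking Lemma \ref{l:good-range} (resp.\ Remark \ref{remark1}) at $x'_{j,k}$ at scale $\approx r_k$, but those statements as written require $\delta(x'_{j,k})\lesssim r_k$, while membership in $E_k$ only gives $d(x'_{j,k})<r_k$, which is weaker. Your fix --- pick $y\in E_0\cap B_0$ with $\delta(y)+|x'_{j,k}-y|<r_k$, apply the $\BA$ stopping condition at $y$ at scale $s\approx r_k$ to get $\angle(L_{y,s},L_{B_0})<\ve^{1/4}$, and then transfer to $L_{x'_{j,k},C_2r_k}$ via Lemma \ref{l:angle-general} using the pointwise bound $\alpha_\mu(y,s)\lesssim M\ve$ --- is exactly the right way to close this, and it is (at best) left implicit in the paper.

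On the other hand, you are briefer than the paper on the local-to-global step. The paper proves a two-point inequality $|\pi_{P_0^\perp}(x-y)|\lesssim\ve^{1/4}|x-y|$ for all $x,y\in\Sigma$ by working at the scale $r_k\approx|x-y|$ and distinguishing whether $\max\{k(x),k(y)\}$ is above or below $k$ (using $\Sigma_{k+1}=\Gamma_{j,k}$ in a cylinder of radius $49r_k$ in the latter case); this directly gives both injectivity of $\pi_{P_0}|_\Sigma$ and the global Lipschitz bound. Your plan instead gets global injectivity/surjectivity from degree theory and then passes from a uniform local Lipschitz constant to a global one. That does work, but you should be explicit that the local graph description has to hold at the scale $\min\{r_{k(x)},r_{k(y)}\}$ around both points, and for pairs at distance bigger than either of these you need the paper's intermediate-scale argument (or an a.e.\ differential bound $|Dh|\lesssim\ve^{1/4}$ plus the fundamental theorem of calculus) rather than just patching local graphs.

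In short: same approach, with your angle argument filling a small gap in the paper's citation and the paper's two-point estimate filling a small gap in your globalization.
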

\begin{proof}
By Lemma \ref{l:good} $\cH^{d}(Z)=0$. Moreover $Z$ is a closed subset of $\Sigma$ since 
$d$ is continuous. In particular, we infer that for $\sigma$-almost every $x\in \Sigma$, $d(x)>0$. Hence, for $\sigma$-almost every $x\in 10B_{0}\cap \Sigma$, $B(x,r_{k(x)})\cap \Sigma$ is a $C\ve$-Lipschitz graph over $P_{j,k(x)-1}$ by \eqref{e:ingraph}. Recalling that $d(x)\approx r_{k(x)}$, by 
 \eqref{e:good-range'}  $P_{j,k(x)-1}$ is 
 a $C\ve^{\frac14}$-Lipschitz graph over $P_{0}$, and hence so is $B(x,r_{k(x)})\cap \Sigma$ (with another constant $C$). On the other hand, if $x\in \Sigma\setminus 10B_{0}$, then $x\in P_{0}$ by \eqref{e:planar-sigma}.  Thus, we can cover $\Sigma$ up 
 to a set of surface measure zero by balls $B_j$ in which $\Sigma$ is a $C\ve^{\frac14}$-Lipschitz graph over $P_{0}$. 
 By the previous lemma, $g:P_{0}\rightarrow \Sigma$ is bi-Lipschitz, and so for a.e.\ $z\in P_{0}$, $g(z)\in \bigcup_{j=1}^N \mathcal{B}_j$. 
 
 {
 The initial goal is to show that for any $x,y\in \Sigma$, $|\pi_{P_{0}^{\perp}}(x-y)|\lec \ve^{\frac14}\,|x-y|$, which would guarantee that $\Sigma$ is included in a Lipschitz graph with constant bounded above by a constant times $ \ve^{\frac14}$. Let  $x,y\in \Sigma$ and $x', y'\in P_0$ be such that $g(x')=x$ and $g(y')=y$. Note that \rf{e:sky-y} implies that $|x'-g(x')|=|x'-x|\lec\ve$ and $|y'-g(y')|=|y'-y|\lec \ve$. If $|x-y|\ge 1/10$
 then
 \begin{equation}\label{x-y-big}
 |\pi_{P_{0}^{\perp}}(x-y)|\le |\pi_{P_{0}^{\perp}}(x)|+|\pi_{P_0^{\perp}}(y)|\le |x'-x| +|y'-y|\lec \ve\lec\ve|x-y|. 
 \end{equation}
 Thus we assume that $|x-y|< 1/10=r_1$. Hence there exists $k\ge1$ such that $r_{k+1}\le |x-y|<r_k$. We consider two cases: either $\max\{k(x),k(y)\}>k$ or $\max\{k(x),k(y)\}\le k$. 
 
 In the first case we assume
 without loss of generality we assume that $k(x)>k$. If $x\not\in Z$ and $y\in B(x,r_{k(x)})\cap \Sigma$, then 
  \begin{equation}\label{k-small} 
 |\pi_{P_{j,k(x)-1}^{\perp}}(x)-\pi_{P_{j,k(x)-1}^{\perp}}(y)|\lec \ve |\pi_{P_{j,k(x)-1}}(x)-\pi_{P_{j,k(x)-1}}(y)|\lec \ve\,|x-y| 
 \end{equation}
 as $B(x,r_{k(x)})\cap \Sigma$ is a $C\ve$ Lipschitz graph over $P_{j,k(x)-1}$ for some $j\in J_{ k(x)-1}$ by Lemma \ref{lem6.3}. Since $x_{j,k(x)-1}'\in E_0\cap B_0$ by the choice of $x_{j,k}$, $P_{j,k}$ and $x_{j,k}'$ (see \rf{coice-x'} and line above \rf{e:xP'}) we have by Lemma \ref{l:good-range} that $\angle(P_{j,k(x)-1},L_{B_{0}})\leq \ve^{\frac{1}{4}}$. Thus a simple geometric argument ensures that 
 \begin{equation}\label{k-small-2} 
 |\pi_{P_{0}^{\perp}}(x)-\pi_{P_{0}^{\perp}}(y)| \lec\ve^\frac{1}{4}|\pi_{P_{0}}(x)-\pi_{P_{0}}(y)|,
 \end{equation}
 provided $x,y\in \Sigma\backslash Z$ and $\max\{k(x),k(y)\}>k$.
 
 In the case when $\max\{k(x),k(y)\}\le k$, for $g_k$ as in $(xii)$ in Theorem \ref{teo61}, denote by $x_k=g_k(x')\in \Sigma_{k+1}$ and $y_k=g_k(y')\in \Sigma_{k+1}$. Iterating \rf{e:sky-y} we have $|x_k-x|\lec \ve r_k$ and $|y_k-y|\lec \ve r_k$. Thus , for $\ve>0$ small enough,
 \[
 |x_k-y_k|\le |x-y|+x_k-x|+|y_k-y|\le r_k + C\ve r_k\le 2r_k.
 \]
By the construction there is $x_{j,k}$ such that $|x_k-x_{j,k}|\le 10 r_k$ and 
 therefore $|y_k-x_{jk}|\le 12r_k$. Hence 
 \[
 x_k,y_k\in \Sigma_{k+1}\cap B(x{j,k}, 12 r_k)\subset \Sigma_{k+1}\cap D(x_{j,k}, P_{j,k}, 49 r_k)=\Gamma_{j,k}\cap D(x_{j,k}, P_{j,k}, 49 r_k)
 \]
 as in \rf{e:49graph} where
 by $(vii)$ in Theorem \ref{teo61} $\Gamma_{j,k}$ is a graph over $P_{j,k}$ with constant less than $C\ve$. A similar argument to the one used above yields $\angle(P_{j,k},L_{B_{0}})\leq \ve^{\frac{1}{4}}$ where we 
 also appeal to Remark \ref{remark1} with $c_0=1/100$, which ensures that \rf{k-small-2} also holds in this case. 
 
 The inequality \rf{k-small-2}
  proves that there exists $C(n)\ve^{\frac14}$-Lipschitz function $h:P_{0}\rightarrow P_{0}^{\perp}$ such that 
 \begin{equation}\label{S-in-graph}
	\Sigma \backslash Z\subset \{x+h(x):x\in P_{0}\}=\Gamma.
	\end{equation}
For $x\in Z\subset \Sigma$ by \rf{e:sigma-AD} since $\mu(Z)=0$ there exists a sequence $\tilde x_k\in \Sigma\backslash Z$ such that $\tilde x_k\to x$ as $k\to\infty$. By \rf{S-in-graph} there is $y_k\in P_0$ 
such that $\tilde x_k=y_k+h(y_k)\to x$, thus $|y_k-y_\ell|\le |y_k+h(y_k) -(y_\ell+h(y_\ell))| +|h(y_k)-h(y_\ell)|\le |\tilde x_k-\tilde x_\ell| + C\ve^{\frac{1}{4}}|y_k-y_\ell|\le 2|\tilde x_k-\tilde x_\ell|$ which ensures that $\{y_k\}_k$ is a Cauchy sequence. Let $y=\lim_{k\to\infty}y_k\in P_0$. Since $h$ is Lipschitz continuous $\tilde x_k=y_k+h(y_k)\to y+h(y)=x$. Thus $\Sigma\subset \Gamma$.
 Since $\Sigma$ and $\Gamma$ are both closed if here is $x+h(x)\in \Gamma\backslash \Sigma$ with $x\in P_0$ then since $\Sigma\backslash 10B_0=P_0\backslash 10B_0$ there exists $\rho>0$ such that 
 $B(x+h(x),\rho)\cap (\Sigma\cup (10B_0)^c=\empty$. Then the map $\pi_{P_0}\circ g:P_0\rightarrow P_0\backslash \pi_{P_0}(\Gamma \cap B(x+h(x),\rho)$ is bi-Lipschitz and satisfies $\pi_{P_0}\circ g= Id$ on
 $P_0\backslash 10B_0$ which is a contradiction (via a minor degree argument).
 }\end{proof}
 
 {
 It is worth emphasizing that the reason why in this case $\Sigma$ is a Lipschitz graph in contrast with the general $\Sigma$ constructed in Theorem \ref{teo61} is that since we have that $\cH^d(Z)=0$ the 
 construction always stops before the tilt between the original plane $P_0$ and the good approximating plan at a given scale gets larger than $\ve^{\frac{1}{4}}$.}
 

\begin{lemma}
We have
\begin{equation}
\label{e:h-int}
\int_{P_{0}} |D h|^{2}\,d\HH^d
\lec \ve.
\end{equation}
\end{lemma}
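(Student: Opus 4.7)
The plan is to reduce the $L^{2}$ norm of $Dh$ to an integral over $\Sigma$ of squared angles between tangent planes and $P_0$, localize this via the Besicovitch cover $\{B_j\}$, and then chain across scales using the $\alpha_\mu$-number estimates. Since $\Sigma\setminus B(0,10)=P_0\setminus B(0,10)$ by \eqref{e:planar-sigma} we have $h\equiv 0$ outside $P_0\cap B(0,10)$. Because $|Dh|\lec\ve^{1/4}$ (from Lemma \ref{lem6.8}), the Jacobian of the parametrisation $y\mapsto y+h(y)$ is $\approx 1$, so
$$\int_{P_0}|Dh|^{2}\,d\HH^d\lec\int_{\Sigma\cap 40B_0}\angle(T_x\Sigma,P_0)^{2}\,d\sigma(x).$$
Since $\cH^d(Z)=0$ by Lemma \ref{l:good}, for $\sigma$-a.e.\ $x\in\Sigma\cap 40B_0$ we have $d(x)>0$ and $x$ lies in some Besicovitch ball $B_j$ from \eqref{besic}. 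By \eqref{e:3bjcapSigma}, $\Sigma\cap 3B_j$ is a $C\ve$-Lipschitz graph over $P_j$, hence $\angle(T_x\Sigma,P_j)\lec\ve$ a.e.\ on $B_j\cap\Sigma$, and
$$\angle(T_x\Sigma,P_0)^{2}\lec\ve^{2}+\angle(P_j,P_0)^{2}.$$
The $\ve^{2}$ piece integrates to $\lec\ve^{2}\sigma(40B_0)\lec\ve$, so it remains to handle the tilts $\angle(P_j,P_0)^{2}$.

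For each $\xi_j\in 40B_0\setminus Z$ one chains across scales: setting $P^{(l)}=P_{i_l,l}$ with $\xi_j\in 40B_{i_l,l}$ for $l=0,\ldots,k(\xi_j)-1$, the triangle inequality gives $\angle(P_j,P_0)\le\sum_l\angle(P^{(l)},P^{(l-1)})$, and each link is bounded by $\ve_l(\xi_j)\lec\alpha_\mu(\xi_j,C_2^{2} r_l)$ via \eqref{e:angle-general1} and \eqref{e:ve<a}. Combining these and organizing the resulting sum by scale, using $s_j^d\approx r_{k(\xi_j)-1}^d\approx\mu(B_j)$ from \eqref{e:muAD}, the bounded overlap of $\{B_{i,k}\}_i$ at each scale $k$, and \eqref{e:alpha-monitone} to pass from the ball center to an average, one is reduced to
$$\sum_k\sum_{i\in J_k}r_k^d\,\alpha_\mu(x_{i,k},Cr_k)^{2}\lec\sum_k\int_{E_0\cap B_0}\alpha_\mu(y,2Cr_k)^{2}\,d\mu(y)\lec\int_{E_0\cap B_0}J_{\alpha,4C_1}\,d\mu\lec\ve^{2},$$
by Fubini and \eqref{working-hyp-2A}. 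Together with the $\ve^{2}$ contribution above this yields $\int|Dh|^{2}\lec\ve$.

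The main technical difficulty lies in passing from the triangle inequality $\angle(P_j,P_0)\le\sum_l\angle(P^{(l)},P^{(l-1)})$ to an effective sum of squares, since naive Cauchy-Schwarz would lose a logarithmic factor in $k(\xi_j)$. This is circumvented by extracting the sharp $L^{2}$ estimate underlying \cite[Proposition 8.3]{DT12}, where the bi-Lipschitz constant of $g$ is controlled by $\sum_k\ve_k^{2}$ rather than by $(\sum_k\ve_k)^{2}$; equivalently, one may invoke a Dorronsoro-type identity that expresses $\int|Dh|^{2}$ as a square function of $\beta_{2,\sigma}$ numbers, each of which is dominated by the nearby $\alpha_\mu$ via the approximation of $\sigma$ by $\LL_{j,k}$ contained in \eqref{e:muLjk}.
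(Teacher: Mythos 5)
You correctly identify the central obstruction: the triangle-inequality chaining gives only an $\ell^1$ bound $\angle(P_j,P_0)\le\sum_l\angle(P^{(l+1)},P^{(l)})$, and converting this to an $\ell^2$ estimate by Cauchy--Schwarz introduces a factor $k(\xi_j)$ (the number of links), which is unbounded and destroys the bound. But the proposal does not actually close this gap. Appealing to \cite[Prop.\ 8.3]{DT12} or to ``a Dorronsoro-type identity'' names the right tool without giving an argument: the $\ell^2$ smallness $\sum_k\ve_k^2\lesssim\ve^2$ from \eqref{e:DTsum} controls the bi-Lipschitz constant of $g$, not the pointwise quantity $\angle(P_j,P_0)^2$, and you never say how one passes from the former to the scale-by-scale angle sum you want. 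In fact, the paper warns (in the remark immediately following this lemma) that if one only uses the angle information recorded by the stopping condition $\BA$ --- which is what your pointwise chain collapses to after using the cap $\angle(L_{x,r},L_{B_0})\le\ve^{1/4}$ --- one gets the weaker estimate $\int|Dh|^2\lesssim\ve^{1/2}$, which is not good enough.

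The paper's proof is much shorter and avoids all angle chaining. Since $h\equiv 0$ on $P_0\setminus B(0,10)$, it applies the area formula to $f(y)=(y,h(y))$: because $|Dh|\lesssim\ve^{1/4}$, the Jacobian satisfies $J_f=\sqrt{\det(\delta_{ij}+\partial_ih\,\partial_jh)}\ge 1+c|Dh|^2$ by a Taylor expansion of the determinant. Integrating over $B(0,10)\cap P_0$ then yields
\begin{equation*}
\int_{P_0}|Dh|^2\,d\HH^d\ \lesssim\ \int_{B(0,10)\cap P_0}(J_f-1)\,d\HH^d\ =\ \cH^d(B(0,10)\cap\Sigma)-\cH^d(B(0,10)\cap P_0),
\end{equation*}
and the right-hand side is $\lesssim\ve$ by the near-optimal AD-regularity \eqref{e:sigma-AD} with constant $1+C\ve$ (Lemma \ref{lem-sigma-AD}), which in turn comes from the $(1+C\ve)$-bi-Lipschitz character of $g$ supplied by \cite[Prop.\ 8.3]{DT12} once $\sum_k\ve_k^2\lesssim\ve^2$ is verified. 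So the ultimate $L^2$ input is the same one you gesture at, but the paper converts it to an excess-of-surface-measure statement via the area formula rather than attempting any scale-by-scale angle accounting. If you want to carry out a Dorronsoro-type route instead, you would need to actually prove the square-function identity for $h$ and dominate each $\beta_{2,h}$ by an $\alpha_\mu$ at the corresponding scale; as written, that step is missing.
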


\begin{proof}
Since $\sigma(x)=x$ outside $B(0,10)$ by \eqref{e:v10c}, using some simple degree theory as in the proof of  \cite[Theorem 13.1]{DT12}, we know that 
\[
\pi_{P_{0}}(B(0,10)\cap\Sigma)=B(0,10)\cap P_{0}. 
\]
{
and $h|_{P_{0}\setminus B(0,10)}\equiv 0$. Let the function $f:P_0\to\R^n$ be defined by $f(y)=(y,h(y))$. Since $h$ is a $C\ve^{\frac{1}{4}}$ Lipschitz function, 
by the area formula the generalized Jacobian, $J_f$ of $f$ is given by:
\begin{equation}\label{jacobian}
J_j=\sqrt{\text{det}\,\left(\delta_{ij}+\frac{\partial h}{\partial x_i}\frac{\partial h}{\partial x_j}\right)}\ge 1 +C |D h|^{2},
\end{equation}
where we have used the fact that $ |D h|\le C\ve^{\frac{1}{4}}$ and a Taylor expansion for this type determinant.}\jonas{I'd say maybe do both since then it's a bit more self-contained. I don't have the book (and can't pirate it even), do you want to add the argument here?}


From \rf{jacobian}, we get
$$
\int_{P_{0}} |D h|^{2}\,d\HH^d
\lec \int_{B(0,10)\cap P_{0}}  \!\!(J_{f}-1)\,d\HH^d
=\cH^{d}(B(0,10)\cap \Sigma)-\cH^{d}(B(0,10)\cap P_{0})
\stackrel{\eqref{e:sigma-AD}}{\lec} \!\ve,
$$
as wished.
\end{proof}

Note that the argument above can also be reduced by using standard results, see for example the proof of Lemma 23.10 \cite{Maggi}. 
 
Notice that the estimate \rf{e:h-int} follows from the $(1+C\ve)$-bilipschitz character of $f(y)=(y,h(y))$,
which in turn comes from the smallness of the $\alpha$-numbers ensured by the condition \rf{working-hyp-2A}. If, instead, we use the property that $h$ is $C\ve^{1/4}$-Lipschitz coming from the stopping condition $\BA$ involving the angles that the approximating $d$-planes form with $P_0$, we get the worse estimate
$$\int_{P_{0}} |D h|^{2}\,d\HH^d \lesssim \ve^{1/2},$$
which is not useful for our purposes. The sharper inequality
\rf{e:h-int} plays a key role 
later to show that the set of points where $\BA$ holds has small measure.


\begin{lemma}  \label{l:Xavi-lemma}
	Let $B$ be a ball centered on $\Sigma$, and $f$ a function such that 
	$$||f-f(x_{B})||_{L^\infty(3B\cap \Sigma)} \lec \ve^{\frac14}\quad \mbox{ and }f(x)\in (1/C,C)$$ uniformly for all $x\in 3B\cap\Sigma$ for some constant $C>0$. Then
	\begin{equation}\label{e:Xavi-lemma}
	\int_{B}\int_{0}^{r_{B}} \alpha_{f\sigma}(x,r)^{2}\frac{dr}{r}d\sigma(x) \lec \ve^{\frac12} r_{B}^{d}
	\end{equation}
	and there is a plane $P_{B}$ such that \xavi{I replaced $\sigma$ by $f\sigma$ inside $F_B(...)$ here}
	\begin{equation}\label{e:a<e}
	\alpha_{f\sigma}(B)\lec r_{B}^{-d-1} F_{B}( f\sigma,\cH^{d}|_{P_{B}})\lec \ve^{\frac12}.
	\end{equation}
\end{lemma}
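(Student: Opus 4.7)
The plan is to decompose $\alpha_{f\sigma}(x,r)$ by the triangle inequality, separating the contribution from the density oscillation of $f$ from the geometric deviation of $\Sigma$ from flat planes. For each $x\in B\cap\Sigma$ and $r\in (0,r_B]$, I will choose a plane $L_{x,r}$ and constant $c_{x,r}$ depending on the scale, and write
\[
F_{B(x,r)}(f\sigma, c_{x,r}\HH^d|_{L_{x,r}}) \le F_{B(x,r)}(f\sigma, c_{x,r}\sigma) + c_{x,r}\,F_{B(x,r)}(\sigma, \HH^d|_{L_{x,r}}),
\]
where the first piece captures the density defect and the second captures the flatness of $\Sigma$ near $B(x,r)$. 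Here $\sigma$ is nearly AD-regular by Lemma \ref{lem-sigma-AD}, so $f\sigma(B(x,r))\approx r^d$, and we normalise throughout by $r\,f\sigma(B(x,r))\approx r^{d+1}$.

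For the geometric piece, I will exploit the Lipschitz graph structure from Lemma \ref{lem6.8}: $\Sigma$ is the graph over $P_0$ of a function $h$ with $\|Dh\|_\infty\lesssim \ve^{1/4}$ and $\int_{P_0}|Dh|^2\,d\HH^d\lesssim \ve$. Taking $L_{x,r}$ to be the graph over $P_0$ of the $L^2$-best affine approximation of $h$ on $\pi_{P_0}(B(x,Cr))$, and combining the area formula $d\sigma=\sqrt{1+|Dh|^2}\,d\HH^d$ with $L^2$-Poincar\'e, I expect a pointwise bound of the form
\[
F_{B(x,r)}(\sigma,\HH^d|_{L_{x,r}}) \lesssim r^{d+1}\Bigl(\avint_{\pi_{P_0}(B(x,Cr))}|Dh-\overline{Dh}|^2\,d\HH^d\Bigr)^{1/2},
\]
which via a Dorronsoro-type Fubini argument integrates to
\[
\int_B\int_0^{r_B}\alpha_\sigma(x,r)^2\,\frac{dr}{r}\,d\sigma(x)\lesssim \|Dh\|_{L^2(P_0)}^2\lesssim\ve.
\]
For the density piece, choosing $c_{x,r}$ as the local $\sigma$-average of $f$ on $B(x,r)$, the hypothesis $\|f-f(x_B)\|_{L^\infty(3B\cap\Sigma)}\lesssim \ve^{1/4}$ yields a pointwise contribution of $\ve^{1/4}$, which when squared and integrated against the Carleson measure gives $\lesssim \ve^{1/2}\sigma(B)\lesssim \ve^{1/2}r_B^d$.

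Combining the two contributions yields \eqref{e:Xavi-lemma}. For the pointwise bound \eqref{e:a<e}, I take $P_B:=L_{x_B,r_B}$ and absorb the constant $c_B=f(x_B)$ (of order $1$) into the infimum defining $\alpha_{f\sigma}(B)$; both contributions then combine to give $F_B(f\sigma,c_B\HH^d|_{P_B})\lesssim\ve^{1/2}r_B^{d+1}$, whence \eqref{e:a<e} follows after the $r_B^{-d-1}$ normalisation. The main technical obstacle will be making the Dorronsoro-type Carleson embedding for $\alpha$-numbers on Lipschitz graphs precise: the analogue for Jones's $\beta$-numbers is classical, but $\alpha$-numbers additionally require matching the surface density against the planar density, so the area-formula expansion $\sqrt{1+|Dh|^2}=1+O(|Dh|^2)$ is essential in converting this density deficit into a quadratic quantity that absorbs into the global bound $\|Dh\|_{L^2(P_0)}^2\lesssim \ve$.
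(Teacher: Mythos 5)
Your decomposition $\alpha_{f\sigma}(x,r)\lesssim\alpha^{\mathrm{dens}}(x,r)+\alpha^{\mathrm{geom}}(x,r)$ via the triangle inequality is a sensible starting point, and it is the right division into ``density oscillation'' and ``graph tilt''. But the argument for the density piece as written has a real gap. You choose $c_{x,r}$ to be the local $\sigma$-average of $f$ over $B(x,r)$, observe the pointwise bound $\alpha^{\mathrm{dens}}(x,r)\lesssim\ve^{1/4}$, and then claim that squaring and integrating ``against the Carleson measure'' gives $\ve^{1/2}r_B^d$. The natural measure you are integrating against is $\frac{dr}{r}\,d\sigma(x)$ on $B\times(0,r_B]$, which is \emph{not} a finite (let alone Carleson-normalized) measure: $\int_0^{r_B}\frac{dr}{r}=\infty$. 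A pointwise $L^\infty$ bound on the oscillation of $f$ does \emph{not} deliver a Carleson estimate for $\bigl(\avint_{B(x,r)}|f-f_{B(x,r)}|\bigr)^2\frac{dr}{r}\,d\sigma$; that quantity can be logarithmically larger than $\|f\|_{BMO}^2\,r_B^d$. What is needed, and what is the actual engine in the paper's proof (inherited from \cite[Theorem~1.1]{Tol09} and its Remark 4.1), is a genuine Plancherel/martingale square-function identity: after pushing $f\sigma$ down to $P_0$ one writes $\tilde g=f(\tilde A)\,g$ and uses $\sum_I\|\Delta_I\tilde g\|_2^2\approx\|\tilde g-f(x_B)\|_{L^2}^2$, which is finite precisely because the Whitney extension makes $\tilde g\equiv f(x_B)$ outside a region of measure $\approx r_B^d$. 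The $L^\infty$ bound then gives $\|\tilde g-f(x_B)\|_{L^2}^2\lesssim\ve^{1/2}r_B^d$. Without this orthogonality step, your density term does not close.

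A second, smaller issue concerns the geometric piece: you invoke the \emph{global} bound $\|Dh\|_{L^2(P_0)}^2\lesssim\ve$ from \eqref{e:h-int}. This gives a bound of $\ve$, not the needed \emph{local} bound $\ve^{1/2}r_B^d$; for $r_B\ll\ve^{1/(2d)}$ the global estimate is too weak. The correct input is the pointwise Lipschitz bound $\|Dh\|_\infty\lesssim\ve^{1/4}$ together with a localization (e.g.\ a Whitney extension making the graph flat outside $4B$, as in the paper's proof), which yields $\|Dh\|_{L^2(\pi(CB))}^2\lesssim\ve^{1/2}r_B^d$. Finally, note that the paper itself does not re-derive the Carleson estimate for $\alpha$-numbers on Lipschitz graphs: it cites Tolsa's Theorem~1.1 and only modifies the final square-function estimate to accommodate the density factor $f$. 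Re-proving that theorem from scratch along Dorronsoro lines is a considerably longer road; if you pursue it, the Plancherel step above is the essential ingredient you must make explicit, and the area-formula expansion $\sqrt{1+|Dh|^2}=1+O(|Dh|^2)$ that you flag at the end is indeed needed in the same place where the paper bounds $\int|1-g|^2\lesssim\|\grad A\|_2^2$.
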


{
This result is a direct consequence of \cite[Theorem 1.1]{Tol09} and Remark 4.1 immediately proceeding it, taking into account that
$\Sigma$ is a $C\ve^{\frac14}$-Lipschitz graph. The original statement was for $f\equiv 1$, but the same proof works for the preceding lemma. We sketch the adjustments below, using the notation of \cite{Tol09}. First, since $\Sigma$ is a $C\ve^{\frac{1}{4}}$-Lipschitz graph, by Whitney extension we can replace it with a $C\ve^{1/2}$-graph $\Gamma$ that agrees with $\Sigma$ in $3B$ but is constant outside $4B$, and also set $f=f(x_{B})$ outside $3B$. By rotating, we can assume $\Gamma$ is a graph along $\R^{d}$.

At the beginning of the proof of \cite[Theorem 1.1]{Tol09}, replace the function
\[
g(x) := \rho(\tilde{A}(x)) |J(\tilde{A})(x)|.
\]
with $\tilde{g} = f(\tilde{A})g$. There, the graph $\Gamma$ is a graph of a function $A$ and $\tilde{A}(x)=(x,A(x))$, and in our case $A$ is $C\ve^{1/4}$-Lipschitz. Then the proof continues verbetum. As in \cite[Remark 4.1]{Tol09}, we obtain from the proof that 
\[
\sum_{Q\in \mathcal{D}_{\R^{n}}} \alpha(Q)^{2}\mu(Q)
\lec \sum_{Q\in \mathcal{D}_{\R^{n}}} \beta_{1}(2Q)^{2}\mu(Q) + \sum_{I\in \mathcal{D}_{\R^{d}}} ||\Delta_{I} \tilde{g}||_{2}^{2}
\]
and  again, as in \cite[Remark 4.1]{Tol09}, 
\[
 \sum_{Q\in \mathcal{D}_{\R^{n}}} \beta_{1}(2Q)^{2}\mu(Q) \lec ||\grad A||_{2}^{2}.
 \]
 
In our situation, if $\tilde{A}(x)\in 3B$, then since $|g|\lec 1$,
\begin{align*}
|\tilde{g}(x)-1|
& =|f(\tilde{A}(x))g(x) -f(x_{B})|
\leq |f(\tilde{A}(x))-f(x_{B})|\cdot |g(x)| + |f(x_{B})|\cdot |1-g(x)|
\\
& \lec \ve^{\frac{1}{4}}\one_{3B} +|1-g(x)|.
\end{align*}

Hence, since $g\lec 1$ and $|f(x_{B})|\leq  1$, and because $f\equiv f(x_{B})$ outside $3B$ and $\grad A=0$ outside the projection of $4B$ into $\R^{d}$. 
\begin{align*}
\sum_{I\in \mathcal{D}_{\R^{d}}} ||\Delta_{I} \tilde{g}||_{2}^{2}
& \sim ||\tilde{g}-f(x_{B})||^{2} \lec \ve^{\frac{1}{2}} r_{B}^{d} + \int_{\R^{d}} |1-g|^{2}\\
& \lec \ve^{\frac{1}{2}} r_{B}^{d} + ||\grad A||_{2}^{2}
\lec \ve^{\frac{1}{2}}r_{B}^{d}. 
\end{align*}

Now \eqref{e:a<e} follows by applying the same argument to a slightly larger ball, say $\frac{3}{2}B$, then \eqref{e:Xavi-lemma} and Chebychev's inequality imply there must be $r\in (\frac{4}{3}r_{B},\frac{3}{2}r_{B})$ and $x\in \frac{1}{3}B$ such that 
\[
\alpha_{f\sigma}(B)
\lec \alpha_{f\sigma}(x,r) \lec \ve^{\frac{1}{2}}. 
\]

}

\vv

\section{The approximating measure}\label{meas-approx}

Let $\theta_{j}$ be a partition of unity subordinated to the balls $B_{j}$, belonging to the Besicovitch subcovering of balls as in \rf{besic}, and satisfying
\begin{equation}\label{partition1}
0\leq \theta_{j} \leq 1 , \;\;
\Lip(\theta_{j})\approx s_{j}^{-1}, \;\; 
\frac{3}{2}B_j\subset\supp \theta_{j}\subset 2B_{j}
\end{equation}
and
\begin{equation}\label{partition2}
\theta:=\sum_{j} \theta_{j} \equiv 1\quad \mbox{ on } \quad\bigcup \tfrac{3}{2}B_{j}=: O.
\end{equation}
Note that, by the finite superposition of the balls $B_j$ we may assume that 
$$\theta_j(x)\approx 1\quad\mbox{ for all $x\in B_j$.}$$
Define 
\begin{equation}
\label{e:cj}
 c_{j}:= \isif{ \int \theta_{j} d\mu/\int \theta_{j}d\sigma & \mbox{ if } 2B_{j}\subset  B(0,10), \\ c_{C_{2} B_{0}} & \mbox{ if } 2B_{j}\not\subset B(0,10),}
\end{equation}
and
\begin{equation}\label{def-nu}
d\nu := \sum_{j} c_{j}\theta_{j} d\sigma.
\end{equation}
Note that $\nu\ll  \sigma=\cH^d|_{\Sigma}$.
Note that by the way we have chosen the $c_{j}$, we have 
\begin{equation}\label{e:nuaway}
\nu|_{B(0,10)^{c}}= c_{C_{2}B_{0}} \cH^{d}|_{P_{0}\setminus B(0,10)} = \LL_{C_{2}B_{0}}|_{B(0,10)^{c}}.
\end{equation}
%

\begin{lemma}
	For all $j$ such that $2B_{j}\subset B(0,10)$,
	\begin{equation}\label{e:c-cB}
	|c_{j}-c_{\wt B_{j}}|\lec_{A,\tau} \ve .
	\end{equation}
\end{lemma}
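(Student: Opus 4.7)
The plan is to write
\[
c_j - c_{\wt B_j} = \frac{1}{\int \theta_j\,d\sigma}\left(\int \theta_j\,d\mu - c_{\wt B_j}\int \theta_j\,d\sigma\right),
\]
and then split the numerator via $\LL_j = c_{\wt B_j}\HH^d|_{P_j}$ as
\[
\int \theta_j\,d\mu - \int \theta_j\,d\LL_j \;+\; c_{\wt B_j}\!\left(\int \theta_j\,d\HH^d|_{P_j} - \int \theta_j\,d\sigma\right).
\]
Each of the two pieces will be controlled by a single $F$-distance after rescaling $\theta_j$ to be $1$-Lipschitz.

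For the first piece, since $\theta_j$ is $\Lip(s_j^{-1})$-Lipschitz and $\supp\theta_j\subset 2B_j\subset \wt B_j$ (using $C_2\geq 49$ and $3B_j\subset 49B_{i,k(\xi_j)-1}$ from the construction of $\wt B_j$), the function $s_j\theta_j\in\Lip_1(\wt B_j)$, so by \eqref{e:muLjk} and $s_j\approx r_{k(\xi_j)-1}$ (from \eqref{e:s<rifxin40} and Lemma \ref{lem6.4}),
\[
\Bigl|\int\theta_j\,d\mu-\int\theta_j\,d\LL_j\Bigr|\leq s_j^{-1}F_{\wt B_j}(\mu,\LL_j)\lec_{A,\tau}\ve\,s_j^d.
\]
For the second, $s_j\theta_j\in\Lip_1(3B_j)$ and \eqref{e:dHPj} gives
\[
\Bigl|\int\theta_j\,d\HH^d|_{P_j}-\int\theta_j\,d\sigma\Bigr|\leq s_j^{-1}F_{3B_j}(\sigma,\HH^d|_{P_j})\lec\ve\,s_j^d,
\]
and $c_{\wt B_j}\lec_{A,\tau}1$ by \eqref{e:cb-mub} applied to $\wt B_j$ (valid since $2d(\xi_j')<r_{k(\xi_j)-1}\approx r_{\wt B_j}<C_1$).

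It remains to bound $\int\theta_j\,d\sigma$ from below. Since $\theta_j\approx 1$ on $B_j$ and $B_j$ is centered on $\Sigma$, the AD-regularity of $\sigma$ from \eqref{e:sigma-AD} gives $\int\theta_j\,d\sigma\gec \sigma(B_j)\approx s_j^d$. Combining the three estimates yields $|c_j-c_{\wt B_j}|\lec_{A,\tau}\ve$, as desired. The only mild subtleties are verifying the nesting $3B_j\subset\wt B_j$ (which forces a lower bound on $C_2$) and ensuring $s_j\approx r_{k(\xi_j)-1}$ so that the two error terms emerge at the same scale $s_j^d$; both are immediate from the setup in Section \ref{approximating}.
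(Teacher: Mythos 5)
Your proof is correct and follows essentially the same route as the paper's: you split $\int\theta_j\,d\mu - c_{\wt B_j}\int\theta_j\,d\sigma$ via $\LL_j=c_{\wt B_j}\HH^d|_{P_j}$, bound the two pieces by $F_{\wt B_j}(\mu,\LL_j)$ and $F_{3B_j}(\sigma,\HH^d|_{P_j})$ after normalizing $\theta_j$ to be (roughly) $1$-Lipschitz, and use $\int\theta_j\,d\sigma\gec s_j^d$. You are slightly more explicit than the paper about the geometric verifications ($3B_j\subset\wt B_j$, $s_j\approx r_{k(\xi_j)-1}$), but the decomposition, the key inequalities, and the conclusion are identical.
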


\begin{proof}
 
Recall that by \eqref{e:3bjcapSigma}, $3B_{j}\cap \Sigma$ is a $C\ve^{}$-Lipschitz graph over $P_{j}$, and also that $3B_j
\subset \wt B_j$, $r_{B_j}\approx r_{\wt B_j}$ (as in \rf{e:btwid}), and $P_j$ passes through the center of $\wt B_j$. Then 
as in \rf{e:dHPj}
\begin{equation}\label{e:sigma-Pj}
F_{3B_{j}}(\sigma,\cH^{d}|_{P_{j}})\lec_{A,\tau} \ve^{} s_{j}^{d+1} .
\end{equation}
Recalling that $\Lip_{1}(\theta_{j})\lec s_{j}^{-1}$, we have
\begin{align*}
s_{j}^{d} \,|c_{j}- c_{\wt{B}_{j}}| &  \lec  |c_{j}- c_{\wt{B}_{j}}|   \int \theta_{j}d\sigma 
= \av{ \int \theta_{j} d\mu -\int \theta_{j} c_{\wt{B}_{j}} d\sigma }\\
& \leq \biggl| \int \theta_{j} d\mu-\!\int \theta_{j} \underbrace{c_{\wt{B}_{j}} d\cH^{d}|_{P_{j}}}_{=d\LL_{j}}\biggr|
+c_{\wt{B}_{j}}   \av{\int \theta_{j} d\cH^{d}|_{P_{j}} -\! \int \theta_{j}d\sigma}
 \stackrel{\eqref{e:sigma-Pj}, \eqref{e:muLjk}}{\lec_{A,\tau} }\!\!
\ve^{} s_{j}^{d}.
\end{align*}
\end{proof}

\begin{lemma}
	The measure $\nu$ is $d$-AD-regular with constants depending on $A$ and $\tau$, that is,
	\begin{equation}
	\label{e:nu-AD}
	\nu(B(x,r))\approx_{A,\tau} r^{d} \quad\mbox{ for all }x\in \Sigma \mbox{ and }r>0.
	\end{equation}
\end{lemma}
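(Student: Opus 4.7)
The plan is to show that the density $\frac{d\nu}{d\sigma} = \sum_j c_j \theta_j$ is bounded above and below by constants depending only on $A$ and $\tau$ on a set of full $\sigma$-measure, and then to conclude by invoking the near-AD-regularity of $\sigma$ in \eqref{e:sigma-AD}. The AD-regularity of $\nu$ will then follow directly from that of $\sigma$.

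First I would establish that $c_j \approx_{A,\tau} 1$ uniformly in $j$. In the case $2B_j \subset B(0,10)$, the estimate \eqref{e:c-cB} together with \eqref{e:cb-mub} gives
\[
c_j = c_{\wt B_j} + O_{A,\tau}(\ve) \approx_{A,\tau} \Theta^d_\mu(\wt B_j) \approx_{A,\tau} 1,
\]
since the radius $C_2 r_{k(\xi_j)-1}$ of $\wt B_j$ satisfies $2d(x_{i,k-1}') < 2r_{k-1} \leq C_2 r_{k-1}$ by the definition of the $x_{j,k}'$ in $E_k$, so the lemmas of Section 5 apply. In the case $2B_j\not\subset B(0,10)$, $c_j = c_{C_2 B_0}$, and again $c_{C_2 B_0} \approx_{A,\tau} \Theta^d_\mu(0,C_2) \approx_{A,\tau} 1$ via the same family of lemmas since $d(0) \leq 10^{-3} \ll C_2 < C_1$.

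Next, I would verify that $\frac{d\nu}{d\sigma}$ is comparable to $1$ at $\sigma$-a.e.\ point of $\Sigma$. On the open set $O = \bigcup \frac{3}{2}B_j$ we have $\sum_j \theta_j \equiv 1$, so
\[
\frac{d\nu}{d\sigma}(y) = \sum_j c_j \theta_j(y) \in \Bigl[\min_{j:\, \theta_j(y)>0} c_j,\, \max_{j:\, \theta_j(y)>0} c_j\Bigr],
\]
and this interval is contained in some $[c_1(A,\tau), c_2(A,\tau)]$ by Step 1. Since the collection $\{B_j\}$ is a Besicovitch subcovering of $\{B(x,\eta d(x)): x\in \Sigma\setminus Z\}$, it covers $\Sigma \setminus Z$, whence $\Sigma \setminus Z \subset O$. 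By Lemma \ref{l:good}, $\mathcal H^d(Z)=0$, i.e.\ $\sigma(Z)=0$, so $\sigma(\Sigma \setminus O) = 0$. For the global upper bound on $\frac{d\nu}{d\sigma}$ outside $O$ (where some $\theta_j$ may still be nonzero on $2B_j\setminus \frac{3}{2}B_j$), one uses bounded overlap of $\{2B_j\}$, which holds because $d$ is $1$-Lipschitz forces $s_{j_1}\approx s_{j_2}$ whenever $2B_{j_1}\cap 2B_{j_2}\neq\emptyset$, together with the Besicovitch splitting into $N$ disjoint subfamilies; combined with $c_j \lec_{A,\tau} 1$ we get $\frac{d\nu}{d\sigma} \lec_{A,\tau} 1$ everywhere.

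Finally, for any $x\in \Sigma$ and $r>0$ one computes
\[
\nu(B(x,r)) = \int_{B(x,r)\cap \Sigma} \frac{d\nu}{d\sigma}\, d\sigma \approx_{A,\tau} \sigma(B(x,r)\cap O) = \sigma(B(x,r)),
\]
where the last equality uses $\sigma(\Sigma\setminus O)=0$ and the comparability uses the two-sided bound on $\frac{d\nu}{d\sigma}$. Applying \eqref{e:sigma-AD} yields $\nu(B(x,r)) \approx_{A,\tau} r^d$, as desired. The only subtle point is the bounded overlap of $\{2B_j\}$ needed for the global upper bound on the density; this is the mild technical step, but it is standard given that the radii $s_j = \eta\, d(\xi_j)$ are locally Lipschitz in $\xi_j$.
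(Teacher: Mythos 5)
Your proof is correct and follows essentially the same route as the paper's: show $c_j \approx_{A,\tau} 1$, conclude $d\nu \approx_{A,\tau} d\sigma$, and invoke the AD-regularity of $\sigma$ from Lemma \ref{lem-sigma-AD}. The paper's own proof is a three-line version of the same argument; your write-up merely makes explicit the step the paper glosses over, namely that the lower bound $d\sigma \lesssim d\nu$ relies on $\sigma(\Sigma\setminus O)=0$, which follows from $\Sigma\setminus Z \subset \bigcup B_j \subset O$ together with $\cH^d(Z)=0$ from Lemma \ref{l:good}.
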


\begin{proof}
	Note that by \eqref{e:cb-mub} and the definition of $c_{j}$, using Lemma \ref{lem5.6}, Remark \ref{remark1} and Remark \ref{remark2}, we have $c_{j}\approx_{A,\tau} 1$ for all $j$. Thus, 
	\begin{equation}\label{nu-sigma-comparison}
	d\sigma 
	\lec_{A,\tau} \sum_{j} c_{j} \theta_{j} d\sigma 
	\lec_{A,\tau} d\sigma,
	\end{equation}
	which by Lemma \ref{lem-sigma-AD} ensures that $\nu$ is $AD$-regular.
\end{proof}

\begin{lemma}
	If $2B_{i}\cap 2B_{j}\neq\varnothing$, then
	\begin{equation}
	\label{e:ci-cj}
	|c_{i}-c_{j}|\lec_{A,\tau} \ve^{}.
	\end{equation}
	Thus,
	\begin{equation}\label{e:almost-constant}
	\biggl|\sum_{j}c_{j} \theta_{j}(x)-c_{i}\biggr|\lec \ve^{} \quad\mbox{ for all }x\in \Sigma\cap 2B_{i}. 
	\end{equation}
\end{lemma}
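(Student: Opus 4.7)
The plan is to reduce \eqref{e:ci-cj} to the already-proved inequality \eqref{e:c-cB} by comparing $c_{\wt B_i}$ and $c_{\wt B_j}$ (or $c_{C_2 B_0}$ in the outside regime). The main tool will be the scale-comparison estimate \eqref{e:cx-cy}, applied to sufficiently close pairs; what makes this work is that $2B_i\cap 2B_j\neq\varnothing$, combined with $s_\bullet=\eta d(\xi_\bullet)$ and the $1$-Lipschitz property of $d$, forces $d(\xi_i)\approx d(\xi_j)$ up to a multiplicative error $(1+2\eta)/(1-2\eta)$ arbitrarily close to $1$.

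First I would split according to the active clause in \eqref{e:cj}. If both $2B_i,2B_j\not\subset B(0,10)$, then $c_i=c_j=c_{C_2 B_0}$ trivially. If only $2B_j$ lies outside $B(0,10)$, then Lemma \ref{l:2Bjinball} forces $\xi_j\in 40B_0$, so $s_j\leq 1/20$; hence $|\xi_j|>10-2s_j\geq 9.9$, and $|\xi_i-\xi_j|\leq 2(s_i+s_j)\leq 1/5$ gives $|\xi_i|>9.7$. Since $V_1^{40}\subset B(0,5)$ but $\xi_i\in V_0^{40}=B(0,40)$, we have $k(\xi_i)=1$. Because $J_0=\{0\}$ with $x_{0,0}=0$, this forces $\xi_i'=0$ and $\wt B_i=C_2 B_0$, so $c_{\wt B_i}=c_{C_2 B_0}=c_j$, and \eqref{e:c-cB} finishes this subcase.

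The main case is $2B_i,2B_j\subset B(0,10)$; here by \eqref{e:c-cB} it suffices to show $|c_{\wt B_i}-c_{\wt B_j}|\lec_{A,\tau}\ve$. From $d(\xi_i)\approx d(\xi_j)$ and Lemma \ref{lem6.4}, $r_{k(\xi_i)-1}$ and $r_{k(\xi_j)-1}$ differ by at most a factor of $10$, and $|\xi_i'-\xi_j'|\leq 40(r_{k(\xi_i)-1}+r_{k(\xi_j)-1})+2(s_i+s_j)\lec R$ where $R:=\max\{r_{k(\xi_i)-1},r_{k(\xi_j)-1}\}$. I would then choose $K\gec C_2$ large enough (with $KR<C_1$) so that
\[
B(\xi_i',2C_2 r_{k(\xi_i)-1})\cup B(\xi_j',2C_2 R)\subset B(\xi_j',KR).
\]
Since $d(\xi_\bullet')\leq\delta(\xi_\bullet')<r_{k(\xi_\bullet)-1}$, the hypotheses of \eqref{e:cx-cy} hold with $(x,r)=(\xi_i',C_2 r_{k(\xi_i)-1})$, $(y,s)=(\xi_j',KR)$, and again with $(x,r)=(\xi_j',C_2 R)$, $(y,s)=(\xi_j',KR)$; in both cases the ratio $s/r$ is bounded by a constant depending only on $C_2$. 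Applying \eqref{e:cx-cy} in both pairings and using \eqref{pointwise-a-bound-2} to bound $\alpha_\mu(\xi_j',KR)\lec\ve$, the triangle inequality completes the bound.

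For \eqref{e:almost-constant}, note that on $\Sigma\setminus Z\subset O$ one has $\sum_j\theta_j(x)=1$, and for $x\in 2B_i$ only indices $j$ with $2B_j\cap 2B_i\neq\varnothing$ contribute to the sum, so by \eqref{e:ci-cj}
\[
\Big|\sum_j c_j\theta_j(x)-c_i\Big|=\Big|\sum_j(c_j-c_i)\theta_j(x)\Big|\leq \sum_j|c_j-c_i|\theta_j(x)\lec\ve.
\]
The most delicate step is the combinatorial bookkeeping in the mixed outside case, verifying $\wt B_i=C_2 B_0$ from the definitions of $k(\xi_i)$, $V_k^\lambda$, and $J_0$; past that, the argument is a two-step triangle inequality through \eqref{e:cx-cy}.
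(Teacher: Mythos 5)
Your proof is essentially correct and follows the same two-step triangle-inequality strategy as the paper's: split according to which $2B_\bullet$ are inside $B(0,10)$, then reduce to comparing $c_{\wt B_i}$ and $c_{\wt B_j}$ via \rf{e:c-cB} and the constant-comparison lemma (the paper uses \rf{e:cx-cy-general1} by passing through $c_{4\wt B_j}$; you pass through $c_{B(\xi_j',KR)}$, which amounts to the same thing with explicit constants). In fact your treatment of the mixed case is more careful than the paper's: the paper simply asserts $s_j\approx d(\xi_j)\approx 1$ and invokes ``similar estimates,'' whereas you explicitly establish $k(\xi_i)=1$, hence $\wt B_i=C_2B_0$, so the mixed case collapses to a single application of \rf{e:c-cB}. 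That is a genuine improvement in rigor.

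Two minor corrections are needed. First, the numerical claim ``$r_{k(\xi_i)-1}$ and $r_{k(\xi_j)-1}$ differ by at most a factor of $10$'' is too strong: Lemma \ref{lem6.4} gives $r_{k(\xi)}/10\leq d(\xi)\leq 60 r_{k(\xi)}$, and combined with $d(\xi_i)/d(\xi_j)\in[1/3,3]$ this only gives $r_{k(\xi_i)-1}/r_{k(\xi_j)-1}\in[10^{-3},10^3]$. This does not damage the argument, since $K$ can still be chosen as a fixed multiple of $C_2$; it only changes the constant. Second, in the main case your second pairing $(x,r)=(\xi_j',C_2R)$ with $R=\max\{r_{k(\xi_i)-1},r_{k(\xi_j)-1}\}$ produces $c_{B(\xi_j',C_2R)}$, which need not equal $c_{\wt B_j}=c_{B(\xi_j',C_2 r_{k(\xi_j)-1})}$ when $R=r_{k(\xi_i)-1}>r_{k(\xi_j)-1}$. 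You should either assume without loss of generality that $r_{k(\xi_j)-1}\geq r_{k(\xi_i)-1}$ (so $R=r_{k(\xi_j)-1}$ and the pairing is exactly $\wt B_j$), or replace $C_2R$ by $C_2 r_{k(\xi_j)-1}$ in the second pairing; the hypotheses of \rf{e:cx-cy} still hold since $C_2r_{k(\xi_j)-1}\leq C_2R\ll KR$. With that fix, the triangle inequality gives $|c_{\wt B_i}-c_{\wt B_j}|\lec_{A,\tau}\alpha_\mu(\xi_j',KR)\lec\ve$ as you intended, and \rf{e:c-cB} completes \rf{e:ci-cj}.

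For the deduction of \rf{e:almost-constant}, you should note one small point: on $2B_i$ the identity $\sum_j\theta_j\equiv 1$ holds on the set $O=\bigcup_j\frac32B_j$; away from $O$ the partition sum is only bounded by $1$, but then the left side is bounded by $\sum_j|c_j-c_i|\theta_j(x)+|c_i|\,\bigl|1-\sum_j\theta_j(x)\bigr|$. Since $\Sigma\cap 2B_i\setminus Z\subset O$ (every point of $\Sigma$ with $d>0$ lies in some $\frac32 B_j$) and $\sigma(Z)=0$, your estimate holds $\sigma$-a.e., which is all that is used downstream.
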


\begin{proof}
Let $B_i$ and $B_j$ be such that $2B_{i}\cap 2B_{j}\neq\varnothing$.
 Then we have
\[
s_{i} = \eta d(\xi_{i}) \leq \eta d(\xi_{j}) + \eta|\xi_{i}-\xi_{j}|
\leq s_{j} + 2\eta (s_{i}+s_{j})
\]
and since $\eta=1/1000$ we get
$
s_{i}\leq (2+4\eta)s_{j} \leq 3s_{j}.
$
Therefore, by symmetry,
$$\frac13\,s_j\leq s_{i}\leq 3s_{j}.$$
Since $ \wt{B}_{i}\supset 2B_i$ and $\wt{B}_{j}\supset 2B_{j}$ for $C_{2}$ large enough, we
derive
\[
 \wt{B}_{i}\subset 4 \wt{B}_{j}\quad \mbox{ and }\quad\wt{B}_{j}\subset 4 \wt{B}_{i}.
\]

First assume both $2B_{i}$ and $2B_{j}$ are contained in $B(0,10)$.
Then,
\begin{align*}
|c_{i}-c_{j}|
& \leq |c_{i} - c_{\wt{B}_{i}}|
+ |c_{\wt{B}_{i}}- c_{4\wt{B}_{j}}|
+|c_{4\wt{B}_{j}}-c_{\wt{B}_{j}}|
+ |c_{\wt{B}_{j}}-c_{j}|\\
& \stackrel{\eqref{e:c-cB}, \eqref{e:cx-cy-general1}}{\lec_{A,\tau}} \alpha_{\mu}(4\wt{B}_{j}) +\ve^{} \lec \ve^{}.
\end{align*}

Suppose now that $2B_{i}\subset  B(0,10)$ and $2B_{j}\not\subset B(0,10)$.
From the fact that $s_j\leq 1/20$ by \rf{e:s<rifxin40*}, it follows that $B_j\cap B(0,9)=\varnothing$,
and thus $s_j\approx d(\xi_j)\approx 1$.
So  we have $s_{i}\approx s_{j}\approx 1$.
 Thus, \eqref{e:ci-cj} follows by similar estimates to above using the fact that $c_{j}=c_{C_{2}B_{0}}$. 
 
 Finally, if both $2B_{i}$ and $ 2B_{i}$ are not contained in $B(0,10)$, then $c_{i}=c_{j}= c_{C_{2} B_{0}}$ and so \eqref{e:ci-cj} is trivial. 
\end{proof}
%
%
%

\begin{lemma}
	For all $x\in B(0,20)$ with $2 d(x)<r<20$, and $\theta$ as in \rf{partition1}
	\begin{equation}
	\label{e:mumutheta}
	F_{B(x,r)}(\mu,\theta \mu)\lec_{A} \ve^{\frac{1}{2}} r^{d+1}. 
	\end{equation} 
\end{lemma}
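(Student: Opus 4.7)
The plan is to exploit that $\theta\equiv 1$ on $O = \bigcup\frac{3}{2}B_j$, so that $(1-\theta)$ vanishes on $O$ and is uniformly bounded by the Besicovitch overlap constant. Then for $\phi\in \Lip_1(B(x,r))$, using $|\phi|\leq r$,
$$\left|\int \phi(1-\theta)\,d\mu\right| \;\lesssim_n\; r\cdot \mu(B(x,r)\setminus O),$$
reducing the lemma to the estimate $\mu(B(x,r)\setminus O)\lesssim_A \ve^{1/2} r^d$. I would split this via
$$\mu(B(x,r)\setminus O) \;\leq\; \mu(B(x,r)\setminus E_0) \;+\; \mu(E_0\cap B(x,r)\setminus O).$$
Since $2d(x)<r<C_1$, the first term is bounded by $\lesssim_{A,M}\ve^{1/2}r^d$ by combining \rf{eqe0*} with the upper bound in \rf{e:muAD}.

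For the second term, since $\mu(Z)=0$ by Lemma \ref{l:good}, it suffices to control the set $E_0\cap B(x,r)\setminus(O\cup Z)$. The key elementary observation is that if $y\in E_0\setminus Z$ satisfies $\dist(y,\Sigma)\leq \frac{\eta}{3}d(y)$, then $y\in O$: the projection $\xi$ of $y$ onto $\Sigma$ has $d(\xi)>0$, so $\xi\in \Sigma\setminus Z \subset \bigcup_{j'} B_{j'}$; picking $j'$ with $\xi\in B_{j'}$ and using the $1$-Lipschitz property of $d$ to get $d(\xi_{j'})\approx d(y)$, the triangle inequality $|y-\xi_{j'}|\leq |y-\xi|+|\xi-\xi_{j'}|$ gives $|y-\xi_{j'}|<\tfrac{3}{2}s_{j'}$ for $\eta=10^{-3}$ small enough. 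Contrapositively, every $y\in E_0\cap B(x,r)\setminus(O\cup Z)$ satisfies $\dist(y,\Sigma)\gtrsim \eta d(y)$.

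To bound the $\mu$-measure of such $y$'s by $\ve^{1/2}r^d$, I would run a scale-by-scale Chebyshev argument. At each DT-scale $r_k$, combining the $\alpha$-estimate \rf{e:muLjk} with the Reifenberg approximation of $\Sigma$ by $P_{j,k}$ gives
$$\int_{B(x_{j,k},r_k)} \dist(z,\Sigma)\,d\mu(z) \;\lesssim_A\; \alpha_{j,k}\, r_k^{d+1},$$
so Chebyshev at threshold $\eta r_k$ bounds the bad $\mu$-measure in that ball by $\lesssim \alpha_{j,k} r_k^d/\eta$. Summing over active pairs $(j,k)$ (those containing a bad $y$ with $k(y)=k$) and applying Cauchy--Schwarz yields
$$\mu(\text{bad}) \;\lesssim\; \eta^{-1}\Big(\!\sum_{j,k}\alpha_{j,k}^2 r_k^d\Big)^{1/2} \Big(\!\sum_{(j,k)\text{ active}} r_k^d\Big)^{1/2}.$$
The first factor is $\lesssim \ve r^{d/2}$ by the Carleson-type bound coming from the pointwise $\ell^2$-summability of $\alpha$ in scale (\rf{working-hyp-2A}), and the second is $\lesssim r^{d/2}$ because each bad $y$ contributes $r_{k(y)}^d\approx d(y)^d\lesssim \mu(B(y,2d(y)))$, whose finitely-overlapping sum telescopes into $\mu(B(x,2r))\lesssim r^d$. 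This yields $\mu(\text{bad})\lesssim \ve r^d/\eta \lesssim \ve^{1/2}r^d$ provided $\ve\ll \eta^2$, which holds by \rf{constants-1}.

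The main obstacle is the last Carleson--Cauchy--Schwarz step: the naive per-scale Chebyshev bound $\lesssim \ve r^d/\eta$ does not decay in $k$ and would diverge when summed, so the square-summability of the $\alpha$-numbers in scale (rather than merely the pointwise bound $\alpha\lesssim \ve$) together with the AD-regular packing of bad points at their individual scales is essential to close the estimate with the desired $\ve^{1/2}$ gain.
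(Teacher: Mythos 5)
Your reduction to estimating $\mu(B(x,r)\setminus O)$ is correct, and your geometric observation (that $\dist(y,\Sigma)\lec\eta\,d(y)$ for $y\in E_0\setminus Z$ forces $y\in O$) is exactly the right mechanism; it is essentially the paper's. But you then treat the complementary ``bad'' set $E_0\cap B(x,r)\setminus(O\cup Z)$ as potentially nonempty of small measure and try to control it by a Carleson--Cauchy--Schwarz argument. That is an unnecessary detour, and it misses the real content: the bad set is \emph{empty}. Since every $y\in E_0$ has $\alpha_\mu(y,t)\lec\ve$ for all $t$ in the relevant range (by \rf{pointwise-a-bound-2} and Lemma \ref{a-comparison}), Lemma \ref{l:LcapB} can be applied \emph{at $y$ itself} with $s\approx\eta^2 d(y)$, giving the pointwise (not Chebyshev/a.e.) bound $\dist(y,P_{j,k(y)-1})\leq\eta^2 d(y)$; combining with the Reifenberg estimates \rf{e:49r}, \rf{e:ytosigma} gives $\dist(y,\Sigma)\leq(\eta^2+C\ve)d(y)<2\eta^2 d(y)\ll\eta\,d(y)$. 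Feeding this into your observation shows directly $E_0\cap 40B_0\setminus Z\subset O$, hence $\theta\equiv 1$ $\mu$-a.e.\ on $E_0$ and only the $\ND$-type term $\mu(B(x,r)\setminus E_0)$ survives.

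As written, the Carleson step is also not airtight: the bound $\sum_{(j,k)}\alpha_{j,k}^2 r_k^d\lesssim\ve^2 r^d$ from the pointwise Dini condition \rf{working-hyp-2A} requires a packing argument (integrating over $\mu$ with a controlled overlap), and your claimed finite overlap of $B(y,2d(y))$ across \emph{different scales} $k(y)$ and the ``telescoping'' into $\mu(B(x,2r))$ are not justified -- distinct bad $y$'s at comparable scales do pack, but across many scales this needs a Vitali-type selection that you have not supplied. None of this is fatal in principle, but it is considerably more machinery than the problem requires; the decisive idea you should exploit is that $\alpha$-smallness on $E_0$ is a \emph{pointwise} hypothesis, so the supporting $d$-plane passes through an $\eta^2 d(y)$-neighborhood of every $y\in E_0$, which immediately kills the exceptional set rather than merely bounding its measure.
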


\begin{proof}
	Let $\phi\in \Lip_{1}(B(x,r))$. Since $4d(x)<r< C_1$, we have
	$$\frac{\mu(B(x,r)\setminus E_{0})}{ \mu(B(x,r))}\lesssim_{M}\ve^{\frac{1}{2}}.
$$ 
	\[
	\av{\int \phi\, d\mu - \int \phi\, d\mu|_{E_{0}}}
	\leq r\mu(B(x,r)\setminus E_{0})
	\stackrel{\eqref{eqe0*}}{\lesssim}_M r\,\ve^{\frac{1}{2}} \mu(B(x,r))\stackrel{\eqref{e:muAD}}{\lec_{A}} \ve^{\frac{1}{2}} r^{d+1} 
	\]
	which implies $F_{B(x,r)} (\mu, \mu|_{E_{0}})\lec_{A,M} \ve^{\frac{1}{2}} r^{d+1}$. Similarly, $F_{B(x,r)}  (\theta \mu, \theta \mu|_{E_{0}})\lec_{A} \ve^{\frac{1}{2}} r^{d+1}$. So it suffices to show
	that $\mu|_{E_{0}}=\theta \mu|_{E_{0}}$, which is equivalent to saying that $\theta\equiv1$ 
	$\mu$-a.e.\ in $E_0$.

	Let $y\in E_{0}\cap B(x,r)\setminus Z\subset B(0,40)$. We wish to show that $y\in\frac32 B_j$ for
	some $j$.
	Let $k=k(y)$. Then $k>0$ since $y\in 40 B_{0}$, and so $y\in V_{k-1}^{40}$, hence $y\in 40B_{j,k-1}$ for some $j\in J_{k-1}$. 
%
%
%
%
	For $\ve>0$ small enough  depending on $M$ and $\eta$, by Lemma \ref{l:LcapB}, 
	recalling that $d(y)\approx r_{k-1}$,
		\[
	\dist(y,P_{j,k-1})\leq \min\{\eta^{2}d(y),r_{k-1}\}.
	\]
	Also, since $y\in 40 B_{j,k-1}$, we have
	$
	\pi_{j,k-1}(y) \in 40B_{j,k-1}$. Then,
	for $\ve>0$ small,
	\begin{align*}
	\dist(y,\Sigma)
	& \leq |y-\pi_{j,k-1}(y)| + \dist(\pi_{j,k-1}(y),\Sigma)\\
	& \stackrel{\eqref{e:49r} \atop \eqref{e:ytosigma}}{\leq} \eta^{2}d(y) + C\ve r_{k-1}\\
	& \leq \eta^{2}d(y)+ C\ve d(y)
	< 2\eta^{2}d(y).
	\end{align*}
		Let $z\in \Sigma$ be such that 
	\[
	\dist(y,\Sigma)=|y-z|<2\eta^{2}d(y)\leq\frac{d(y)}{2}.
	\]
	 Then,
	\[
	d(z)\geq d(y) - |y-z| \geq d(y)/2>0.
	\]
	Hence, $z\in B_{j}$ for some $j$ and
	\[
	s_{j} =   \eta   d(\xi_{j})\geq  \eta (d(y) -  |\xi_{j}-z|-|z-y|)
	\geq  \eta \ps{d(y) - s_{j} -\frac{d(y)}{2} }\geq \frac{\eta}{2} d(y) - \eta s_{j}.
	\]
	Therefore, 
	\begin{equation}
	\label{e:sj>eta/4}
	s_{j}\geq  \frac{\eta}{2(1+\eta)}d(y)\geq \frac{\eta}{4} d(y).
	\end{equation}
	In particular, 
    \[
	y\in B(z,2\eta^{2}d(y))
	\subset B\ps{\xi_{j},s_{j}+2\eta^{2}d(y)}
	\stackrel{\eqref{e:sj>eta/4}}{\subset} B\ps{\xi_{j},s_{j}(1+8\eta)}\subset \tfrac{3}{2} B_{j}.\]	
	This proves
	\begin{equation}\label{e:E040}
	E_{0}\cap 40 B_{0} \subset \bigcup_j \tfrac32B_j=O.
	\end{equation}
	So $\theta\equiv 1$ on $O$ and, since $\mu(Z)=0$, we deduce that thus $\theta\equiv 1$ $\mu$-a.e.
	on $E_0$, as wished.
	\end{proof}
	
\begin{lemma}
	For $x\in \Sigma\cap B(0,20)$ and $0<r\leq15$,
	\begin{equation}
	\label{e:nuthetamu}
		F_{B(x,r)}(\nu,\theta \mu) \lec_{A,\tau} \sum_{2B_{j}\cap B(x,r)\neq\varnothing} \ve^{\frac{1}{4}} s_{j}^{d+1}.
	\end{equation}
\end{lemma}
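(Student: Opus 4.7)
The plan is to test against an arbitrary $\phi\in\Lip_1(B(x,r))$ and exploit the decomposition
\[
\int \phi\,d(\theta\mu - \nu) \;=\; \sum_{j:\,2B_j\cap B(x,r)\neq\varnothing}\int \phi\,\theta_j\,(d\mu - c_j\,d\sigma),
\]
which follows immediately from $d\nu = \sum_j c_j\theta_j\,d\sigma$ and $\sum_j\theta_j = \theta$. I will bound each summand by $\ve s_j^{d+1}$ in the main case and by a uniform $\ve^{1/4}s_j^{d+1}$ in a negligible boundary case, then take the supremum over $\phi$.

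In the main case $2B_j\subset B(0,10)$, the definition \eqref{e:cj} of $c_j$ as an average gives the cancellation $\int \theta_j\,(d\mu - c_j\,d\sigma) = 0$, so I can replace $\phi$ by $\phi - \phi(\xi_j)$. The auxiliary function $\psi_j := (\phi - \phi(\xi_j))\theta_j$ is supported in $2B_j$, bounded by $2s_j$ by $1$-Lipschitzness of $\phi$, and satisfies $\Lip(\psi_j)\lesssim 1$ by the product rule with $\Lip(\theta_j)\lesssim s_j^{-1}$. Hence $\psi_j$ is a bounded constant multiple of a function in $\Lip_1(2B_j)$, and
\[
\Bigl|\int \psi_j\,(d\mu - c_j\,d\sigma)\Bigr|\;\lesssim\; F_{2B_j}(\mu, c_j\sigma).
\]
Inserting the plane measure $\LL_j = c_{\wt B_j}\HH^d|_{P_j}$ (using $2B_j\subset \wt B_j$ for $C_2$ large), I split $F_{2B_j}(\mu, c_j\sigma)\leq F_{\wt B_j}(\mu,\LL_j) + F_{2B_j}(\LL_j,c_j\sigma)$. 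The first summand is $\lesssim_{A,\tau}\ve s_j^{d+1}$ by \eqref{e:muLjk} together with $r_{k(\xi_j)-1}\approx s_j$ from \eqref{e:rsimd}. For the second, a further triangle inequality through $c_j\HH^d|_{P_j}$ combined with $|c_{\wt B_j}-c_j|\lesssim \ve$ from \eqref{e:c-cB}, the elementary bound $F_{2B_j}(\HH^d|_{P_j},0)\lesssim s_j^{d+1}$ (test functions are bounded by $2s_j$ on a set of $\HH^d$-measure $\lesssim s_j^d$), $c_j\approx 1$, and $F_{3B_j}(\HH^d|_{P_j},\sigma)\lesssim \ve s_j^{d+1}$ from \eqref{e:dHPj}, yields $F_{2B_j}(\LL_j,c_j\sigma)\lesssim \ve s_j^{d+1}$.

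In the boundary case $2B_j\not\subset B(0,10)$, the $1$-Lipschitzness of $d$ with $d(0)\leq\delta(0)\leq 10^{-3}$ from \eqref{e:delta0} forces $d(\xi_j)\geq |\xi_j|-1\gtrsim 1$, so $s_j\gtrsim 1$ and only $O(1)$ such balls contribute. Here $c_j = c_{C_2 B_0}$ and the previous cancellation is lost, but I bound $|\int\phi\theta_j\,(d\mu - c_j\,d\sigma)|\lesssim s_j^{-1}F_{2B_j}(\mu, c_{C_2 B_0}\sigma)$ directly from $\Lip(\phi\theta_j)\lesssim s_j^{-1}$, and then insert $\LL_{C_2 B_0}$: the first piece is $\lesssim \ve$ via $\alpha_\mu(0,C_2)\lesssim \ve$ from \eqref{pointwise-a-bound-2}, and the second reduces to $c_{C_2 B_0}F_{2B_j}(\HH^d|_{P_0},\sigma)$, which uses that $\Sigma$ is a $C\ve^{1/4}$-Lipschitz graph over $P_0$ by Lemma \ref{lem6.8} (and equals $P_0$ outside $B(0,10)$ by \eqref{e:planar-sigma}). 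Since $s_j\gtrsim 1$ the resulting bound is absorbed into $\ve^{1/4}s_j^{d+1}$. The main subtlety throughout is arranging each per-ball bound to scale like $s_j^{d+1}$ rather than the easier $s_j^d$: in the interior case this is achieved by exploiting the defining cancellation of $c_j$ to subtract the constant $\phi(\xi_j)$ (saving a factor of $s_j$), while in the boundary case it is automatic from $s_j\gtrsim 1$.
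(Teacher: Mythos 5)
Your argument is correct and follows essentially the same route as the paper's: expand $\int\phi\,d(\theta\mu-\nu)$ per ball, use the defining cancellation $\int\theta_j(d\mu-c_j\,d\sigma)=0$ to subtract $\phi(\xi_j)$ and gain the extra factor of $s_j$, compare $c_j\sigma$ to $\mu$ through $\LL_j$ via \eqref{e:muLjk}, \eqref{e:c-cB}, \eqref{e:dHPj}, and handle the $O(1)$ boundary balls with $2B_j\not\subset B(0,10)$ separately using $\alpha_\mu(0,C_2)\lesssim\ve$ and the $C\ve^{1/4}$-Lipschitz graph property of $\Sigma$. The paper reaches the same conclusion by splitting $\int\phi\,d\nu$ into $I_1+I_2$ and recombining $\int\phi\theta\,d\mu$ at the end rather than subtracting it up front; the ingredients and estimates are identical, so this is a difference of bookkeeping, not of substance.

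One small correction in the boundary case. The lower bound $d(\xi_j)\geq|\xi_j|-1$ does not follow from the $1$-Lipschitzness of $d$ together with $d(0)\leq 10^{-3}$ — that only gives the useless direction $d(\xi_j)\leq d(0)+|\xi_j|$. It follows directly from the definition: $d(\xi_j)=\inf_{y\in E_0\cap B_0}\bigl(\delta(y)+|\xi_j-y|\bigr)\geq\inf_{y\in B_0}|\xi_j-y|=|\xi_j|-1$. Moreover, to conclude $|\xi_j|\gtrsim 1$ you need to first note (as the paper does) that for any $j$ with $2B_j\cap B(x,r)\neq\varnothing$ one has $\xi_j\in 40B_0$ by Lemma \ref{l:2Bjinball}, hence $s_j\leq 1/20$ by \eqref{e:s<rifxin40*}, and so $2B_j\not\subset B(0,10)$ forces $|\xi_j|\geq 10-2s_j\geq 9$.
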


\begin{proof}
Let 
\[
J(x,r) = \{j: 2B_{j}\cap B(x,r)\neq\varnothing\}.
\]
Observe that for $j\in J(x,r)$,  since $x\in B(0,20)$ and $r\leq15$, we have $2B_{j}\cap B(0,35)\neq\varnothing$, so $\xi_{j}\in 40 B_{0}$ by Lemma \ref{l:2Bjinball}. Let $\phi\in \Lip_{1}(B(x,r))$. Then
\begin{multline*}
\int \phi\, d\nu 
=\sum_{j\in J(x,r)} \int \phi \theta_{j} c_{j}\,d\sigma 
=\sum_{j\in J(x,r)}\int (\phi-\phi(\xi_{j})) \theta_{j}c_{j}\,d\sigma +\sum_{j\in J(x,r)}\!\! \phi(\xi_{j}) \int  \theta_{j}c_{j}\,d\sigma \\
=:I_{1}+I_{2}.
\end{multline*}
We will estimate the two sums separately. Note that $\Lip((\phi-\phi(\xi_{j})) \theta_{j})\lec 1$
and $c_j\approx_{A,\tau}1$, and so, with constants $C$ depending on $A$ and $\tau$,
\begin{align*}
I_{1} 
& = \sum_{j\in J(x,r)}\int (\phi-\phi(\xi_{j})) \theta_{j}c_{j}d\sigma
 \stackrel{\eqref{e:dHPj}}{\leq}  \sum_{j\in J(x,r)}\ps{\int (\phi-\phi(\xi_{j})) \theta_{j}c_{j}d \cH^{d}|_{P_{j}}+C\ve s_{j}^{d+1}}\\
 & \stackrel{\eqref{e:c-cB}}{\leq} \sum_{j\in J(x,r)}\ps{\int (\phi-\phi(\xi_{j})) \theta_{j}c_{\wt{B}_{j}}d \cH^{d}|_{P_{j}}+C\ve s_{j}^{d+1}}\\
& \stackrel{\eqref{cL-ppts}}{\leq} \sum_{j\in J(x,r)}\ps{\int (\phi-\phi(\xi_{j})) \theta_{j}d\mu + Cs_{j}\mu(\wt{B}_{j}) \alpha_{\mu}(\wt{B}_{j}) +C\ve s_{j}^{d+1}}\\
& \stackrel{\eqref{e:muAD}\eqref{e:muLjk}}{\leq} \sum_{j\in J(x,r)}\ps{\int (\phi-\phi(\xi_{j})) \theta_{j}d\mu + C\ve s_{j}^{d+1}}.\\
\end{align*}

Let $J_{1}$ be those $j\in J(x,r)$ for which $2B_{j}\subset B(0,10)$ and $J_{2}=J(x,r)\setminus J_{1}$. We split
\[
I_{2} = \sum_{j\in J_{1}}  \phi(\xi_{j}) \int  \theta_{j}c_{j}d\sigma 
+  \sum_{j\in J_{2}}  \phi(\xi_{j}) \int  \theta_{j}c_{j}d\sigma 
=:I_{21}+I_{22}.\]
We now estimate these two terms separately. First,
\[
I_{21}\stackrel{\eqref{e:cj}}{=}\sum_{j\in J_1} \phi(\xi_{j}) \int  \theta_{j}d\mu.
\]
For $I_{22}$, note that if $2B_{j}\cap B(0,10)^{c}\neq\varnothing$, then $s_{j}\approx 1$ and so $r\lec s_{j}$ and  there number of such $j$'s is bounded above by a constant only depending on $n$. Thus, $|\phi(\xi_j)|\lesssim r\lesssim1$ and moreover,
for $C_{2}$ large enough, $\bigcup_{j\in J_2} 2B_{j}\subset C_{2}B_{0}$. Also, since $\Sigma$ is a $C\ve^{\frac{1}{4}}$-Lipschitz graph over $P_{0}$, we know 
\[
F_{C_{2}B_{0}}(\sigma,\cH^{d}|_{P_{0}})\lec \ve^{\frac{1}{4}}.\] 
We can thus estimate
\begin{align*}
I_{22}
& \stackrel{\eqref{e:cj}}{=} \sum_{j\in J_{2}}  \phi(\xi_{j}) \int  \theta_{j}c_{C_{2}B_{0}}d\sigma 
\leq 
\sum_{j\in J_{2}}  \phi(\xi_{j}) \int  \theta_{j}\underbrace{c_{C_{\wt{B}_{j}}}d\cH^{d}|_{P_{0}}}_{\LL_{C_{2}B_{0}}} +C\sum_{j\in J_{2}}  |\phi(\xi_{j})| \, \ve^{\frac{1}{4}}\\
& \leq \sum_{j\in J_{2}}  \phi(\xi_{j}) \int  \theta_{j} d\mu +C\sum_{j\in J_{2}}  |\phi(\xi_{j})|\,(\alpha_{\mu}(C_{2}B_{0})+\ve^{\frac{1}{4}})\\
& \leq \sum_{j\in J_{2}}  \phi(\xi_{j}) \int  \theta_{j} d\mu +C\sum_{j\in J_2} \ve^{\frac{1}{4}} s_{j}^{d+1}.
\end{align*}
Thus,
\[
I_{2} \leq I_{21}+I_{22} = \sum_{j\in J(x,r)} \ps{ \phi(\xi_{j})\int \theta_{j}d\mu + \ve^{\frac{1}{4}} s_{j}^{d+1}}.\]
Hence,
\begin{align*}
\int \phi\, d\nu 
& =I_{1}+I_{2} \\
&\leq \sum_{j\in J(x,r)}\int (\phi-\phi(\xi_{j})) \theta_{j}\,d\mu + C \sum_{j\in J(x,r)}\ve^{\tfrac12} s_{j}^{d+1} + \sum_{j\in J(x,r)} \phi(\xi_{j}) \int  \theta_{j}d\mu\\
&=\int \phi \,\theta\, d\mu + C\sum_{j\in J(x,r)} \ve^{\frac{1}{4}} s_{j}^{d+1} .
\end{align*}
We can similarly show a converse inequality, and this proves the lemma. 
\end{proof}

An immediate consequence of the previous two lemmas is the following.

\begin{lemma}
	For all $x\in \Sigma\cap B(0,20)$ with $2d(x)<r\leq15$, 
	\begin{equation}
	\label{e:dnumu}
	F_{B(x,r)}(\nu,\mu)\lec_{A,\tau}  \ve^{\frac{1}{4}} r^{d+1} + \ve^{\frac14} \sum_{2B_{j}\cap B(x,r)\neq\varnothing}  s_{j}^{d+1}.
	\end{equation}
\end{lemma}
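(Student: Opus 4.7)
The statement is advertised as ``an immediate consequence of the previous two lemmas,'' so the plan is essentially a triangle inequality for the Wasserstein-type distance $F_{B(x,r)}$.

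The plan is as follows. First, I would verify that both hypotheses allow me to invoke \eqref{e:mumutheta} and \eqref{e:nuthetamu} simultaneously: the condition $x \in \Sigma \cap B(0,20)$ with $2d(x) < r \leq 15$ clearly implies $x \in B(0,20)$ with $2d(x) < r < 20$ (needed for \eqref{e:mumutheta}) and also $x \in \Sigma \cap B(0,20)$ with $0 < r \leq 15$ (needed for \eqref{e:nuthetamu}). So both estimates are available.

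Next, since $F_{B(x,r)}(\cdot,\cdot)$ is a genuine metric on finite Borel measures supported in $B(x,r)$ (as noted in the introduction, being a variant of the Wasserstein-$1$ distance), the triangle inequality gives
\[
F_{B(x,r)}(\nu,\mu) \;\leq\; F_{B(x,r)}(\nu,\theta\mu) + F_{B(x,r)}(\theta\mu,\mu).
\]
Applying \eqref{e:nuthetamu} to the first term and \eqref{e:mumutheta} to the second term yields
\[
F_{B(x,r)}(\nu,\mu) \;\lec_{A,\tau}\; \ve^{1/4}\!\!\sum_{2B_j\cap B(x,r)\neq\varnothing}\!\! s_j^{d+1} \;+\; \ve^{1/2} r^{d+1}.
\]
Finally, since $\ve<1$ we have $\ve^{1/2}\le \ve^{1/4}$, so the $\ve^{1/2} r^{d+1}$ term is absorbed into $\ve^{1/4} r^{d+1}$, giving exactly \eqref{e:dnumu}.

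There is essentially no obstacle here: the work has already been done in the two preceding lemmas, and the only subtlety is making sure the ranges of validity of $r$ match up (which they do, since $2d(x) < r \leq 15 < 20$). The only cosmetic point is replacing $\ve^{1/2}$ by the weaker $\ve^{1/4}$ in order to present the bound as a single coefficient $\ve^{1/4}$ multiplying $r^{d+1} + \sum s_j^{d+1}$.
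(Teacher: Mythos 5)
Your proposal is correct and matches the paper's approach exactly: the paper states the lemma as ``an immediate consequence of the previous two lemmas'' without giving details, and your triangle-inequality argument (together with $\ve^{1/2}\le\ve^{1/4}$ for $\ve<1$) is precisely that consequence.
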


\begin{lemma}
	If $1<r$, $x\in \Sigma$, and $B(x,r) \cap B(0,10)\neq\varnothing$, then
	\begin{equation}\label{e:a<r^-d}
	\alpha_{\nu}(x,r)\lec_{A,\tau} \ve^{\frac{1}{4}} r^{-d}.	
	 	\end{equation}
\end{lemma}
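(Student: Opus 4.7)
The plan is to take $\LL_{C_2 B_0} = c_{C_2 B_0}\HH^d|_{P_0}$ as the comparison measure in the definition of $\alpha_\nu(x,r)$. The key structural fact is \eqref{e:nuaway}: the signed measure $\nu - \LL_{C_2 B_0}$ is supported inside the fixed ball $B(0,10)$. Thus $F_{B(x,r)}(\nu,\LL_{C_2 B_0})$ really only sees what happens at the base scale, and my task reduces to transferring a base-scale estimate to the (possibly large) ball $B(x,r)$.

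First I will establish the base-scale bound
$$F_{B(0,15)}(\nu,\LL_{C_2 B_0}) \lesssim_{A,\tau} \ve^{1/4}.$$
Since $0 \in F_0 \subset E_0$ with $d(0) \leq \delta(0) \leq 10^{-3}$ by \eqref{e:delta0}, I can apply \eqref{e:dnumu} at $x = 0$, $r = 15$. The sum $\sum_j s_j^{d+1}$ appearing there is $\lesssim 1$ because $s_j \leq 1/20$ by \eqref{e:s<rifxin40*} and $\{B_j\}$ has bounded overlap on $B(0,16)$ where $\sigma$ is AD-regular. Hence $F_{B(0,15)}(\nu,\mu) \lesssim_{A,\tau} \ve^{1/4}$. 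Assuming $C_1$ is taken large enough that $C_2 \geq 15$ (allowed since $C_2^2<C_1$), the monotonicity of $F$ in the ball, the identity $F_{B(0,C_2)}(\mu, \LL_{C_2 B_0}) = C_2\,\mu(C_2 B_0)\,\alpha_\mu(0,C_2)$, the bound $\alpha_\mu(0,C_2) \lesssim \ve$ from \eqref{pointwise-a-bound-2}, and $\mu(C_2 B_0) \leq 1$ from \eqref{meas=1} together give $F_{B(0,15)}(\mu,\LL_{C_2 B_0}) \lesssim \ve$. The triangle inequality then closes the argument.

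To transfer to $B(x,r)$, I will use a fixed Lipschitz cutoff $\chi$ with $\chi \equiv 1$ on $B(0,10)$, $\supp \chi \subset B(0,15)$, and $\Lip(\chi) \lesssim 1$. For $\phi \in \Lip_1(B(x,r))$, set $\psi = \phi\chi$; then $\supp \psi \subset B(0,15)$. Since $\phi$ vanishes on $\partial B(x,r)$, one has $\|\phi\|_\infty \leq r$, hence $\Lip(\psi) \leq 1 + r\,\Lip(\chi) \lesssim r$ for $r > 1$, so $\psi/(Cr) \in \Lip_1(B(0,15))$ for an appropriate constant $C$. Because $\nu - \LL_{C_2 B_0}$ is supported in $B(0,10)$ where $\chi \equiv 1$, I obtain
$$\int \phi\,d(\nu - \LL_{C_2 B_0}) = \int_{B(0,10)} \phi\,d(\nu - \LL_{C_2 B_0}) = \int \psi\,d(\nu - \LL_{C_2 B_0}).$$
Applying the base-scale bound yields $F_{B(x,r)}(\nu,\LL_{C_2 B_0}) \lesssim_{A,\tau} r\,\ve^{1/4}$. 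Dividing by $r\,\nu(B(x,r)) \approx_{A,\tau} r^{d+1}$ via AD-regularity \eqref{e:nu-AD} produces $\alpha_\nu(x,r) \lesssim_{A,\tau} \ve^{1/4}/r^d$, as claimed.

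The principal subtlety is the cutoff step: a direct monotonicity comparison between $F_{B(0,15)}$ and $F_{B(x,r)}$ is unavailable in general (the balls need not contain each other in a useful way), and the cutoff introduces an unavoidable factor of $r$ in $\Lip(\psi)$. This factor is harmless for the final estimate: it cancels with the $r$ in the denominator of $\alpha_\nu$, producing precisely the $r^{-d}$ decay claimed, rather than the $r^{-d-1}$ a naive argument might suggest.
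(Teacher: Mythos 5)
Your proposal is correct and follows essentially the same route as the paper: both hinge on the structural fact \eqref{e:nuaway} that $\nu-\LL_{C_{2}B_{0}}$ is supported in $B(0,10)$, combined with a Lipschitz cutoff to reduce to a base-scale estimate. The paper's version uses a three-way decomposition $\phi=(\phi-\phi(0))\psi+\phi(0)\psi+\phi(1-\psi)$ and introduces the auxiliary constant $\wt c=\int\psi\,d\nu/\int\psi\,d\cH^{d}|_{P_0}$ with $|\wt c-c_{C_{2}B_{0}}|\lesssim\ve^{1/4}$, whereas you multiply $\phi$ by a single cutoff $\chi$ and compare directly to $F_{B(0,15)}(\nu,\LL_{C_{2}B_{0}})$; yours is a modest but genuine streamlining of the bookkeeping, and you correctly identify that the factor of $r$ introduced by the cutoff (resp. by $|\phi(0)|\leq r$ in the paper) is exactly what produces the $r^{-d}$ rather than $r^{-d-1}$ decay.

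One small technical point you should fix: \eqref{e:dnumu} is stated for $x\in\Sigma$, and from $0\in F_0\subset E_0\cap B_0$ one only knows $\dist(0,\Sigma)\lesssim\ve$ (via \eqref{e:gz-z}), not $0\in\Sigma$. The remedy is routine: take $z\in\Sigma$ realizing $\dist(0,\Sigma)$, note $d(z)\le d(0)+|z|\le 10^{-3}+C\ve$, and since $B(0,15)\subset B(z,16)$ apply \eqref{e:dnumu} at $x=z$, $r=16$ (adjust the cutoff radius from $15$ to something slightly smaller than $16$ if you want to keep $r\le 15$, or simply note that the proof of \eqref{e:dnumu} trivially accommodates $r\le 16$). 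The paper's argument implicitly uses the same relocation.
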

 
\begin{proof}
	Let $\psi$ be a $1$-Lipschitz function that is zero on $B(0,11)^{c}$ and $1$ on $B(0,10)$. Set
	\[
	\wt c=\frac{\int \psi\, d\nu}{\int \psi\, d\cH^{d}|_{P_{0}}}.
	\]
	{
	Note that the collection $\{B_j\}$ has finite overlap depending only on $n$. Moreover
	if $2B_{j}\cap B(x,r)\neq\varnothing$, with $B_j=B(\xi_j,s_j)$, $\xi_j\in\Sigma$, $s_j=\eta d(\xi_j)\le \eta (d(x) +2s_j+r)\le \eta(3r/2+2s_j)$ with $\eta=10^{-3}$ which yields $s_j\le 2\eta r$ and therefore
	using \rf{e:nu-AD} we have
	\begin{equation}\label{sum-bound-1}
	 \sum_{2B_{j}\cap B(x,r)\neq\varnothing}  s_{j}^{d+1}\lec r\sum_{2B_{j}\cap B(x,r)\neq\varnothing}\sigma(B_j) 
	 \lec r\sigma(B(x, 2r))\lec r^{d+1}.
		\end{equation}
	Then using the fact that $C_2\le C_1$ (see line above \rf{coice-x'}), \rf{pointwise-a-bound-2} and \rf{e:dnumu}, we have
	}
	\begin{align*}
	\wt c\int \psi\, d\cH^{d}|_{P_{0}}  & = \int \psi\, d\nu
	 \stackrel{\eqref{e:dnumu}}{\leq}
	 \int \psi\, d\mu+ C\ve^{\frac{1}{4}}
	\\ & \leq \int \psi\, d\LL_{C_{2}B_{0}} +C\alpha(0,C_{2}) + C\ve^{\frac{1}{4}} 
	\leq c_{C_{2}B_{0}} \int \psi \,d\cH^{d}|_{P_{0}} + C\ve^{\frac{1}{4}}.
	\end{align*}
	Since $\int \psi\, d\cH^{d}|_{P_{0}} \approx 1$, this gives $\wt c\leq c_{C_{2}B_{0}}+C\ve^{\frac{1}{4}}$ for $\ve>0$ small enough, where $C$ depends on $A$ and $\tau$. An opposite inequality can be proved by 
	a similar argument. Thus, $|\wt c-c_{C_{2}B_{0}}|\lec_{A,\tau}\! \ve^{\frac{1}{4}}$. Hence, for $\phi\in \Lip_{1}(B(x,r))$, and since $\nu = \LL_{C_{2}B_{0}}$ in $B(0,10)^c$ and $1<r$ by \eqref{e:nuaway},
	\begin{align*}
	\left|\int \phi \,(d\nu- d\LL_{C_{2}B_{0}})\right|
	& = \left|\int [  (\phi -\phi(0))\psi + \phi(0)\psi + \phi (1-\psi)]\, (d\nu- d\LL_{C_{2}B_{0}})\right|\\
	& \lec \alpha_{\nu}(0,C_{2})+ |\phi(0)| \left|\int \psi \, (d\nu- d\LL_{C_{2}B_{0}})\right| + 0\\
	& \lec \alpha_{\nu}(0,C_{2})+ r \ps{ \ve^{\frac{1}{4}}  +  \left|\int \psi \, (d\nu- \wt c \,d \cH^{d}|_{P_{0}})}\right|
	 \lec  r\ve^{\frac{1}{4}}.
	\end{align*}
Thus, \eqref{e:a<r^-d} follows by this and \eqref{e:nu-AD}.
\end{proof}

\vv
 
\section{$\Lambda$-estimates}

For the rest of the paper we denote by $\phi:\R^n\to\R$ a radial $C^\infty$ function such that $\chi_{B(0,1/2)}\leq\phi\leq 
\chi_{B(0,1)}$.
We also set
\[
\phi_{r}(x) = r^{-d} \phi(r^{-1}x)
\]
and 
\[
\psi_{r}(x) = \phi_{r}(x) -\phi_{2r}(x).\]

Let $\pi$ be the orthogonal projection onto $P_{0}$ and let $\pi[\nu]$ denote the image measure
of $\nu$ by $\pi$, that is, the measure such that
\begin{equation}\label{pi-nu}
\pi[\nu](G)=\nu(\pi^{-1}(G))
\end{equation}
for any Borel subset $G\subset P_0$. 


{
Note that \rf{nu-sigma-comparison} ensures that $\nu$ and $\sigma$ are comparable measures on $\Sigma$. Since $\Sigma$ is a Lipschitz graph over $P_0$ (see Lemma \ref{lem6.8}) then
$\HH^{d}|_{P_{0}}$ and $\pi[\nu]$ are mutually absolutely continuous, in fact they are comparable.}

The goal of this section is to prove the following.

\begin{lemma}\label{lem8.1}
	Let $f=\frac{d\pi[\nu]}{d\HH^{d}|_{P_{0}}}$. Then
\begin{equation}
\label{e:nu-int}
\|f-c_{C_{2}B_{0}}\|_{L^{2}(\HH^d|_{P_0})}^{2} \approx \int_{P_0}  \int_{0}^{\infty} |\psi_{r}*\pi[\nu](z)|^{2} \,\frac{dr}{r}\, d\HH^d(z) \lec_{A,\tau}  \ve^{\frac{1}{4}}. 
\end{equation}
\end{lemma}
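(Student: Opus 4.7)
The plan is to prove the two conclusions of \rf{e:nu-int} separately: the equivalence by a Plancherel identity, and the bound by first estimating $|\psi_r*\pi[\nu](z)|$ pointwise via an $\alpha_\nu$ at a nearby scale, and then integrating using Lemma \ref{l:Xavi-lemma} together with \rf{e:a<r^-d}.

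For the equivalence, I would first observe that by Lemma \ref{lem6.8}, $\Sigma$ is a $C\ve^{1/4}$-Lipschitz graph over $P_0$, so $\pi|_\Sigma$ is bi-Lipschitz and $\pi[\nu]$ has a bounded density $f$ with respect to $\HH^d|_{P_0}$. By \rf{e:nuaway}, $f\equiv c_{C_2 B_0}$ on $P_0\setminus B(0,10)$, hence $f-c_{C_2 B_0}\in L^2(\HH^d|_{P_0})$. Since $\int\psi_r=\int\phi_r-\int\phi_{2r}=0$, we have $\psi_r*\pi[\nu]=\psi_r*(f-c_{C_2 B_0})$, and Plancherel on $\R^d\cong P_0$, together with the fact that
\[
\int_0^\infty |\hat\phi(r\xi)-\hat\phi(2r\xi)|^2\,\frac{dr}{r}=c_0
\]
is a positive constant independent of $\xi\neq 0$ (by the substitution $s=r|\xi|$ and the radial symmetry of $\phi$), yields the first $\approx$.

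For the pointwise bound, I would fix $z\in P_0$ and set $w=z+h(z)\in\Sigma$. For any $d$-plane $L$ with $\pi(L)=P_0$ and any $c\geq 0$, the area formula gives $\int_L\psi_r(z-\pi(x))\,d\HH^d|_L(x)=(\cos\angle(L,P_0))^{-1}\int\psi_r=0$, so
\[
\psi_r*\pi[\nu](z)=\int\psi_r(z-\pi(x))\,d(\nu-c\HH^d|_L)(x).
\]
Choosing $L=L_{w,Cr}$ and $c=c_{w,Cr}$, which by the stopping condition $\BA$ satisfy $\angle(L,P_0)\lec\ve^{1/4}$, the cylinder $\{x:\,|\pi(x)-z|\leq r\}$ meets both $\Sigma$ and $L$ only inside $B(w,Cr)$ (both being Lipschitz graphs over $P_0$ with constant $\lec\ve^{1/4}$); a Lipschitz cutoff supported there is therefore harmless, and combined with $\Lip(\psi_r(z-\pi(\cdot)))\lec r^{-d-1}$ and the AD-regularity \rf{e:nu-AD} of $\nu$, this gives
\[
|\psi_r*\pi[\nu](z)|\lec r^{-d-1}F_{B(w,Cr)}(\nu,c_{w,Cr}\HH^d|_{L_{w,Cr}})\lec_{A,\tau}\alpha_\nu(w,Cr).
\]

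To integrate, I would split at some fixed $R\sim 1$. For $r>R$, $\psi_r*\pi[\nu](z)$ vanishes for $|z|>r+10$ (since $\pi[\nu]-c_{C_2 B_0}\HH^d|_{P_0}$ is supported in $B(0,10)$), and \rf{e:a<r^-d} gives $\alpha_\nu(w,Cr)\lec\ve^{1/4}r^{-d}$, contributing a tail bounded by $\ve^{1/2}$. For $r\leq R$, the change of variables $z\mapsto w=z+h(z)$ has Jacobian $\approx 1$, so the integral reduces to $\int_0^R\int_\Sigma\alpha_\nu(w,Cr)^2\,d\sigma\,\frac{dr}{r}$, which Lemma \ref{l:Xavi-lemma} applied to $f=\sum_j c_j\theta_j$ bounds by $\ve^{1/2}$. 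Summing, the total is $\lec\ve^{1/2}\leq\ve^{1/4}$. The main obstacle will be verifying the oscillation hypothesis $\|f-f(x_B)\|_{L^\infty(3B\cap\Sigma)}\lec\ve^{1/4}$ of Lemma \ref{l:Xavi-lemma}: while \rf{e:almost-constant} and \rf{e:ci-cj} give only $O(\ve)$ oscillation between overlapping $B_j$, chains of such balls may be long across regions where $d(\xi_j)$ is small, so one should apply Lemma \ref{l:Xavi-lemma} to a Whitney-type decomposition of $\Sigma\setminus Z$ on which the chain oscillation is genuinely $\lec\ve^{1/4}$, and then sum.
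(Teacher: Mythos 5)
Your Plancherel equivalence and your large-scale estimate (the range $r>R\sim 1$ via \eqref{e:a<r^-d}, $\alpha_\nu(x,r)\lec\ve^{1/4}r^{-d}$) both match the paper's argument, as does the pointwise bound $|\psi_r*\pi[\nu](\pi(x))|\lec_{A,\tau}\alpha_\nu(x,Cr)$, which is the paper's $\Lambda_\nu(x,r)\lec\alpha_\nu(x,5r)$ in \eqref{eqkgeq245}. The problem is the range $r\le R$, where you propose to bound $\int_0^R\int_\Sigma\alpha_\nu(w,Cr)^2\,d\sigma\,dr/r$ entirely through Lemma \ref{l:Xavi-lemma} applied on a Whitney decomposition of $\Sigma\setminus Z$. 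That cannot close the argument, and the gap you flag is the real one, not a technicality. A Whitney ball $B$ centered at $x$ has radius $\sim d(x)$, and Lemma \ref{l:Xavi-lemma} only controls $\int_B\int_0^{r_B}\alpha_\nu(\cdot,r)^2\,dr/r\,d\sigma$; so after summing over Whitney balls you have bounded the contribution from $r\lec d(x)$, i.e.\ essentially the paper's region $A_4$, and nothing is said about $\eta^2 d(x)\lec r\le 1$. You cannot instead take $B$ with radius $\sim 1$: the oscillation of the density $\sum_j c_j\theta_j$ over such a ball is not $\lec\ve^{1/4}$ in general. The only a priori control is $c_j\approx_{A,\tau}1$ (giving $O(1)$ oscillation), and while adjacent coefficients differ by $O(\ve)$ via \eqref{e:ci-cj}, chaining $|c_{x,r}-c_{x,2r}|\lec\alpha_\mu(x,2r)$ from a small scale $r$ up to scale $1$ gives a sum of $\alpha_\mu$'s rather than $\alpha_\mu^2$'s, so the cumulative oscillation is only $\lec(\log(1/r))^{1/2}\ve$, which blows up as $r$ shrinks near $Z$. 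So neither the literal hypothesis nor a Whitney-rescaled version of it can be verified in the intermediate range, and this is exactly why the paper does something different there.

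The paper handles the intermediate range $\eta^2 d(x)<r\le 1$ (its region $A_3$) by never trying to control the density oscillation of $\nu$ at those scales. Instead it decomposes $\Lambda_\nu \le |\Lambda_\nu-\Lambda_{\theta\mu}| + \Lambda_{(1-\theta)\mu} + \Lambda_\mu$: the first term is geometric and small by \eqref{e:nuthetamu} (a sum of $\ve^{1/4}s_j^{d+1}$ over intersecting Whitney balls, summed via Cauchy--Schwarz), the third term is bounded by $\alpha_\mu$ at a nearby point of $E_0$ and then integrated against the Jones function $J_{\alpha}$ from hypothesis \eqref{working-hyp-2A}, and the middle term $\Lambda_{(1-\theta)\mu}$ is killed by a square-function operator $T$ of Calder\'on--Zygmund type: its $L^1\to L^{1,\infty}$ bound shows the set $H$ where this term is large has $\sigma(H)\lec\ve^{1/2}$, and its $L^4$ bound controls $\int_H T_\sigma q^2$. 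This comparison to $\mu$ and the CZ machinery are the essential new ingredients missing from your proposal; Lemma \ref{l:Xavi-lemma} is used only for the genuinely small scales $r\le\eta^2 d(x)$, exactly as your Whitney idea would permit.
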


The first comparison above is a classical result from harmonic analysis (see \cite[Section I.6.3]{Big-Stein}), so we just will focus on the second inequality. 

For a measure $\lambda$ and $x\in \R^{n}$, we define
\[\wt{\psi}_{r}(x) = \psi_{r}\circ \pi(x) \cdot \phi\bigl((5r)^{-1}x\bigr)\]
and
\[
\Lambda_{\lambda}(x,r)
=\av{\int \wt{\psi}_{r}(y-x) \,d\lambda(y)}.\]

Since $\Sigma$ is a $C\ve^{\frac{1}{4}}$-Lipschitz graph over $P_{0}$, we claim that for $\ve>0$ small enough, then 
\begin{equation}\label{e:lambda-psi}
\Lambda_{\nu}(x,r) = |\psi_{r}* \pi[\nu](\pi(x))| \quad\mbox{ for all $x\in \Sigma$}.
\end{equation}
Indeed, it suffices to show that
\begin{equation}\label{eqpsi1}
\wt{\psi}_{r}(y-x) =\psi_{r}(\pi(y-x))
\quad\mbox{ for all $x,y\in\Sigma$.}
\end{equation}
To this end, by the definition of $\wt{\psi}_{r}$ it suffices to check that $\phi\bigl((5r)^{-1}(y-x)\bigr)
= 1$ whenever $\psi_{r}(\pi(y-x))\neq0$. Note that the latter condition implies that
 $\phi_{2r}(\pi(x-y))\neq0$ and so $|\pi(x)-\pi(y)|\leq 2r$. In fact if $\phi_{2r}(\pi(x-y))=0$ then $\phi_{r}(\pi(x-y))=0$ and $\psi_{r}(\pi(y-x))=0$. Thus, since $\Sigma$ is a $C\ve^{\frac{1}{4}}$-Lipschitz graph, 
\[
|x-y|\leq 2(1+C\ve^{\frac{1}{4}})r<\frac{5}{2}\,r.\] 
Thus, $y\in B(x,5r/2)$, which implies that $
\phi\bigl((5r)^{-1}(y-x)\bigr)=1$, as wished.

Consequently, since $\pi$ is bi-Lipschitz between $\Sigma$ and $P_{0}$, we have
\begin{align*}
\int_{P_0}  \int_{0}^{\infty} |\psi_{r}*\pi[\nu](z)|^{2} \,\frac{dr}{r}\, d\HH^d(z) &\approx
\int_{P_0}  \int_{0}^{\infty} |\psi_{r}*\pi[\nu](z)|^{2} \,\frac{dr}{r}\, d\pi[\sigma](z)\\
&=\int_{\Sigma}  \int_{0}^{\infty} \Lambda_{\nu}(x,r)^{2} \,\frac{dr}{r}\, d\sigma(z).
\end{align*}
Therefore, to complete the proof of Lemma \ref{lem8.1}, it suffices to show that
\begin{equation}
\label{e:lambda-int}
\int_{\Sigma}  \int_{0}^{\infty} \Lambda_{\nu}(x,r)^{2}\, \frac{dr}{r} \,d\sigma(x) \lec_{A,\tau} \ve^{\frac{1}{4}}.
\end{equation}
The rest of this section is devoted to proving this estimate. 

First we need the following auxiliary result.

\begin{lemma} 
	\label{l:tt-lemma}
	For a finite Borel measure $\lambda$, denote 
\[
	T\lambda(x) = \ps{\int_{0}^{\infty} \bigl|\wt\psi_{r}*\lambda(x)\bigr|^{2}
	 \,\frac{dr}{r}}^{\frac{1}{2}},
	\]
and for $f\in L^2(\sigma)$,
	set $T_\sigma f=T(f\sigma)$. Then $T_\sigma$ is bounded in $L^p(\sigma)$ for $1<p<\infty$  and $T$ is bounded from 
$M(\R^{n})$ to $L^{1,\infty}(\sigma)$. Further, the norms $\|T_\sigma\|_{L^p(\sigma)\to L^p(\sigma)}$ and
 $\|T\|_{M(\R^n)\to L^{1,\infty}(\sigma)}$ are bounded above by some absolute constants depending only on $p$, $n$, and $d$.
\end{lemma}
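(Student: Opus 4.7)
The plan is to prove the lemma via standard vector-valued Calder\'on--Zygmund theory for square functions on the space of homogeneous type $(\Sigma,\sigma)$, which is indeed homogeneous by the AD-regularity established in Lemma~\ref{lem-sigma-AD}. Viewing $T$ as the operator with $L^2((0,\infty),dr/r)$-valued kernel $K(x,y)=(\wt\psi_r(x-y))_{r>0}$, the argument reduces to (a) verifying that $K$ satisfies the standard size and smoothness estimates; and (b) establishing the $L^2(\sigma)$-boundedness of $T_\sigma$. The remaining $L^p(\sigma)$ and $L^{1,\infty}(\sigma)$ bounds then follow from the classical extension scheme.

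For (a), since $\psi_r=\phi_r-\phi_{2r}$ one has $\supp\psi_r\subset B(0,2r)$, $\|\psi_r\|_\infty\lec r^{-d}$, $\|\nabla\psi_r\|_\infty\lec r^{-d-1}$, and crucially $\int \psi_r\,d\HH^d=0$ on $P_0$. The cutoff $\phi((5r)^{-1}\cdot)$ in the definition of $\wt\psi_r$ localizes the support to $B(0,5r)$ without destroying the size or regularity, so one readily checks $\|K(x,y)\|_{L^2(dr/r)}\lec |x-y|^{-d}$ and the analogous H\"ormander-type regularity in both variables.

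For (b), which is the heart of the matter, I would exploit the identity \eqref{e:lambda-psi}. For $f\in L^2(\sigma)$ and $x\in\Sigma$, this identity gives
\[
T_\sigma f(x)^2 \;=\; \int_0^\infty \bigl|\psi_r * \pi[f\sigma](\pi(x))\bigr|^2\,\frac{dr}{r}.
\]
Since $\pi:\Sigma\to P_0$ is bi-Lipschitz with constant $1+O(\ve^{1/4})$ (Lemma~\ref{lem6.8}), pushing forward yields $\pi[f\sigma]=\wt f\,\HH^d|_{P_0}$ with $\|\wt f\|_{L^2(\HH^d|_{P_0})}\approx \|f\|_{L^2(\sigma)}$, and the problem reduces to the classical Littlewood--Paley bound
\[
\int_{P_0}\int_0^\infty |\psi_r* g(z)|^2\,\frac{dr}{r}\,d\HH^d(z)\; \lec\; \|g\|_{L^2(\HH^d|_{P_0})}^2,
\]
which follows from Plancherel on $P_0\cong\R^d$ together with the uniform bound $\int_0^\infty |\wh\psi(r\xi)|^2\,dr/r\lec 1$ (using the cancellation $\wh\psi(0)=0$ and the smoothness of $\psi$). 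Crucially, the resulting constant is independent of $\ve$ because the Plancherel estimate is purely spectral, and the $L^2$-distortion from the bi-Lipschitz transfer is $1+O(\ve^{1/4})$.

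With (a) and (b) in hand, the classical Calder\'on--Zygmund machinery on the homogeneous space $(\Sigma,\sigma)$ gives $T_\sigma:L^1(\sigma)\to L^{1,\infty}(\sigma)$, and then $T_\sigma:L^p(\sigma)\to L^p(\sigma)$ for all $1<p<\infty$ follows by Marcinkiewicz interpolation and duality combined with the CZ principle. To extend the weak-$L^1$ bound from $f\,d\sigma$ to arbitrary $\lambda\in M(\R^n)$, I would invoke a non-homogeneous Calder\'on--Zygmund decomposition of $\lambda$ relative to $\sigma$ in the spirit of Nazarov--Treil--Volberg and Tolsa: the compact support of $\wt\psi_r$ in $B(0,5r)$ and the AD-regularity of $\sigma$ ensure that this decomposition applies verbatim, since the kernel's interaction with $\lambda$ is localized to tubular neighborhoods of $\Sigma$. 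The main technical obstacle would be the last step, but it is entirely standard once the $L^2$-boundedness and kernel estimates above are secured.
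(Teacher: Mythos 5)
Your proposal is correct and follows essentially the same route as the paper's (deliberately brief, references-only) argument: $L^2(\sigma)$-boundedness via \eqref{eqpsi1} and the push-forward to $P_0$, then the standard Calder\'on--Zygmund scheme (non-homogeneous decomposition for the weak-$(1,1)$ bound on $M(\R^n)$, interpolation for $L^p$). The only cosmetic difference is at $p>2$, where the paper interpolates from $L^\infty\to\mathrm{BMO}$ while you invoke duality for the linearized vector-valued operator; both are standard and equivalent here.
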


The proof of this lemma is quite standard in Calder\'on-Zygmund theory. First one shows that 
$T_\sigma$ is bounded in $L^2(\sigma)$, taking in to account
\rf{eqpsi1}. By a suitable Calder\'on-Zygmund decomposition, one can derive then the boundedness of
$T$ from $M(\R^n)$ to $L^{1,\infty}(\sigma)$, which implies the boundedness of $T_\sigma$ in $L^p(\sigma)$
for $1<p<2$ by interpolation. The boundedness in $L^p(\sigma)$
for $2<p<\infty$ can be deduced by interpolation from its boundedness from $L^\infty(\sigma)$ to $BMO(\sigma)$.
See \cite[Theorem 5.1]{TT15} and  \cite[Proposition 13.7]{Tol17} for quite similar  (but somewhat more difficult) results. We skip the
details.

\begin{lemma}
We have
	\begin{equation}\label{e:nu-int1}
	\int_{\Sigma \cap 20 B_{0}}\int_{\eta^2 d(x) }^{1}  \Lambda_{\nu}(x,r)^{2}\,\frac{dr}{r}\, d\sigma(x) \lec \ve^{\frac{1}{4}}.
	\end{equation}
\end{lemma}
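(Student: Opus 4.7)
The plan is to bound $\Lambda_\nu(x,r)$ pointwise by $\alpha_\mu(x^*,5r)$ for a nearby $x^*\in E_0$ plus $\ve$-small error terms, then square and integrate using \eqref{working-hyp-2A}. Fix $x \in \Sigma \cap 20 B_0$ and $r \in (\eta^2 d(x), 1)$; pick $x^* \in E_0 \cap B_0$ with $|x-x^*|+\delta(x^*)<2d(x)$ (so $x^*=x$ when $x \in E_0$ with $d(x)=0$) and set $\LL = c_{x^*,5r}\cH^d|_{L_{x^*,5r}}$. Since $\wt\psi_r$ is supported in $B(x,5r)$ with $\Lip(\wt\psi_r) \lesssim r^{-d-1}$, the triangle inequality gives
$$
\Lambda_\nu(x,r) \leq r^{-d-1}\bigl[F_{B(x,5r)}(\nu,\mu) + F_{B(x,5r)}(\mu,\LL)\bigr] + \bigl|\langle\wt\psi_r(\cdot-x), \LL\rangle\bigr|.
$$
The middle term is $\lesssim \alpha_\mu(x^*,5r)\,r^{d+1}$ by \eqref{cL-ppts} and \eqref{e:muAD}, while $F_{B(x,5r)}(\nu,\mu)$ is bounded via \eqref{e:nuthetamu} combined with the identity $\theta\equiv 1$ $\mu$-a.e.\ on $E_0\cap 40 B_0$ from \eqref{e:E040} and the density estimate \eqref{eqe0*}, yielding a bound of order $\ve^{1/4}\sum_{2B_j\cap B(x,5r)\neq\emptyset}s_j^{d+1}$.

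The crucial step is showing $|\langle\wt\psi_r(\cdot-x), \LL\rangle| = 0$. Remark \ref{remark1} at scale $5r \gtrsim_\eta \delta(x^*)$ forces $\angle(L_{x^*,5r}, P_0) \lesssim_\eta \ve^{1/4}$, and Lemma \ref{l:LcapB} gives $\dist(x^*,L_{x^*,5r})\leq 5r/4$. Taking $x=x^*$ and writing $y-x = a+\ell$ with $a\perp L_{x^*,5r}$ (so $|a|\leq 5r/4$) and $\ell$ parallel, one has $|\pi a|\lesssim \ve^{1/4}|a|\lesssim \ve^{1/4}r$ because $a$ is close to $P_0^\perp$; on the support of $\psi_r(\pi(\cdot-x))$, $|\pi\ell|\leq 2r+O(\ve^{1/4}r)$, $|\ell|\leq 2r(1+O(\ve^{1/4}))$, and thus
$$
|y-x|^2 = |a|^2+|\ell|^2 \leq (5r/4)^2 + (2r)^2+O(\ve^{1/4}r^2) = \tfrac{89}{16}r^2+O(\ve^{1/4}r^2) < (5r/2)^2,
$$
so $\phi((5r)^{-1}(y-x)) \equiv 1$ on the region of integration and the integral reduces to $c_{x^*,5r} \cdot J \cdot \int_{P_0}\psi_r = 0$, where $J$ is the constant Jacobian from $L_{x^*,5r}$ to $P_0$. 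For $x \neq x^*$, translating the reference plane to pass through $x$ parallel to $L_{x^*,5r}$ restores the vanishing, with the translation error absorbed into $F(\nu,\mu)$.

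Squaring the resulting pointwise bound $\Lambda_\nu(x,r) \lesssim \alpha_\mu(x^*,5r) + \ve^{1/4}r^{-d-1}\sum_{2B_j\cap B(x,5r)\neq\emptyset} s_j^{d+1}$ and integrating $dr/r \, d\sigma(x)$ over $(\eta^2 d(x),1) \times (\Sigma \cap 20 B_0)$, the $\alpha_\mu^2$-contribution is bounded via \eqref{working-hyp-2A} after a Fubini argument that transfers the integration variable from $\sigma$ on $\Sigma$ to $\mu$ on $E_0$ (using AD-regularity of both). The sum-squared contribution is controlled using the finite overlap of $\{B_j\}$ and the bound $s_j \lesssim r$ for $2B_j \cap B(x,5r)\neq\emptyset$ (cf.\ \eqref{sum-bound-1}), yielding $\lesssim \ve^{1/2}$. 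All contributions combine to $\lesssim \ve^{1/2} \leq \ve^{1/4}$.

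The main obstacle is the vanishing computation, which is numerically tight: the inequality $\sqrt{(5/4)^2+2^2}=\sqrt{89}/4<5/2$ is exactly what motivates the cutoff scale $5r$ in the definition of $\wt\psi_r$. The most delicate technical point is handling $x \in \Sigma\setminus E_0$, where $|x-x^*|$ may exceed $r$; this requires careful use of the Lipschitz graph structure of $\Sigma$ (Lemmas \ref{lem6.3} and \ref{lem6.8}) to recover the cutoff condition through an appropriate translation of the reference plane.
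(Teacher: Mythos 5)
Your strategy of establishing a pointwise bound $\Lambda_\nu(x,r)\lesssim\alpha_\mu(x^*,5r)$ plus $\ve$-errors and then squaring and integrating has a genuine gap in the range of scales $\eta^2 d(x)<r\lesssim d(x)$. Your bound on $F_{B(x,5r)}(\nu,\mu)$ via $F(\theta\mu,\mu)$ ultimately relies on \eqref{eqe0*}, and this and the resulting estimates \eqref{e:mumutheta}, \eqref{e:dnumu} carry the hypothesis $2d(x)<r$: below the stopping scale $\delta(x^*)$, a positive fraction of $\mu(B(x,r))$ may lie off $E_0$, so there is no pointwise $\ve$-smallness for $F(\theta\mu,\mu)$. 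The paper handles the $\Lambda_{(1-\theta)\mu}$ contribution \emph{non-pointwise}: it introduces the exceptional set $H$ where $\int\Lambda_{(1-\theta)\mu}^2\,dr/r>\ve^{1/4}$, shows $\sigma(H)\lesssim\ve^{1/2}$ via the weak $(1,1)$ boundedness of $T$ from Lemma~\ref{l:tt-lemma} applied to $\chi_{40B_0}(1-\theta)\mu$ (small total mass, but possibly large local density ratios), and then controls $\int_H\Lambda_\nu^2$ by Cauchy--Schwarz against the $L^4(\sigma)$-norm of $T_\sigma q$. This weak-type/$L^4$ step is what produces the $\ve^{1/4}$ on the right-hand side; it has no pointwise analogue and is exactly what your approach is missing.

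A second, related issue is the reference plane. You take $\LL=\LL_{x^*,5r}$ with $|x-x^*|<2d(x)$, but for $r\ll d(x)$ this allows $|x-x^*|\gg r$. Then $B(x,5r)\not\subset B(x^*,Cr)$ for any fixed $C$, so the bound $F_{B(x,5r)}(\mu,\LL)\lesssim\alpha_\mu(x^*,5r)\,r^{d+1}$ breaks, and the translation of the plane you propose in order to restore the vanishing of $\int\wt\psi_r\,d\LL$ costs $\gtrsim|x-x^*|\,r^{d}\gtrsim d(x)\,r^{d}$ in $F$, which blows up after dividing by $r^{d+1}$. The paper avoids this by using the Reifenberg data directly: for every $r>\eta^2 d(x)$ there is a ball $B_{j,k}$ with $B(x,5r)\subset 3B_{j,k}$ and $r\approx r_k$, so the plane $P_{j,k}$ and its base point $x_{j,k}$ stay within $O(r)$ of $x$ at every admissible scale, and the vanishing of the $\wt\psi_r$-integral against $\LL_{j,k}$ goes through cleanly. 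Your cutoff-tightness computation ($\sqrt{89}/4<5/2$) is correct and parallels the paper's, but it only applies when $x=x^*$; the case $x\neq x^*$, which you defer to ``an appropriate translation,'' is precisely the point of failure.
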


\begin{proof}
For each $x\in\Sigma \cap 20 B_{0}$, we split
\begin{align}\label{eqoq1}
 &\int_{\eta^2 d(x)}^{1}   \Lambda_{\nu}(x,r)^{2}\,\frac{dr}{r} \\
&  \lesssim \int_{\eta^2 d(x)}^{1} \!(\Lambda_{\nu}(x,r)-  \Lambda_{\theta \mu}(x,r))^{2}\,\frac{dr}{r} +\int_{\eta^2 d(x)}^{1} \!\Lambda_{(1-\theta) \mu}(x,r)^{2} \, \frac{dr}{r} 
+\int_{\eta^2 d(x)}^{1} \!\Lambda_{ \mu}(x,r)^{2}\,\frac{dr}{r},\nonumber
\end{align}
and denote\[
H=\ck{x\in \Sigma \cap 20 B_0 : \int_{\eta^2 d(x)}^{1}   \Lambda_{(1-\theta)\mu}(x,r)^{2}\,\frac{dr}{r} >\ve^{\frac14}}.
\]
Now write
\begin{eqnarray}\label{eqoq2}
\int_{\Sigma \cap 20 B_{0}}\int_{\eta^2 d(x) }^{1}  \Lambda_{\nu}(x,r)^{2}\,\frac{dr}{r}\, d\sigma(x) &=&
\int_{H}\int_{\eta^2 d(x) }^{1}  \Lambda_{\nu}(x,r)^{2}\,\frac{dr}{r}\, d\sigma(x) \nonumber\\ &&\quad +
\int_{\Sigma \cap 20 B_{0}\setminus H}\int_{\eta^2 d(x) }^{1}   \Lambda_{\nu}(x,r)^{2}\,\frac{dr}{r}\, d\sigma(x)
	\end{eqnarray}
To estimate the first integral on the right hand side, 
note that if $x\in 20 B_{0}$ and $r<1$, then $B(x,r)\subset 40B_{0}$. So
 applying Lemma \ref{l:tt-lemma} with $\lambda =(\mu- \theta \mu)|_{40B_{0}}$, using the fact that $\theta\equiv 1$ on $O\supset E_{0}\cap 40 B_{0}$ by \eqref{e:E040}, by the definition of $E_{0}$, and \rf{working-hyp-3A} (provided $C_1>40$) we get 
\begin{align*}
\sigma(H) &\leq \sigma\left(\left\{x\in\Sigma: T( \chi_{40B_0}(\mu-\theta \mu))>\ve^{\frac18}\right\}\right)\\
&
\lec \ve^{-\frac{1}{8}}\,\| \mu-\theta \mu\|_{40 B_{0}} \leq\ve^{-\frac{1}{8}} \mu(40B_{0}\setminus E_{0})\leq 
\ve^{\frac{7}{8}} \ve \,\mu(40B_{0})
\lesssim \ve^{\frac{1}{2}} . 
\end{align*}
Consider the function 
$q = \frac{d\nu|_{C_{1}B}}{d\sigma}$. Taking into account that $\|q\|_{L^4(\sigma)}\lesssim_A1$
and using the $L^4(\sigma)$ boundedness of $T_\sigma$ (and recalling $T_{\sigma}(q)=T(\nu)$), we obtain
\begin{equation}\label{eqoq*h}
\int_{H}\int_{\eta^2 d(x)}^{1} \Lambda_{\nu}(x,r)^{2} \,\frac{dr}{r}\, d\sigma (x)
\leq \int_H |T_\sigma q(x)|^2\,d\sigma(x) \lesssim \sigma(H)^{\frac12}\,\|T_\sigma q\|_{L^4(\sigma)}^2
\lesssim_A \ve^{\frac{1}{4}}.
\end{equation}

Next we consider the second integral on the right hand side of \rf{eqoq2}. By the
definition of $H$, we have
$$\int_{\Sigma \cap 20 B_{0}\setminus H}
\int_{\eta^2 d(x)}^{1} \!\Lambda_{(1-\theta) \mu}(x,r)^{2} \, \frac{dr}{r}\lesssim \ve^{\frac14},$$
and so, by  \rf{eqoq1}, 
\begin{align}\label{eqoq3}
 \int_{\Sigma \cap 20 B_{0}\setminus H}\int_{\eta^2 d(x) }^{1}   \Lambda_{\nu}(x,r)^{2}\frac{dr}{r}\,d\sigma(x) 
&  \lesssim  \int_{\Sigma \cap 20 B_0} \int_{\eta^2 d(x)}^{1} \!(\Lambda_{\nu}(x,r)-  \Lambda_{\theta \mu}(x,r))^{2}\frac{dr}{r}\,d\sigma(x) \\
&\quad
+ \int_{\Sigma \cap 20 B_0}\int_{\eta^2 d(x)}^{1} \!\Lambda_{ \mu}(x,r)^{2}\frac{dr}{r}\,d\sigma(x) + \ve^{\frac14}\nonumber\\
& =: I_1 + I_2 + \ve^{\frac14}.\nonumber
\end{align}

We will now bound $I_2$.  Given $x\in\Sigma \cap 20 B_{0}
\setminus Z$, by Lemma \ref{lem6.4} we have
$d(x)\approx r_{k(x)}$ and by the definition of $k(x)$, $x\in V_{k(x)-1}^{40}$.
Thus there exists some ball $B_{j,k(x)-1}$ such that $x\in 40B_{j,k(x)-1}$.
So for all $r\in(\eta^2 d(x),1)$ there exists some ball $B_{j,k}$ such that
$B(x,5r)\subset 3B_{j,k}$ and  $r\approx r_{B_{j,k}}$. 
Then,
taking into account that $|\nabla \wt\psi_r|\lesssim r^{-d-1}$ and \rf{e:muLjk1},
\begin{align}
\label{e:L<a}
\Lambda_{\mu}(x,r) &
\leq  \av{\int \wt{\psi}_{r}(y-x) \,(d\mu(y)-d \LL_{j,k})(y)}
 + \av{\int \wt{\psi}_{r}(y-x) \, d\LL_{j,k}(y)}\\
&\lesssim \frac{r_{2C_2B_{j,k}}\,\mu(2C_2B_{j,k})}{r^{d+1}}\,\alpha_{\mu}(2C_2B_{j,k}) + \av{\int \wt{\psi}_{r}(y-x) \, d\LL_{j,k}(y)}.\nonumber
\end{align}
The first term on the right hand side satisfies
$$\frac{r_{2C_2B_{j,k}}\,\mu(2C_2B_{j,k})}{r^{d+1}}\,\alpha_{\mu}(C_2B_{j,k})\lesssim_A \alpha_{\mu}(C_2B_{j,k})\lesssim_A \alpha_{\mu}(x_{j,k},C_3r),$$
for a suitable constant $C_3>1$, so that in particular $C_3\,r> d(x_{j,k})$.

Now we claim that the last integral on the right hand side of \rf{e:L<a} vanishes. To prove this, first we will check that $\wt{\psi}_{r}(y-x) =\psi_r\circ\pi(y-x)$ for $x\in\Sigma$ and $y\in P_{j,k}$ with
$y-x\in\supp{\psi}_{r}\circ\pi$. Indeed, the latter condition implies that
$$|\pi(y)-\pi(x)|\leq 2r.$$
Also using that $\Sigma$ is a $C\ve^{\frac14}$-Lipschitz graph, that $\angle(P_{j,k},P_{0})<\ve^{\frac{1}{4}}$, that
$x\in 3B_{j,k}$, \rf{e:49r}, and \rf{e:ytosigma}, it easily follows that
$$|\pi^\perp(y)-\pi^\perp(x)|\lesssim\ve^{\frac14}r,$$
assuming $\ve$ small enough.
Then we infer that
$$|x-y|\leq 2r + C\ve^{\frac14}r\leq \frac52\,r.$$
Thus $\phi((5r)^{-1}(x-y))=1$ and so
$$\wt{\psi}_{r}(y-x) =\phi((5r)^{-1}(x-y))\,\psi_r\circ\pi(y-x) = \psi_r\circ\pi(y-x).$$
Now we derive
\begin{equation}\label{8.10A}
\int \wt{\psi}_{r}(y-x) \, d\LL_{j,k}(y) =
\int \psi_{r}(\pi(y)-\pi(x)) \, d\LL_{j,k}(y) = \int \psi_{r}(y'-\pi(x)) \, d\pi[\LL_{j,k}](y')=0,
\end{equation}
taking into account the definition of $\psi_r$ and that $\pi[\LL_{j,k}]$ coincides with $d$-dimensional
Lebesgue measure on $P_0$ modulo a constant factor. 

Consequently, from \rf{e:L<a} we deduce that
$$\Lambda_{\mu}(x,r)\lesssim_A \alpha_{\mu}(x_{j,k},C_3r).$$
Therefore, for $C_1$ big enough,
\[
 \int_{\eta^2 d(x)}^{1} \Lambda_{ \mu}(x,r)^{2}\,\frac{dr}{r} \lesssim \int_{0}^{1} \alpha_{\mu}(x_{j,k},C_3r)^{2}\, \frac{dr}{r}  
\lec \int_{0}^{C_{1}}\alpha_{\mu}(x_{j,k},t)^{2} \,\frac{dt}{t}  
< \ve,\]
and thus using that fact that $\mu(B_0)=1$, \rf{working-hyp-1A} and taking $C_1>40$ we have
$$I_2 = \int_{\Sigma \cap 20 B_0}\int_{\eta^2 d(x)}^{1} \!\Lambda_{ \mu}(x,r)^{2}\frac{dr}{r}\,d\sigma(x)  \lesssim \ve.$$

Finally we handle the integral $I_1$ in \rf{eqoq3}.
 Recall that 
\begin{align*}
|\Lambda_{\nu}(x,r)-\Lambda_{\theta\mu}(x,r)|^{2}
&\lec \ps{r^{-d-1} F_{B(x,5r)}(\nu,\theta\mu)}^{2}
& \!\!\stackrel{\eqref{e:nuthetamu}}{ \lec}  \Biggl(\,\sum_{2B_{j}\cap B(x,5r)\neq\varnothing} \ve^{\frac14}\, \frac{s_{j}^{d+1}}{r^{d+1}}\Biggr)^{2}.
\end{align*}
Observe that if $B(x,5r)\cap 2B_{j}\neq\varnothing$, for $r\in (\eta^2d(x),1)$ (recall $\eta=10^{-3}$) then
\[
s_{j}=\eta\, d(\xi_{j}) \leq \eta(d(x)+2s_{j} + 5r) \leq 10^3r + 2\eta s_{j}+ r=1001r+2\eta s_{j}\]
and so $s_{j}\leq 1100r$ since $\eta<1/4$. Then, it follows easily that $B_j\subset C_4B_0$, for a large enough
constant $C_4>1$.
Hence, by the Cauchy-Schwarz inequality, plus an argument along the lines of the one used to show \rf{sum-bound-1}, we have
\begin{align*}
\biggl(\,\sum_{2B_{j}\cap B(x,5r)\neq\varnothing}  \frac{s_{j}^{d+1}}{r^{d+1}}\biggr)^{2}
& \leq 
\Biggl(\,\sum_{2B_{j}\cap B(x,5r)\neq\varnothing} \frac{s_{j}^{d+2}}{r^{d+2}}\Biggr)
\Biggl(\,\sum_{2B_{j}\cap B(x,5r)\neq\varnothing} \frac{s_{j}^{d}}{r^{d}}\Biggr)\\
& \lesssim \sum_{2B_{j}\cap B(x,5r)\neq\varnothing} \frac{s_{j}^{d+2}}{r^{d+2}}.
\end{align*}
This and the fact that $s_{j}\leq 1100r$ imply
\begin{align*}
I_1 &= \int_{\Sigma \cap 20 B_0} \int_{\eta^2 d(x)}^{1} \!(\Lambda_{\nu}(x,r)-  \Lambda_{\theta \mu}(x,r))^{2}\,\frac{dr}{r}\,d\sigma(x)\\
& \lec \ve^{\frac12} \int_{\Sigma\cap 20 B_{0}}\int_{\eta^2 d(x)}^{1} \sum_{2B_{j}\cap B(x,5r)\neq\varnothing}  s_{j}^{d+2} \frac{dr}{r^{d+3}}\, d\sigma(x) \\
& = \ve^{\frac12} \;\sum_{j} s_{j}^{d+2}  \int_{\Sigma\cap 20 B_{0}}\int_{\eta^2 d(x)}^{1}\chi_{B(x,5r)\cap 2B_{j}\neq\varnothing}(x)\, \frac{dr}{r^{d+3}}\, d\sigma(x) \\
& \lec \ve^{\frac12} \sum_{j: B_{j}\subset C_4B_0} s_{j}^{d+2}  \int_{\Sigma\cap 20 B_{0} }\int_{s_{j}/1100}^{1}\chi_{B(x,5r)\cap 2B_{j}\neq\varnothing}(x)\,\frac{dr}{r^{d+3}} \,d\sigma(x).
\end{align*}
Observe now that if $ B(x,5r)\cap 2B_{j}\neq\varnothing$, then 
$$x\in B(\xi_j,5r+2s_j)\subset B(\xi_j,2250r),$$ 
recalling that $s_j\leq 1100r$. Therefore, 
\begin{align*}
I_1
 & \lec \ve^{\frac12} \sum_{j: B_{j}\subset C_4B_0} s_{j}^{d+2}  \int_{s_{j}/4}^{1}\int_{B(\xi_{j},13r)}  d\sigma(x)\,\frac{dr}{r^{d+3}} \\
  & \lec_A \ve^{\frac12} \sum_{j: B_{j}\subset C_4B_0 } s_{j}^{d+2}  \int_{s_{j}/4}^{1} \frac{dr}{r^{3}}  
  \\
   &    \lec \ve^{\frac12} \sum_{j: B_{j}\subset C_4B_0 } s_{j}^{d}  \lesssim\ve^{\frac12}\sigma(C_4B_{0}) 
     \lec \ve^{\frac12}
\end{align*}
Gathering  \rf{eqoq1}, \rf{eqoq*h}, and
the estimates obtained for $I_1$ and $I_2$, the lemma follows.
\end{proof}

\begin{lemma}
For all $j$,
\begin{equation}
\label{e:small-scales}
\int_{B_{j}}\int_{0}^{\eta^2 d(x)}  |\Lambda_{\nu}(x,r)|^{2}\frac{dr}{r}d\sigma(x) \lec_{A,\tau} \ve^{1/4} s_{j}^{d}.
\end{equation}
\end{lemma}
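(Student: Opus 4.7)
The plan is to show the stronger bound $\int_{B_j}\int_0^{\eta^2 d(x)}|\Lambda_\nu(x,r)|^2\,\tfrac{dr}{r}\,d\sigma(x)\lec \ve \,s_j^d$, using that on the range of scales in question $\nu$ is a nice density times $\sigma$ and $\Sigma$ is a Lipschitz graph over $P_0$. First I would check the \emph{localization}: for $x\in B_j$ one has $d(x)\leq d(\xi_j)+s_j=s_j(\eta^{-1}+1)$, so any $r<\eta^2d(x)$ satisfies $r<2\eta s_j$, hence $B(x,5r)\subset B(\xi_j,s_j+5r)\subset\tfrac{3}{2}B_j\subset O$. In particular $\theta\equiv1$ on $B(x,5r)$ and $\nu = F\sigma$ there, with $F:=\sum_i c_i\theta_i$. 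Writing $F-c_j=\sum_i(c_i-c_j)\theta_i$ (using $\sum_i\theta_i\equiv 1$ on $O$) and combining $|c_i-c_j|\lec\ve$ from \eqref{e:ci-cj} with $\Lip(\theta_i)\lec s_i^{-1}\approx s_j^{-1}$, one obtains $|F-c_j|\lec\ve$ and $\Lip(F|_{\frac{3}{2}B_j})\lec \ve/s_j$.

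Next I would split
\[
\Lambda_\nu(x,r)\leq\biggl|\int \wt\psi_r(y-x)\bigl(F(y)-F(x)\bigr)\,d\sigma(y)\biggr|+F(x)\Lambda_\sigma(x,r).
\]
The first term is bounded pointwise by $\|\wt\psi_r\|_\infty\cdot\Lip(F)\cdot r\cdot\sigma(B(x,5r))\lec \ve\, r/s_j$, whose square integrates over $r\in(0,2\eta s_j)$ and $x\in B_j$ to $\lec \eta^2\ve^2 s_j^d$. For the main term, since $F(x)\lec 1$ it suffices to estimate the square integral of $\Lambda_\sigma(x,r)$ itself.

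This is where I would use the graph structure. By Lemma \ref{lem6.8}, $\Sigma$ is the graph of an $O(\ve^{1/4})$-Lipschitz function $h:P_0\to P_0^\perp$, and by the identity $\wt\psi_r(y-x)=\psi_r(\pi(y-x))$ for $y\in\Sigma$ established in \eqref{eqpsi1}, parametrizing $d\sigma$ as $J(z)\,dz$ with $J(z)=\sqrt{1+|\nabla h(z)|^2}$ gives
\[
\Lambda_\sigma(x,r)=|(\psi_r*J)(\pi(x))|=|(\psi_r*\wt J)(\pi(x))|,\qquad \wt J:=J-1,
\]
where the second equality uses $\int \psi_r=0$. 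Since $\pi|_\Sigma$ is $(1+O(\ve^{1/4}))$-bi-Lipschitz and, for $a\in\pi(B_j)$ and $r\in(0,3s_j)$, the value $\psi_r*\wt J(a)$ depends only on $\wt J|_{B(\pi(\xi_j),5s_j)}$, the standard $L^2$-boundedness of the vertical Littlewood--Paley square function (valid since $\psi_r(z)=r^{-d}\psi(z/r)$ with $\int\psi=0$ and $\psi$ smooth with compact support) yields
\[
\int_{B_j}\!\int_0^{3s_j}|\Lambda_\sigma(x,r)|^2\,\frac{dr}{r}\,d\sigma(x)
\lec \int_{B(\pi(\xi_j),5s_j)}|\wt J|^2\,d\HH^d\lec \ve\, s_j^d,
\]
where the last step uses $|\wt J|\lec|\nabla h|^2\lec\ve^{1/2}$ pointwise. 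Combining the two contributions gives the desired bound (with $\ve$ in place of $\ve^{1/4}$, which is more than enough).

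The main technical obstacle is really just bookkeeping: making sure that the partition-of-unity Lipschitz estimate for $F$ works uniformly (via $\sum\theta_i\equiv1$ on $O$), that the localization $B(x,5r)\subset\tfrac{3}{2}B_j$ is valid for $r<\eta^2 d(x)$, and that the change of variables $\pi:B_j\to\pi(B_j)\subset P_0$ only introduces the negligible factor $1+O(\ve^{1/4})$ before one invokes the standard Littlewood--Paley bound.
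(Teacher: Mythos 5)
Your argument is correct, modulo harmless constant/formula adjustments: for $a\in\pi(B_j)$ and $r<3s_j$ the value $\psi_r*\wt J(a)$ depends on $\wt J$ on $B(\pi(\xi_j),7s_j)$ rather than $5s_j$, and for $d>1$ the area element is $J=\sqrt{\det(I_d+(Dh)^{T}Dh)}$ rather than $\sqrt{1+|\nabla h|^2}$, though the key inequality $|J-1|\lec |Dh|^2\lec\ve^{1/2}$ persists. The route you take, however, is genuinely different from the paper's. Both proofs begin with the same two observations (the density $F=\sum_i c_i\theta_i$ satisfies $|F-c_j|\lec\ve$ on $\tfrac32 B_j$ by \eqref{e:ci-cj}, and $\eta^2 d(x)<2\eta s_j$ on $B_j$), but from there the paper proves the pointwise bound $\Lambda_\nu(x,r)\lec_{A,\tau}\alpha_\nu(x,5r)$ (using that $\Lambda_\nu$ annihilates flat measures, as in the computation around \eqref{e:L<a}--\eqref{8.10A}) and then cites Lemma \ref{l:Xavi-lemma} to carry out the $\tfrac{dr}{r}\,d\sigma$ integral over $B_j\times(0,2\eta s_j)$; that lemma is an $\alpha$-number square-function estimate for densities on Lipschitz graphs which rests on \cite[Theorem 1.1]{Tol09}. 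You instead work directly with $\Lambda_\nu$: you separate the contribution of the density oscillation $F-F(x)$ (controlled by $\Lip(F|_{3B_j/2})\lec\ve/s_j$) from that of the area-element oscillation $\wt J=J-1$, use the cancellation $\int_{P_0}\psi_r\,d\HH^d=0$ to subtract the constant, and apply the ordinary $L^2$ Littlewood--Paley square-function bound on $\R^d$ to the localized $\wt J$. This is more elementary and self-contained, bypassing the quasi-orthogonality machinery of \cite{Tol09}, and it gives the slightly sharper bound $\ve\,s_j^d$ versus the paper's $\ve^{1/2}s_j^d$ (both comfortably beating the stated $\ve^{1/4}s_j^d$). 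The trade-off is that Lemma \ref{l:Xavi-lemma} controls the full $\alpha$-numbers of $f\sigma$, which the paper also needs elsewhere (e.g.\ for \eqref{e:a<r^-d}), whereas your computation is tailored to the weaker $\Lambda$-quantity; so the paper's route reuses an already-available lemma rather than redoing a square-function estimate by hand.
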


\begin{proof}
	Let $g= \sum_{i} c_{i}\theta_{i}(x) $. Note that for $x\in \tfrac32B_{j}$, 
	\[
	|g(x) - c_{j}| =\av{\sum_{i} (c_{i}-c_{j})\theta_{i} }
	\stackrel{\eqref{e:ci-cj}}{\lec} \ve .
	\]
	For $x\in B_{j}$ we have
	\[
	\eta^2 d(x) \leq \eta^2(d(\xi_{j}) + |x-\xi_{j}|) < 2 \eta s_{j}.\]
	Also, using the $d$-AD-regularity of $\nu$ and that $\Lambda_{\nu}(x,r)=|\psi_{r}* \pi[\nu](\pi(x))| $ for
all $x\in\Sigma$ (see \rf{e:lambda-psi}), it is easy to check, by an argument similar to the one used in \rf{e:L<a}, that 
\begin{equation}\label{eqkgeq245}
\Lambda_{\nu}(x,r)\lesssim_{A,\tau}\alpha_{\nu}(x,5r).
\end{equation}	
	Thus, by Lemma \eqref{l:Xavi-lemma}, and recalling Lemma \ref{lem8.1}we get
\[
\int_{B_{j}}\int_{0}^{\eta^2 d(x)} |\Lambda_{\nu}(x,r)|^{2} \,\frac{dr}{r} \,d\sigma(x) 
\lec_{A,\tau} \int_{B_{j}} \int_{0}^{2\eta s_{j}}\alpha_{\nu}(x,5r)^{2} \,\frac{dr}{r} \,d\sigma(x)
\lec \ve^{\frac14} s_{j}^{d}.
\]
\end{proof}

We now complete the proof of \eqref{e:lambda-int}. To this end, we split the domain of integration by setting
\[
A_{1} = \{(x,r): B(x,r)\cap B(0,10)=\varnothing\},
\]	
\[
A_{2}=\{(x,r): B(x,r)\cap B(0,10)\neq\varnothing, r>1\},
\]
\[
A_{3} = \{(x,r): B(x,r)\cap B(0,10)\neq\varnothing,\eta^2 d(x)<r\leq 1\},
\]
\[
A_{4} = \{(x,r): B(x,r)\cap B(0,10)\neq\varnothing, r\leq \eta^2 d(x) \},
\]
and then we write 
\[
I_{i}=\iint_{A_{i}} \Lambda_{\nu}(x,r)^{2}\, \frac{dr}{r} \,d\sigma(x) .
\]

Note that for $(x,r)\in A_{1}$,
\[
\Lambda_{\nu}(x,r) = \Lambda_{\LL_{C_{2} B_{0}}} (x,r)=0.
\]
and so
$
I_{1}=0.
$

For $(x,r)\in A_{2}$, since $B(x,r)\cap B(0,10)\neq\varnothing$ and $r>1$, 
\[
|x| \leq r +10<11r.
\]
and so 
\[ r\geq \max\{\tfrac1{11}|x|,1\}.\]
Using again \rf{eqkgeq245}, which still holds in this case, plus the fact that $\sigma$ is $d$-AD regular 
 we get
\begin{align*}
I_{2} 
& \leq \int_{\Sigma} \int_{\max\{\frac1{11}|x|,1\}}^{\infty} \Lambda_{\nu}(x,r)^{2} \,\frac{dr}{r}\,d\sigma(x) 
\lec \int_{\Sigma}\int_{\max\{\frac1{11}|x|,1\}}^{\infty} \alpha_{\nu}(x,5r)^{2} \,\frac{dr}{r}\,d\sigma(x) \\
&\!\!\!  \stackrel{\eqref{e:a<r^-d}}{\lec}  \ve^{\frac12} \int_{\Sigma}\int_{\max\{\frac1{11}|x|,1\}}^{\infty}\, \frac{dr}{r^{2d+1}}\,d\sigma(x)
\lec  \ve^{\frac12} \int_{\Sigma} \frac{1}{\max\{\frac1{11}|x|,1\}^{2d}}\,d\sigma(x)\\
&\!\!\! \lec\ve^{\frac12} \left(\int_{\Sigma\cap B_0} \,d\sigma(x)+ \sum_{k=1}^\infty2^{-2dk}\int_{\Sigma\cap (B_{2^k}\backslash B_{2^{k-1}})}\,d\sigma(x)\right)
\lec \ve^{\frac12}. 
\end{align*}
 
If $B(x,r)\cap B(0,10)\neq\varnothing$ and $r<1$, then $x\in B(0,20)$, and so
\[
I_{3} 
\leq \int_{B(0,20)}\int_{\eta^2 d(x)}^{1}  \Lambda_{\nu}(x,r)^{2} \,\frac{dr}{r} \,d\sigma(x) 
\stackrel{\eqref{e:nu-int1}}{\lec} \ve^{\frac{1}{4}}. 
\]
Thus, all that is left is $I_{4}$. Note that if $r<\eta^2 d(x)$ and $x\in B_{j}$, then $B(x,r)\subset 2B_{j}$, hence
\begin{align*}
I_{4} 
& =  \int_{B(0,20)}\int_{0}^{\eta^2 d(x)} \Lambda_{\nu}(x,r)^{2} \,\frac{dr}{r}\, d\sigma(x) 
 =\sum_{B_{j}\cap B(0,20)\neq\varnothing} \int_{B_{j}}\int_{0}^{\eta^2 d(x)} \Lambda_{\nu}(x,r)^{2} \,\frac{dr}{r}\, d\sigma(x)\\
 & \!\!\!\stackrel{\eqref{e:small-scales}}{\lec} \sum_{B_{j}\cap B(0,20)\neq\varnothing}  \ve^{\frac14} s_{j}^{d}
 \lec \ve^{\frac14}\sum_{B_{j}\cap B(0,20)\neq\varnothing}\sigma(B_{j})
 \lec \ve^{\frac14} \sigma(B(0,40)) \lec \ve^{\frac14}. 
\end{align*}
Combining the estimates above finishes the proof of Lemma \ref{lem8.1}.

\vv

\section{The end of the proof}\label{end}

Recall that by our choice to $\tau$ and $A$, Lemma \ref{l:good-range}, $\delta(x)\leq 10^{-3}$ for all points $x\in E_{0}\cap B_{0}$.
If moreover $\delta(x)>0$, let 
\begin{equation}\label{def-Bx}
B_{x}= B(x,\delta(x))
\end{equation}
Recall that for $x\in E_{0}\cap B_{0}\setminus Z$ with $Z$ as in Lemma \ref{l:good} $d(x)>0$ and therefore $\delta(x)>0$ since $E_0\cap B_0$ is a closed set. Hence $x\in E_{0}\cap B_{0}\setminus Z$
satisfies one of the following conditions:

\begin{itemize}
	\item[\ND:] $\mu(B_{x}\setminus E_{0})\geq\ve^{\frac12} \mu(B_{x})$,
	\item[\LD:] 
	$\Theta_{\mu}^{d}(B_{x})\leq\tau$,
	\item[\HD:]  
	$\Theta_{\mu}^{d}(B_{x})\ge A$, or
	\item[\BA:] 
	$\angle(L_{x,r},P_{B_{0}})\geq\ve^{\frac{1}{4}}$. 
\end{itemize}

\noindent Recall the abbreviations stand for ``low density", ``high density", and ``big angle", respectively. 

What we show now is that each of these sets has measure much smaller than $\mu(E_{0}\cap B_{0})$, and thus there must be a subset $G'\subset E_{0}\cap B_{0}$ of positive measure for which $\Theta^d_{\mu}(x,r) \in [\tau,A]$ for all $r>0$ small. 
This contradicts the conclusion of Lemma \ref{l:good}, obtained under the assumption that 
there is no set $E'\subset E$ with $\mu(E')>0$ such that $\mu|_{E'}$ is $d$-rectifiable. 
This will conclude the proof of
Lemma \ref{l:main} and hence of Theorem \ref{thmi}.

\begin{lemma}\label{lemld}
	$\mu(\ND)\lec \ve^{\frac{1}{2}}$. 
\end{lemma}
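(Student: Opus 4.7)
The plan is a standard Besicovitch covering argument combined with the observation that the origin is a good point. Recall from the setup of Section~4 that $0\in F_0$ and $\mu(B(0,3C_1))=1$, so in particular \rf{working-hyp-3A} applied at $x=0$ and $r=2$ (valid since $2<4C_1$) gives
\[
\mu(2B_0\setminus E_0)\leq \ve\,\mu(B(0,2))\leq \ve.
\]
This is the only ``global'' information we will use about the measure of the complement of $E_0$.

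First I would note that for every $x\in \ND$, since $x\in E_0\cap B_0$ and $\delta(x)\leq 10^{-3}$ by Lemma~\ref{l:good-range}, the ball $B_x=B(x,\delta(x))$ is contained in $B(0,1+10^{-3})\subset 2B_0$. Next, apply Besicovitch's covering lemma to the family $\{B_x\}_{x\in \ND}$ to extract disjoint subcollections $\mathcal G_1,\dots,\mathcal G_N$ (with $N=N(n)$) covering $\ND$, as in \rf{besicovitch-1}--\rf{e:besicovitch}. For each $j$, using the defining inequality of $\ND$ and the pairwise disjointness of the balls in $\mathcal G_j$,
\[
\sum_{B\in\mathcal G_j}\mu(B)
\leq \ve^{-\frac12}\sum_{B\in\mathcal G_j}\mu(B\setminus E_0)
=\ve^{-\frac12}\mu\Bigl(\bigsqcup_{B\in\mathcal G_j}(B\setminus E_0)\Bigr)
\leq \ve^{-\frac12}\mu(2B_0\setminus E_0)\leq \ve^{\frac12}.
\]
Summing over $j=1,\dots,N$ yields $\mu(\ND)\lec \ve^{\frac12}$, as desired.

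There is essentially no serious obstacle here; the only thing to verify is that the balls $B_x$ stay inside a single ball on which $\mu(\,\cdot\,\setminus E_0)$ is controlled by $\ve$, and this is precisely what the hypothesis $0\in F_0$ together with $\delta(x)\leq 10^{-3}$ provides. Note that one does not need to use the $\alpha$-number hypothesis or the doubling hypothesis in this lemma: the bound on $\mu(\ND)$ comes entirely from the density property of $F_0$ inside $E_0$ and Besicovitch's lemma.
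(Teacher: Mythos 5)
Your proof is correct and follows essentially the same route as the paper: a Besicovitch covering of $\{B(x,\delta(x)):x\in\ND\}$, the defining $\ve^{1/2}$-inequality for $\ND$, and the density bound $\mu(B\setminus E_0)\le\ve\,\mu(B)$ for a fixed large ball centered at $0\in F_0$. The only cosmetic difference is that the paper bounds $\delta(x)\le C_1$ directly from the definition and works inside $3C_1B_0$, whereas you invoke the sharper $\delta(x)\le 10^{-3}$ from Lemma~\ref{l:good-range} to work inside $2B_0$; both are valid and yield the same estimate.
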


\begin{proof}
Let $\{\mathcal{B}_j\}_{j=1}^{N}$ be a Besicovitch subcovering of the collection $\{B(x,\delta(x)): x\in \ND\}$. Since $\delta(x)\leq C_1$ by definition,
 we have $B\subset 3C_1B_0$ for each $B\in \mathcal{B}_j$ and every $j$, and so by our definition of $E_{0}$,
\begin{align*}
	\mu(\ND) &
	\leq \sum_{j=1}^N \sum_{B\in\mathcal{B}_j}\mu(B)
	\leq\ve^{-\frac{1}{2}} \sum_{j}\mu( B(j)\setminus E_{0})\\&
	\leq \ve^{-\frac{1}{2}} \mu(3C_1B_{0}\setminus E_{0})
	\leq \ve^{-\frac{1}{2}} \ve\, \mu(3C_1B_{0}) = \ve^{\frac{1}{2}}. 
\end{align*}
\end{proof}

\begin{lemma}\label{l:LD}
	$\mu(\LD)\lec_{M,C_1} \tau$. 
\end{lemma}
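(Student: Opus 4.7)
The plan is a Besicovitch packing argument. The defining inequality $\mu(B_x) \leq \tau \delta(x)^d$ for $x \in \LD$ immediately bounds $\mu(\LD)$ in terms of $\tau$ times the $d$-dimensional content of the balls $\{B_x : x \in \LD\}$; the work is to bound that content.

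First I would extract $N = N(n)$ disjoint subfamilies $\mathcal G_1,\ldots,\mathcal G_N$ of $\{B_x\}$ via Besicovitch's lemma, so that
\begin{equation*}
\mu(\LD) \leq \sum_{j=1}^{N} \sum_{B_x \in \mathcal G_j} \mu(B_x) \leq \tau \sum_{j=1}^{N}\sum_{B_x \in \mathcal G_j} \delta(x)^d.
\end{equation*}
The main obstacle is to show each inner sum is $O(1)$. The strategy is to transport the estimate to the auxiliary Lipschitz surface $\Sigma$. Since $\LD \subset E_0 \cap B_0 \setminus Z$, the computation carried out inside the proof of \eqref{e:mumutheta} yields
\begin{equation*}
\dist(x,\Sigma) \leq 2\eta^2 d(x) \leq 2\eta^2 \delta(x) \leq \delta(x)/2
\end{equation*}
(recall $\eta = 10^{-3}$), so there exists $z \in \Sigma$ with $B(z,\delta(x)/2) \subset B_x$. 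The $d$-AD-regularity of $\sigma = \HH^d|_\Sigma$ from Lemma \ref{lem-sigma-AD} then produces $\sigma(B_x) \gtrsim \delta(x)^d$. This geometric lower bound is the heart of the argument: it says that $\LD$-points cannot hide far from the approximating graph, so the Hausdorff measure on $\Sigma$ can be used as a packing counter.

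Since $\delta(x) \leq 10^{-3}$ for $x \in E_0 \cap B_0$ by Lemma \ref{l:good-range}, every $B_x$ sits inside $B(0,2)$. Within a fixed disjoint subfamily $\mathcal G_j$, disjointness and the previous step give
\begin{equation*}
\sum_{B_x \in \mathcal G_j} \delta(x)^d \lesssim \sum_{B_x \in \mathcal G_j} \sigma(B_x) \leq \sigma(B(0,2)) \lesssim 1,
\end{equation*}
and summing over the $N = N(n)$ subfamilies yields $\mu(\LD) \lesssim \tau$. The dependence on $M$ and $C_1$ in the statement is absorbed into the constants that were already fixed (via Lemma \ref{l:good-range}) to ensure $\delta(x) \leq 10^{-3}$ and the AD-regularity of $\sigma$. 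Finally, the restriction to $\LD \setminus Z$ is harmless because $\mu(Z) = 0$ by Lemma \ref{l:good}.
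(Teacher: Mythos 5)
Your proof is correct and reaches the same conclusion by the same overall mechanism (Besicovitch packing plus transfer to the surface measure $\sigma=\HH^d|_\Sigma$), but it takes a genuinely sharper route at the key step. The paper establishes only the crude estimate $\dist(x,\Sigma)\leq C_5\,\delta(x)$ with $C_5>1$ (going through $d(x)\approx r_k$ and $x\in 2B_{i,k}$, which contributes a factor around $2$), and therefore must run Besicovitch over the enlarged balls $B(x,4C_5\delta(x))$ and then pay a doubling factor $\lesssim_M$ to get back from $\mu(B(x,4C_5\delta(x)))$ to $\mu(B_x)\leq\tau\delta(x)^d$; it also sums $\sigma$ over $3C_1B_0$, picking up a $C_1$-dependence. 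You instead import the much sharper estimate $\dist(x,\Sigma)<2\eta^2 d(x)\leq 2\eta^2\delta(x)\ll\delta(x)/2$ from the computation inside the proof of \eqref{e:mumutheta} (which is valid here since $\LD\subset E_0\cap B_0\setminus Z$, so $d(x)>0$ and $k(x)$ is well-defined), so that $B_x$ itself already contains $B(z,\delta(x)/2)$ with $z\in\Sigma$. That gives $\sigma(B_x)\gtrsim\delta(x)^d$ directly, no enlargement and no doubling step, and the packing closes inside $B(0,2)$ with a constant depending only on $n$ (and the near-$1$ AD constant of $\sigma$). The trade-off: the paper's cruder $z(x)$/enlarged-ball construction via \rf{eqzx1}--\rf{eqzx3} is reused verbatim in the $\HD$ and $\BA$ lemmas, so their choice buys uniformity across the three stopping conditions at the cost of worse constant dependence in this one; your version buys a cleaner constant for $\LD$ at the cost of reaching back into the proof of an earlier lemma.
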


\begin{proof}
For any $x\in \LD$ we have $\delta(x)\geq d(x)>0$.
 Then there exists some $k$ such that $r_{k-1}\leq \delta(x) < r_{k}$ and so $x\in V_k^2$.
 For $i\in J_k $ such that $x\in 2 B_{i,k}$ we have
$$\dist(x,\Sigma)\leq |x-x_{i,k}| + \dist(x_{i,k},\Sigma)\lesssim r_k +\ve\, r_k\lesssim \delta(x).$$
Let $z(x)\in \Sigma$ be such that 
\begin{equation}\label{eqzx1}
|x-z(x)|=\dist(x,\Sigma)\leq C_5\,\delta(x),
\end{equation}
for a suitable constant $C_5>1$.
Then we have
\begin{equation}\label{eqzx2}
B(x,\delta(x))\subset B(z(x),(C_5+1)\delta(x))\subset B(z(x),2C_5\delta(x))\subset B(x,4C_5\delta(x)),
\end{equation}
so that
\begin{equation}\label{eqzx3}
\mu(B(z(x),2C_5\delta(x)))\leq \mu(B(x,4C_5\delta(x)))\lesssim_M \mu(B(x,\delta(x)))\lesssim_{M}\tau \,\delta(x)^d.
\end{equation}

Let $\{\mathcal{B}_j\}_{j=1}^{N}$ be a Besicovitch subcovering from the collection $\{B(x,4C_5\delta(x)):x\in \LD\}$, and let $x_{B}$ be
the center of each $B\in \cup_{j=1}^N\mathcal{B}_j$. 
We deduce that
\begin{align*}
\mu(\LD)&
\leq \sum_{j=1}^N\sum_{B\in\mathcal{B}_j}\mu(B) \lesssim_{M}\tau \sum_{j=1}^N\sum_{B\in\mathcal{B}_j}\delta(x_B)^d \\
&\lesssim_{M}\tau \sum_{j=1}^N\sum_{B\in\mathcal{B}_j}\sigma(B)
\lec \tau \sigma(3C_1B_{0})
\lec_{M,C_1} \tau,
\end{align*}
by \rf{eqzx3} and because the balls $B(z(x_B), 2C_5\delta(x_B))$ have finite superposition.
\end{proof}

\begin{lemma}\label{lemhd}
	$\mu(\HD)\lec_{A,\tau} \ve^{\frac{1}{4}}$. 
\end{lemma}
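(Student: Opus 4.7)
The plan is to transfer the high-density information from $\mu$ to the density $f=d\pi[\nu]/d\HH^{d}|_{P_{0}}$ on $P_{0}$, and then exploit the $L^{2}$-control of $f-c_{C_{2}B_{0}}$ provided by Lemma \ref{lem8.1} via a Besicovitch covering argument.

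First, for each $x\in\HD$ I would observe matching two-sided bounds
\[A\,\delta(x)^{d}\leq \mu(B_{x})\leq 2^{d}A\,\delta(x)^{d}:\]
the lower bound is the very definition of $\HD$, and the upper bound follows because by Lemma \ref{l:good-range} $\delta(x)\leq 10^{-3}$, so $2\delta(x)<C_{1}$, and since condition $\HD$ fails at the larger radius $2\delta(x)$, $\mu(B(x,2\delta(x)))<A(2\delta(x))^{d}$. Arguing as in the proof of Lemma \ref{l:LD}, pick $z(x)\in\Sigma$ with $|x-z(x)|\leq C_{5}\,\delta(x)$ for a suitable constant $C_{5}$.

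Next I would transfer this mass from $\mu$ to $\nu$ using the $F$-distance estimate \eqref{e:dnumu}. Let $C'=2+C_{5}$ and take the $1$-Lipschitz cutoff $\phi(y)=\min(\delta(x),(2\delta(x)-|y-x|)_{+})$, which is supported in $B(x,2\delta(x))\subset B(z(x),C'\delta(x))$ and satisfies $\phi\geq \delta(x)$ on $B_{x}$; then $\int\phi\,d\mu\geq \delta(x)\mu(B_{x})\geq A\,\delta(x)^{d+1}$. Applying \eqref{e:dnumu} at the ball $B(z(x),8C'\delta(x))$ (whose radius lies in the admissible range since $d(z(x))\leq (1+C_{5})\delta(x)$ and $\delta(x)\leq 10^{-3}$), and noting that each $s_{j}$ with $2B_{j}$ intersecting this ball satisfies $s_{j}\lec\delta(x)$, one gets
\[F_{B(z(x),8C'\delta(x))}(\nu,\mu)\lec_{A,\tau}\ve^{\frac{1}{4}}\delta(x)^{d+1}.\]
For $\ve$ small enough this yields $\int\phi\,d\nu\geq (A/2)\delta(x)^{d+1}$, and using $\phi\leq \delta(x)$ we conclude $\nu(B(z(x),8C'\delta(x)))\gec A\,\delta(x)^{d}$. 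Projecting to $P_{0}$: since $\pi[\nu]=f\,\HH^{d}|_{P_{0}}$ and the projected disk $D_{x}:=D(\pi(z(x)),8C'\delta(x))$ satisfies $\HH^{d}(D_{x})\approx\delta(x)^{d}$, we get $\avint_{D_{x}}f\,d\HH^{d}\gec A$. Since $c_{C_{2}B_{0}}\approx \Theta^{d}_{\mu}(0,C_{2})\leq \mu(3C_{1}B_{0})/C_{2}^{d}=C_{2}^{-d}$ is bounded independently of $A$, choosing $A$ large enough gives $\avint_{D_{x}}(f-c_{C_{2}B_{0}})\,d\HH^{d}\gec A$, and by Cauchy--Schwarz
\[\int_{D_{x}}(f-c_{C_{2}B_{0}})^{2}\,d\HH^{d}\gec A^{2}\,\delta(x)^{d}.\]

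Finally, I would apply Besicovitch to $\{B_{x}\}_{x\in \HD}$, obtaining $N$ disjoint subfamilies $\{B_{x_{j,k}}\}_{k}$, $j=1,\dots,N$. For fixed $j$, the disjointness of $\{B_{x_{j,k}}\}_{k}$ forces bounded overlap of the enlarged balls $B(z(x_{j,k}),8C'\delta(x_{j,k}))$ in $\R^{n}$, and since $\pi|_{\Sigma}$ is $(1+C\ve)$-bi-Lipschitz by Lemma \ref{lem-sigma-AD}, the projected disks $D_{j,k}$ in $P_{0}$ inherit bounded overlap. Summing the previous display and invoking Lemma \ref{lem8.1} gives
\[A^{2}\sum_{k}\delta(x_{j,k})^{d}\lec \sum_{k}\int_{D_{j,k}}(f-c_{C_{2}B_{0}})^{2}\,d\HH^{d}\lec \|f-c_{C_{2}B_{0}}\|^{2}_{L^{2}(\HH^{d}|_{P_{0}})}\lec_{A,\tau}\ve^{\frac{1}{4}},\]
so $\sum_{k}\delta(x_{j,k})^{d}\lec_{A,\tau}\ve^{\frac{1}{4}}$. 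Combining with $\mu(B_{x_{j,k}})\leq 2^{d}A\,\delta(x_{j,k})^{d}$ from the first step and summing over $j=1,\dots,N$ yields the desired $\mu(\HD)\lec_{A,\tau}\ve^{\frac{1}{4}}$.

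The most delicate step will be the bounded overlap of the projected disks $D_{j,k}$ in $P_{0}$: orthogonal projection does not in general preserve overlap counts. It succeeds here because the centers $z(x_{j,k})$ all lie on the very flat Lipschitz graph $\Sigma$, so $\pi|_{\Sigma}$ is bi-Lipschitz with constants close to $1$, allowing overlap in $\R^{n}$ to transfer cleanly to $P_{0}$.
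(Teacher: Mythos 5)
Your overall strategy matches the paper's: transfer the high-density information from $\mu$ to $\nu$ via the $F$-distance bound, project to $P_0$, compare with $c_{C_2B_0}$, and invoke the global $L^2$ estimate of Lemma \ref{lem8.1}. Replacing the paper's maximal-function/weak-$(2,2)$ step with a direct Cauchy--Schwarz argument is a perfectly valid variant of the same mechanism. However, there is a genuine gap in the final covering step.

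You apply Besicovitch to the collection $\{B_x=B(x,\delta(x))\}_{x\in\HD}$ and then assert that the disjointness of each subfamily $\{B_{x_{j,k}}\}_k$ ``forces bounded overlap'' of the enlarged balls $B(z(x_{j,k}),8C'\delta(x_{j,k}))$. This is false in general when the radii $\delta(x_{j,k})$ vary across scales. For instance, in $\R^n$ take $x_m=(4\cdot 2^{-m},0,\dots,0)$ and $\delta_m=2^{-m}$: the balls $B(x_m,\delta_m)$ are pairwise disjoint, yet the origin lies in $B(x_m,5\delta_m)$ for every $m$, so the $5$-times enlargements have unbounded overlap. Since $\delta(\cdot)$ has no Lipschitz control (only $d(\cdot)$ does), nothing rules out such cascades of scales in a single Besicovitch subfamily, and your sum $\sum_k\int_{D_{j,k}}(f-c_{C_2B_0})^2\,d\HH^d$ cannot be bounded by $\|f-c_{C_2B_0}\|_{L^2}^2$ without further work.

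The fix is small but necessary and is exactly what the paper does: apply Besicovitch to the \emph{enlarged} balls $\{B(x,4C_5\delta(x))\}_{x\in\HD}$ (which still cover $\HD$ since $B_x\subset B(x,4C_5\delta(x))$). A disjoint subfamily of these then has, by the inclusion $B(z(x),2C_5\delta(x))\subset B(x,4C_5\delta(x))$, pairwise disjoint $2C_5$-balls centered on $\Sigma$, and since $\pi|_\Sigma$ is nearly an isometry, the projected $d$-disks $B(\pi(z(x)),C_5\delta(x))\cap P_0$ are genuinely disjoint. With disjointness (rather than a claimed bounded overlap) in hand, the remainder of your argument goes through, and your Cauchy--Schwarz route and the paper's maximal-function route both yield $\sum_k\delta(x_{j,k})^d\lec_{A,\tau}\ve^{1/4}$ and the desired bound on $\mu(\HD)$.
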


\begin{proof}
Take $x\in\HD$ and note again that $\delta(x)\geq d(x)>0$. Then arguing as in Lemma \ref{l:LD}, there is a point $z(x)\in\Sigma$
satisfying \rf{eqzx1} and \rf{eqzx2}.

Observe that for any ball $B_j$ such that $2B_{j}\cap B(z(x),4C_5\delta(x))\neq\varnothing$, by
\rf{eqzx1} we have
\[
\delta(x) \geq d(x) \geq d(\xi_{j})-|x-z(x)|-|z(x)-\xi_{j}|\geq d(\xi_{j}) -C_5\delta(x)-2s_{j}-4C_5\delta(x)
\]
and so 
\begin{equation}\label{e:delta>si}
\delta(x)\geq  \frac{d(\xi_{j})-2s_{j}}{1+5C_5}
=\frac{\eta^{-1}s_{j}-2s_{j}}{1+5C_5}
>\frac{s_{j}}{C_5}.
\end{equation}
This and the fact that $2B_{j}\cap B(z(x),4C_5\delta(x))\neq\varnothing$ imply
\begin{equation}\label{e:Bi<C0d}
B_{j}\subset B(z(x),4C_5\delta(x)+4s_{j}) \subset   B(z(x),8C_5\delta(x)).
\end{equation}

Consider now the function 
$\lambda(y) = (4C_5\delta(x)-|y-z(x)|)_{+}$. Observe that this is $1$-Lipschitz and, by \rf{eqzx2}, satisfies
$$2C_5\delta(x)\,\chi_{B(x,\delta(x))}\leq 2C_5\delta(x)\,\chi_{B(z(x),2C_5\delta(x))} \leq\lambda\leq 4C_5\delta(x)\,\chi_{B(z(x),4C_5\delta(x))}.$$
Thus, with constants $C$ depending only on $A$ and $\tau$, and for $\ve>0$ small enough, we get
\begin{align*}
4C_5\delta(x)  &\,\nu(B(z(x),4C_5\delta(x)))
 \geq \int \lambda(y)\,d\nu(y)\\
&\!\!\! \stackrel{\eqref{e:nuthetamu}}{\geq} \int\lambda(y)\theta(y) \,d\mu(y) -\sum_{2B_{j}\cap B(z(x),4C_5\delta(x))\neq\varnothing} \ve^{\frac14} s_{j}^{d+1}\\
& \!\!\!\!\!\! \stackrel{\eqref{e:mumutheta},\eqref{e:delta>si}}{\geq} \int \lambda(y)\,d\mu(y) - C\ve^{\frac{1}{2}} \delta(x)^{d+1} - C\ve^{\frac14} \!\sum_{2B_{j}\cap B(z(x),4C_5\delta(x))\neq\varnothing} \!\! \delta(x)\,\sigma(B_{j})\\
& \!\!\stackrel{\eqref{e:Bi<C0d}}{\geq} \delta(x)\, \mu(B(x,\delta(x))) -C\ve^{\frac{1}{2}} \delta (x)^{d+1} -  C\ve^{\frac14}\, \delta(x) \,\sigma( B(z(x),8C_5\delta(x)))\\
& \geq A\delta(x)^{d+1} - C\ve^{\frac{1}{4}} \delta(x)^{d+1}
\geq \frac{A}{2}\,\delta(x)^{d+1} .
\end{align*}
Hence, by \rf{pi-nu}
\[
\pi[\nu](B(\pi(z(x)),4C_5\delta(x))) \geq \nu(B(z(x),4C_5\delta(x)))
\geq \frac{A}{8C_5}\, \delta(x)^{d}. 
\]
In particular, for $y\in B(\pi(z(x)),C_5\delta(x))\cap P_{0}$,
\[
\frac{\pi[\nu](B(y,5C_5\delta(x)))}{\cH^{d}(P_{0}\cap B(y,5C_5\delta(x)))}
\gtrsim \frac{\pi[\nu](B(\pi(z(x)),4C_5\delta(x)))}{\delta(x)^{d}}
\gtrsim A.\]
So choosing $A\gg c_{C_{2}B_{0}}$ (recall $c_{C_{1}B_{0}} \stackrel{\eqref{e:cb-theta}}{\lec}  \Theta^d_{\mu}(C_{1}B_{0}) =1$ by assumption), and since  $f=\frac{d\pi[\nu]}{d\cH^{d}|_{P_{0}}} $,
\[
\cM\ps{f - c_{C_{2}B_{0}}}(y)
\geq \frac{\pi[\nu](B(y,5C_5\delta(x)))}{\cH^{d}(P_{0}\cap B(y,5C_5\delta(x)))} -c_{C_{1}B_{0}}
\gtrsim A,
\]
where $\cM$ is the Hardy-Littlewood maximal function on $P_{0}$. Therefore, for any $x\in\HD$ we have
\begin{equation*}
B(\pi(z(x)),C_5\delta(x))\cap P_0\subset \ck{y\in P_0: \cM\ps{f - c_{C_{1}B_{0}}}(y) >1}=:\Gamma.
\end{equation*}
On the other hand, since $\cM$ is of weak type $(2,2)$ with respect to $\HH^d|_{P_0}$, by Lemma \ref{lem8.1} we have
\begin{align}\label{e:pib-Lambda}
\HH^d(\Gamma) &\lesssim \|\cM(f-c_{C_{2}B_{0}})\|_{L^{2}(\HH^d|_{P_0})}^{2} \lesssim \|f-c_{C_{2}B_{0}}\|_{L^{2}(\HH^d|_{P_0})}^{2}
 \lec_{A,\tau}  \ve^{\frac{1}{4}}.
 \end{align}

Let $\{\mathcal{B}_j\}_{j=1}^{N}$ be a Besicovitch subcovering from the collection $\{B(x,4C_5\delta(x)):x\in \HD\}$. 
There is $j_0\in\{1,\cdots,N\}$ such that the disjoint subcollection $\mathcal{B}_{j_0}$ satisfies
$$\mu(\HD)\lesssim \sum_{B\in\mathcal{B}_{j_0}}\mu(B).$$
Denote by $x_B$ the center of each $B$, and note that the balls $B(z(x_B),2C_5\delta(x_B))$, $B\in\mathcal{B}_{j_0}$, are also pairwise disjoint,
by \rf{eqzx2}.
Since $\Sigma$ is a Lipschitz graph with a very small constant and the balls $B(z(x_B),2C_5\delta(x_B))$ are centered in $\Sigma$ are
pairwise disjoint, it follows that the $d$-dimensional balls $B(\pi(z(x_B)),C_5\delta(x_B))$, $B\in\mathcal{B}_{j_0}$, are also pairwise disjoint.
Then we get
\begin{align*}
\mu(\HD)&
\lesssim \sum_{B\in\mathcal{B}_{j_0}} \mu(B)
  \lesssim_{A} \sum_{B\in\mathcal{B}_{j_0}} r_{B}^{d} 
\lec \sum_{B\in\mathcal{B}_{j_0}} \cH^{d}\bigl(B(\pi(z(x_B)),C_5\delta(x_B))\bigr)\\
&\leq \cH^{d}\biggl( \bigcup_{B\in\mathcal{B}_{j_0}}B(\pi(z(x_B)),C_5\delta(x_B)))\biggr)\leq \HH^d(\Gamma)
\stackrel{\eqref{e:pib-Lambda}}
\lec_{A,\tau}
\ve^{\frac{1}{4}}.
\end{align*}
\end{proof}

\begin{lemma}
	$\mu(\BA)\lec_{A,\tau,M} \ve^{\frac{1}{2}}$.  
\end{lemma}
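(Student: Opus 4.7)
The plan is to reduce the estimate for $\mu(\BA)$ to a Chebyshev-type argument driven by the $L^{2}$ gradient bound \eqref{e:h-int}. Since $\Sigma = \{y + h(y) : y \in P_{0}\}$ is a $C\ve^{1/4}$-Lipschitz graph with $\int_{P_{0}} |Dh|^{2} \, d\HH^{d} \lec \ve$, the weak-type $(2,2)$ bound for the centered Hardy--Littlewood maximal operator $\cM$ on $P_{0}$ immediately yields
\[
\HH^{d}(\Gamma) \lec \ve^{-1/2} \|Dh\|_{L^{2}(\HH^{d}|_{P_{0}})}^{2} \lec \ve^{1/2}, \qquad \Gamma := \{y \in P_{0} : \cM(|Dh|)(y) \geq \kappa \ve^{1/4}\},
\]
for a suitably small $\kappa > 0$. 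The overall strategy is to show that for each $x \in \BA$ there is a ball $B(\pi(z(x)), c\delta(x)) \cap P_{0} \subset \Gamma$, where $z(x)$ is a nearest point of $\Sigma$ to $x$, and then to mimic the Besicovitch argument from Lemma~\ref{lemhd} to transfer $\HH^{d}(\Gamma)$ back to $\mu(\BA)$ via the Ahlfors regularity $\mu(B(x,r)) \lec_{A} r^{d}$ from \eqref{e:muAD}.

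The geometric heart of the argument is to show that for $x \in \BA$ one has $|Dh| \gtrsim \ve^{1/4}$ on a subset of $P_{0}$ near $\pi(z(x))$ of $\HH^{d}$-measure $\gtrsim \delta(x)^{d}$. I fix $k$ with $r_{k+1} \leq \delta(x) < r_{k}$, so that $d(x) \leq \delta(x) < r_{k}$ puts $x \in E_{k}$ and hence $x \in V_{k}^{3/2}$ by \eqref{e:E<V}; choose $j \in J_{k}$ with $x \in \tfrac{3}{2} B_{j,k}$. Applying the angle comparison of Lemma~\ref{lem5.6} (via Remark~\ref{remark2} with constant $1$) to $L_{x,\delta(x)}$ and $L_{x'_{j,k}, C_{2} r_{k}}$, and using the pointwise estimate $\alpha_{\mu}(x'_{j,k}, C_{2} r_{k}) \lec_{M} \ve$ from \eqref{pointwise-a-bound-2}, gives $\angle(L_{x,\delta(x)}, P_{j,k}) \lec_{A,\tau,M,C_{2}} \ve$, since $P_{j,k}$ is parallel to $L_{x'_{j,k}, C_{2} r_{k}}$ by construction. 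Combined with the $\BA$ hypothesis $\angle(L_{x,\delta(x)}, P_{0}) \geq \ve^{1/4}$, this forces $\angle(P_{j,k}, P_{0}) \geq \tfrac{1}{2} \ve^{1/4}$ for $\ve$ small. Theorem~\ref{t:DT}(vii) realizes $\Sigma \cap 49 B_{j,k}$ as the graph of a $C\ve$-Lipschitz map $A_{j,k}$ over $P_{j,k}$, so the tangent plane of $\Sigma$ at $\HH^{d}$-a.e.\ point of this set lies within angle $C\ve$ of $P_{j,k}$, hence at angle $\geq \tfrac{1}{4} \ve^{1/4}$ from $P_{0}$. Since at a point of differentiability of $h$ this tangent plane is the graph of $Dh(\pi(z))$ over $P_{0}$, one concludes $|Dh(\pi(z))| \gtrsim \ve^{1/4}$ for $\HH^{d}$-a.e.\ $z \in \Sigma \cap B_{j,k}$, yielding the desired subset of $\pi(\Sigma \cap B_{j,k})$ of $\HH^{d}$-measure $\gtrsim r_{k}^{d} \approx \delta(x)^{d}$.

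From here the proof concludes along the lines of Lemma~\ref{lemhd}. Letting $z(x) \in \Sigma$ be a nearest point to $x$, the same argument as in Lemma~\ref{lemhd} gives $|x - z(x)| \lec \delta(x)$. Setting $y_{0} = \pi(z(x))$, the previous paragraph forces $\avint_{B(y_{0}, C\delta(x))\cap P_{0}} |Dh| \, d\HH^{d} \gtrsim \ve^{1/4}$; enlarging the radius by a controlled factor, the analogous inequality holds for every $y \in B(y_{0}, c\delta(x))\cap P_{0}$, whence $B(y_{0}, c\delta(x)) \cap P_{0} \subset \Gamma$ after choosing $\kappa$ appropriately. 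A Besicovitch subcover of $\{B(x, 4C_{5}\delta(x)) : x \in \BA\}$ produces a disjoint subcollection $\mathcal{B}_{j_{0}}$ with $\mu(\BA) \lec \sum_{B \in \mathcal{B}_{j_{0}}} \mu(B) \lec_{A} \sum_{B \in \mathcal{B}_{j_{0}}} \delta(x_{B})^{d}$; the containment $B(z(x_{B}), 2C_{5}\delta(x_{B})) \subset B(x_{B}, 4C_{5}\delta(x_{B}))$ makes the balls $B(z(x_{B}), 2C_{5}\delta(x_{B}))$ pairwise disjoint, and the $(1+C\ve^{1/4})$-bi-Lipschitz nature of $\pi|_{\Sigma}$ (Lemma~\ref{lem6.8}) transfers this disjointness to the projected balls $B(\pi(z(x_{B})), c\delta(x_{B}))\cap P_{0} \subset \Gamma$. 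Summing,
\[
\mu(\BA) \lec_{A} \sum_{B \in \mathcal{B}_{j_{0}}} \HH^{d}\bigl(B(\pi(z(x_{B})), c\delta(x_{B})) \cap P_{0}\bigr) \leq \HH^{d}(\Gamma) \lec \ve^{1/2},
\]
which is the desired estimate.

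The main obstacle is the chain of plane comparisons in the second paragraph: passing from the macroscopic angle condition $\angle(L_{x,\delta(x)}, P_{0}) \geq \ve^{1/4}$ down to a pointwise bound $|Dh| \gtrsim \ve^{1/4}$ on a definite piece of $P_{0}$ only succeeds because every intermediate error (Lemma~\ref{lem5.6}, Theorem~\ref{t:DT}(vii)) is $O(\ve)$, which is genuinely smaller than $\ve^{1/4}$. This is exactly where the quantitative smallness of the $\alpha$-numbers \eqref{working-hyp-2A} interacts sharply with the Reifenberg-graph structure, and is precisely the gap closed by the improved $L^{2}$ estimate \eqref{e:h-int} as opposed to the cruder pointwise bound $|Dh| \lec \ve^{1/4}$.
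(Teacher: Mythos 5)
Your overall strategy is sound, but the central geometric step contains a genuine gap. You assert that Theorem~\ref{t:DT}(vii) ``realizes $\Sigma\cap 49B_{j,k}$ as the graph of a $C\ve$-Lipschitz map $A_{j,k}$ over $P_{j,k}$,'' and from this you draw a \emph{pointwise} lower bound $|Dh(\pi(z))|\gtrsim\ve^{1/4}$ for a.e.\ $z\in\Sigma\cap B_{j,k}$. But Theorem~\ref{t:DT}(vii) only identifies $\Sigma_{k+1}\cap D(x_{j,k},P_{j,k},49r_k)$ with $\Gamma_{j,k}$, not $\Sigma$. The only place the paper promotes such an identity to $\Sigma$ itself is Lemma~\ref{lem6.3}, and there the radius of validity is $r_{k(z)}\approx d(z)$, which can be much smaller than $r_k\approx\delta(x)$ (one only has $d(z(x))\lec\delta(x)$, with no lower bound). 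Between scales $d(z)$ and $\delta(x)$ the Reifenberg construction keeps running, and the tangent direction of $\Sigma$ can swing by as much as $C\ve^{1/4}$ (the only global control on $Dh$) while $\Sigma$ stays within $C\ve r_k$ of $P_{j,k}$ in Hausdorff distance. So the Hausdorff-distance estimates \eqref{e:49r}, \eqref{e:ytosigma} control the \emph{oscillation} of $h$ on $\pi(B\cap\Sigma)$, but they do \emph{not} yield a pointwise lower bound on $|Dh|$; that step does not follow.

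The paper's own proof sidesteps this precisely by working at the level of $L^2$ averages: the oscillation bound on $h$ (forced by $\dist_H(B\cap\Sigma,\,B\cap P_{i,k})\lec\ve r_B$ together with $\angle(P_{i,k},P_0)\gec\ve^{1/4}$) plus a Poincar\'e inequality gives $\avint_{\pi(B\cap\Sigma)}|Dh|^2\,d\HH^d\gec\ve^{1/2}$, and then a direct Besicovitch summation against \eqref{e:h-int} closes the argument, with no maximal function needed. Two remarks on repairability. First, your pointwise conclusion \emph{can} be recovered: since $|Dh|\lec\ve^{1/4}$ globally (Lemma~\ref{lem6.8}), the average lower bound $\avint|Dh|^2\gec\ve^{1/2}$ forces $|Dh|\gtrsim\ve^{1/4}$ on a definite fraction of the ball; but this goes through the Poincar\'e step anyway, so nothing is gained. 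Second, your maximal-function framing works more cleanly with $\cM(|Dh|^2)$ at level $\kappa\ve^{1/2}$ and the weak $(1,1)$ bound, which matches exactly the average estimate you can legitimately prove; at that point, however, the Besicovitch summation is shorter than the weak-type estimate, so the detour is not an improvement.
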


\begin{proof}
As in Lemma \ref{lemld}, for each $x\in\BA$ there is a point $z(x)\in\Sigma$
satisfying \rf{eqzx1} and \rf{eqzx2}.
Let $\{\mathcal{B}_j\}_{j=1}^{N}$ be a Besicovitch subcovering 
 from the collection $\{B(x,4C_5\delta(x)): x\in \BA\}$ with centers $x_B$ and radii $t_B$.
Again as in Lemma \ref{lemld}, for each $x_B$ we take $k$ such that $r_{k-1}\leq \delta(x_B) < r_{k}$, and so there exists
some $i\in J_k$ such that $x_B\in 2B_{i,k}$. Then, using  \eqref{e:angle} and the subsequent
Remark \ref{remark2},
	\begin{equation}\label{angle-estimate}
	\angle(P_{i,k},P_{0})\geq \angle(L_{x_B,\delta(x_B)},P_{0})-\angle(P_{i,k},L_{x_B,\delta(x_B)})
	\gec \ve^{\frac{1}{4}} - C \ve \gec \ve^{\frac{1}{4}}. 
	\end{equation}
	By \eqref{e:49r} and \eqref{e:sky-y}, taking into account \rf{eqzx2},
	\begin{equation}\label{h-dist-estimate}
	\dist_{H}(B \cap \Sigma, B\cap P_{i,k})\lec \ve r_{k}\approx \ve\, \delta(x_B)\approx \ve\,
	r_{B}.\end{equation}
	Thus, by \rf{e:3bjcapSigma}, Lemma \ref{lem6.8}, \rf{angle-estimate} and \rf{h-dist-estimate}, we have
	\begin{align*}
	\avint_{\pi(B\cap \Sigma)} |Dh|^{2} \,d\HH^d &\gec \;\avint_{\pi(B\cap \Sigma)} \frac{|h-h(\pi(x_B))|^{2}}{r_{B}^2} \,d\HH^d\\
	&\gec  \;\avint_{\pi(B\cap \Sigma)} \frac{|A_{j,k}-h(\pi(x_B))|^{2}}{r_{B}^2}\,d\HH^d - C \ve^2
	\gec \ve^{\frac12} -C\ve^2 \gec \ve^{\frac12}. 
	\end{align*}
	 Then
	\begin{align*}
	\mu(\BA)
	& \leq \sum_{j=1}^N\sum_{B\in\mathcal B_{j}}\mu(B)
	\lesssim_{A} \sum_{j=1}^N\sum_{B\in\mathcal B_{j}}r_{B}^{d} 
	\lec \sum_{j=1}^N\sum_{B\in\mathcal B_{j}} \HH^d(\pi(B\cap \Sigma))\\
	& \lec \ve^{-\frac12} \sum_{j=1}^N\sum_{B\in\mathcal B_{j}} \int_{\pi(B\cap \Sigma)} |Dh|^{2} \,d\HH^d
 	\lec \ve^{-\frac{1}{2}} \int |Dh|^{2} \lec \ve^{-\frac12}\ve\lec \ve^{\frac12},
	\end{align*}
	by \rf{e:h-int}.
\end{proof}
\vv

Putting these lemmas together, we obtain that for $\tau$ and $\ve$ small enough,
\[
\mu( \HD\cup\LD\cup \ND\cup \BA)\leq C\tau + C(A,\tau) \ve^{\frac{1}{4}}  <\mu(E_{0}),
\]
which implies that there exists a subset $G'\subset E_{0}\cap B_{0}$ of positive measure for which $\Theta^d_{\mu}(x,r) \in [\tau,A]$ for all $r>0$ small. 
This contradicts Lemma \ref{l:good} and concludes the proof of
Lemma \ref{l:main} and of Theorem \ref{thmi}.

\vv


\section{Proof of Theorem \ref{teocount1}}

In this section we assume $n=2$ and $d=1$. That is, we are in the plane and we consider $1$-dimensional $\alpha$-numbers.
Our objective is to construct a measure $\mu$ such that
	\begin{equation}\label{eq**1}
	\int_0^1\alpha_\mu(x,r)^2\,\frac{dr}r<\infty\quad \mbox{for all $x\in \supp\mu$},
	\end{equation}
	and such that 
	\begin{equation}\label{10.1A}
	\lim_{r\to 0}\frac{\mu(B(x,r))}r =0\quad\mbox{ for all $x\in\supp\mu$.}
	\end{equation}

Given $h>0$ and a horizontal line $L\subset\R^2$, we denote by $L(h)$ the line parallel to $L$ and above $L$ which is at a distance $h$ from $L$. That is, $L(h) = h\,e_2+ L$, where $e_2=(0,1)$.
Our measure $\mu$ will be a weak limit of measures $\mu_k$ of the form
\begin{equation}\label{10.1B}
\mu_k=\sum_{j=1}^{n_k} c_j^k \,\HH^1|_{L_j^k},
\end{equation}
where $L_j^k$, $j=1,\ldots,n_k$ are horizontal lines.

We consider two decreasing sequences of positive numbers $\{a_k\}_k$, $\{h_k\}_k$, tending to $0$, which will be chosen later, and so that $0<a_k,h_k<1/2$. The reader should think that $h_k$ will tend to zero much faster than $a_k$.
First we set $\mu_0 = \HH^1|_{L_0^0}$, where $L_0^0$ is just the horizontal axis. Inductively, $\mu_{k+1}$ is constructed from $\mu_k$ as in \rf{10.1B}, as follows:
\begin{equation}\label{def-mk}
\mu_{k+1} = \sum_{j=1}^{n_k} c_j^k \,\bigl[(1-a_{k+1}) \HH^1|_{L_j^k} + a_{k+1}\,\HH^1|_{L_j^k(h_{k+1})}\bigr]
.\end{equation}
So roughly speaking, at each step $k+1$, each line $L_j^k$ is split into the two lines $L_j^k$ and 
$L_j^k(h_{k+1})$ and the total mass is distributed so that a fraction $(1-a_{k+1})$ is kept in $L_j^k$, while the other fraction $a_{k+1}$ is transferred to $L_j^k(h_{k+1})$. Further, one should think that $h_k$ goes to $0$ 
very quickly, and $h_{k+1}$ is much smaller than any of the mutual distances among the lines $L_j^k$, $j=1,\ldots,n_k$.
We claim that $a_k$ and $h_k$ can be chosen so that \rf{eq**1} holds but $\mu$ is singular with respect to
$\HH^1$.

First we just analyse how the $\alpha$ coefficients evolve from $\mu_0$ to $\mu_1$. Consider the measure 
$$\mu_1= (1-a)\,\HH^1|_L + a\,\HH^1|_{L'},$$
where $a=a_1$, $L=L_0^0$, and $L'=L(h)$, with $h=h_1$.
Consider a $1$-Lipschitz function $\phi$ supported on $B(x,r)$, with $x\in \supp \mu_1$ and estimate the following integral using a change of variable
\begin{align}\label{eq**15}
\left|\int \phi \,d\bigl(\HH^1|_L - \mu_1)\right| & = \left|\int \phi \,d(a\,\HH^1|_{L} - a\,\HH^1|_{L'})
\right|\\
& = a \left|\int \bigl(\phi(x) - \phi(x+h)\bigr) \,d\HH^1|_{L} \right| \lesssim a\,h\,r. \nonumber
\end{align}

First we estimate $\alpha_{\mu_1}(x,r)$ for $x\in L$. To this end, note that since $a<1/2$, $\mu(B(x,r))
\approx r$ for all $r>0$. 
Further, $\alpha_{\mu_1}(x,r)=0$ if $r\leq h$. 
For $r>h$, we write
$$\alpha_{\mu_1}(x,r))\lesssim \frac1{r^2}\,\dist_{B(x,r)}(\mu_1,\HH^1|_L).$$
 
Hence by \rf{eq**15} we obtain
\begin{equation}\label{eq**2}
\alpha_{\mu_1}(x,r))\lesssim a\,\frac hr,
\end{equation}
and thus 
\begin{equation}\label{eq**25}
\int_0^\infty\alpha_\mu(x,r)^2\,\frac{dr}r\lesssim a^2\int_h^\infty\frac{h^2}{r^2}\,\frac{dr}r\approx a^2.
\end{equation}

Next we turn our attention to the case when $x\in L'$. Again, for $r\leq h$, $\alpha_{\mu_1}(x,r)=0$.
On the other hand, for $r>2h$, $\mu_1(B(x,r))\approx r$, and almost the same calculations as 
above (i.e. for $x\in L$ and $r>h$) show 
that
$$\alpha_{\mu_1}(x,r)\lesssim a\,\frac hr,$$
as in \rf{eq**2}. This yields
\begin{equation}\label{eq**3}
\int_{2h}^\infty 
\alpha_\mu(x,r)^2\,\frac{dr}r\lesssim a^2\int_{2h}^\infty\frac{h^2}{r^2}\,\frac{dr}r\approx a^2.
\end{equation}

Assume now that $x\in L'$ and $h<r<2h$. An easy geometric argument shows that 
$$\HH^1(L\cap B(x,r)) = 2\sqrt{r^2-h^2}.$$
Hence
$$\mu_1(B(x,r)) = 2(1-a)\sqrt{r^2-h^2} + 2ar.$$
By \rf{eq**15} we have
$$\alpha_{\mu_1}(x,r)\lesssim\frac{ahr}{r\mu_1(B(x,r))}\lesssim \frac{a\,h}{2(1-a)\sqrt{r^2-h^2} + 2ar}
\lesssim \frac{a\,h}{\sqrt{r^2-h^2} + ar},$$
and so
$$\alpha_{\mu_1}(x,r)^2\lesssim \frac{a^2\,h^2\,}{r^2-h^2 + (ar)^2} 
= \frac{a^2\,h^2\,}{(1+a^2)r^2-h^2 }
.$$
Therefore, 
\begin{align*}
\int_{h}^{2h} 
\alpha_\mu(x,r)^2\,\frac{dr}r &\lesssim 
\int_{h}^{2h}\frac{a^2\,h^2\,}{(1+a^2)r^2-h^2 }\,\frac{dr}r \lesssim
\int_{h}^{2h}\frac{a^2\,r\,}{(1+a^2)r^2-h^2 }\,dr\\
& = \frac{a^2}{2(1+a^2)}\,\biggl[\log((1+a^2)r^2-h^2)\biggr]_h^{2h} = 
\frac{a^2}{2(1+a^2)}\,\log \frac{3+4a^2}{a^2}\\
&\lesssim a^2\,\log\frac{4}{a^2}\approx a^2\,|\log a|
.
\end{align*} 
Together with \rf{eq**3}, this yields for $x\in L'$ since $0<a<\frac{1}{2}$ that
\begin{equation}\label{10.5A}
\int_{0}^\infty 
\alpha_\mu(x,r)^2\,\frac{dr}r\lesssim a^2\,|\log a|.
\end{equation}
Further, because of \rf{eq**25}, this estimate is also valid for $x\in L$.
\vv

By the same arguments, in the  step $k+1$, denoting 
$d_{k}$ the minimal distance among the lines $L_j^{k}$, $j=1,\ldots,n_{k}$, that form the support of $\mu_{k}$, and assuming that $h_{k+1}\ll d_k$,
 we obtain as before that
\begin{equation}\label{eq**5}
\int_0^{d_{k}/2}
\alpha_{\mu_{k+1}}(x,r)^2\,\frac{dr}r 
\lesssim a_{k+1}^2\,|\log a_{k+1}|\quad\mbox{ for all $x\in\supp\mu_{k+1}$.}
\end{equation}
On the other hand, for $r\geq d_k/2$, we claim that if we choose $h_{k+1}$ small enough so that
\begin{equation}\label{eqhk}
h_{k+1}\leq a_{k+1}^{4(k+1)}\,d_k,
\end{equation}
then we have
\begin{equation}\label{eq*50}
\alpha_{\mu_{k+1}}(x,r)\leq \biggl(1+ \frac{h_{k+1}^{1/4}}{d_k^{1/4}}\,\biggr)\,\alpha_{\mu_k}(x',r')
+ C\,\frac{h_{k+1}}r,
\end{equation}
where $x'$ is the closest point to $x$  from $\supp\mu_k$ and $r'=r+h_{k+1}$. We defer the proof of this estimate and
show how obtain \rf{eq**1} provided \rf{eqhk} and \rf{eq*50} hold.

\vv
For any $0<\ve_k<1/2$, using that $h_{k+1}\ll d_k$, a change of variable and the fact that the function $\frac{s}{s-h_{k+1}}$ is decreasing, we have
\begin{align*}
\int_{d_{k}/2}^\infty\!
\alpha_{\mu_{k+1}}(x,r)^2\,\frac{dr}r & \leq 
(1+\ve_k)\biggl(1+ \frac{h_{k+1}^{1/4}}{d_k^{1/4}}\,\biggr)^2\int_{d_{k}/2}^\infty
\alpha_{\mu_{k}}(x',r+h_{k+1})^2\,\frac{dr}r \\& \quad+ C\,\ve_k^{-1} \int_{d_{k}/2}^\infty\frac{h_{k+1}^2}{r^2}\,\frac{dr}r\\
& \leq (1+\ve_k)\biggl(1+ 3\frac{h_{k+1}^{1/4}}{d_k^{1/4}}\,\biggr)\frac{\frac12d_k}{\frac12 d_k -h_{k+1}}\int_0^\infty\!\!
\alpha_{\mu_{k}}(x',s)^2\,\frac{ds}s + C\,\frac{h_{k+1}^2}{\ve_k\,d_{k}^2}.
\end{align*}

Together with \rf{eq**5}, and using that
$$\frac{\frac12d_k}{\frac12 d_k -h_{k+1}}\leq 1+ C\,\frac{h_{k+1}}{d_k} \leq 1+ C\,\frac{h_{k+1}^{1/4}}{d_k^{1/4}},$$
this gives
\begin{align}\label{eq*55}
\int_0^\infty
\alpha_{\mu_{k+1}}(x,r)^2\,\frac{dr}r &
\leq C\,a_{k+1}^2\,|\log a_{k+1}| \\ &\quad+ (1+\ve_k)
\biggl(1+ C\,\frac{h_{k+1}^{1/4}}{d_k^{1/4}}\,\biggr)\int_0^\infty
\alpha_{\mu_{k}}(x',r)^2\,\frac{dr}r + C\,\frac{h_{k+1}^2}{\ve_k\,d_{k}^2}.\nonumber
\end{align}
Choosing $\ve_k=2^{-k}$ and assuming also that
\begin{equation}\label{eqas490*'}
\frac{h_{k+1}^{1/4}}{d_k^{1/4}}\leq 2^{-k},
\end{equation}
 iterating the estimate \rf{eq*55} it follows that
$$\int_0^\infty
\alpha_{\mu_{k+1}}(x,r)^2\,\frac{dr}r \lesssim \sum_{j=1}^{k+1} a_j^2\,|\log a_j| +\sum_{j=1}^k
\frac{2^j\,h_{j+1}^2}{d_j^2} \lesssim 1+ \sum_{j=1}^{k+1} a_j^2\,|\log a_j|.$$
Since this estimate is uniform on $k$, we derive
\begin{equation}\label{10.10A}\int_0^\infty
\alpha_{\mu}(x,r)^2\,\frac{dr}r \lesssim 1 + \sum_{j\geq1} a_j^2\,|\log a_j| .
\end{equation}

Let $\{a_j\}_j$ be a sequence such that the last sum in \rf{10.10A} is finite (which guaranties that \rf{eq**1} holds)
but so that $\sum_j a_j=\infty$
(such as $a_j = 1/j$, for example). Further choose $\{h_k\}$ inductively so that
both \rf{eqas490*'} and \rf{eqhk} hold.
Recall that by construction, $\mu_k$ has constant $1$-dimensional density on each line $L_i^k$, $i=1,\ldots,n_k$, and further this density is at most
$\prod_{j=1}^k (1-a_j).$
The condition $\sum_j a_j=\infty$ implies that this product tends to $0$ as $k\to\infty$. In turn this ensures that $\mu$ has  vanishing upper density at all points, and thus $\mu$ is singular with respect
to $\HH^1$. This completes the construction of the counterexample, modulo the proof of claim \rf{eq*50}.

\vv
{\bf Proof of \rf{eq*50}}.
Consider a $1$-Lipschitz function $\phi$ supported on $B(x,r)$ and
denote $B=B(x,r)$ and $B'=(x',r')$, with $r'=r+h_{k+1}$ (note that $|x-x'|\leq h_{k+1}$).
Let $c_{B'},L_{B'}$ some pair for which the minimum is attained in the definition of $\alpha_{\mu_k}(B')$ as in \rf{cL-ppts}.
Then we have
\begin{align}\label{eq*51}
\left|\int \phi\,d(c_{B'}\HH^n|_{L_{B'}} - \mu_{k+1})\right| & \leq
\left|\int \phi\,d(c_{B'}\HH^n|_{L_{B'}} - \mu_{k})\right| +
\left|\int \phi\,d(\mu_k - \mu_{k+1})\right|\\
&\leq F_{B'}\bigl(c_{B'}\HH^n|_{L_{B'}},\mu_{k}\bigr) + \left|\int \phi\,d(\mu_k - \mu_{k+1})\right|,
\nonumber
\end{align}
taking into account that $\supp\phi\subset B\subset B'$ in the last inequality.
To deal with the last integral note that
\begin{align*}
\mu_k - \mu_{k+1} & = \sum_{j=1}^{n_k} c_j^k \,\HH^1|_{L_j^k}  - \sum_{j=1}^{n_k} c_j^k \,\bigl[(1-a_{k+1}) \HH^1|_{L_j^k} + a_{k+1}\,\HH^1|_{L_j^k(h_{k+1})}\bigr]
\\
& = a_{k+1}
 \sum_{j=1}^{n_k} c_j^k \,\bigl[\HH^1|_{L_j^k} - \HH^1|_{L_j^k(h_{k+1})}\bigr]
 = a_{k+1} \,\bigl(\mu_k -  \,P_k[\mu_k]\bigr),
 \end{align*}
where $P_k$ stands for the translation $P_k(y) = y+h_{k+1}$.
Therefore,
$$\left|\int \phi\,d(\mu_k - \mu_{k+1})\right| = a_{k+1}
\left|\int \bigl(\phi(y)-\phi(y+h_{k+1})\bigr)\,d\mu_k\right| \leq a_{k+1}\,h_{k+1}\,\mu_k(B'),$$
where we used that $\phi$ is $1$-Lipschitz and $\supp\phi\cup\supp\phi(\cdot+h_{k+1})\subset B'$.
From this inequality and \rf{eq*51} we deduce that
\begin{align}\label{eqal99*}
\alpha_{\mu_{k+1}}(x,r)&\leq \frac{F_{B'}\bigl(c_{B'}\HH^n|_{L_{B'}},\mu_{k}\bigr) + a_{k+1}\,h_{k+1}\,\mu_k(B')}
{r\,\mu_{k+1}(B)}\\
& = \frac{r'\,\mu_k(B')}{r\,\mu_{k+1}(B)}\,
\alpha_{\mu_k}(x',r') + \frac{a_{k+1}\,h_{k+1}\,\mu_k(B')}
{r\,\mu_{k+1}(B)}.\nonumber
\end{align}
Next we need to estimate $\mu_k(B')$ in terms of $\mu_{k+1}(B)$. To this end, 
recall that
$$\supp\mu_k =\bigcup_{j=1}^{n_k} \bigl(L_j^k \cup L_j^k(h_{k+1})\bigr)$$
and denote by $T_k:\supp\mu_{k+1}
\to\supp\mu_k$ the map which equals the identity on each line $L_j^k$ and 
coincides with the orthogonal projection from $L_j^k(h_{k+1})$ to $L_j^k$ on each $L_j^k(h_{k+1})$.
By construction, $\mu_k=T_k[\mu_{k+1}]$, and thus  
$\mu_k(B') = \mu_{k+1}(T_k^{-1}(B'))$. Moreover note that
$$T_k^{-1}(B'\cap \supp\mu_k)\subset B'\cup (B'-h_{k+1}) \subset B(x,r+2h_{k+1}).$$
Therefore,
\begin{equation}\label{10.12A}
\mu_k(B') \leq  \mu_{k+1}(B(x,r+2h_{k+1})) = \mu_{k+1}(B) + \mu_{k+1}(A(x,r,r+2h_{k+1})),
\end{equation}
where $A(x,r,r+2h_{k+1})$ denotes the annular region of center $x$ and between radii $r$ and $r+2h_{k+1}$.
By geometric considerations, it is immediate to check that for any line $L\subset\R^d$,
$$\HH^1(L\cap A(x,r,r+2h_{k+1})) \leq 2\sqrt{(r+2h_{k+1})^2-r^2} =  2\sqrt{4h_{k+1}r + 4h_{k+1}^2}\leq 
2\sqrt{8h_{k+1}r}, 
$$
since we have $h_{k+1}\ll d_k\leq r$.
Therefore,
\begin{equation}\label{10.12B}
\mu_{k+1}(A(x,r,r+2h_{k+1}))\leq 2 \sum_{i=1}^{n_{k+1}} c_i^{k+1}\sqrt{8h_{k+1}r}\leq 2\sqrt{8h_{k+1}r},
\end{equation}
since by construction $\sum_{i=1}^{n_{k+1}} c_i^{k+1}=1$.
Combining \rf{10.12A} and \rf{10.12B}, we derive
$$\mu_k(B') \leq  \mu_{k+1}(B) + C\,\sqrt{h_{k+1}r}.$$
Plugging this estimate into \rf{eqal99*} we obtain
\begin{equation}\label{eq*60}
\alpha_{\mu_{k+1}}(x,r)\leq  \frac{r+h_{k+1}}r \,\left(1 + C\,\frac{\sqrt{h_{k+1}r}}{\mu_{k+1}(B)}
\right)\,
\alpha_{\mu_k}(x',r') + \frac{a_{k+1}\,h_{k+1}}r \left(1+\frac{\sqrt{h_{k+1}r}}{\mu_{k+1}(B)}\right).
\end{equation}
Since $B$ is centered on some line $L_i^{k+1}$, we can use the following brutal estimate for 
$\mu_{k+1}(B)$: 
$$\mu_{k+1}(B)\geq a_1\ldots a_{k+1} \,r\geq a_{k+1}^{k+1}\,r.$$
So by the assumption \rf{eqhk} we infer that
$$\frac{\sqrt{h_{k+1}r}}{\mu_{k+1}(B)}\leq \frac{h_{k+1}^{1/2}}{a_{k+1}^{k+1} \,r^{1/2}}
\leq \frac{h_{k+1}^{1/4}}{d_k^{1/4}}
.$$
Our claim \rf{eq*50} is an immediate consequence of this estimate and \rf{eq*60}.
\vvv

\bibliographystyle{alpha}

\begin{thebibliography}{{Orp}17}

\bibitem[ADT16]{ADT16}
J.~Azzam, G.~David, and T.~Toro.
\newblock Wasserstein distance and the rectifiability of doubling measures:
  part {I}.
\newblock {\em Math. Ann.}, 364(1-2):151--224, 2016.

\bibitem[AM16]{AM16}
J.~Azzam and M.~Mourgoglou.
\newblock A characterization of 1-rectifiable doubling measures with connected
  supports.
\newblock {\em Anal. PDE}, 9(1):99--109, 2016.

\bibitem[AT15]{AT15}
J.~Azzam and X.~Tolsa.
\newblock Characterization of {$n$}-rectifiability in terms of {J}ones' square
  function: {P}art {II}.
\newblock {\em Geom. Funct. Anal.}, 25(5):1371--1412, 2015.

\bibitem[BS15]{BS15}
M.~Badger and R.~Schul.
\newblock Multiscale analysis of 1-rectifiable measures: necessary conditions.
\newblock {\em Mathematische Annalen}, 361(3-4):1055--1072, 2015.

\bibitem[BS16]{BS16}
M.~Badger and R.~Schul.
\newblock Two sufficient conditions for rectifiable measures.
\newblock {\em Proc. Amer. Math. Soc.}, 144(6):2445--2454, 2016.

\bibitem[BS17]{BS17}
M.~Badger and R.~Schul.
\newblock Research article. {M}ultiscale analysis of 1-rectifiable measures
  {II}: {C}haracterizations.
\newblock {\em Anal. Geom. Metr. Spaces}, 5:1--39, 2017.

\bibitem[DL08]{DeL08}
C.~De~Lellis.
\newblock {\em Rectifiable sets, densities and tangent measures}.
\newblock Zurich Lectures in Advanced Mathematics. European Mathematical
  Society (EMS), Z\"urich, 2008.

\bibitem[DS91]{DS}
G.~David and S.~W. Semmes.
\newblock Singular integrals and rectifiable sets in {${\bf R}^n$}: {B}eyond
  {L}ipschitz graphs.
\newblock {\em Ast\'erisque}, (193):152, 1991.

\bibitem[DT12]{DT12}
G.~David and T.~Toro.
\newblock {\em Reifenberg parameterizations for sets with holes}.
\newblock American Mathematical Soc., 2012.

\bibitem[ENV16]{ENV16}
Nick Edelen, Aaron Naber, and Daniele Valtorta.
\newblock Quantitative reifenberg theorem for measures.
\newblock {\em arXiv preprint arXiv:1612.08052}, 2016.

\bibitem[GKS10]{GKS10}
J.~Garnett, R.~Killip, and R.~Schul.
\newblock A doubling measure on {$\Bbb R^d$} can charge a rectifiable curve.
\newblock {\em Proc. Amer. Math. Soc.}, 138(5):1673--1679, 2010.

\bibitem[L\'99]{Leger}
J.~C. L\'eger.
\newblock Menger curvature and rectifiability.
\newblock {\em Ann. of Math. (2)}, 149(3):831--869, 1999.

\bibitem[Ler03]{Ler03}
Gilad Lerman.
\newblock Quantifying curvelike structures of measures by using {$L_2$} {J}ones
  quantities.
\newblock {\em Comm. Pure Appl. Math.}, 56(9):1294--1365, 2003.

\bibitem[Mag12]{Maggi}
F.~Maggi.
\newblock {\em Sets of finite perimeter and geometric variational problems},
  volume 135 of {\em Cambridge Studies in Advanced Mathematics}.
\newblock Cambridge University Press, Cambridge, 2012.
\newblock An introduction to geometric measure theory.

\bibitem[Mat95]{Mattila}
P.~Mattila.
\newblock {\em Geometry of sets and measures in {E}uclidean spaces}, volume~44
  of {\em Cambridge Studies in Advanced Mathematics}.
\newblock Cambridge University Press, Cambridge, 1995.
\newblock Fractals and rectifiability.

\bibitem[MO16]{MO16}
H.~Martikainen and T.~Orponen.
\newblock Boundedness of the density normalised jones' square function does not
  imply $1 $-rectifiability.
\newblock {\em arXiv preprint arXiv:1604.04091}, 2016.

\bibitem[{Orp}17]{Orp17}
T.~{Orponen}.
\newblock {Absolute continuity and $\alpha$-numbers on the real line}.
\newblock {\em ArXiv e-prints}, March 2017.

\bibitem[Paj97]{Pajot97}
Herv\'e Pajot.
\newblock Conditions quantitatives de rectifiabilit\'e.
\newblock {\em Bull. Soc. Math. France}, 125(1):15--53, 1997.

\bibitem[Pre87]{Pr87}
D.~Preiss.
\newblock Geometry of measures in {${\bf R}^n$}: distribution, rectifiability,
  and densities.
\newblock {\em Ann. of Math. (2)}, 125(3):537--643, 1987.

\bibitem[Ste93]{Big-Stein}
E.~M. Stein.
\newblock {\em Harmonic analysis: real-variable methods, orthogonality, and
  oscillatory integrals}, volume~43 of {\em Princeton Mathematical Series}.
\newblock Princeton University Press, Princeton, NJ, 1993.
\newblock With the assistance of Timothy S. Murphy, Monographs in Harmonic
  Analysis, III.

\bibitem[Tol09]{Tol09}
X.~Tolsa.
\newblock Uniform rectifiability, {C}alder\'on-{Z}ygmund operators with odd
  kernel, and quasiorthogonality.
\newblock {\em Proc. Lond. Math. Soc. (3)}, 98(2):393--426, 2009.

\bibitem[Tol15]{Tol15-I}
X.~Tolsa.
\newblock Characterization of {$n$}-rectifiability in terms of {J}ones' square
  function: part {I}.
\newblock {\em Calc. Var. Partial Differential Equations}, 54(4):3643--3665,
  2015.

\bibitem[Tol17a]{Tol17}
X.~Tolsa.
\newblock Rectifiable measures, square functions involving densities, and the
  {C}auchy transform.
\newblock {\em Mem. Amer. Math. Soc.}, 245(1158):v+130, 2017.

\bibitem[Tol17b]{Tolsa-betap}
Xavier Tolsa.
\newblock Rectifiability of measures and the $\beta_p$ coefficientse.
\newblock {\em To appear in Publ. Mat.}, 2017.

\bibitem[TT15]{TT15}
X.~Tolsa and T.~Toro.
\newblock Rectifiability via a square function and {P}reiss' theorem.
\newblock {\em Int. Math. Res. Not. IMRN}, (13):4638--4662, 2015.

\end{thebibliography}

\def\cprime{$'$}

\end{document}